\documentclass[11pt,english]{amsart}

\usepackage[T1]{fontenc}
\usepackage[english]{babel}

\usepackage{hyperref,amssymb,url,xspace,mathtools,mathrsfs,stmaryrd}

\usepackage{geometry}
\usepackage{tikz-cd}
\usetikzlibrary{decorations.pathmorphing}
\usepackage{tikz}
\usetikzlibrary{positioning}

\usepackage{amsthm}
\usepackage{thmtools}
\usepackage{cleveref}

\newtheoremstyle{ThmSpace}{10pt}{13pt}{\itshape}{}{\bfseries}{:}{0.5em}{}
\newtheoremstyle{DefSpace}{10pt}{13pt}{}{}{\bfseries}{:}{0.5em}{}
\newtheoremstyle{DefBreak}{10pt}{13pt}{}{}{\bfseries}{:}{\newline}{}

\theoremstyle{ThmSpace}
\newtheorem{thm}{Theorem}[section]
\newtheorem{prop}[thm]{Proposition}
\newtheorem{cor}[thm]{Corollary}
\newtheorem{fact}[thm]{Fact}

\newtheorem{lem}[thm]{Lemma}

\theoremstyle{DefSpace}
\newtheorem{dfn}[thm]{Definition}
\newtheorem{ex}[thm]{Example}
\newtheorem{rk}[thm]{Remark}

\theoremstyle{DefBreak}
\newtheorem{dfnBreak}[thm]{Definition}

\newcommand{\mbb}{\mathbb}
\newcommand{\mbf}{\mathbf}
\newcommand{\mc}{\mathcal}
\newcommand{\mf}{\mathfrak}
\newcommand{\mrm}{\mathrm}
\newcommand{\mscr}{\mathscr}

\renewcommand{\u}{\underline}

\newcommand{\A}{\mbb{A}}

\newcommand{\C}{\mbb{C}}

\newcommand{\F}{\mbb{F}}

\renewcommand{\L}{\mbb{L}}

\renewcommand{\P}{\mbb{P}}
\newcommand{\Q}{\mbb{Q}}
\newcommand{\R}{\mbb{R}}

\newcommand{\Z}{\mbb{Z}}

\newcommand{\BB}{\mbf{B}}

\newcommand{\Ccal}{\mc{C}}

\newcommand{\Ecal}{\mc{E}}
\newcommand{\Fcal}{\mc{F}}
\newcommand{\Gcal}{\mc{G}}
\newcommand{\Hcal}{\mc{H}}

\newcommand{\Ocal}{\mc{O}}
\newcommand{\Pcal}{\mc{P}}

\newcommand{\Scal}{\mc{S}}

\newcommand{\Ucal}{\mc{U}}

\newcommand{\Xcal}{\mc{X}}
\newcommand{\Ycal}{\mc{Y}}

\newcommand{\Ascr}{\mscr{A}}
\newcommand{\Cscr}{\mscr{C}}
\newcommand{\Lscr}{\mscr{L}}
\newcommand{\Mscr}{\mscr{M}}
\newcommand{\Sscr}{\mscr{S}}

\newcommand{\Xscr}{\mscr{X}}
\newcommand{\Zscr}{\mscr{Z}}

\newcommand{\ad}{\mrm{ad}}
\newcommand{\Aut}{\mrm{Aut}}
\newcommand{\can}{\mrm{can}}
\newcommand{\FS}{\mbf{FS}}

\newcommand{\GL}{\mrm{GL}}
\newcommand{\GSp}{\mrm{GSp}}
\newcommand{\GSpin}{\mrm{GSpin}}
\newcommand{\Hom}{\mrm{Hom}}
\newcommand{\id}{\mrm{id}}
\newcommand{\im}{\mrm{im}}
\newcommand{\Ind}{\mrm{Ind}}
\newcommand{\Isom}{\mrm{Isom}}
\newcommand{\Lev}{\mrm{Lev}}
\newcommand{\Loc}{\mbf{Loc}}
\newcommand{\Sh}{\mrm{Sh}}
\newcommand{\sh}{\mrm{sh}}

\newcommand{\SO}{\mrm{SO}}

\newcommand{\Spec}{\mrm{Spec}}
\newcommand{\Triv}{\mrm{Triv}}
\newcommand{\uIsom}{\u{\Isom}}

\renewcommand{\l}{\ell}
\newcommand{\Ql}{\Q_{\ell}}
\newcommand{\Qp}{\Q_p}
\newcommand{\Zl}{\Z_{\ell}}
\newcommand{\Zp}{\Z_p}

\title{Pointwise to global good reduction problems}
\author{François \textsc{Gatine}}
\date{\today}
\email{francois.gatine@imj-prg.fr}

\begin{document}

\begin{abstract}
    Given a smooth proper family of varieties over some $p$-adic field, if (sufficiently many of) the closed fibers have good reduction, does the family as a whole have good reduction? This question is asked and positively answered in recent work of Cadoret and Tamagawa for curves in general, and abelian varieties if the base field has absolute ramification at most $p-1$. In this paper we define a general framework for studying this question, yielding new positive answers for families of K3 surfaces and cubic fourfoulds, again with ramification at most $p-1$. Our strategy is to study several extension properties of moduli spaces and how they transfer under various morphisms. Of particular importance are integral canonical models of Shimura varieties, and whether or not their extension properties are preserved under ramified scalar extension. Additionally, building on earlier counterexamples by Raynaud-Ogus-Gabber and Vasiu-Zink, we show that the ramification assumption is optimal for the result of Cadoret-Tamagawa on families of abelian varieties. This reflects the fact that the extension property of Siegel Shimura varieties no longer holds over a field of absolute ramification at least $p$.
\end{abstract}

\maketitle

\tableofcontents

\section{Introduction}

\subsection{Motivation and main results}

Let $\Ocal_F$ denote a complete DVR of mixed characteristic $(0,p)$, with fraction field $F$. 
Consider $\Xcal$ a smooth, separated scheme of finite type over $\Ocal_F$, let $X/F$ denote its generic fiber. 
Fix $Y \to X$ a smooth and proper family of varieties. 
We are concerned with the following problem: assuming (sufficiently many of) the closed fibers $Y_x$ have good reduction, does $Y$ extend to a smooth and proper scheme over $\Xcal$? 
Does pointwise good reduction imply global good reduction?

This question is asked in the recent work of Cadoret and Tamagawa \cite[§6.2]{cadoret2025pointwise}, where it is answered positively in the cases of pointed smooth proper curves (resp. of stable proper curves) of genus $g \ge 2$, and of abelian schemes if $F$ has absolute ramification index $e \le p-1$. 
The two main inputs of their argument are \cite[Theorem 1]{cadoret2025pointwise} stating that a local system on $X$ extends to $\Xcal$ if and only if it is pointwise unramified for a relevant collection of closed points of $X$, as well as Serre-Tate-type criteria. 
It is also discussed in \cite[§6.2, Example (2)]{cadoret2025pointwise} how \emph{integral canonical models} of Shimura varieties could be leveraged for a strategy when $\Ocal_F$ is the ring of integers of a $p$-adic field. 
Indeed, by definition these models satisfy a property akin to the Néron extension property of Néron models, thus providing a Serre-Tate criterion over general bases.

The main goal of this paper is to formalize the discussion in \textit{loc. cit.}. 
We reduce the pointwise-to-global good reduction problem to the existence of a separated, Deligne-Mumford stack $\Sscr/\Ocal_F$ equipped with a local system $\L$ such that the pair $(\Sscr, \L)$ satisfies some "L-extension property", and show that moduli spaces of curves, as well as integral canonical models of Shimura varieties, can fill this role. 
Consequently, we (re)establish:
\begin{thm}\label{thm:main thm}
    Assume $p>2$. Let $F / \Qp$ be a finite field extension of ramification index $e$. Assume that $Y \to X$ is either:
    \begin{itemize}
        \item[(a)] a smooth, proper family of genus $g$, $n$-pointed, geometrically connected curves with $2-2g < n$,
        \item[(b)] ($e \le p-1$) a polarized abelian scheme,
        \item[(c)] ($e \le p-1$) a primitively polarized K3 surface,
        \item[(d)] ($e \le p-1$) a smooth, proper family of cubic fourfolds.
    \end{itemize}
    If $Y \to X$ has pointwise good reduction, then it has global good reduction.
\end{thm}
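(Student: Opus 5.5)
The plan is to reduce every case to one abstract statement about moduli stacks, and then check that statement case by case. Let $\Sscr/\Ocal_F$ be a separated Deligne--Mumford stack carrying an étale local system $\L$ on its generic fiber. Say $(\Sscr,\L)$ has the \emph{$\L$-extension property} if the following holds for every smooth $\Ocal_F$-scheme $\Xcal$ with generic fiber $X$: any morphism $f\colon X\to \Sscr_F$ such that $f^*\L$ extends to a local system on $\Xcal$ itself extends to a morphism $\Xcal\to\Sscr$.

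Granting this property, the abstract statement goes as follows. The family $Y\to X$ is classified by a map $f\colon X\to\Sscr_F$, and $f^*\L$ is the relevant cohomology local system $R^iY_*\Zl$, with $\ell\neq p$. Pointwise good reduction says that the fibers $Y_x$ have good reduction. By smooth proper base change, $(f^*\L)_x$ is then unramified at each such closed point $x$. The criterion of Cadoret--Tamagawa \cite[Theorem 1]{cadoret2025pointwise} now says that $f^*\L$ extends over $\Xcal$. By the extension property, $f$ extends to $\Xcal\to\Sscr$. Pulling back the universal family gives a smooth proper model of $Y$, which is global good reduction. Separatedness of $\Sscr$ makes the extension unique, so étale descent lets us pass from a finite étale cover to $\Xcal$ itself.

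For the individual cases we proceed as follows.

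\textbf{(a) Curves.} Take $\Sscr=\overline{\mathcal M}_{g,n}$, or a fine level cover of it (of level $\ell^k$, which is possible since $p>2$), with $\L$ the $H^1$ local system. The extension property follows from Oda/Tamagawa-type good reduction criteria: unramified $H^1$ forces smooth reduction over traits. To pass from traits to a general smooth $\Xcal$ we use purity, namely Zariski--Nagata together with the fact that the relevant space is proper or quasi-finite over $\overline{\mathcal M}$. Then the map extends in codimension one, and hence everywhere by normality and a Hartogs argument on the closure of the graph.

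\textbf{(b) Abelian schemes.} Take $\Sscr$ to be the integral canonical model of the Siegel modular variety at some level, with $\L$ the Tate module. The extension property for $e\le p-1$ is the Faltings--Chai/Vasiu--Zink extension theorem for abelian schemes over regular bases of ramification $<p-1$, suitably applied; this is also where the bound on $e$ enters.

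\textbf{(c) and (d) K3 surfaces and cubic fourfolds.} Use the Kuga--Satake construction. It gives an étale (after level structure) period morphism from the moduli of polarized K3 surfaces, respectively cubic fourfolds, to the canonical model of the $\GSpin(L)$ Shimura variety $\Sscr_K$ over $\Z_{(p)}$; for K3 surfaces this morphism extends over $\Z_{(p)}$ by Madapusi-Pera, and for cubics by Madapusi-Pera's integral Torelli results. The orthogonal Shimura variety embeds in a Siegel one. The key point is that the pair $(\Sscr_K\otimes\Ocal_F,\L)$ inherits the extension property from the Siegel case when $e\le p-1$. Here we would use that the normalization of the closure in Siegel space is the canonical model, and that extensions of maps land in it by normality of $\Xcal$. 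We then need to lift from $\Sscr_K$ back to the moduli of K3 surfaces or cubics. This uses the description of the period map as an open immersion into $\Sscr_K$, whose image is the complement of explicit special divisors. We would also need the fact that pointwise good reduction of the points forces the extended map to avoid those divisors at closed points of the special fiber, via a Matsumoto-type good reduction criterion for K3 surfaces and cubic fourfolds.

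The main obstacle is this last step, together with base-changing the extension property of integral canonical models to ramified $\Ocal_F$. Codimension-one points of the special fiber of $\Xcal$ have ramified residue DVRs, so one must show that the extension property survives scalar extension to $\Ocal_F$ with $e\le p-1$. I would try to derive this from the Siegel case by the closure-and-normalization argument just described. I would then control the image in codimension $\ge2$ by purity of the special-divisor complement, since that complement is affine over the boundary locus.
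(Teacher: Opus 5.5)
Your general reduction (an extension property for a pair $(\Sscr,\L)$ combined with the Cadoret--Tamagawa criterion) and your treatment of (b) are essentially the paper's route. Two steps fail as written, in cases (a) and (c), (d).

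\textbf{Case (a).} The $H^1$ local system is the wrong invariant. By Néron--Ogg--Shafarevich, unramified $H^1$ of a curve only says that its Jacobian has good reduction. That also happens when the stable reduction is of compact type, e.g.\ a genus-$2$ curve degenerating to two elliptic curves with good reduction meeting in one point. For $g=0$ the local system is even zero. So the pair $(\mathcal M_{g,n},H^1)$ already fails your extension property for $\Xcal=\Spec\,\Ocal_F$. Working on $\overline{\mathcal M}_{g,n}$ instead only produces stable models, and their smoothness is the original question. You need nonabelian monodromy: the outer Galois action on the pro-$\ell$ geometric fundamental group of the fibres. This gives the Oda--Tamagawa criterion over traits, and Stix's monodromy criterion extends it over normal bases directly, with no codimension-one-plus-purity argument. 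This is what the paper uses: Stix's theorem is exactly the extension property for $\mathcal M_{g,n}\otimes\Ocal_F$ with his local system. The Cadoret--Tamagawa criterion applies to any monodromy valued in a profinite group.

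\textbf{Cases (c), (d).} The step you leave open is lifting the extension $h\colon\Xcal\to\Sscr_{K,\Ocal_F}$ through the period map $\pi\colon\Ucal\to\Sscr_{K,\Ocal_F}$, where $\Ucal$ is the oriented level-$K$ moduli stack. This does not follow from unramified cohomology, which is what produced $h$ in the first place. Unramified $H^2$ is compatible with the polarization specializing to a mere quasi-polarization (e.g.\ a quartic acquiring a node with trivial monodromy), i.e.\ with $h$ meeting the complement of the image of $\pi$.

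You also do not need a Matsumoto-type criterion, special divisors or purity. The missing idea is to feed pointwise good reduction in directly:
\begin{itemize}
\item A closed point of $\Xcal_s$ lifts, by smoothness, to an $\Ocal_E$-point $\tilde x$.
\item By PGR, $f_\eta(\tilde x_\eta)$ extends to an $\Ocal_{E'}$-point $y$ of $\Ucal$.
\item Separatedness of $\Sscr_K$ then gives $h\circ\tilde x\simeq\pi\circ y$.
\item Hence the closure $Z$ of the graph of $f_\eta$ in $\Xcal\times_{\Ocal_F}\Ucal$ surjects onto $\Xcal$.
\item $Z$ is a union of components of the étale cover $\Xcal\times_{\Sscr_K}\Ucal$ of the regular scheme $\Xcal$, and has degree one over $\Xcal_\eta$, so $Z\cong\Xcal$.
\end{itemize}
This is \Cref{thm:PGR extension pulls back for stacks}, and it only uses that $\pi$ is étale. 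If $\pi$ is an open immersion, it just says that $h^{-1}(\pi(\Ucal))$ contains all closed points.

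After that you must still descend along the orientation and level torsor. This means extending the torsor over $\Xcal$ by the Cadoret--Tamagawa criterion and checking that the lifted map is again PGR (\Cref{prop:PGR descends along finite torsors}).

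Finally, closure-and-normalization in Siegel space only covers Hodge type at hyperspecial level. The period maps land in $\SO$-Shimura varieties, which are of abelian type and need a Moonen-type quotient argument. For cubic fourfolds at $p=3$ the lattice has discriminant $3$, so you need Madapusi Pera's model via special endomorphisms. There $e\le p-1$ enters again, through the crystalline--de Rham comparison.
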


Cases (a) and (b) were already known, and our proof is the same as in \cite[§6.2, Examples (1), (3)]{cadoret2025pointwise}, only reformulated to fit our framework. Building upon \cite[Theorem 28]{vasiu2010purity} we also show the optimality of their assumption $e \le p-1$, and relate it to the non-existence of a smooth integral canonical model for Siegel modular varieties if $e \ge p$. We have not investigated the possibility of a similar obstruction for orthogonal-type Shimura varieties involved in cases (c) and (d).

\subsection{Outline and strategy of proof}

\Cref{sec:can loc sys on schemes and stacks} introduces the formalism of local systems and torsors on stacks used in the rest of the paper. Of particular importance to us are towers of stacks with finite étale transition maps inducing compatible systems of torsors called \emph{fit systems}: associated with this setup is a \emph{canonical local system}. When the stacks carry specific moduli interpretations, we define a \emph{universal local system} and argue that canonical and universal local systems agree, thus generalizing constructions and statements from \cite{ullmo2013generalised} and \cite{cadoret2020integral}.

We formulate in \Cref{sec:pointwise and global good reduction problems} the pointwise-to-global good reduction problem. We then show that even in the usually well-behaved case of abelian schemes the answer is negative: if $F/\Q_p$ has ramification index $e \ge p$, \Cref{cor:counterexample PGR not GGR} provides a principally polarized counterexample with arbitrary level structure. To obtain it, we recall the work of Vasiu-Zink in \cite{vasiu2010purity} and descend their construction to finite type bases over $\Ocal_F$.

We spend some time in \Cref{sec:pointwise criterion} recalling the main inputs from \cite{cadoret2025pointwise} to be used later. Our assumptions and conventions differ slightly from the reference, so for convenience we state their results, as well as certain proofs, with the terminology of \Cref{sec:can loc sys on schemes and stacks}.

\Cref{sec:extension properties for good reduction} is the core of the proof toward \Cref{thm:main thm}. For the convenience of the reader, we motivate the technical results with case (c) in mind. With $X/F$ and $\Xcal/\Ocal_F$ as above, let $Y \to X$ be a family of K3 surfaces (with extra structure) having pointwise good reduction, we wish to extend it to a family $\Ycal \to \Xcal$ of K3 surfaces. Let $\Mscr$ denote the moduli stack of K3 surfaces (with extra structure) over $\Ocal_F$, the family defines a \emph{PGR-morphism} $X \to \Mscr$ (as in \emph{Pointwise Good Reduction}, see \Cref{dfn:PGR-morphism}) which we wish to extend to $\Xcal$. If it is true that any PGR-morphism to $\Mscr$ admits an extension over $\Ocal_F$, we say that $\Mscr$ has \emph{PGR-extension} (\Cref{dfn:PGR-extension property}).

To argue that $\Mscr$ has PGR-extension, we need to introduce two other extension properties of moduli stacks: \emph{L-extension} and \emph{smooth-extension} (\Cref{dfn:L-extension property} and \Cref{dfn:smooth extension}). The majority of \Cref{sec:extension properties for good reduction} is spent establishing the implications below:
\begin{enumerate}
    \item[(i)] If $\Sscr_{\infty}/\Ocal_F$ is a stack with smooth-extension which can be expressed as the limit of a fit system
    \[\Sscr_{\infty} = \varprojlim_K \Sscr_K,\]
    then each $\Sscr_K$ has L-extension (\Cref{prop:smooth extension implies L extension}). This is where the notion of canonical local system from \Cref{sec:can loc sys on schemes and stacks} is used. The implication had already been observed in \cite[§1.3]{bakker2024integral} in the context of Shimura varieties.
    \item[(ii)] If $\Sscr/\Ocal_F$ has L-extension, then it also has PGR-extension (\Cref{prop:L-extension implies PGR extension}).
    \item[(iii)] If $\Sscr/\Ocal_F$ has PGR-extension and $\Ucal \to \Sscr$ is étale, then $\Ucal$ has PGR-extension (\Cref{thm:PGR extension pulls back for stacks}). This fails for L-extension, as shown by the open inclusion of the generic fiber in $\Sscr$.
    \item[(iv)] If $\Ucal \to \Mscr$ is a finite torsor over $\Ocal_F$ and if $\Ucal$ has PGR-extension, then so does $\Mscr$ (\Cref{prop:PGR descends along finite torsors}).
\end{enumerate}
The inputs allowing (ii) and (iv) are the results recalled in \Cref{sec:pointwise criterion}. Summarizing, the extension properties imply each-other from top-right to bottom-left in the following diagram, using (i) to (iv) in order:

\[\begin{tikzpicture}[node distance=1cm and 1cm]

    \node (U) {$\Ucal$};
    \node[right=of U] (S) {$\Sscr = \Sscr_K$};
    \node[above=of S] (A) {$\Sscr_{\infty}$};
    \node[below=of U] (M) {$\Mscr$};

    \draw[->] (A) -- node[right] {projection} (S);
    \draw[->] (U) -- node[above] {étale} (S);
    \draw[->] (U) -- node[left] {finite torsor} (M);

    \node[draw, right=0.5cm of A] {smooth-extension};
    \node[draw, right=0.5cm of S] {L-extension};
    \node[draw, left=0.5cm of U] {PGR-extension};
    \node[draw, left=0.5cm of M] {PGR-extension};

\end{tikzpicture}\]

In \Cref{sec:extension for Shimura} we investigate our primary source of stacks with smooth-extension: integral canonical models of Shimura varieties. These are schemes $\Sscr_{\infty}$ usually defined over valuation rings of number fields. However, none of the extension properties above are preserved under base-change in general: for $F/\Qp$ an arbitrary finite extension, $\Sscr_{\infty,\Ocal_F}$ may not have smooth extension. Using an imput from Vasiu-Zink in \cite[Corollary 5]{vasiu2010purity}, the majority of \Cref{sec:extension for Shimura} is spent recalling the construction of certain integral canonical models from the work of Kisin and Madapusi Pera in \cite{kisin2010integral} and \cite{pera2016integral} respectively, to check that smooth extension is preserved whenever $F / \Qp$ has ramification index $e \le p-1$. 

In \Cref{sec:applications} we argue that the relevant moduli spaces $\Mscr$ in \Cref{thm:main thm} have PGR-extension using the models from \Cref{sec:extension for Shimura} and implications from \Cref{sec:extension properties for good reduction}. For case (a), it is known from \cite{stix2005monodromy} that $\Mscr$ has L-extension, hence PGR-extension. Case (b) follows directly from extension properties of moduli spaces of abelian schemes derived in \Cref{sec:extension for Shimura}; we then relate the counterexample from \Cref{sec:pointwise and global good reduction problems} to the possible non-existence of integral canonical models for Siegel Shimura varieties if $F$ is too ramified (\Cref{rk:optimality of e}). Cases (c) and (d) are similar, but require (iii) and (iv) additionally: the period morphism defines an étale map from a moduli space $\Ucal=\Mscr_K$ with $K$-level structure, which is a finite torsor over $\Mscr$, to an integral canonical model $\Sscr_K$ of a Shimura stack of level $K$. \Cref{thm:main thm} summarizes the contents of \Cref{thm: PGR GGR curves}, \Cref{thm:PGR GGR ab schemes}, \Cref{thm:PGR GGR for K3} and \Cref{thm:PGR GGR cubic fourfolds}.

\subsection{Conventions and notations}
Some conventions for the rest of the paper are:
\begin{itemize}
    \item Unless specified otherwise, $p>0$ is a fixed prime number. It will eventually be odd.
    \item A variety over a field is a reduced separated scheme of finite type over the field.
    \item A \emph{smooth model} of a variety $X/F$ is a smooth algebraic space $\Xcal / \Ocal_F$ with generic fiber $X/F$.
    \item The ring $\Ocal_F$ is a complete DVR with fraction field $F$ of characteristic 0. The integral closure of $\Ocal_F$ in some fixed algebraic closure of $F$ is denoted $\overline{\Ocal_F}$. We sometimes ask that $\Ocal_F$ is the ring of integers of a finite extension $F/\Qp$.
\end{itemize}

\subsection{Acknowledgements}
We thank first and foremost Anna Cadoret, for their careful reviewing and numerous suggestions which greatly improved this paper, as well as Livia Grammatica for key inputs in the counterexamples of \Cref{sec:pointwise and global good reduction problems} and multiple helpful discussions on various topics of this article. We thank Drimik Roy and Guillaume Pignon--Ywanne for their help with the arguments of \Cref{subsec:universal fit system} and \Cref{subsec:quotient of sheaves vs quotient sheaves} respectively. We also thank Tomás Fernández, Enrico Lampetti and Benoît Stroh for helpful discussions.

\subsection{AI disclosure}
No AI were used in the making of this paper. Any typos, inaccuracies or mistakes are the author's.

\section{Canonical local systems on schemes and stacks}\label{sec:can loc sys on schemes and stacks}

This section mostly serves as a toolbox of definitions: we advise the reader to skip it at first read, and come back to it whenever necessary. 

We use the language of fibered categories and (algebraic) stacks over a site, and in particular the pro-étale site. The meaning of these terms varies accross references; for the convenience of the reader we spend the first paragraph expliciting our conventions.  

The next two paragraphs define what is meant in the rest of the paper by local systems and torsors on schemes and stacks. In particular, to certain projective systems of schemes/stacks called \emph{fit systems} are attached \emph{canonical} local systems which play a role in later applications; this construction is inspired from \cite[§2]{ullmo2013generalised} and \cite[§3.1]{cadoret2020integral}. We show in \Cref{prop:universal fit system} of the fourth paragraph that there is a universal fit system of classifying stacks inducing all other fit systems. 

When the stacks of a fit system additionally carry a certain moduli interpretation, one can also define a \emph{universal} local system: this is formalized in the fifth paragraph, where it is shown that it is canonically isomorphic to the canonical local system. This identification recovers \cite[paragraph following Remark 2.8]{ullmo2013generalised} (see also \cite[Proposition 5.2]{cadoret2020integral}) by applying it to points on Siegel modular varieties, as well as \cite[end of the proof of Theorem 6.6]{cadoret2020integral} for points on moduli spaces of K3 surfaces. This result is not essential for the rest of the paper.

The sixth and last paragraph is devoted to the proof of \Cref{lem:quotient sheaf is sheaf of quotient} used several times in this section, and could be of independent interest.

We fix $S$ a base scheme and set $\Cscr \coloneq (\mrm{Sch}/S)_{\text{proét}}$ the big pro-étale site over $S$; in the rest of the article we will often implicitely be the case that $S = \Spec \, \Ocal_F$, $S = \Spec \, \Z_{(p)}$ or generic and closed points thereof. Unless stated otherwise, all fibered categories, stacks and morphisms between them are over $\Cscr$. For all $S$-scheme $T$, we write $\Cscr_T = (\mrm{Sch}/T)_{\text{proét}}$ the restriction of $\Cscr$ to $T$. We fix $R$ a topological ring and $V$ a free $R$-module of finite rank, we let $\GL(V)$ denote the topological group of $R$-linear automorphisms of $V$. We also fix $Q$ a topological group.

\subsection{(Algebraic) stacks on the big pro-étale site}\label{subsec:alg stacks on the big pro-étale site}

In the rest of this paper, we will need the notion of (algebraic and Deligne-Mumford) stacks over the big pro-étale site over $S$. In this paragraph we make our conventions precise.

\subsubsection{"The" pro-étale site over $S$}
The reader willing to ignore set-theoretic issues is encouraged to skip this paragraph and proceed with the intuitive understanding that the big pro-étale site over $S$ is the category of all schemes on $S$ with coverings refining the étale site, in that projective limits of finite étale covers are again covers.

We define a site as in \cite[\href{https://stacks.math.columbia.edu/tag/00VG}{Tag 00VG}]{stacks-project}. In particular, we require that objects, morphisms between objects and the collection of all coverings within the site are all sets. This is to ensure the existence of sheafification of presheaves (resp. stackification of prestacks) over a site. The big category $\mrm{Sch}/S$ does not satisfy these conditions for any topology: the collection of objects needs to be reduced to a set. When shrinking, one has to ensure to keep all $S$-schemes which are essential in context\footnote{For instance, we will be studying applications to certain Shimura varieties over $S = \Spec \, \Q$ and $S = \Spec \, \Z_{(p)}$, so whichever set we replace $\mrm{Sch}/S$ with, it should contain those varieties.}, as well as making sure the resulting set is still stable under useful operations. The question thus becomes: is it possible to find a \emph{set} $\Scal$ within the collection of all $S$-schemes such that
\begin{itemize}
    \item[(i)] the set $\Scal$ contains a prescribed set $\Scal_0$ of $S$-schemes, and
    \item[(ii)] for any $T \in \Scal$ and any scheme $U$ obtained from $T$ by some "reasonable scheme-theoretic construction", there exists $U' \in \Scal$ such that $U' \cong U$ as $S$-schemes.
\end{itemize}
The answer is positive, and is made precise in \cite[\href{https://stacks.math.columbia.edu/tag/000H}{Tag 000H}]{stacks-project}. For convenience, we still denote this shrunk down category as $\mrm{Sch}/S$. To equip it with a topology $\tau$, namely fppf or pro-étale, the collection of all coverings needs also to be reduced with similar care; this is done in \cite[\href{https://stacks.math.columbia.edu/tag/000W}{Tag 000W}]{stacks-project}. It is shown in \cite[\href{https://stacks.math.columbia.edu/tag/00VY}{Tag 00VY}]{stacks-project} that the resulting category of sheaves on $\mrm{Sch}/S$ is independent of the set of $\tau$-coverings chosen in this manner. 

The upshot is that there are many possible "big fppf/pro-étale" sites over $S$, but for all intents and purposes in this article any choice is virtually equivalent.

\begin{dfn}
    For the remainder of these notes, we fix $(\mrm{Sch}/S)_{\text{fppf}}$ and $\Cscr \coloneq (\mrm{Sch}/S)_{\text{proét}}$ big fppf and pro-étale sites over $S$, as in \cite[\href{https://stacks.math.columbia.edu/tag/021R}{Tag 021R}]{stacks-project} and \cite[\href{https://stacks.math.columbia.edu/tag/098G}{Tag 098G}]{stacks-project} respectively. We can and will arrange so that the underlying categories are the same, and so that our choice of fppf coverings is a subset of our choice of pro-étale coverings. We call them \emph{the} big sites over $S$.
\end{dfn}

\subsubsection{Algebraic stacks on a site}
We use the language of fibered categories and stacks over a site as in \cite{vistoliNotesGrothendieckTopologies2007}, namely, a stack over a site is a category fibered in groupoids for which every descent datum is effective. In particular, we can make sense of stacks over the pro-étale site $\Cscr$ over $S$.

Algebraic stacks, on the other hand, are only defined over $(\mrm{Sch}/S)_{\text{fppf}}$ (or even $(\mrm{Sch}/S)_{\text{ét}}$) in most references. Our conventions are inspired from \cite[4]{laumonChampsAlgebriques2000}.

\begin{dfn}\label{dfn:algebraic and DM stacks}
    A stack over $(\mrm{Sch}/S)_{\text{fppf}}$ (resp. $\Cscr = (\mrm{Sch}/S)_{\text{proét}}$) is \emph{algebraic} if its diagonal is representable, quasi-compact and separated, and if there exists a surjective smooth morphism (called an \emph{atlas}) from an algebraic space. It is \emph{Deligne-Mumford} if the atlas can be chosen étale. A Deligne-Mumford stack is \emph{separated} if its diagonal is proper (equiv. finite).
\end{dfn}

We will work with standard fppf algebraic or Deligne-Mumford stacks (classifying stacks, moduli stacks of curves, abelian varieties,$\dots$), and view them as pro-étale stacks. When we do so, it is allowed by the following fact, without further mention.

\begin{fact}[\cite{stacks-project},\href{https://stacks.math.columbia.edu/tag/0GRH}{Tag 0GRH}]
    An algebraic (resp. Deligne-Mumford) stack over $(\mrm{Sch}/S)_{\text{fppf}}$ with ind-quasi-affine diagonal has all fpqc descent data effective. In particular, it is automatically an algebraic (resp. Deligne-Mumford) stack over $\Cscr$.
\end{fact}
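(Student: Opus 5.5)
We have to prove the Fact: an algebraic (resp. DM) stack over fppf with ind-quasi-affine diagonal satisfies fpqc descent, hence is an algebraic (resp. DM) stack over the pro-étale site $\Cscr$.

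The plan has two parts. First, show that every fpqc descent datum for $\Xscr$ is effective, which is the content of the cited Stacks Project tag. Second, deduce the pro-étale statement by checking the conditions of \Cref{dfn:algebraic and DM stacks} on $\Cscr$.

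For the first part, fix an fpqc covering $\{T_i \to T\}$ and a descent datum $(x_i, \varphi_{ij})$ of objects of $\Xscr$ over the $T_i$.

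1. Reduce to a single faithfully flat quasi-compact morphism $T' \to T$ with $T, T'$ affine. Since $\Xscr$ is already an fppf stack, hence a Zariski stack, we may work Zariski-locally on $T$.
2. Choose a smooth atlas $U \to \Xscr$ and pull it back along $x' : T' \to \Xscr$. This gives a smooth surjection $U_{T'} \to T'$ carrying a descent datum relative to $T'/T$, because the diagonal is representable.
3. Descend the fiber $x'$ using the diagonal hypothesis. The isomorphism sheaf $\uIsom(p_1^*x', p_2^*x')$ over $T''=T'\times_T T'$ is representable by an algebraic space that is ind-quasi-affine over $T''$. Ind-quasi-affine morphisms satisfy fpqc descent (Gabber; \cite[Tag 0APK]{stacks-project}), so the relevant isomorphism algebraic spaces and their sections descend.
4. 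This lets us replace $T'\to T$ by an fppf, indeed smooth, refinement. Concretely, the smooth surjection $U_{T'}\to T'$ descends to an algebraic space $W\to T$ that is smooth and surjective. Along $W$, the descent datum becomes effective by fppf effectivity. Gluing then yields an object over $T$, again using that $\Xscr$ is an fppf stack.

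The main obstacle is step 4: descending the algebraic space $U_{T'}$ along an fpqc morphism. General algebraic spaces need not fpqc-descend, and this is exactly where the ind-quasi-affine diagonal is used. I would follow the Stacks Project strategy of working with torsors under the ind-quasi-affine automorphism spaces rather than with $U$ directly.

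For the second part, recall that pro-étale coverings are fpqc coverings (weakly étale, hence flat). Fpqc effectivity therefore gives the stack condition on $\Cscr$.

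The remaining conditions of \Cref{dfn:algebraic and DM stacks} are fppf-intrinsic:
1. Representability, quasi-compactness and separatedness of the diagonal are statements about fiber products with schemes, so they are unchanged.
2. The smooth (resp. étale) atlas is unchanged.
3. Algebraic spaces are fpqc sheaves, so they remain sheaves on $\Cscr$.

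Hence $\Xscr$ is an algebraic (resp. DM) stack over $\Cscr$.
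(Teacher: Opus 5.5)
Your overall architecture is the standard one, and the pro-étale part (pro-étale coverings are fpqc coverings, the remaining conditions are unchanged, algebraic spaces are fpqc sheaves) is fine. The paper only cites the Stacks Project here, so there is no in-paper proof to compare against. The gap is that the one step carrying the content is never proved. You correctly identify that the descent datum on $U_{T'}=U\times_{\Xscr,x'}T'$ must be shown effective, but you then defer to an unspecified strategy of ``torsors under the automorphism spaces''. As stated this produces no descended object: forming $\uIsom$-torsors requires a reference object over an fppf cover of $T$, and producing one is exactly what descending the atlas does.

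Your Step 3 also puts the hypothesis in the wrong place. Nothing over $T''$ needs to be descended. For gluing morphisms you only need the $\uIsom$-spaces to be fpqc sheaves, which holds for every algebraic space by Gabber's theorem, with no condition on the diagonal. The ind-quasi-affine diagonal is used only to make the pulled-back atlas descend.

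The missing idea is to choose the atlas well. After Zariski-localizing, assume $S$, $T$, $T'$ affine and take $U=\coprod_i U_i$ with each $U_i$ affine.

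\begin{itemize}
\item The morphism $U_i\times_{\Xscr}T'\to U_i\times_S T'$ is the base change of $\Delta$ along $U_i\times_S T'\to\Xscr\times_S\Xscr$, hence ind-quasi-affine. Composed with the affine projection $U_i\times_S T'\to T'$, this makes $U_i\times_{\Xscr}T'$ a scheme ind-quasi-affine over $T'$.
\item The descent datum induced by $\varphi$ is $(u,t,\alpha)\mapsto(u,t,\varphi\circ\alpha)$. It commutes with the projection to $U$, so it preserves each piece $U_i\times_{\Xscr}T'$, and the cocycle condition follows from that of $\varphi$.
\item Effectivity of fpqc descent for ind-quasi-affine morphisms (the lemma you cite) gives $W_i\to T$. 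Then $W=\coprod_i W_i\to T$ is smooth and surjective, since both properties are fpqc-local on the base.
\item The projection $W\times_T T'\to U$ is compatible with the descent datum. Because $U$ is an fpqc sheaf, it descends to $W\to U$, which gives an object $x_W$ over $W$.
\item The isomorphism between the two pullbacks of $x_W$ to $W\times_T W$ is defined after pulling back to $(W\times_T W)\times_T T'$. It descends, with its cocycle condition, by the fpqc sheaf property of $\uIsom$.
\item Fppf effectivity along $W\to T$ then gives $x$ over $T$. The same sheaf property gives $x|_{T'}\simeq x'$ compatibly with $\varphi$.
\end{itemize}

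With this inserted in place of your Steps 3 and 4, the argument is complete.
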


In this paper, stacks are not required to be algebraic until \Cref{prop:morphisms isomorphic over dense subset}.

\subsection{Local systems and torsors on schemes}\label{subsec:local systems and torsors on schemes}
We fix an $S$-scheme $T$.

\begin{dfn}\label{dfn:sheaf from top space}
    Let $A$ be a topological space. Let $\Fcal_{A}$ denote the presheaf on $\Cscr_T$ mapping any $U \in \Cscr_T$ to the set $\Fcal_A(U) = \mrm{Cont}(U, A)$ of continuous functions $U \to A$. This is a sheaf by \cite[Lemma 4.2.12]{bhatt2013pro}.
\end{dfn}

\begin{rk}\label{rk:F_A when tot disc and qcqs}
    If $A$ is discrete then $\Fcal_{A}$ is the usual constant sheaf $\u A$. More generally, if $A$ is totally disconnected and $U \to T$ is qcqs, then $\Fcal_A(U) = \mrm{Cont}(\pi_0(U), A)$.
\end{rk}

The sheaf $\Fcal_R$ is a sheaf of rings, and $\Fcal_V$ is an $\Fcal_R$-module.
\begin{dfn}\label{dfn:R-local systems}
    An $\Fcal_R$-module locally isomorphic to the $\Fcal_R$-module $\Fcal_{V}$ is called an \emph{$R$-local system with fiber $V$}. The groupoid of $R$-local systems with fiber $V$ over $\Cscr_T$ is denoted $\Loc_V(T)$. The functor
    \[\Loc_V: T \in \Cscr \longmapsto \Loc_V(T)\]
    defines a stack over $\Cscr$.
\end{dfn}

The sheaf $\Fcal_Q$ is a sheaf of groups.
\begin{dfn}\label{dfn:Q-torsor}
    Let $\Gcal$ be a sheaf of groups on $\Cscr$. A \emph{$\Gcal$-sheaf} $\Pcal$ on $\Cscr_T$ is a sheaf of sets together with a morphism $\Gcal_T \times \Pcal \to \Pcal$ inducing a group action for every object of $\Cscr_T$. A \emph{$\Gcal$-torsor} on $\Cscr_T$ is a $\Gcal$-sheaf locally isomorphic to the $\Gcal$-sheaf $\Gcal$. The category of $\Gcal$-sheaves (resp. the groupoid of $\Gcal$-torsors) over $\Cscr_T$ is denoted $\Gcal \mbf{Sh}(T)$ (resp. $\BB \Gcal(T)$). As in \Cref{dfn:R-local systems} this defines a fibered category (resp. a stack) $\Gcal \mbf{Sh}$ (resp. $\BB \Gcal$) on $\Cscr$. When $\Gcal = \Fcal_Q$, we write $Q \mbf{Sh}$ (resp. $\BB Q$) and call its objects $Q$-sheaves (resp. $Q$-torsors). 
\end{dfn}

\begin{fact}[\cite{giraudCohomologieNonAbelienne1971}, III, Corollaire 2.2.6]\label{fact:loc sys equiv GL(V) torsor}
    If $Q = \GL(V)$ then there is an equivalence
    \[\Triv:\Loc_V \rightarrow \BB \GL(V)\]
    mapping a local system $\L$ over $T$ to its sheaf of trivializations $\uIsom(\L,\Fcal_V)$ over $T$.
\end{fact}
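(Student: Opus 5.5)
The plan is to construct the functor $\Triv$ and its quasi-inverse explicitly, fiber by fiber over each $T \in \Cscr$, and then check compatibility with pullbacks so that we get an equivalence of stacks (equivalently, of fibered categories over $\Cscr$).

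\emph{Forward functor.} Given $\L \in \Loc_V(T)$, define $\Triv(\L) = \uIsom_{\Fcal_R}(\L,\Fcal_V)$. This is a pro-étale sheaf on $\Cscr_T$ because $\L$ and $\Fcal_V$ are sheaves. There is a left action of $\Fcal_{\GL(V)}$ by post-composition. To make this action well-defined, first identify $\Fcal_{\GL(V)}$ with $\underline{\Aut}_{\Fcal_R}(\Fcal_V)$. On a section $U$, a continuous map $U \to \GL(V)$ gives an $\Fcal_R$-linear automorphism of $\Fcal_V|_U$ by pointwise application. Conversely, fix an $R$-basis of $V$, so that $\Fcal_V \cong \Fcal_R^n$. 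An $\Fcal_R$-linear endomorphism of $\Fcal_R^n$ over $U$ is then a matrix in $M_n(\Fcal_R(U)) = \mrm{Cont}(U, M_n(R))$, and invertibility corresponds to landing in $\GL_n(R)$ with continuous inverse. This identification is the one step needing care. The point is that $\GL(V)$ carries the topology induced from $M_n(R)\times M_n(R)$ via $g \mapsto (g,g^{-1})$. The torsor condition is then immediate: if $\L|_U \cong \Fcal_V|_U$, then $\Triv(\L)|_U \cong \underline{\Aut}(\Fcal_V)|_U = \Fcal_{\GL(V)}|_U$ equivariantly. Functoriality in isomorphisms $\L \to \L'$ comes from precomposition with the inverse.

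\emph{Quasi-inverse.} Given a $\GL(V)$-torsor $\Pcal$ on $\Cscr_T$, form the contracted product $\Pcal^{-1}\wedge^{\Fcal_{\GL(V)}} \Fcal_V$, i.e. the sheafification of the quotient of $\Pcal\times \Fcal_V$ by the diagonal action $g\cdot(t,v) = (g t, g v)$; equivalently, take the sheaf $\uIsom_{\Fcal_{\GL(V)}}(\Pcal, \Fcal_V)$ of equivariant maps. The second description avoids sheafification issues entirely, so I will use it. It is an $\Fcal_R$-module via the target. Over a $U$ trivializing $\Pcal$, it becomes $\uIsom_{\Fcal_{\GL(V)}}(\Fcal_{\GL(V)},\Fcal_V)\cong \Fcal_V$ by evaluation at $1$. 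Hence it is a local system with fiber $V$.

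\emph{Mutual inverses and stack compatibility.} There are natural maps
\[\L \to \uIsom_{\Fcal_{\GL(V)}}(\Triv(\L),\Fcal_V), \quad \ell \mapsto (\phi\mapsto \phi(\ell)),\]
and
\[\Pcal \to \Triv\bigl(\uIsom_{\Fcal_{\GL(V)}}(\Pcal,\Fcal_V)\bigr), \quad t \mapsto (f\mapsto f(t)).\]
Both are morphisms of sheaves, and checking that they are isomorphisms is local. So I reduce to the trivial objects $\Fcal_V$ and $\Fcal_{\GL(V)}$, where they become the identity under the identifications above. Finally, all constructions commute with restriction along $T'\to T$, since $\uIsom$ of restrictions is the restriction of $\uIsom$. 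This gives an equivalence of fibered categories over $\Cscr$, hence of stacks.

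I expect the main obstacle to be the identification $\Fcal_{\GL(V)}\cong\underline{\Aut}_{\Fcal_R}(\Fcal_V)$: one has to check that continuity of the inverse is automatic or correctly built into the topology on $\GL(V)$. Everything else is formal torsor yoga, exactly as in Giraud.
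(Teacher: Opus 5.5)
Your proof is correct and is the standard argument behind the cited result of Giraud (objects locally isomorphic to a model $E$ versus torsors under $\underline{\Aut}(E)$), specialized to $E=\Fcal_V$. It also correctly checks the one input that the paper's bare citation leaves implicit: the identification $\underline{\Aut}_{\Fcal_R}(\Fcal_V)\cong\Fcal_{\GL(V)}$, which requires $\GL(V)$ to carry the topology induced by $g\mapsto(g,g^{-1})$ (automatic for the profinite $R$ used later). One slip: the quasi-inverse should be the sheaf of equivariant morphisms $\u{\Hom}_{\Fcal_{\GL(V)}}(\Pcal,\Fcal_V)$ rather than $\uIsom$, since a torsor $\Pcal$ and $\Fcal_V$ are not locally isomorphic; your evaluation-at-$1$ computation already treats it this way.
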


\begin{dfn}\label{dfn:Induction of torsors}
    Let $\phi:\Hcal \to \Gcal$ be a morphism of sheaves of groups over $\Cscr$. The forgetful functor from $\Gcal \mbf{Sh}$ to $\Hcal \mbf{Sh}$ admits a left-adjoint
    \[\Ind_{\Hcal}^{\Gcal}: \Hcal\mbf{Sh} \rightarrow \Gcal\mbf{Sh}.\]
    mapping any $\Hcal$-sheaf $\Pcal$ over $T$ to the $\Gcal$-sheaf $\Ind_{\Hcal}^{\Gcal}(\Pcal)$ over $T$ defined as the quotient of $\Gcal \times \Pcal$ under the $\Hcal$-action:
    \begin{align*}
        \Hcal \times (\Gcal \times \Pcal) & \to \Gcal \times \Pcal \\
        (h,g,p) & \mapsto (g \phi(h)^{-1},h \cdot p).
    \end{align*}
    This functor restricts to torsors and defines $\Ind_{\Hcal}^{\Gcal}: \BB \Hcal \rightarrow \BB \Gcal$. If $\Hcal = \Fcal_Q$, $\Gcal = \Fcal_{Q'}$ and $\phi$ comes from a continuous group homomorphism $Q \to Q'$, we denote it $\Ind_Q^{Q'}:\BB Q \to \BB Q'$.
\end{dfn}

\begin{rk}\label{rk:induction from prestack morphism}
    There is an alternative description for $\Ind_{\Hcal}^{\Gcal}$. Recall that $\BB \Gcal$ is the stack associated to the prestack $\BB \Gcal^{\mrm{pre}}$ such that for all $T$, $\BB \Gcal^{\mrm{pre}}(T)$ has a single object $\{*\}$ and $\Aut(\{*\}) = \Gcal(T)$. The morphism $\Hcal \to \Gcal$ induces a morphism of prestacks
    \[\BB \Hcal^{\mrm{pre}} \longrightarrow \BB \Gcal^{\mrm{pre}},\]
    the associated functor between stacks $\BB \Hcal \to \BB \Gcal$ is the induction $\Ind_{\Hcal}^{\Gcal}$. To check that both definitions agree, verify the result for the trivial torsor, then argue locally for an arbitrary torsor.
\end{rk}

Using \Cref{fact:loc sys equiv GL(V) torsor} and \Cref{dfn:Induction of torsors}, any continuous representation of $Q$ on $V$ defines a functor
\[\BB Q \rightarrow \Loc_V\]
sending $\Pcal$ to the $R$-local system with fiber $V$ associated with $\Ind_Q^{\GL(V)} \Pcal$.

We will be interested in torsors over $\Cscr$ of geometric origin.
\begin{dfn}\label{dfn:morphism is finite Q-torsor}
    Let $T' \to T$ be an algebraic space over $T$. Fix a homomorphism $Q \to \Aut_T(T')$. We say that $T' \to T$ is a \emph{$Q$-torsor} over $T$ if the functor of points $h_{T'}$ defined by
    \[(U \to T) \in \Cscr \mapsto h_{T'}(U) = \Hom_T(U,T'),\]
    is a $Q$-torsor on $\Cscr_T$ as in \Cref{dfn:Q-torsor} for the induced action of $\Fcal_Q$. If $Q$ is finite then $T' \to T$ is finite étale and $T'$ is a scheme.
\end{dfn}

If $K_0$ is a topological group, let $\Ccal(K_0)$ denote the set of all open normal subgroups of $K_0$. 
\begin{dfn}\label{dfn:fit system of schemes}
    Let $K_0$ be a profinite group. Let $(T_K)_{K \in \Ccal(K_0)}$ be a projective system of schemes. We say that it is \emph{fit for $K_0$}, or a \emph{$K_0$-fit system}, if it satisfies the following assumptions:
    \begin{enumerate}
        \item[(i)] for each $K \in \Ccal(K_0)$, the morphism $T_K \to T_{K_0}$ is a finite $K_0 / K$-torsor over $T_{K_0}$ as in \Cref{dfn:morphism is finite Q-torsor}, and
        \item[(ii)] for every $K' \subseteq K$ in $\Ccal(K_0)$, the morphism $T_{K'} \to T_{K}$ is $K_0$-equivariant for the $K_0$-action induced from (i). 
    \end{enumerate}
    Then for every $K \in \Ccal(K_0)$ the map $T_K \to T_{K_0}$ is a finite étale cover, thus it makes sense to define the projective limit
    \[T_{\infty} \coloneq \varprojlim_{K \in \Ccal(K_0)} T_K\]
    which is a pro-étale cover of $T_{K_0}$, and a $K_0$-torsor over $T_{K_0}$ as in \Cref{dfn:morphism is finite Q-torsor} with $h_{T_{\infty}} = \varprojlim_K h_{T_K}$ in $K_0\mbf{Sh}(T_{K_0})$.
\end{dfn}

The groupoid of $K_0$-fit systems over a fixed scheme $T_{K_0}$ is denoted $K_0\FS(T_{K_0})$, and $K_0\FS$ is a stack over $\Cscr$. We will see in \Cref{prop:universal fit system} that it is equivalent to $\BB K_0$ via $(T_K)_K \mapsto T_{\infty}$.

\begin{dfn}\label{dfn:can loc sys on scheme}
    Let $K_0$ be a profinite group with a fixed continuous embedding $K_0 \hookrightarrow \GL(V)$.
    Let $(T_K)_{K \in \Ccal(K_0)}$ be schemes fit for $K_0$, let $h_{T_{\infty}}$ be the $K_0$-torsor from \Cref{dfn:fit system of schemes}. The corresponding \emph{canonical local system} $\L^{\can} \in \Loc_V(T_{K_0})$ is the $R$-local system associated with $\Ind_{K_0}^{\GL(V)} (h_{T_{\infty}})$.
\end{dfn}

\begin{rk}
Let us give a more down-to-earth description of the canonical local system in case $T_{K_0}$ is a connected scheme with a $K_0$-fit system $(T_K)_K$. Fix $(\overline{t}_K)_K$ a compatible system of geometric points of $(T_K)_K$. For each $K \in \Ccal(K_0)$, $T_{K} \to T_{K_0}$ is a $K_0/K$-torsor on $T_{K_0}$ in the usual sense for the étale topology on $S$. Having fixed geometric points, this torsor is uniquely described by a continuous morphism
\[\pi_1(T_{K_0},\overline{t}_{K_0}) \to K_0 / K.\]
The compatibility conditions allow to take the projective limit over $K$ so as to find a continuous morphism
\[\pi_1(T_{K_0},\overline{t}_{K_0}) \to K_0.\]
The composition with the embeddings $K_0  \hookrightarrow \GL(V)$ yields the monodromy action on the fibers of $\L^{\can}$. If $T_{K_0}$ is locally noetherian but no longer connected, we can argue component-wise.
\end{rk}

\begin{ex}
    Let $(G,\Omega)$ be a Shimura datum with reflex field $E$, set $S = \Spec \, E$. Let $K_0$ be a neat compact open subgroup of $G(\A_f)$ where $\A_f$ denotes the finite adèles over $\Q$. If $K \in \Ccal(K_0)$, let $T_K = \Sh_K(G,\Omega)$ denote the associated Shimura variety of level $K$. Then $(\Sh_K(G,\Omega))_K$ is fit for $K_0$, so $\Sh_{K_0}(G,\Omega)$ carries a canonical local system. The neatness assumption is equivalent to condition (i) of \Cref{dfn:fit system of schemes}. We can remove this assumption by replacing schemes with stacks: this motivates \Cref{subsec:Local systems and torsors on stacks} below.
\end{ex}

\subsection{Local systems and torsors on stacks}\label{subsec:Local systems and torsors on stacks}
Let $\Xscr$ be a stack on $\Cscr$. We extend the notions of the previous paragraph to $\Xscr$.
\begin{dfn}\label{dfn:loc sys and torsor on stack}
    We denote by $Q\mbf{Sh}(\Xscr)$ (resp. $\Loc_V(\Xscr)$, $\BB Q (\Xscr)$, $K_0\FS(\Xscr)$) the category of morphisms of fibered categories $\Xscr \to Q\mbf{Sh}$ (resp. $\Xscr \to \Loc_V$, $\Xscr \to \BB Q$, $\Xscr \to K_0\FS$), the objects of which are called \emph{$Q$-sheaves on $\Xscr$} (resp. \emph{$R$-local systems with fiber $V$ on $\Xscr$}, \emph{$Q$-torsors on $\Xscr$}, \emph{$K_0$-fit systems over $\Xscr$}).
\end{dfn}
Explicitely, an object of $Q\mbf{Sh}(\Xscr)$ is the data, for each $\xi:T \to \Xscr$, of a $Q$-sheaf $F_{\xi} \in Q\mbf{Sh}(T)$, collectively satisfying compatibility conditions as $\xi$ and $T$ vary. Identical descriptions hold for $\Loc_V(\Xscr)$ and $\BB Q (\Xscr)$.

The induction functor and the equivalence of \Cref{fact:loc sys equiv GL(V) torsor} automatically extend to $Q$-sheaves, local systems and torsors on $\Xscr$. Consequently, any continuous representation of $Q$ on $V$ defines a functor
\[\BB Q(\Xscr) \rightarrow \Loc_V(\Xscr)\]
sending a $Q$-torsor $\Pcal$ on $\Xscr$ to the $R$-local system with fiber $V$ on $\Xscr$ associated with $\Ind_Q^{\GL(V)} \Pcal$.

We will be interested in a geometric description of objects of $\BB Q(\Xscr)$ and $K_0\FS(\Xscr)$.
\begin{dfn}\label{dfn:morphism of stacks is Q-torsor}
    Let $f:\Xscr' \to \Xscr$ be a representable morphism of stacks over $\Cscr$. Assume $\Xscr'$ is equipped with a pseudo-$Q$-action\footnote{Formally, a pseudo-$Q$-action is a 2-morphism $\BB Q \to \mbf{Cat}$ with essential image $\Xscr'$. Explicitely, this is the data for every $q \in Q$ of an equivalence $a_q:\Xscr' \xrightarrow{\sim} \Xscr'$ such that there are isomorphisms $a_q \circ a_{q'} \simeq a_{qq'}$ satisfying the associativity condition.} preserving \footnote{Formally, for any $q \in Q$ inducing an equivalence $a_q:\Xscr' \to \Xscr'$, there is an isomorphism $f \circ a_q \simeq f$ satisfying omitted compatibility conditions.} $f$. For any scheme $T \to \Xscr$, the fiber product $T' \coloneq T \times_{\Xscr} \Xscr'$ is an algebraic space which inherits a $Q$-action $Q \to \Aut_T(T')$. We say that $f$ is a \emph{$Q$-torsor} if for all such $T$ the morphism $T' \to T$ is a $Q$-torsor as in \Cref{dfn:morphism is finite Q-torsor}. This defines a $Q$-torsor $h_{\Xscr'}$ on $\Xscr$ as in \Cref{dfn:loc sys and torsor on stack}.
\end{dfn}

\begin{dfn}\label{dfn:fit system of stacks}
    Let $K_0$ be a profinite group. Let $(\Xscr_K)_{K \in \Ccal(K_0)}$ be a projective system of stacks. We say that it is \emph{fit for $K_0$}, or a \emph{$K_0$-fit system}, if it satisfies the following assumptions:
    \begin{enumerate}
        \item[(i)] for each $K \in \Ccal(K_0)$, the morphism $\Xscr_K \to \Xscr_{K_0}$ is a finite $K_0 / K$-torsor as in \Cref{dfn:morphism of stacks is Q-torsor}, and
        \item[(ii)] for every $K' \subseteq K$ in $\Ccal(K_0)$, the morphism $\Xscr_{K'} \to \Xscr_{K}$ is pseudo-$K_0$-equivariant\footnote{This is the categorical version of an equivariant morphism, sending the pseudo-action on $\Xscr_{K'}$ to the pseudo-action on $\Xscr_K$ up to isomorphisms satisfying omitted compatibility conditions.} for the $K_0$-action induced from (i). 
    \end{enumerate}
    Then for every $K \in \Ccal(K_0)$ the map $\Xscr_K \to \Xscr_{K_0}$ is finite étale. One can define the projective limit
    \[\Xscr_{\infty} \coloneq \varprojlim_{K \in \Ccal(K_0)} \Xscr_K\]
    which is pro-étale over $\Xscr_{K_0}$, and is a $K_0$-torsor as in \Cref{dfn:morphism of stacks is Q-torsor} with $h_{\Xscr_{\infty}} = \varprojlim_K h_{\Xscr_K}$ in $K_0\mbf{Sh}(\Xscr_{K_0})$.
\end{dfn}

We show that both notions of $K_0$-fit systems on a fixed stack $\Xscr_{K_0}$ coincide. Provided a projective system $(\Xscr_K)_{K \in \Ccal(K_0)}$ as in \Cref{dfn:fit system of stacks} and an $S$-scheme $T_{K_0}$, we define a functor $\Xscr_{K_0} \to K_0\FS$ by sending a $T$-object $T \to \Xscr$ to the $K_0$-fit system $(T \times_{\Xscr} \Xscr_{K})_{K \in \Ccal(K_0)}$. Conversely, provided a functor $\Xscr_{K_0} \to K_0\FS$, for any $K \in \Ccal(K_0)$ we define $\Xscr_K(T_{K_0})$ as follows: for any $S$-scheme $T_{K_0}$, we consider the groupoid
\[\Xscr_K(T_{K_0}) = \{(\xi,T_K) \; | \; \xi \in \Xscr_{K_0}(T_{K_0}) \text{ mapping to } (T_{K'})_{K' \in \Ccal(K_0)} \in K_0 \FS(T_{K_0}) \text{ via } \Xscr_{K_0} \to K_0\FS\}.\]
One then checks that $\Xscr_K$ is a stack, and that $(\Xscr_K)_{K \in \Ccal(K_0)}$ is a $K_0$-fit system on $\Xscr_{K_0}$ as in \Cref{dfn:fit system of stacks}. The constructions are quasi-inverses of each-other.

\begin{dfn}\label{dfn:can loc sys on stack}
    Let $K_0$ be a profinite group with a fixed continuous embedding $K_0 \hookrightarrow \GL(V)$.
    Let $(\Xscr_K)_{K \in \Ccal(K_0)}$ be stacks fit for $K_0$, let $h_{\Xscr_{\infty}}$ be the $K_0$-torsor defined in \Cref{dfn:fit system of stacks}. The corresponding \emph{canonical local system} $\L^{\can} \in \Loc_V(\Xscr_{K_0})$ is the $R$-local system associated to $\Ind_{K_0}^{\GL(V)} (h_{\Xscr_{\infty}})$.
\end{dfn}

If $\Xscr_{K_0}$ is a scheme, then so are all the $\Xscr_K$, and Definitions \ref{dfn:can loc sys on scheme} and \ref{dfn:can loc sys on stack} coincide.

\begin{ex}
    Let $(G,\Omega)$ be a Shimura datum with reflex field $E$. Let $K_0$ be a neat compact open subgroup of $G(\A_f)$. Let $\Sh_{\infty}(G,\Omega) = \varprojlim_{K \in \Ccal(K_0)} \Sh_K(G,\Omega)$ on which $G(\A_f)$ acts. For $K'$ \emph{any} compact open subgroup of $G(\A_f)$, define $\Sh_{K'}[G,\Omega]$ as the quotient stack $[\Sh_{\infty}(G,\Omega) / K']$ (if $K'$ is neat this agrees with the earlier definition of $\Sh_{K'}(G,\Omega)$). If we fix $K_0'$ \emph{any} compact open subgroup of $G(\A_f)$ then $(\Sh_{K'}[G,\Omega])_{K' \in \Ccal(K_0')}$ is a $K_0'$-fit system of stacks; in particular $\Sh_{K_0'}[G,\Omega]$ carries a canonical local system.
\end{ex}

\subsection{The universal $K_0$-fit system}\label{subsec:universal fit system}
Induction between classifying stacks provide a universal $K_0$-fit system.

\begin{prop}\label{prop:universal fit system}
    Let $K_0$ be a profinite group. 
    \begin{itemize}
        \item[(i)] If $K \in \Ccal(K_0)$, the induction functor defines a finite $K_0/K$-torsor $\BB K \to \BB K_0$ on $\BB K_0$ as in \Cref{dfn:morphism of stacks is Q-torsor}, so that $(\BB K)_{K \in \Ccal(K_0)}$ is fit for $K_0$. The associated $K_0$-torsor recovers the universal torsor on $\BB K_0$ defined by the identity $\BB K_0 \to \BB K_0$.
        \item[(ii)] There is an equivalence of stacks $K_0\FS \longleftrightarrow \BB K_0$ given by the functors:
        \begin{align*}
            (T_K)_K & \longmapsto T_{\infty} \\
            (T_{K_0} \times_{\BB K_0} \BB K)_K & \longmapsfrom [T_{K_0} \longrightarrow \BB K_0].
        \end{align*}
    \end{itemize}
\end{prop}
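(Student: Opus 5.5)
For (i) I would first pin down the fibre of $\Ind_K^{K_0}:\BB K \to \BB K_0$ over a scheme. Take $T$ and a $K_0$-torsor $\Pcal$ on $\Cscr_T$, viewed as a morphism $T \to \BB K_0$. The fibre product $T \times_{\BB K_0} \BB K$ is the functor on $\Cscr_T$ sending $U$ to the groupoid of pairs consisting of a $K$-torsor $\Qcal$ on $U$ and an isomorphism $\Ind_K^{K_0}\Qcal \simeq \Pcal_U$. I would identify this functor with the quotient sheaf $\Pcal/K$, where $K$ acts on $\Pcal$ through $K \subseteq K_0$. The map one way is $(\Qcal,\alpha) \mapsto \alpha(\Qcal)/K$, a section of $\Pcal/K$, using the inclusion $\Qcal \hookrightarrow \Ind_K^{K_0}\Qcal$. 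The map the other way sends a local section $\bar p$ to the reduction of structure group $\Qcal = $ the preimage of $\bar p$ in $\Pcal$. Both directions can be checked locally, where $\Pcal$ is trivial and everything reduces to $K_0/K$.

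This shows the groupoid is discrete, so the morphism is representable. For discrete $K_0/K$ the sheaf $\Pcal/K$ is pro-étale locally $\u{K_0/K}$, hence representable by a finite étale $T$-scheme. The main technical point here is identifying the sheaf quotient with the quotient of sections, and for that I would invoke \Cref{lem:quotient sheaf is sheaf of quotient}. The left action of $K_0$ on $\Pcal$ does not descend to $\Pcal/K$ for non-normal $K$, but $K$ is normal, so $K_0/K$ acts and the quotient is a $K_0/K$-torsor in the sense of \Cref{dfn:morphism is finite Q-torsor}.

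For the pseudo-action on $\BB K$, I would take the one induced by conjugation $c_q$ for $q \in K_0$ together with the natural isomorphism $\Ind\circ c_q \simeq \Ind$ given by right multiplication by $q$. The equivariance in (ii) of \Cref{dfn:fit system of stacks} for $K'\subseteq K$ then follows from transitivity of induction, which on fibres is the projection $\Pcal/K' \to \Pcal/K$. For the last claim of (i), the limit $\varprojlim_K \Pcal/K$ is $\Pcal$, since $K_0$ is profinite and $\Pcal$ is locally $\Fcal_{K_0} = \varprojlim \Fcal_{K_0/K}$ (\Cref{rk:F_A when tot disc and qcqs}). Thus the associated $K_0$-torsor pulled back along $\Pcal: T\to\BB K_0$ is $\Pcal$ itself, i.e. the tautological torsor.

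For (ii), I would define the functor $\Phi: K_0\FS \to \BB K_0$ by $(T_K)_K \mapsto h_{T_\infty}$, which is a $K_0$-torsor by \Cref{dfn:fit system of schemes}. The functor $\Psi$ is pullback of the universal fit system from (i). Part (i) gives $\Phi\Psi \simeq \id$ immediately. For $\Psi\Phi\simeq\id$, given a fit system $(T_K)$ I would build natural isomorphisms $T_K \simeq h_{T_\infty}/K$ as $K_0/K$-torsors, compatible in $K$. The map $T_\infty \to T_K$ is $K$-invariant, so it factors through $h_{T_\infty}/K$. Both sides are $K_0/K$-torsors and the map is equivariant, so it is an isomorphism. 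Compatibility with transition maps and the functoriality in $T_{K_0}$ are formal, and since both sides are stacks it suffices to check everything on scheme-valued points.

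I expect the main obstacle to be bookkeeping rather than anything conceptual. One has to verify that the pseudo-action and the 2-isomorphisms make the fibre-product identification strictly compatible. One also needs surjectivity of $T_\infty \to T_K$ as pro-étale sheaves, which comes from the torsor property.
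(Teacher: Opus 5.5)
Your argument is correct and rests on the same key input as the paper, \Cref{lem:quotient sheaf is sheaf of quotient}, but it is organized differently.

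The paper never computes the fibre of $\BB K\to\BB K_0$ over a scheme directly. Instead it proceeds in three steps:
\begin{enumerate}
\item It proves the formal 2-cartesian square $\BB\Hcal\simeq\BB\Gcal\times_{\BB(\Gcal/\Hcal)}\mrm{pt}$ at the level of prestacks (\Cref{lem:fiber product of classifying stacks}), then stackifies using that stackification commutes with 2-fibre products.
\item It identifies $\mrm{pt}\to\BB Q$ as the universal $Q$-torsor (\Cref{lem:universal family for BG}).
\item It obtains both the torsor property in (i) and the identification $T_K\simeq T_{K_0}\times_{\BB K_0}\BB K$ in (ii) by pasting cartesian squares.
\end{enumerate}
You instead compute $T\times_{\BB K_0}\BB K\simeq\Pcal/K$ by hand via reduction of structure group. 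This is precisely the composite of those two lemmas evaluated at $T$.

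The paper's route is more formal and packages the computation into reusable statements. Yours is more explicit, and it supplies two points the paper leaves implicit:
\begin{itemize}
\item the pseudo-$K_0/K$-action on $\BB K$, given by conjugation, with right multiplication by $q$ as the 2-isomorphism $\Ind\circ c_q\simeq\Ind$;
\item the isomorphism $h_{T_\infty}/K\simeq h_{T_K}$. This is exactly what is needed to know that the outer rectangle ($T_K\to\mrm{pt}$ over $T_{K_0}\to\BB(K_0/K)$) in the paper's diagram for (ii) is 2-cartesian. The paper asserts that diagram without justifying this.
\end{itemize}

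One small remark: the surjectivity of $T_\infty\to T_K$ that you list at the end is not needed. You already observed that an equivariant morphism of $K_0/K$-torsors is automatically an isomorphism.
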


\begin{rk}
    \Cref{prop:universal fit system} (ii) immediately extends to stacks: any fit system $(\Xscr_K)_K$ on $\Xscr_{K_0}$ is obtained by pulling back the universal system $(\BB K)_K$ along the morphism $\Xscr_{K_0} \to \BB K_0$ defining $\Xscr_{\infty} \to \Xscr_{K_0}$. For this reason we call $(\BB K)_K$ the \emph{universal $K_0$-fit system}.
\end{rk}

The proof of \Cref{prop:universal fit system} (i) will rely on the following lemma.

\begin{lem}\label{lem:fiber product of classifying stacks}
    Let $\Hcal \to \Gcal$ be a morphism of sheaves of groups on $\Cscr$. Then there is a 2-cartesian square
    \[
    \begin{tikzcd}
    \BB \Hcal \arrow[r] \arrow[d] & \BB \Gcal \arrow[d] \\
    \mrm{pt} \arrow[r]               & \BB (\Gcal/\Hcal)
    \end{tikzcd}
    \]
    where all arrows are induction functors. Here $\mrm{pt}$ denotes $\Cscr$ viewed as a stack over itself, or as the classifying stack of the trivial group.
\end{lem}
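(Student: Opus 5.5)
I will assume that $\Hcal \to \Gcal$ is injective with normal image, which is the situation needed later for $K \subseteq K_0$. Normality is what makes $\Gcal/\Hcal$ a sheaf of groups, so that $\BB(\Gcal/\Hcal)$ makes sense. Injectivity is also needed: otherwise the fiber product would be $\BB$ of the image of $\Hcal$ rather than $\BB\Hcal$. The strategy is to describe the 2-fiber product $\Pcal := \mrm{pt} \times_{\BB(\Gcal/\Hcal)} \BB\Gcal$ explicitly over each $T$ and to write down quasi-inverse functors with $\BB\Hcal$. By definition, an object of $\Pcal(T)$ is a pair $(P,\sigma)$. Here $P$ is a $\Gcal$-torsor on $\Cscr_T$ and $\sigma$ is an isomorphism between $\Ind_{\Gcal}^{\Gcal/\Hcal} P$ and the trivial torsor, which amounts to a global section $s$ of $\Ind_{\Gcal}^{\Gcal/\Hcal} P$. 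Directly from \Cref{dfn:Induction of torsors}, this induced torsor is the quotient sheaf $(\Gcal/\Hcal \times P)/\Gcal \cong P/\Hcal$.

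Next I construct the functor $\BB\Hcal \to \Pcal$. It sends an $\Hcal$-torsor $Q$ to $\Ind_{\Hcal}^{\Gcal} Q$ together with a canonical trivialization of $\Ind_{\Gcal}^{\Gcal/\Hcal}\Ind_{\Hcal}^{\Gcal} Q = \Ind_{\Hcal}^{\Gcal/\Hcal} Q$. The trivialization is easiest to see via \Cref{rk:induction from prestack morphism}. The composite $\BB\Hcal^{\mrm{pre}} \to \BB(\Gcal/\Hcal)^{\mrm{pre}}$ is induced by the trivial homomorphism, so it factors through $\mrm{pt}^{\mrm{pre}}$. This factorization gives, after stackification, the required 2-commutativity of the square, and hence a functor to the fiber product. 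Concretely, the section is the image of $Q$ in $(\Gcal \times Q)/\Hcal \to \Gcal/\Hcal \times^{\Gcal} (\ldots)$, namely the class of $(1,q)$.

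In the other direction, the functor $\Pcal \to \BB\Hcal$ sends $(P,s)$ to the fiber $P_s := P \times_{P/\Hcal,\, s} T$. This is the subsheaf of $P$ of local sections mapping to $s$, and $\Hcal$ acts on it by restricting the $\Gcal$-action. To check that $P_s$ is an $\Hcal$-torsor, I work locally. Locally $P \cong \Gcal$, so $P/\Hcal \cong \Gcal/\Hcal$. The section $s$ lifts locally to some $g$ because $\Gcal \to \Gcal/\Hcal$ is an epimorphism of sheaves, and then $P_s \cong g\Hcal \cong \Hcal$. Injectivity of $\Hcal \to \Gcal$ gives freeness of the action.

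It remains to check that the two functors are quasi-inverse. For $(P,s)$, the map $\Gcal \times P_s \to P$, $(g,p) \mapsto gp$, descends to a $\Gcal$-equivariant map $\Ind_{\Hcal}^{\Gcal} P_s \to P$. This is a morphism of $\Gcal$-torsors, hence an isomorphism, and it carries the canonical section to $s$. For $Q$, the map $q \mapsto [1,q]$ identifies $Q$ with the fiber of $\Ind_{\Hcal}^{\Gcal} Q$ over the canonical section. All of these constructions are natural and compatible with pullback in $T$, and isomorphisms of torsors can be checked locally, so this yields an equivalence of stacks.

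I expect the main obstacle to be the identification of quotients used throughout. The quotients in \Cref{dfn:Induction of torsors} are sheafified quotients, and one needs that $(\Gcal/\Hcal \times P)/\Gcal \cong P/\Hcal$ and that the sections of $P/\Hcal$ are locally represented by sections of $P$. I would handle this with \Cref{lem:quotient sheaf is sheaf of quotient} (quotient sheaf versus sheaf of quotients) and by reducing to the trivial torsor locally, where all these identifications are tautological.
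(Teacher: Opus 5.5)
Your proof is correct, but it takes a different route from the paper. The paper never describes the 2-fiber product of stacks explicitly. It uses that stackification commutes with 2-fiber products to reduce to the prestacks $\BB\Hcal^{\mrm{pre}}$, $\BB\Gcal^{\mrm{pre}}$ and $\BB(\Gcal/\Hcal)^{\mrm{pre}}$. There, objects of the fiber product are elements $q\in(\Gcal/\Hcal)(T)$, and $q\simeq q'$ exactly when $q'q^{-1}$ lies in the image of $\Gcal(T)$. Surjectivity of $\Gcal\to\Gcal/\Hcal$ then makes the stackified fiber product a gerbe with a global object, the neutral element. The paper concludes that it is $\BB$ of that object's automorphism sheaf, which is $\Hcal$.

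You instead identify the fiber product with pairs $(P,s)$, where $s$ is a global section of $P/\Hcal$. You then write explicit quasi-inverses $Q\mapsto(\Ind_{\Hcal}^{\Gcal}Q,\,[1,q])$ and $(P,s)\mapsto P_s$; this is the classical reduction-of-structure-group argument. The paper's version is shorter and purely formal. Yours is longer, but it gives a concrete description of the equivalence and of the 2-commutativity datum. That concrete form is how the lemma is actually used later: a lift of $T\to\BB K_0$ to $\BB K$ is a section of $P/\Fcal_K$. This matches the level structures $[\eta]_K$ of \Cref{dfn:Q moduli stack} and the identification $T_K\simeq T_{K_0}\times_{\BB K_0}\BB K$ in \Cref{prop:universal fit system}.

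Your point that $\Hcal\to\Gcal$ must be injective with normal image is well taken. The paper uses both tacitly: normality so that $\Gcal/\Hcal$ is a sheaf of groups, and injectivity in the last step, where the automorphism sheaf of the neutral object is really the image of $\Hcal$.

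One small correction: \Cref{lem:quotient sheaf is sheaf of quotient} is not the right tool here. It compares $\Fcal_Q/\Fcal_K$ with $\Fcal_{Q/K}$ for topological groups, and the paper needs it only afterwards, to identify $\BB(\Fcal_{K_0}/\Fcal_K)$ with $\BB(K_0/K)$. For arbitrary sheaves of groups you need only formal properties of sheafified quotients. Every section of $P/\Hcal$ lifts locally to $P$, and two local sections of $P$ have the same image exactly when they are locally in the same $\Hcal$-orbit. Together with local triviality of $P$, these give all the identifications you list.
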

\begin{proof}
    As stackification commutes with 2-fiber product (see \cite[\href{https://stacks.math.columbia.edu/tag/04Y1}{Tag 04Y1}]{stacks-project}), it suffices to show the result at the level of prestacks. See \Cref{rk:induction from prestack morphism} for the definitions of the prestacks $\BB \Hcal^{\mrm{pre}}$, $\BB \Gcal^{\mrm{pre}}$ and $\BB (\Gcal/\Hcal)^{\mrm{pre}}$, and of the induction functor between them. Let $T$ be an $S$-scheme, a $T$-object of $\mrm{pt} \times_{\BB (\Gcal/\Hcal)^{\mrm{pre}}} \BB \Gcal^{\mrm{pre}}$ is just the datum of an element $q \in (\Gcal/\Hcal)(T)$, and one checks that two objects defined by $q$ and $q'$ are isomorphic if and only if $q' q^{-1}$ lies in the image of $\Gcal(T)$. As $\Gcal \to \Gcal/\Hcal$ is surjective, we find that any two objects of the fiber product are locally isomorphic, and so the same holds true of the stackified fiber product. Since this stack always admits a $T$-point induced by the neutral element of $(\Gcal/\Hcal)(T)$, it is the classifying stack of the automorphism group sheaf of this element. This is easily computed to be $\Hcal$.
\end{proof}

We will apply \Cref{lem:fiber product of classifying stacks} to $\Gcal = \Fcal_{K_0}$ and $\Hcal = \Fcal_{K}$. Before doing so, we need one more lemma.

\begin{lem}\label{lem:universal family for BG}
    Let $\Gcal$ be a sheaf of groups over $\Cscr$, fix $T$ an $S$-scheme, and $\Pcal$ a $\Gcal$-torsor over $T$. Let $T \to \BB \Gcal$ be the morphism defining $\Pcal$. Then there is a 2-cartesian diagram
    \[
    \begin{tikzcd}
    \tilde{\Pcal} \arrow[r] \arrow[d] & T \arrow[d] \\
    \mrm{pt} \arrow[r]               & \BB \Gcal.
    \end{tikzcd}
    \]
    Here $\tilde{\Pcal}$ is the sheaf on $\Cscr$ obtained as the extension from $\Cscr_T$ to $\Cscr$ of $\Pcal$ by the empty set (see \cite[\href{https://stacks.math.columbia.edu/tag/00Y0}{Tag 00Y0, (5)}]{stacks-project}).

    If $\Gcal = \Fcal_Q$ for a profinite group $Q$, then both $\Pcal$ and $\tilde{\Pcal}$ are represented by the same scheme (viewed as a $T$-scheme and as an $S$-scheme respectively). In particular, $\mrm{pt} \to \BB Q$ is a $Q$-torsor which is the universal family over $\BB Q$.
\end{lem}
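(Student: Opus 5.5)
We will compute the 2-fiber product $\mrm{pt} \times_{\BB \Gcal} T$ directly from the definitions, then specialize to $\Gcal = \Fcal_Q$.

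\textbf{Step 1: objects of the fiber product.} Fix an $S$-scheme $U$. An object of $(\mrm{pt} \times_{\BB \Gcal} T)(U)$ consists of a morphism $u : U \to T$ (so $U$ becomes an object of $\Cscr_T$) together with an isomorphism of $\Gcal_U$-torsors $\alpha : \Gcal_U \xrightarrow{\sim} u^*\Pcal$. Here $\Gcal_U$ is the trivial torsor, which is the image of the unique $U$-object of $\mrm{pt}$.

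\textbf{Step 2: reduction to a sheaf.} An isomorphism of torsors $\Gcal_U \to u^*\Pcal$ is determined by the image of the unit section, so $\alpha$ amounts to a section $s = \alpha(1) \in \Pcal(U \xrightarrow{u} T)$. Morphisms between two such objects over $U$ must be the identity on $u$ and compatible with the $\alpha$'s. They are therefore unique when they exist, and they exist exactly when the sections agree. Hence the fiber product is fibered in setoids, and it is equivalent to the sheaf
\[ U \longmapsto \coprod_{u \in \Hom_S(U,T)} \Pcal(U \xrightarrow{u} T). \]
This is precisely the description of the extension by the empty set $\tilde{\Pcal} = j_!\Pcal$ for the localization $\Cscr_T \to \Cscr$ given in Tag 00Y0. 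The projection to $T$ remembers $u$, and the projection to $\mrm{pt}$ is the canonical one. This gives the 2-cartesian square.

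\textbf{Step 3: representability when $\Gcal = \Fcal_Q$.} Now take $Q$ profinite. Write $Q = \varprojlim Q/K$ over open normal subgroups $K$, and set $\Pcal_K = \Ind_Q^{Q/K}\Pcal$. Each $\Pcal_K$ is an $\Fcal_{Q/K}$-torsor, with $Q/K$ finite discrete, so it is pro-étale locally constant. It therefore descends to a finite étale $Q/K$-torsor $P_K \to T$. For this we use that torsors under a finite constant group on the pro-étale site come from the étale site, which follows from the comparison in Bhatt--Scholze, and that finite étale morphisms satisfy pro-étale descent; in the same way Definition \ref{dfn:morphism is finite Q-torsor} notes that such torsors are schemes. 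The transition maps are finite étale and affine, so $P = \varprojlim_K P_K$ exists as a scheme affine over $T$. We then need $h_P = \varprojlim_K h_{P_K} = \varprojlim_K \Pcal_K \cong \Pcal$. The last isomorphism is the main obstacle. Locally $\Pcal$ is trivial, and there it reduces to $\mrm{Cont}(U,Q) = \varprojlim_K \mrm{Cont}(U,Q/K)$, which holds because $Q$ is the topological inverse limit; Remark \ref{rk:F_A when tot disc and qcqs} gives this in the qcqs case. Since both sides are sheaves, the isomorphism glues.

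\textbf{Step 4: conclusion.} Finally, for any $S$-scheme $U$, $\Hom_S(U,P) = \coprod_u \Hom_T((U,u),P)$, so $P$ viewed as an $S$-scheme represents $\tilde{\Pcal}$. Thus for every scheme $T \to \BB Q$ the base change of $\mrm{pt} \to \BB Q$ is the scheme $P$, which is a $Q$-torsor. By Definition \ref{dfn:morphism of stacks is Q-torsor}, $\mrm{pt} \to \BB Q$ is then a $Q$-torsor. Its pullback along any $T \to \BB Q$ recovers the given $\Pcal$, so it is the universal family over $\BB Q$.
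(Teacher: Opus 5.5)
Your proposal is correct and follows the paper's argument: the paper also computes the 2-fiber product directly as the sheaf $U \mapsto \coprod_{u} \Pcal(U \xrightarrow{u} T)$, identifies it with $\tilde{\Pcal}$, and deduces representability from the fact that profinite torsors are represented by pro-finite étale covers. Your Steps 3 and 4 spell out what the paper leaves to citations, namely that fact (via the finite quotient torsors and descent) and the identification of $P$, viewed as an $S$-scheme, with $\tilde{\Pcal}$.
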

\begin{proof}
    A direct computation shows that the 2-fiber product is the sheaf of sets described as:
    \[(U \to S) \longmapsto \coprod_{u:U \to T} \Pcal(u:U \to T).\]
    From \cite[\href{https://stacks.math.columbia.edu/tag/03CD}{Tag 03CD}]{stacks-project} this coincides with $\tilde{\Pcal}$. The second statement follows from \cite[\href{https://stacks.math.columbia.edu/tag/03HU}{Tag 03HU}]{stacks-project} along with the fact that (pro)finite torsors are representable by (pro)finite étale covers.
\end{proof}

\begin{rk}\label{rk:universal Q torsor}
    \Cref{lem:universal family for BG} immediately extends to stacks: if $Q$ is profinite, any $Q$-torsor $\Xscr' \to \Xscr$ fits in the 2-cartesian square
    \[
    \begin{tikzcd}
        \Xcal' \arrow[r] \arrow[d] & \Xcal \arrow[d] \\
        \mrm{pt} \arrow[r]               & \BB Q.
    \end{tikzcd}
    \]
\end{rk}

We are now ready for the proof.

\begin{proof}[Proof of \Cref{prop:universal fit system}]
    \Cref{lem:quotient sheaf is sheaf of quotient} below shows that $\BB (\Fcal_{K_0} / \Fcal_K) \simeq \BB \Fcal_{K_0/K} = \BB K_0/K$. From \Cref{lem:fiber product of classifying stacks} we find the 2-cartesian diagram
    \[
    \begin{tikzcd}
        \BB K \arrow[r] \arrow[d] & \BB K_0 \arrow[d] \\
        \mrm{pt} \arrow[r]               & \BB K_0/K.
    \end{tikzcd}
    \]
    \Cref{lem:universal family for BG} ensures that the bottom arrow, and hence also the top arrow, are $K_0/K$-torsors. It is straightforward to check that the associated $K_0$-torsor
    \[\BB K_0 \longrightarrow \varprojlim_K \BB K_0 / K = \BB K_0\]
    is (isomorphic to) the identity functor. This proves (i).

    To prove (ii), notice that for a fit system $(T_K)_K$ with $K_0$-torsor $T_{\infty}$ over $T_{K_0}$ corresponding to $T_{K_0} \to \BB K_0$, \Cref{lem:fiber product of classifying stacks} and \Cref{lem:universal family for BG} induce a 2-commutative diagram of 2-cartesian squares
    \[
    \begin{tikzcd}
    T_{\infty} \arrow[d] \arrow[r] & \mrm{pt} \arrow[d]        &                    \\
    T_K \arrow[d] \arrow[r]        & \BB K \arrow[d] \arrow[r] & \mrm{pt} \arrow[d] \\
    T_{K_0} \arrow[r]              & \BB K_0 \arrow[r]         & \BB K_0/K.         
    \end{tikzcd}
    \]
    It follows that there is an isomorphism $T_K \simeq T_{K_0} \times_{\BB K_0} \BB K$. Conversely, provided an $S$-scheme $T_{K_0}$ and a $K_0$-torsor $T_{K_0} \to \BB K_0$, let $T_{\infty} \coloneq T_{K_0} \times_{\BB K_0} \mrm{pt}$ be the $T_{K_0}$-scheme representing it. For any $K \in \Ccal(K_0)$ we set
    \[T_K \coloneq T_{K_0} \times_{\BB K_0} \BB K\]
    so that the diagram above holds again. This diagram shows that $T_{\infty}$ is (isomorphic to) the $K_0$-torsor associated with $(T_K)_K$. We are done.
\end{proof}

\subsection{Canonical vs universal local systems}\label{subsec:can vs univ}

Let us first motivate our goal and the formalism we introduce to achieve it. Fix some integer $g>0$ and let $\Mscr_{K_0}$ denote the moduli stack over $\Q$ of principally polarized abelian schemes of dimension $g$ with $K_0$-level structure, where $K_0$ is a compact open subgroup of $\GSp_{2g}(\A_f)$. Replacing $K_0$ with finite index open subgroups $K$, we find a $K_0$-fit system $(\Mscr_{K})_K$ of stacks in the sense of \Cref{dfn:fit system of stacks}, so we can consider the corresponding \emph{canonical} $\A_f$-local system $\L^{\can}$ on $\Mscr_{K_0}$. There is a second local system on $\Mscr_{K_0}$: the relative (adelic) Tate-module of the universal abelian scheme, which we call the \emph{universal} local system $\L_{K_0}$. We aim to show that $\L^{\mrm{can}}$ and $\L_{K_0}$ are canonically isomorphic, which is the content of \Cref{prop:can equals univ} below. 

To do so, we formalize the notion of a moduli stack with level structure in \Cref{dfn:Q moduli datum} and \Cref{dfn:Q moduli stack}. We sketch how this is done in the explicit case of $\Mscr_{K_0}$. For any scheme $T/\Q$, an object in $\Mscr_{K_0}(T)$ is a triple $(f:A\to T,\lambda,[\eta]_{K_0})$ where
\begin{itemize}
    \item $(f:A \to T,\lambda)$ is a principally polarized abelian scheme of dimension $g$, whose relative (adelic) Tate-module is denoted $\L(A)$, and
    \item $[\eta]_{K_0}$ is a section of the quotient by $\Fcal_{K_0}$ of some subsheaf of $\uIsom(\L(A), \Fcal_V)$ where $V = \A_f^{2g}$ is acted-on by $K_0$; namely we restrict to those isomorphisms that are compatible with the usual symplectic structure on $V$, and the symplectic structure on $\L(A)$ induced from $\lambda$.
\end{itemize}
To define $\Mscr_{K_0}$, we first consider the stack $\Xscr = \Ascr_{g,1}$ of principally polarized abelian schemes of dimension $g$ and view relative cohomology as a morphism from $\Xscr$ to $\Loc_{V}$. From there, we could define a stack of triples $(f:A\to T,\lambda,[\eta]_{K_0})$ with the slight caveat that $[\eta]_{K_0}$ is a section of the whole sheaf $\uIsom(\L(A), \Fcal_V)/\Fcal_{K_0}$ (no compatibility condition with the symplectic structures). To formalize replacing every $\GL_d(\A_f)$-torsor $\uIsom(\L(A), \Fcal_V)$ with the appropriate $\GSp_{2g}(\A_f)$-subsheaf (which is a $\GSp_{2g}(\A_f)$-torsor), we use the language of natural transformations between fibered categories, as explained below.

\hspace{1em}

Fix an embedding $Q \hookrightarrow \GL(V)$. Recall the equivalence of stacks from \Cref{fact:loc sys equiv GL(V) torsor}
\[\Triv:\Loc_{V} \to \BB \GL(V)\]
assigning to any $T \in \Cscr$ and any $\L \in \Loc_V(T)$ the $\GL(V)$-torsor $\uIsom(\L,\Fcal_V)$ over $T$.
\begin{dfn}\label{dfn:Q moduli datum}
    A \emph{$Q$-moduli datum} is a quadruple $(\Xscr, \L_{\Xscr}, \Lev, \iota)$ consisting of 
    \begin{itemize}
        \item[(i)] a stack $\Xscr$ over $\Cscr$,
        \item[(ii)] a local system $\L_{\Xscr}:\Xscr \to \Loc_V$ on $\Xscr$; we let $\Triv_{\Xscr} = \Triv \circ \L_{\Xscr}$,
        \item[(iii)] a $Q$-torsor $\Lev:\Xscr \to \BB Q$ on $\Xscr$, and
        \item[(iv)] a natural transformation $\iota$ between the morphisms of fibered categories:
            \[
            \begin{tikzcd}[row sep=0.2cm]
                                                               & \BB \GL(V) \arrow[rd, "\mrm{forget}", bend left]                            &          \\
            \Xscr \arrow[rd, "\Lev"' ,bend right] \arrow[ru, "\Triv_{\Xscr}",bend left] &                                                          & Q\mbf{Sh}. \\
                                                               & \BB Q \arrow[ru, "\mrm{forget}"', bend right] \arrow[uu, "\iota"', shorten <=8pt, shorten >=8pt, Rightarrow] &         
            \end{tikzcd}
            \]
    \end{itemize}
\end{dfn}
A $Q$-moduli datum is to be understood as follows. The functor $\Triv_{\Xscr}: \Xscr \to \BB \GL(V)$ assigns to any $T \in \Cscr$ and any $A \in \Xscr(T)$ the $\GL(V)$-torsor $\uIsom(\L_{\Xscr}(A),\Fcal_V)$ over $T$. The functor $\Lev$ assigns to the same data a $Q$-subsheaf of $\Triv_{\Xscr}(A)$ which is a $Q$-torsor for the induced action, considering only isomorphisms with additional constraints. The condition of being a $Q$-subsheaf for each $T \in \Cscr$ and $A \in \Xscr(T)$ is formalized by $\iota$, which is automatically a natural monomorphism.

\begin{lem}\label{lem:L comes from Lev}
    Let $(\Xscr, \L_{\Xscr}, \Lev, \iota)$ be a $Q$-moduli datum. Then $\L_{\Xscr}$ is the $R$-local system with fiber $V$ associated with the $\GL(V)$-torsor $\Ind_{Q}^{\GL(V)} (\Lev)$.
\end{lem}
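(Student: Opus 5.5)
Via the equivalence $\Triv:\Loc_V \to \BB\GL(V)$ of \Cref{fact:loc sys equiv GL(V) torsor}, the statement amounts to a natural isomorphism of $\GL(V)$-torsors on $\Xscr$,
\[\Ind_Q^{\GL(V)}(\Lev) \xrightarrow{\sim} \Triv_{\Xscr}.\]
We construct it from $\iota$ by adjunction. By \Cref{dfn:Induction of torsors}, $\Ind_Q^{\GL(V)}$ is left adjoint to the forgetful functor $\GL(V)\mbf{Sh}\to Q\mbf{Sh}$. For each $T \in \Cscr$ and $A \in \Xscr(T)$, the component $\iota_A:\Lev(A)\to \Triv_{\Xscr}(A)$ is a morphism of $Q$-sheaves. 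Its adjoint is a morphism of $\GL(V)$-sheaves $\Ind_Q^{\GL(V)}(\Lev(A)) \to \Triv_{\Xscr}(A)$, given explicitly on the quotient of $\Fcal_{\GL(V)}\times\Lev(A)$ by $(g,p)\mapsto g\cdot\iota_A(p)$. This formula is invariant under the $Q$-action $(q,g,p)\mapsto(gq^{-1},qp)$ precisely because $\iota_A$ is $Q$-equivariant.

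Next we check naturality. Because $\iota$ is a natural transformation of morphisms of fibered categories and the adjunction is natural, these maps are compatible with pullbacks along morphisms $T'\to T$ and with isomorphisms in $\Xscr$. They therefore assemble into a 2-morphism between the functors $\Xscr \to \BB\GL(V)$.

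It then suffices to show that each component is an isomorphism. Any morphism of $\GL(V)$-torsors over $T$ is automatically an isomorphism: after passing to a pro-étale cover on which both torsors are trivial, it becomes a $\GL(V)$-equivariant self-map of $\Fcal_{\GL(V)}$, i.e. right multiplication by a section, which is invertible. The source is a torsor because $\Ind$ restricts to torsors (\Cref{dfn:Induction of torsors}), and the target is a torsor by \Cref{fact:loc sys equiv GL(V) torsor}.

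Finally, apply the quasi-inverse of $\Triv$. This gives $\L_{\Xscr}\simeq \Triv^{-1}(\Ind_Q^{\GL(V)}\Lev)$, which is by definition the local system associated with $\Ind_Q^{\GL(V)}(\Lev)$.

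The main point needing care is the naturality and coherence bookkeeping on a stack, which we handle pointwise via \Cref{dfn:loc sys and torsor on stack}. There is also a small subtlety in the last step. Checking that the map is invertible requires trivializing $\Lev(A)$ and $\Triv_{\Xscr}(A)$ simultaneously; we do this on a common refinement of their trivializing covers. On such a cover, choosing $p\in\Lev(A)(U)$ identifies the source with $\Fcal_{\GL(V)}$ via $g\mapsto[g,p]$. Under this identification the map is $g\mapsto g\,\iota_A(p)$, which is visibly bijective.
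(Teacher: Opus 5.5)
Your proposal is correct and is essentially the paper's argument: the paper's proof simply notes that, by the adjunction $\Ind_Q^{\GL(V)} \dashv \mrm{forget}$, the natural monomorphism $\iota$ amounts to a natural isomorphism $\Ind_Q^{\GL(V)} \circ \Lev \simeq \Triv_{\Xscr}$, and you spell out the details it leaves implicit, namely the explicit adjoint map and the fact that morphisms of torsors are isomorphisms. One minor simplification: your final step needs no common refinement of trivializing covers, since a local section $p$ of $\Lev(A)$ already provides the local section $\iota_A(p)$ of $\Triv_{\Xscr}(A)$.
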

\begin{proof}
    Condition (iv) is equivalent to a natural isomorphism
        \[
    \begin{tikzcd}
    \Xscr \arrow[rr, "\Triv_{\Xscr}"] \arrow[rd, "\Lev"', bend right] & {}                                               & \BB \GL(V) \\
                                                    & \BB Q \arrow[ru, "\Ind_{Q}^{\GL(V)}."', bend right] \arrow[u, shorten <=10pt, Rightarrow] &           
    \end{tikzcd}
    \]
    The equivalence follows formally from the fact that $\Ind_{Q}^{\GL(V)}:Q\mbf{Sh} \to \GL(V)\mbf{Sh}$ is left adjoint to the forgetful functor $\GL(V)\mbf{Sh} \to Q\mbf{Sh}$.
\end{proof}

It follows that a $Q$-moduli datum is determined up to isomorphism by the $Q$-torsor $\Lev$ on $\Xscr$. 

We are interested in the following moduli stacks:

\begin{dfn}\label{dfn:Q moduli stack}
    Let  $K_0$ be a compact subgroup of $Q$ which is profinite, and $K$ an open subgroup of $K_0$. Let $(\Xscr, \L_{\Xscr}, \Lev, \iota)$ be a $Q$-moduli datum.
    The associated \emph{moduli stack of level $K$} is the stack $\Mscr_K$ over $\Cscr$ assigning to each $T \in \Cscr$ a pair $(A, [\eta]_K)$, where
    \begin{itemize}
        \item[(i)] $A$ is an object in $\Xscr(T)$, and
        \item[(ii)] $[\eta]_K \in H^0(T,\Lev(A)/\Fcal_K)$ is a global section of the quotient of $\Lev(A)$ by the action of $\Fcal_K$.
    \end{itemize}
    If $K$ is normal in $K_0$, the obvious morphism $\Mscr_{K} \to \Mscr_{K_0}$ is a $K_0/K$-torsor as in \Cref{dfn:morphism of stacks is Q-torsor}.
\end{dfn}

Fix $K_0$ and $(\Xscr, \L_{\Xscr}, \Lev, \iota)$ as in \Cref{dfn:Q moduli stack}. The projective system $(\Mscr_K)_{K \in \Ccal(K_0)}$ is fit for $K_0$. By \Cref{dfn:can loc sys on stack} we can consider the canonical local system $\L^{\can}$ on $\Mscr_{K_0}$. We now define the universal local system on $\Mscr_{K_0}$ and show that it is canonically isomorphic to $\L^{\can}$. Observe that the projection onto the first component defines a morphism of stacks $p:\Mscr_{K_0} \to \Xscr$.

\begin{dfn}\label{dfn:univ loc sys}
    The \emph{universal local system} $\L_{K_0} \in \Loc_V(\Mscr_{K_0})$ on $\Mscr_{K_0}$ is the pullback of $\L_{\Xscr}$ by $p$, that is:
    \[\Mscr_{K_0} \xrightarrow{p} \Xscr \xrightarrow{\L_{\Xscr}} \Loc_V.\]
    The \emph{universal torsor} $\Lev_{K_0} \in \BB Q(X_{K_0})$ on $\Mscr_{K_0}$ is the pullback of $\Lev$ by $p$, that is:
    \[\Mscr_{K_0} \xrightarrow{p} \Xscr \xrightarrow{\Lev} \BB Q.\]
\end{dfn}

By \Cref{lem:L comes from Lev}, $\L_{K_0}$ is the $R$-local system with fiber $V$ associated with $\Ind_Q^{\GL(V)} (\Lev_{K_0})$.

\begin{prop}\label{prop:can equals univ}
    There is a canonical isomorphism $\L^{\can} \simeq \L_{K_0}$.
\end{prop}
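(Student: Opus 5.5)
Since both sides are local systems obtained by inducing a torsor to $\GL(V)$, the plan is to compare the underlying torsors before inducing. By \Cref{dfn:can loc sys on stack}, $\L^{\can}$ is associated with $\Ind_{K_0}^{\GL(V)}(h_{\Mscr_\infty})$. By the remark after \Cref{dfn:univ loc sys} (using \Cref{lem:L comes from Lev}), $\L_{K_0}$ is associated with $\Ind_Q^{\GL(V)}(\Lev_{K_0})$. Transitivity of induction, which follows formally from composing the left adjoints in \Cref{dfn:Induction of torsors}, gives $\Ind_{K_0}^{\GL(V)} = \Ind_Q^{\GL(V)}\circ\Ind_{K_0}^{Q}$. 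It therefore suffices to construct a canonical isomorphism of $Q$-torsors on $\Mscr_{K_0}$:
\[\Ind_{K_0}^{Q}(h_{\Mscr_\infty}) \simeq \Lev_{K_0}.\]

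\textbf{Step 1: identify $h_{\Mscr_\infty}$.} Over a point $\xi=(A,[\eta]_{K_0})\in\Mscr_{K_0}(T)$, the fiber $T\times_{\Mscr_{K_0}}\Mscr_K$ represents the subsheaf of $\Lev(A)/\Fcal_K$ consisting of local sections mapping to $[\eta]_{K_0}$ in $\Lev(A)/\Fcal_{K_0}$. Taking the limit over $K\in\Ccal(K_0)$, $h_{\Mscr_\infty}(\xi)$ becomes the subsheaf $\Pcal_\xi\subseteq\Lev(A)$ of sections $\eta$ with $\eta \bmod \Fcal_{K_0} = [\eta]_{K_0}$. This uses $\varprojlim_K \Lev(A)/\Fcal_K = \Lev(A)$, which holds because $\Lev(A)$ is pro-étale locally $\Fcal_Q$ and $\Fcal_Q=\varprojlim_K \Fcal_Q/\Fcal_K$ as sheaves. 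For the latter, use \Cref{lem:quotient sheaf is sheaf of quotient} to identify $\Fcal_Q/\Fcal_K$ with $\Fcal_{Q/K}$, then note that continuous maps into $Q$ are exactly compatible continuous maps into the quotients $Q/K$. The subsheaf $\Pcal_\xi$ is stable under $\Fcal_{K_0}$ and is a $K_0$-torsor: $[\eta]_{K_0}$ lifts locally, and any two lifts differ by a unique section of $\Fcal_{K_0}$ because $\Lev(A)$ is a $Q$-torsor. One must also check that the $K_0$-action on $h_{\Mscr_\infty}$ coming from \Cref{dfn:fit system of stacks} agrees with the restricted $Q$-action. This is a matter of unwinding how $K_0/K$ acts on $\Mscr_K$, namely by $[\eta]_K\mapsto[\eta k^{-1}]_K$, up to the usual left/right convention.

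\textbf{Step 2: induce.} The inclusion $\Pcal_\xi\hookrightarrow\Lev(A)=\Lev_{K_0}(\xi)$ is $K_0$-equivariant. By the adjunction of \Cref{dfn:Induction of torsors}, it yields a $Q$-equivariant map $\Ind_{K_0}^Q\Pcal_\xi\to\Lev_{K_0}(\xi)$. Any morphism of $Q$-torsors is an isomorphism, so this map is an isomorphism. Naturality in $\xi$, i.e. compatibility with pullbacks and with morphisms in $\Mscr_{K_0}$, holds because every construction above is functorial. This gives the canonical isomorphism of functors $\Mscr_{K_0}\to\BB Q$, and applying $\Ind_Q^{\GL(V)}$ together with \Cref{fact:loc sys equiv GL(V) torsor} yields $\L^{\can}\simeq\L_{K_0}$.

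The main obstacle is Step 1. Making precise the identification of the limit torsor $h_{\Mscr_\infty}=\varprojlim_K h_{\Mscr_K}$ with the fiber $\Pcal_\xi$ requires care on the pro-étale site, specifically the commutation of the quotients $\Lev(A)/\Fcal_K$ with the inverse limit. I would first prove the limit statement for the trivial torsor $\Fcal_Q$ via \Cref{lem:quotient sheaf is sheaf of quotient} and the description of $\Fcal_A$ in \Cref{rk:F_A when tot disc and qcqs}, and then pass to $\Lev(A)$ by working locally.
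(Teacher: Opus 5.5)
Your proposal is correct and follows the paper's proof. Both arguments build a $K_0$-equivariant map $h_{\Mscr_\infty}(\xi)\to\Lev(A)=\Lev_{K_0}(\xi)$ by identifying compatible systems $([\eta]_K)_K$ with sections of $\varprojlim_K \Lev(A)/\Fcal_K\simeq\Lev(A)$, turn it by adjunction into an isomorphism of $Q$-torsors $\Ind_{K_0}^Q(h_{\Mscr_\infty})\simeq\Lev_{K_0}$, and then apply $\Ind_Q^{\GL(V)}$. Your explicit description of $h_{\Mscr_\infty}(\xi)$ as the subsheaf of lifts of $[\eta]_{K_0}$ is just a more precise statement of the paper's chain of identifications, which tacitly lands in that subsheaf.
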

\begin{proof}
    Recall that $\L^{\can}$ is a local system defined from a $\GL(V)$-torsor, itself induced from the $K_0$-torsor $h_{\Mscr_{\infty}}$ as in \Cref{dfn:fit system of stacks}. We will define a canonical morphism of $K_0$-sheaves between $h_{\Mscr_{\infty}}$ and $\Lev_{K_0}$ ($Q$-torsor viewed as a $K_0$-sheaf). By adjunction, as in the proof of \Cref{lem:L comes from Lev}, this defines a canonical isomorphism between the $Q$-torsors $\Ind_{K_0}^Q (h_{\Mscr_{\infty}})$ and $\Lev_{K_0}$. Applying $\Ind_{Q}^{\GL(V)}$, we find the desired isomorphism. 

    Defining a morphism $h_{\Mscr_{\infty}} \to \Loc_{K_0}$ of $K_0$-sheaves on $\Mscr_{K_0}$ amounts to find, for every $\xi:T \to \Mscr_{K_0}$, a morphism of $K_0$-sheaves $h_{\Mscr_{\infty}}(\xi) \to \Loc_{K_0}(\xi)$ on $T$ which is natural in $T$. Fixing $T \to \Mscr_{K_0}$, let $(A,[\eta]_{K_0}) \in \Mscr_{K_0}(T)$ be the corresponding object. A section of $h_{\Mscr_{\infty}}(A,[\eta]_{K_0})$ is a compatible collection of sections $T \to \Mscr_{K}$ over $\Mscr_{K_0}$ as $K$ varies, i.e a compatible system of pairs $(A, [\eta]_K)$ as $K$ varies, which is determined by the projective system of level structures $([\eta]_K)_{K} \in \varprojlim_K H^0(T,\Lev(A)/\Fcal_K)$. Now there are $K_0$ equivariant identifications
    \[\varprojlim_K H^0(T,\Lev(A)/\Fcal_K) = H^0(T,\varprojlim_K \Lev(A)/\Fcal_K) \simeq H^0(T, \Lev(A)) = H^0(T, \Lev_{K_0}(A,[\eta]_{K_0})).\]
    For the middle isomorphism we used that $K_0$ is profinite. This defines the desired morphism
    \[h_{\Mscr_{\infty}}(A,[\eta]_{K_0}) \longrightarrow \Lev_{K_0}(A,[\eta]_{K_0})\]
    which is indeed natural in $(A,[\eta]_{K_0})$.
\end{proof}

\begin{ex}\label{ex:can loc sys on shimura}
    Let $(G,\Omega)$ be a Shimura datum with reflex field $E$. Let $K_0$ be a compact open subgroup of $G(\A_f)$. If $\Sh_{K_0}[G,\Omega]/E$ carries a modular interpretation (e.g. is Hodge-type), then the local system on $\Sh_{K_0}[G,\Omega]$ coming from the relative cohomology of the universal family coincides with the canonical local system. 
\end{ex}

\subsection{Quotient of sheaves vs quotient sheaves}\label{subsec:quotient of sheaves vs quotient sheaves}

We prove the following result used in previous subsections, which could be of independent interest.

\begin{lem}\label{lem:quotient sheaf is sheaf of quotient}
    Suppose $Q$ is Hausdorff totally disconnected and $K \subseteq Q$ is a closed subgroup. Then there is a natural isomorphism
    \[\Fcal_{Q} / \Fcal_K \simeq \Fcal_{Q / K}.\]
\end{lem}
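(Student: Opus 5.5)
The plan is to construct a natural morphism $\Fcal_Q/\Fcal_K \to \Fcal_{Q/K}$ and check that it is an isomorphism on stalks, or rather locally, using the sheaf property on the pro-étale site. Let $\pi: Q \to Q/K$ be the quotient map, which is continuous and open. Post-composition with $\pi$ gives a morphism of presheaves $\Fcal_Q \to \Fcal_{Q/K}$, $f \mapsto \pi\circ f$. It is invariant under the right $\Fcal_K$-action, since $\pi(f\cdot k)=\pi(f)$ pointwise. It therefore factors through the presheaf quotient. Because $\Fcal_{Q/K}$ is a sheaf by \Cref{dfn:sheaf from top space}, the map then factors through the sheafification $\Fcal_Q/\Fcal_K$. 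It remains to show that this map $\Phi$ is injective and an epimorphism of sheaves.

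For injectivity, take $U$ and $f,g \in \Fcal_Q(U)$ with $\pi\circ f = \pi\circ g$. Then $h \coloneq f^{-1}g$ is a continuous map $U \to Q$ with values in $K$. Since $K$ is closed and carries the subspace topology, $h$ is a continuous map $U\to K$, so $h \in \Fcal_K(U)$ and $g = f h$. Hence already the presheaf quotient injects, and sheafification preserves injectivity.

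Surjectivity is the main obstacle. Given a continuous $\bar f: U \to Q/K$, we must lift it pro-étale locally on $U$ to a continuous map into $Q$. The plan is to reduce to the case where $U$ is affine and w-contractible (or at least qcqs), using that such objects form a basis of the pro-étale site. Even in that case a continuous $\bar f$ need not factor through $\pi_0(U)$, because $Q/K$ need not be totally disconnected a priori. So I would first check that $Q/K$ is totally disconnected and Hausdorff. Hausdorffness holds since $K$ is closed. For total disconnectedness, note that in the intended cases ($Q$ profinite, or $Q$ locally profinite such as $\GL(V)$ over $\Q_\ell$ or $\A_f$) the group $Q$ has a basis of compact open subgroups, and $Q/K$ is then totally disconnected. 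For a general Hausdorff totally disconnected $Q$ I would try to argue similarly, or else restrict attention to these cases. By \Cref{rk:F_A when tot disc and qcqs}, $\bar f$ then factors through the profinite set $\pi_0(U)$.

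The key input is then a continuous section of $\pi$. When $Q$ is profinite and $K$ closed, continuous sections $Q/K\to Q$ exist by a classical result (Serre, Galois Cohomology I.1.2). For locally profinite $Q$ one can still obtain local sections over a cover of $Q/K$ by clopen sets, which suffices on the compact set $\bar f(\pi_0(U))$ after refining $U$ by a finite disjoint clopen decomposition. Composing $\bar f$ with such a section $s$ gives $s\circ\bar f \in \Fcal_Q(U)$ lifting $\bar f$, which proves local surjectivity and hence that $\Phi$ is an isomorphism. Naturality in $U$ is clear from the construction.
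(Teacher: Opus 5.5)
Your argument is correct when $Q$ is locally profinite, and in particular when $Q$ is profinite, which is the case in which the paper applies the lemma ($\Fcal_{K_0}/\Fcal_K\simeq\Fcal_{K_0/K}$ for $K$ open in a profinite $K_0$). It follows a genuinely different route from the paper.

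The paper never produces sections of $Q\to Q/K$. Instead it:
\begin{itemize}
\item passes to affine objects;
\item uses the cocontinuous functor $U\mapsto\pi_0(U)$ to write $\Fcal_X=\pi^*\u X$;
\item applies the colimit-preserving $\pi^*$ to the condensed identity $\u{Q/K}=\u Q/\u K$, which it checks on extremally disconnected $W$ by lifting maps $W\to Q/K$ to $Q$ via Gleason projectivity.
\end{itemize}
You replace all of this by Serre's continuous section, which is more elementary and gives more. For locally profinite $Q$ with a compact open subgroup $Q_0$, the images of the double cosets $Q_0qK$ partition $Q/K$ into clopen sets. Each is homeomorphic to $Q_0/(Q_0\cap qKq^{-1})$ via $x\mapsto xqK$, so Serre's result already yields a \emph{global} continuous section $s$ of $Q\to Q/K$. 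Hence $\mrm{Cont}(U,Q)/\mrm{Cont}(U,K)\to\mrm{Cont}(U,Q/K)$ is bijective for every $U$, and the presheaf quotient is already a sheaf. Your detour through w-contractible $U$, $\pi_0(U)$ and total disconnectedness of $Q/K$ is therefore unnecessary: $s\circ\bar f$ is the lift.

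What you do not obtain is the lemma in its stated generality, and sections cannot get you there. Let $Q$ be the group of null sequences $(a_n)_{n\ge1}$ of rationals with the sup-norm topology. It is Hausdorff and totally disconnected: separate two points by an irrational cut in a coordinate where they differ. The map $a\mapsto\sum_n a_n2^{-n}$ is an open continuous surjective homomorphism onto $\R$, so for its closed kernel $K$ one has $Q/K\cong\R$. Then $Q/K$ is connected, and $Q\to Q/K$ has no continuous section over any open interval.

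The idea the paper uses instead is that one only needs to lift maps out of extremally disconnected profinite sets. This is a projectivity property of the source rather than a section of $Q\to Q/K$. The factorization problem you raised also disappears on w-local $U$, where every continuous map to a Hausdorff space is constant on connected components.

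Be aware, though, that the paper's sketch is itself incomplete at this level of generality, in two places:
\begin{itemize}
\item it applies $\pi^*\u X=\Fcal_X$, justified only for totally disconnected $X$, to $X=Q/K$;
\item the full preimage $H$ of $f(W)$ is quasi-compact only when $K$ is compact. In general one needs a compact subset of $Q$ surjecting onto $f(W)$, which is available for example when $Q$ is locally compact.
\end{itemize}
So restricting to locally profinite $Q$ loses nothing the paper actually uses.
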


For the proof, we introduce elementary notions from condensed mathematics.

\begin{dfn}
    We denote $*_{\text{proét}}$ the pro-étale site over the point, as introduced in \cite[Definition 1.2]{scholze2026lecturescondensedmathematics}: it is the category of profinite sets with covers that are finite families of jointly surjective maps.

    If $X$ is a topological space, we let $\u{X}$ denote the sheaf on $*_{\text{proét}}$ mapping a profinite set $Y$ to the set of continuous functions $Y \to X$. If $G$ is a topological group, $\u G$ is a sheaf of groups.
\end{dfn}

The idea is to define a functor $\Cscr = (\mrm{Sch}/S)_{\text{proét}} \to *_{\text{proét}}$ mapping $T$ to its space of connected components $\pi_0(T)$. However the latter may not be compact Hausdorff in general; only on qcqs objects $T$ is it ensured that $\pi_0(T)$ is profinite (see \cite[\href{https://stacks.math.columbia.edu/tag/0906}{Tag 0906}]{stacks-project}). As affine schemes are qcqs, this motivates:

\begin{dfn}
    We denote $\Cscr_{\mrm{aff}}$ the full subcategory of $\Cscr$ whose objects are affine schemes\footnote{We ask that objects are maps $T \to S$ where $T$ is an affine scheme, \emph{not} that the morphism $T \to S$ be affine.} mapping to $S$. This is again a site whose pretopology is inherited from that of $\Cscr$.
\end{dfn}
 
There is now a restriction functor between the categories of sheaves
\[\Sh(\Cscr) \longrightarrow \Sh(\Cscr_{\mrm{aff}})\]
which is an equivalence. Indeed, every proétale cover in $\Cscr$ can be refined into a proétale cover by affine schemes belonging to $\Cscr_{\mrm{aff}}$, and so the equivalence follows from \cite[III, Théorème 4.1]{sga4tome1}. In particular, it suffices to establish the isomorphism of \Cref{lem:quotient sheaf is sheaf of quotient} between sheaves restricted to $\Cscr_{\mrm{aff}}$.

We may now consider the functor
\begin{align*}
    \pi: \Cscr_{\mrm{aff}} & \longrightarrow *_{\mrm{proet}} \\
            U & \longmapsto \pi_0(U).
\end{align*}

The following result is implicit in the paragraph preceding \cite[Lemma 4.2.13]{bhatt2013pro}.

\begin{lem}\label{lem:pi_0 cocontinuous}
    The functor $\pi$ is cocontinuous as in \cite[\href{https://stacks.math.columbia.edu/tag/00XJ}{Tag 00XJ}]{stacks-project}.
\end{lem}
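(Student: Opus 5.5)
We must show that $\pi:U \mapsto \pi_0(U)$ is cocontinuous in the sense of Tag 00XJ. Concretely: for every affine $U \in \Cscr_{\mrm{aff}}$ and every covering $\{Y_j \to \pi_0(U)\}_{j}$ in $*_{\text{proét}}$ (a finite family of jointly surjective continuous maps of profinite sets), we need a pro-étale covering $\{U_i \to U\}_i$ in $\Cscr_{\mrm{aff}}$ such that the family $\{\pi_0(U_i) \to \pi_0(U)\}$ refines $\{Y_j \to \pi_0(U)\}$.

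The natural candidate is to pull back each $Y_j$ along $U \to \pi_0(U)$. I would form the fiber products
\[U_j \coloneq U \times_{\pi_0(U)} Y_j,\]
interpreted scheme-theoretically as in \cite[Lemma 4.2.12--4.2.13]{bhatt2013pro}. Write $Y_j = \varprojlim_\alpha Y_{j,\alpha}$ as a cofiltered limit of finite sets. Write also $\pi_0(U) = \varprojlim_\beta F_\beta$, where the $F_\beta$ are finite quotients corresponding to decompositions of $U$ into finitely many clopen pieces, i.e.\ idempotents of $\mathcal{O}(U)$. By compactness each map $Y_j \to \pi_0(U)$ is a limit of maps of finite sets $Y_{j,\alpha} \to F_{\beta(\alpha)}$. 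Then set $U_{j,\alpha} \coloneq U \times_{F_{\beta(\alpha)}} Y_{j,\alpha}$, a finite disjoint union of clopen subschemes of $U$. This is affine, and it is étale over $U$, being a disjoint union of open-and-closed immersions. Finally take $U_j \coloneq \varprojlim_\alpha U_{j,\alpha}$. This is affine and weakly étale (pro-étale) over $U$, and by \cite[Lemma 4.2.13]{bhatt2013pro} or a direct check it satisfies $\pi_0(U_j) \cong Y_j \times_{\pi_0(U)} \pi_0(U)$ compatibly. In particular $\pi(U_j) \to \pi_0(U)$ factors through, and in fact equals, $Y_j \to \pi_0(U)$, so the refinement condition holds trivially with $U_j \mapsto Y_j$.

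It remains to check that $\{U_j \to U\}$ is a pro-étale covering, that is, jointly surjective. This I would get fiberwise over connected components: a point $u \in U$ lies in a component $c \in \pi_0(U)$, and $c$ has a preimage $y \in Y_j$ for some $j$. The fiber of $U_j \to U$ over $u$ contains the fiber over $(u,y)$, which is nonempty because on the level of the finite stages $U_{j,\alpha}$ the point lies over $(u, y_\alpha)$; one then takes a limit of nonempty compact (spectral) sets. Since the family is finite and each map is weakly étale between affines, this is a covering in $\Cscr_{\mrm{aff}}$.

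I expect the main obstacle to be the identification $\pi_0(U \times_{\pi_0(U)} Y) \simeq Y$, or at least the existence of a map $\pi_0(U_j) \to Y_j$ over $\pi_0(U)$, together with making sense of the fiber product with a profinite set. Once the map exists, cocontinuity only needs the refinement, so surjectivity of $\pi_0(U_j)\to Y_j$ is unnecessary. The map itself comes from the compatible continuous maps $U_{j,\alpha} \to Y_{j,\alpha}$, given by the locally constant projection. Taking their limit gives a continuous $U_j \to Y_j$, which factors through $\pi_0(U_j)$ since $Y_j$ is totally disconnected. This is exactly the construction alluded to before \cite[Lemma 4.2.13]{bhatt2013pro}, so I would cite it there.
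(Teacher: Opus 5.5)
Your proposal is correct and follows essentially the paper's route: you pull each $Y_j$ back along $U \to \pi_0(U)$ to get an affine pro-étale $U$-scheme $U \times_{\pi_0(U)} Y_j$ whose $\pi_0$ maps to $Y_j$ over $\pi_0(U)$, and you get joint surjectivity from that of the $Y_j \to \pi_0(U)$ together with surjectivity of $U \to \pi_0(U)$. The only difference is that the paper cites \cite[Lemma 2.2.8]{bhatt2013pro} for the scheme structure on the fiber product, whereas you build it by hand as a cofiltered limit of finite clopen decompositions; there you should index over pairs $(\alpha,\beta)$ so that the $F_{\beta(\alpha)}$ are cofinal, which ensures the limit really is the fiber product over $\pi_0(U)$.
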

\begin{proof}
    Consider $T \to S$ with $T$ affine, and $\{X_i \to \pi_0(T)\}$ a covering in $*_{\mrm{proet}}$. Then by \cite[Lemma 2.2.8]{bhatt2013pro} the topological space $U_i = X_i \times_{\pi_0(T)} T$ is endowed with the structure of an affine scheme pro-étale over $T$, and applying $\pi_0$ to $U_i \to T$ recovers $X_i \to \pi_0(T)$. The joint surjectivity of $\{U_i \to T\}$ now follows from the joint surjectivity of $\{X_i \to \pi_0(T)\}$ and the surjectivity of $T \to \pi_0(T)$.
\end{proof}

It follows from \cite[\href{https://stacks.math.columbia.edu/tag/00XO}{Tag 00XO}]{stacks-project} (see also \cite[IV, 4.7]{sga4tome1}) that $\pi$ induces a geometric morphism of topoi
\[(\pi_*,\pi^*):\Sh(\Cscr_{\mrm{aff}}) \longleftrightarrow \Sh(*_{\mrm{proet}}).\] 
In particular, $\pi^*:\Sh(*_{\mrm{proet}}) \to \Sh(\Cscr_{\mrm{aff}})$ sends $F \in \Sh(*_{\mrm{proet}})$ to the sheafification of the presheaf
\[[T \to S] \longmapsto F(\pi_0(T)).\]
By \cite[Lemma 4.2.13 (1)]{bhatt2013pro} this presheaf is already a sheaf. Thus, for any totally disconnected topological space $X$ we find
\begin{equation}\label{eq:pi*}
    \pi^* \u X = X \circ \pi_0 = \Fcal_X
\end{equation}
where the last equality follows from \Cref{rk:F_A when tot disc and qcqs} and the fact that all objects of $\Cscr_{\mrm{aff}}$ are qcqs schemes.

\begin{fact}\label{fact:condensed quotient}
    Let $Q,K$ be as in the statement of \Cref{lem:quotient sheaf is sheaf of quotient}. Then in $\Sh(*_{\mrm{proet}})$
    \[\u{Q/K} = \u Q / \u K.\]
\end{fact}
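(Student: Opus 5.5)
We have to prove the Fact: in condensed sets, $\u{Q/K} = \u Q/\u K$ for $Q$ Hausdorff totally disconnected and $K$ closed. The plan is to compare both sides on profinite sets directly.

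First, the natural map. The projection $Q \to Q/K$ is continuous, so it induces $\u Q \to \u{Q/K}$. This map is constant on $\u K$-orbits, hence factors through the quotient sheaf $\u Q/\u K$.

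Second, injectivity. On a profinite set $Y$, take $f,f'\colon Y\to Q$ with the same image in $Q/K$. Then $y\mapsto f(y)^{-1}f'(y)$ is a continuous map $Y\to K$, since $K$ is closed and carries the subspace topology. So $f$ and $f'$ already agree in the presheaf quotient, and hence in the sheafification.

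Third, local surjectivity, which is the main obstacle. Given a continuous $g\colon Y\to Q/K$, we need a finite jointly surjective family $Y_i\to Y$ together with lifts $Y_i\to Q$. Constructing continuous local sections of $Q\to Q/K$ directly is hopeless in general. Instead, consider the fiber product
\[
\tilde Y = Y\times_{Q/K} Q.
\]
This surjects onto $Y$ and has a tautological lift to $Q$, but it need not be profinite. The plan is therefore to show that $\u Q\to\u{Q/K}$ is an epimorphism of condensed sets. Two routes are available:

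\begin{itemize}
\item[(a)] Reduce to the compactly generated situation by covering $Y$ by compact pieces of $\tilde Y$. Choose a compact neighbourhood basis in $Q$ and use that $Q\to Q/K$ is open, so images of open sets in $Q$ cover $Q/K$. For each $y$, pick $q$ over $g(y)$ and a compact neighbourhood $C\ni q$; its image contains an open neighbourhood of $g(y)$. Pulling back, $g^{-1}(\pi(C))\times_{\pi(C)} C$ is compact Hausdorff and surjects onto a closed neighbourhood of $y$. Finitely many such pieces cover $Y$ by compactness, and each compact Hausdorff piece is covered by a profinite set, for instance its Stone–Čech cover by an extremally disconnected set.
\item[(b)] If $Q$ is not locally compact, appeal instead to the general statement that quotient maps of compactly generated spaces induce surjections of condensed sets on compact sets that lift. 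This is the key input, and the step most likely to need extra hypotheses.
\end{itemize}

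In applications $Q$ is a profinite group or a group such as $\GL(V)$ over $\A_f$, which are locally compact. I would therefore prove route (a) under local compactness and cite Scholze's lecture notes for the general case.

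Finally, injectivity together with local surjectivity gives the isomorphism $\u Q/\u K \simeq \u{Q/K}$ of sheaves.
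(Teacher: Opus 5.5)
Your argument is correct whenever $Q$ is locally compact. This includes profinite $Q$ with open $K$, the only case the paper actually uses. It does, however, take a different route from the paper.

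\textbf{The paper's route.} It invokes \cite[Proposition 2.7]{scholze2026lecturescondensedmathematics} to test the isomorphism only on extremally disconnected $W$. There the sheaf quotient is just $\u Q(W)/\u K(W)$ and injectivity is immediate. Surjectivity is a \emph{global} lift of $f\colon W\to Q/K$, obtained by Gleason projectivity \cite[Tag 08YN]{stacks-project} from a compact Hausdorff $H\subseteq Q$ surjecting onto $f(W)$.

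\textbf{Your route.} You stay on all profinite sets.
\begin{itemize}
\item Injectivity comes directly from continuity of $f^{-1}f'$ with values in the closed subgroup $K$, together with exactness of sheafification.
\item Local surjectivity comes from openness of $\pi$ and compact neighbourhoods. These give finitely many compact $C_i\subseteq Q$ whose pullbacks $g^{-1}(\pi(C_i))\times_{\pi(C_i)}C_i$ form a finite cover of $Y$ with tautological lifts. These pieces are already profinite because $C_i$ is totally disconnected, so the Stone--\v{C}ech step is unnecessary.
\end{itemize}

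\textbf{Comparison.} Your route avoids extremally disconnected sets and Gleason's theorem, at the price of producing a finite cover rather than a section; the paper's route is shorter. Both rest on the same input: compact subsets of $Q/K$ are images of compact subsets of $Q$.

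On that point your proof is more careful than the paper's. The paper takes $H=\pi^{-1}(f(W))$ and asserts it is quasi-compact ``because $Q\to Q/K$ is open''. That holds only when $K$ is compact, since then $\pi$ is proper; otherwise $H$ contains a closed non-compact coset. For non-compact $K$ one needs exactly your compact-neighbourhood argument. Conversely, when $K$ is compact your $\tilde Y=Y\times_{Q/K}Q$ is itself compact, hence profinite, so local surjectivity is a one-liner in the case the paper uses.

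\textbf{Route (b).} This is not an argument, and you should not rely on citing it. Surjectivity of $\u Q\to\u{Q/K}$ is equivalent to a compact-covering property of $\pi$: every $g(S)$ with $S$ profinite must be the image of a compact subset of $Q$. Quotient maps need not have this property. Local compactness of $Q$, or compactness of $K$, is precisely what supplies it. So for non-locally-compact $Q$ with non-compact $K$ you have no proof. Neither does the paper, but that case is never used.
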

\begin{proof}
    This is explicitely stated in \cite[Remark 2.7]{clausen2026weil} and proved somewhat implicitely in \cite{scholze2026lecturescondensedmathematics}. For the convenience of the reader we sketch the argument.

    By Proposition 2.7 of \textit{loc. cit.} it suffices to check that this holds over the subcategory of compact Hausdorff extremally disconnected sets (see Definition 2.4 of \textit{loc. cit.}). Let $W$ be such a space. In the first paragraph of the Proof of Theorem 2.2 of \textit{loc. cit.} it is argued that there is a natural identification
    \[(\u Q / \u K)(W) = \u{Q}(W) / \u K(W).\]
    It remains to show that the right-hand side is identified with $\u{Q/K}(W)$ via the injective map (functorial in $W$)
    \[\u{Q}(W) / \u K(W) \longrightarrow \u{Q/K}(W).\]
    We show surjectivity, i.e. that any continuous map $f:W \to Q/K$ lifts to a continuous map $W \to Q$. Let $\overline{H} = f(W) \subseteq Q/K$ with the induced topology, and $H$ its pre-image in $Q$. Then $\overline H$ is Hausdorff totally disconnected and quasi-compact, and so is $H$ (Hausdorff totally disconnected is clear, quasi-compact follows from $Q \to Q/K$ being open). As $W$ is extremally disconnected, it follows from \cite[\href{https://stacks.math.columbia.edu/tag/08YN}{Tag 08YN}]{stacks-project} that there exists a dotted arrow
    \[\begin{tikzcd}
                                   & H \arrow[d, two heads] \\
    W \arrow[r] \arrow[ru, dashed] & \overline{H}          
    \end{tikzcd}\]
    making the diagram commute, as desired.
\end{proof}

We may now conclude.

\begin{proof}[Proof of \Cref{lem:quotient sheaf is sheaf of quotient}]
    From \Cref{eq:pi*} we find $\Fcal_{Q/K} = \pi^* \u{Q/K}$, as well as $\Fcal_Q / \Fcal_K = \pi^* \u Q / \pi^* \u K = \pi^* (\u Q / \u K)$ as $\pi^*$ is a left adjoint and hence commutes with all colimits. The result follows from applying $\pi^*$ to \Cref{fact:condensed quotient}.
\end{proof}

\section{Pointwise and global good reduction problems}\label{sec:pointwise and global good reduction problems}

In \Cref{subsec:formulation of the problem} we define what is meant by pointwise and global good reduction of a smooth and proper family of varieties, possibly with extra data. The rest of the section is devoted to showing that even in the usually well-behaved case of principally polarized abelian schemes, it is not true in general that the former implies the latter. In \Cref{subsec:abelian schemes that do not extend} we recall some constructions around abelian schemes over dense open subsets $U$ of a base local scheme $\Spec \, R$ which do not extend to $\Spec \, R$ as a whole; here $R$ is a regular, formally smooth $\Ocal_F$ algebra where $F/\Qp$ is finite with ramification index $e \ge p$. In \Cref{subsec:abelian schemes with PGR not GGR} we descend this construction to a finite type $\Ocal_F$-algebra, and argue that this provides an abelian scheme with pointwise but not global good reduction. As \Cref{thm:main thm} indicates, no such counterexample exists if $e \le p-1$ instead.

\subsection{Formulation of the problem}\label{subsec:formulation of the problem}
We define the notions of pointwise and global good reduction.
\begin{dfn}
    Let $Y/F$ be a smooth proper variety. We say that it has \emph{good reduction} if it is the generic fiber of some smooth proper model $\Ycal/\Ocal_F$.
\end{dfn}

We define the integral locus of a stack over $\Ocal_F$. To this end, we introduce some notation. If $X/F$ is a stack, its set of closed points is denoted by $|X|$. This is the set of $F$-morphisms $\Spec \, E \to X$, with $E/F$ a finite extension, where two morphisms $\Spec \, E \to X$ and $\Spec \, E' \to X$ are identified if there exists a field $E''$, embeddings $E,E' \hookrightarrow E''$ and a 2-commutative diagram
\[
\begin{tikzcd}
    \Spec \, E'' \arrow[r] \arrow[d] & \Spec \, E' \arrow[d] \\
    \Spec \, E \arrow[r]             & X.              
\end{tikzcd}
\]
If $X$ is a scheme, this coincides with the closed points of its underlying topological space in the usual sense.
\begin{dfn}\label{dfn:integral locus}
    Let $\Xcal / \Ocal_F$ be a stack with generic fiber $X/F$. We define the \emph{integral locus} of $\Xcal$ as
    \[|\Xcal|_{\eta}^{\mrm{int}} = \im (\Xcal(\overline{\Ocal_F}) \to |X|).\]
    It is the set of closed points of $X$ which admit a specialization in the special fiber of $\Xcal$. In other words, $x_0:\Spec \, E \to X$ lies in $|\Xcal|_{\eta}^{\mrm{int}}$ if and only if, for some finite extension $E'/E$ there exists $\Spec \, \Ocal_{E'} \to \Xcal$ and a 2-commutative diagram
    \[
    \begin{tikzcd}
        \Spec \, E' \arrow[r] \arrow[d, hook]     & \Spec \, E \arrow[r,"x_0"] & X \arrow[d, hook] \\
        \Spec \, \Ocal_{E'} \arrow[rr, "\exists"] &                            & \Xcal.              
    \end{tikzcd}
    \]
\end{dfn}

If $\Xcal = X$, the integral locus is empty and if $\Xcal / \Ocal_F$ is a proper scheme, then $|\Xcal|_{\eta}^{\mrm{int}} = |X|$.

\begin{dfn}\label{dfn:pointwise good reduction}
    Let $X/F$ be a smooth variety with smooth model $\Xcal / \Ocal_F$. Consider a smooth proper morphism of schemes $Y \to X$. 
    
    \noindent We say that this family has \emph{pointwise good reduction} (relative to $\Xcal$) if for every $x \in |\Xcal|_{\eta}^{\mrm{int}}$, the fiber $Y_x / F(x)$ has good reduction. 
    
    \noindent We say that this family has \emph{global good reduction} (relative to $\Xcal$) if there exists a smooth proper map $\Ycal \to \Xcal$ of algebraic spaces over $\Ocal_F$ with generic fiber $Y \to X$.
\end{dfn}

It is clear that global good reduction implies pointwise good reduction. We are interested in whether the converse holds, and if not, what may some obstructions be.

\begin{rk}
    By default, we require that the model $\Ycal \to \Xcal$ extends as much of the structure of $Y \to X$ as possible. Typically if $Y \to X$ is a polarized abelian scheme or K3 surface with some level structure, we require $\Ycal \to \Xcal$ to be so too.
\end{rk}

\subsection{Abelian schemes that do not extend}\label{subsec:abelian schemes that do not extend}
For abelian schemes over a sufficiently ramified $p$-adic field $F$, pointwise good reduction does not imply global good reduction. We construct a counterexample in the next paragraph. Prior to this, we need some recollections on how abelian schemes over some dense open subscheme can fail to extend to an abelian scheme over the whole base.

We recall the context behind this question. In \cite[Chapter V, Corollary 6.8]{faltingsDegenerationAbelianVarieties1990a} it is stated that given a base regular scheme $S$ with generic points of characteristic zero, and a closed subscheme $Z$ of codimension at least $2$, any abelian scheme over $S \setminus Z$ uniquely extends to an abelian scheme\footnote{In the language of \Cref{dfn:healthy}, the statement claims that any such $S$ is healthy.} over $S$. A counterexample due to Raynaud-Ogus-Gabber (see \cite[§6]{de1997extending}) showed that the claim is wrong as it stands. Vasiu-Zink later generalized this construction in \cite{vasiu2010purity}; the goal of this paragraph is to recall their work so as to prove \Cref{prop:ab scheme over formally smooth base} below. From now on we consider a finite extension $F / \Qp$ of ramification index at least $p$.

\begin{prop}\label{prop:ab scheme over formally smooth base}
    There exists a local étale $\Ocal_F[[T]]$-algebra $R$ with maximal ideal $\mf m$ and an abelian scheme $A_U \to U \coloneq \Spec \, R \setminus \{\mf m\}$, together with a degree-$p^2$ (resp. principal) polarization $\lambda:A_U \to A_U^{\vee}$, such that $A_U$ does not extend to an abelian scheme over $\Spec \, R$.

    Moreover, let $\overline{\eta} \to \Spec \, R$ be a geometric point over the generic point and $N$ a prime-to-$p$ positive integer. We can arrange for $A_{\overline{\eta}}[N]$ to have trivial monodromy.
\end{prop}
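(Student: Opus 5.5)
The plan is to follow the Raynaud--Ogus--Gabber / Vasiu--Zink construction, as recalled from \cite{vasiu2010purity} and \cite[§6]{de1997extending}. The mechanism is to produce, over a two-dimensional regular local base, a $p$-divisible group (or finite flat group scheme) on the punctured spectrum that does not extend, and then to embed it in an abelian scheme.

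Concretely, let $\pi$ be a uniformizer of $\Ocal_F$. Since $e \ge p$, there is a finite flat group scheme over $\Ocal_F$ that is a nontrivial extension of $\mu_p$ by $\Z/p\Z$ (or of $\Z/p\Z$ by $\mu_p$) whose generic fiber becomes split only after ramified base change. Over $\Ocal_F[[T]]$ one can then define, via Kummer theory, an extension class varying with $T$. This gives a finite flat group scheme $G_U$ over the punctured spectrum $U$ of the two-dimensional regular local ring that is not the restriction of any finite flat group scheme over $\Spec\, R$. Vasiu--Zink show that for $e \ge p$ the purity property fails exactly in this way, and I would invoke \cite[Theorem 28]{vasiu2010purity} and its proof for this input rather than redo it. The local étale $\Ocal_F[[T]]$-algebra $R$ appears because the construction needs a henselization, or an étale neighbourhood, to split auxiliary data.

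Next, realize $G_U$ inside an abelian scheme. Start with an abelian scheme $B$ over $\Spec\, R$, for instance a product $E \times E$ of a suitable elliptic curve with supersingular or ordinary reduction chosen so that $B[p]$ contains the relevant $\mu_p$ and $\Z/p\Z$. Then take $A_U \coloneq B_U / H_U$, where $H_U \subset B_U[p]$ is a finite flat subgroup whose closure in $B$ is not finite flat. If $A_U$ extended to an abelian scheme $A$, the isogeny $B_U \to A_U$ would extend by Faltings--Chai / Néron-type uniqueness of morphisms of abelian schemes over normal bases. Its kernel would then be a finite flat extension of $H_U$, which is a contradiction. For the polarization, $H_U$ should be chosen isotropic of order $p^2$ in $B[p]$ for a principal polarization on $B$, so that the principal polarization descends to $A_U$. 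Alternatively, a degree-$p^2$ polarization is obtained by pulling back. I would choose $H_U$ with $|H_U| = p$ to get the degree-$p^2$ version, and a Lagrangian $H_U$ inside $E[p]\times E[p]$ to get the principal version.

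For the level structure: $A_{U}[N] \simeq B_U[N]$ since the isogeny has $p$-power degree. After replacing $R$ by a further local étale extension that trivializes the finite étale cover $B[N]$ over the strictly henselian closed point, which preserves the non-extension because $R \to R'$ is faithfully flat, the monodromy of $A_{\overline\eta}[N]$ becomes trivial. The main obstacle is the first step, namely producing the non-extendable finite flat group scheme over $U$ when $e \ge p$ and checking that it embeds isotropically into $B[p]$. I would take this step directly from Vasiu--Zink's explicit example rather than reconstruct it.
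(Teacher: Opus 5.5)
Your proposal has a genuine gap: Step 1 asks Vasiu--Zink for an object that cannot exist, and the polarization argument rests on inputs that are asserted rather than derived.

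\textbf{The missing finite flat group scheme.} Over a two-dimensional regular local ring $R$, every finite flat group scheme over the punctured spectrum $U$ extends uniquely to one over $\Spec \, R$. The global sections of a vector bundle on $U$ form a finitely generated $R$-module of depth $2$, hence free by Auslander--Buchsbaum. The Hopf-algebra structure maps then extend by the full faithfulness in \Cref{fact:purity for finite flat group schemes}.

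What can fail is that a \emph{monomorphism} over $U$ extends to a monomorphism over $\Spec \, R$. This is exactly what \cite[Theorem 28 (i)]{vasiu2010purity} provides and what \Cref{lem:setup for abelian scheme not extending} exploits. One takes a homomorphism $D \to H$ of finite flat group schemes over $\Spec \, R$ (here $R = \Ocal_F[[T]]$ already works) that is a monomorphism over $U$ but not globally. One then embeds $H$ into an abelian scheme $B$ and sets $A_U \coloneq B_U / D_U$.

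Your Step 2 phrasing, a subgroup of $B_U$ whose schematic closure in $B$ is not flat, is the correct mechanism, and with it your kernel argument is fine. But it is inconsistent with Step 1, and you never actually produce such a subgroup in the $B$ you use.

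\textbf{Consequences for the polarization.} Nothing guarantees that the Vasiu--Zink group embeds in $E \times E$. Nor does anything guarantee that $B[p]$ contains an order-$p$ subgroup, or a Lagrangian of order $p^2$, with non-flat closure. The paper proceeds in four steps instead.
\begin{enumerate}
\item It shows, via Lau's Breuil-module description \cite{lauFrames}, that the Vasiu--Zink $D$ has order $p$.
\item It embeds $H$ into a Jacobian using Raynaud's theorem. So $B$ is principally polarized but of uncontrolled dimension $g$, and descending $p\lambda_B$ through an order-$p$ subgroup would give degree $p^{2g-2}$, not $p^2$.
\item It puts a degree-$p^2$ polarization on $A_U^{\vee}$ by pulling back the principal polarization of $B_U^{\vee}$ along $q^{\vee}$. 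Here $A_U^{\vee}$ cannot extend, since its dual would then extend $A_U$.
\item It obtains the principal case from Zarhin's trick applied to $(A_U \times A_U^{\vee})^4$. It uses the fact that if a product of abelian schemes extends over a regular base, so does each factor, because the idempotents extend.
\end{enumerate}

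\textbf{Level structure.} Your handling of this part is fine and matches \Cref{rk:base change of ab scheme not extending}. You pass to a local étale $R \to R'$ trivializing $B[N]$, use $A_U[N] \simeq B_U[N]$, and note that non-extension is preserved by faithful flatness.
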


\begin{rk}
    In the language of Siegel Shimura varieties, the second part of \Cref{prop:ab scheme over formally smooth base} states that we can arrange for $A_U \to U$ to have any prime-to-$p$ level structure. From now on we use this terminology for conciseness.
\end{rk}

The rest of the paragraph is dedicated to the proof of \Cref{prop:ab scheme over formally smooth base}. The main tool for constructing abelian schemes that do not extend is the following lemma.

\begin{lem}[\cite{vasiu2010purity}, Lemma 27]\label{lem:setup for abelian scheme not extending}
    Let $R$ denote a regular local ring of dimension $\ge 2$ and mixed characteristic $(0,p)$. Let $\mf{m}$ denote its maximal ideal, and set $U = \Spec \, R \setminus \{\mf m\}$.
    
    \noindent Let $D \to H$ be a homomorphism of finite flat group schemes over $\Spec \, R$ which is not a monomorphism, but whose restriction over $U$ is a monomorphism\footnote{\cite[Lemma 27]{vasiu2010purity} is stated with $H \to D$ which is an epimorphism over $U$ but not globally. However the proof only uses the Cartier dual morphism $D^t \to H^t$: here we consider the dual statement to skip taking duals.}. Let $B$ be an abelian scheme over $\Spec \, R$ in which $H$ embeds. Define the quotient abelian scheme $A_U = B_U / D_U$ over $U$.

    \noindent Then $A_U \to U$ is an abelian scheme which does not extend into an abelian scheme over $R'$.
\end{lem}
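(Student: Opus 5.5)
We argue by contradiction: suppose $A_U$ extends to an abelian scheme $A$ over $\Spec \, R$ (the "$R'$" in the statement should be read as $R$). The plan is to extend the isogeny $B_U \to A_U$ over all of $\Spec \, R$ and identify its kernel with $D$. This forces $D \to B$ to be a closed immersion, contradicting the hypothesis that $D \to H$ is not a monomorphism.

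First, the quotient $A_U = B_U/D_U$ makes sense. Over $U$, the composite $D_U \to H_U \hookrightarrow B_U$ is a monomorphism of a finite flat group scheme into an abelian scheme, so $D_U$ is a closed finite flat subgroup. Hence $A_U$ is an abelian scheme over $U$ and $\pi_U: B_U \to A_U$ is an isogeny with kernel $D_U$.

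Next, extend $\pi_U$ over $\Spec \, R$. Since $R$ is regular, hence normal, and $\mf m$ has codimension $\ge 2$, a morphism of abelian schemes $B_U \to A_U$ extends uniquely to a homomorphism $\pi: B \to A$. One way to see this is Faltings–Chai / Moret-Bailly: homomorphisms between abelian schemes over a normal base extend across codimension $\ge 2$, since $\Hom(B,A)$ is unramified and the graph closure argument applies. Alternatively, work with the $p$-divisible groups and the étale parts and invoke the standard rigidity $\Hom_R(B,A) = \Hom_U(B_U,A_U)$, which holds for normal noetherian $R$ with $\mathrm{depth}\ge 2$ at $\mf m$ by Hartogs on the $R$-finite Hom-scheme. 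By symmetry, using the dual isogeny $A_U \to B_U$ whose composite with $\pi_U$ is multiplication by $n = \deg$, $\pi$ is an isogeny. Its kernel $\ker \pi$ is then a finite flat group scheme over $R$ (kernels of isogenies of abelian schemes are finite flat), and it restricts to $D_U$ over $U$.

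The key step is to compare $\ker\pi$ with $D$. Here I use that finite flat group schemes over $R$, and morphisms between them, are determined by their restrictions to $U$. Concretely, a finite flat $R$-algebra is a reflexive $R$-module, so it equals the global sections over $U$. Thus the identity $D_U \cong (\ker\pi)_U$ extends to an isomorphism $D \cong \ker\pi$ over $R$. By the same Hartogs argument, the resulting map $D \to B$ agrees with the composite $D \to H \to B$, since the two agree over $U$ and $B$ is separated with $D$ flat and $R$-schematically dense. But $\ker\pi \to B$ is a closed immersion, so $D \to H \to B$ is a monomorphism, hence $D \to H$ is a monomorphism, which is a contradiction.

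The main obstacle is the extension of the homomorphism $\pi_U$ across the closed point. If a clean reference is lacking, I would first try to deduce it from the $\ell$-adic Tate modules for $\ell \ne p$ (étale, hence extending by Zariski–Nagata purity) together with Tate/de Jong-type full faithfulness over normal bases.
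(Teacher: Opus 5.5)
Your proposal is correct and follows the paper's proof. Both arguments extend $B_U \to A_U$ to an isogeny $B \to A$, use purity for finite flat group schemes to identify $\ker(B \to A)$ with $D$ compatibly with the maps to $B$, and conclude that $D \to H$ is a monomorphism; you check the compatibility via separatedness of $B$ and schematic density of $D_U$ in $D$, where the paper instead uses the fiber product $D \times_H K$ (your variant avoids the paper's tacit use of $K \subseteq H$). For extending homomorphisms you can simply cite \cite[IX Corollaire 1.4]{raynaudFaisceauxAmplesSchemas1970} as the paper does, with no Tate-module detour, and you should drop the phrase ``$R$-finite Hom-scheme'', since the Hom-scheme of abelian schemes is unramified but not finite over $R$.
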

\begin{proof}

    Suppose that $A_U$ extends to an abelian scheme $A \to \Spec \, R$. The isogeny $B_U \to A_U$ extends to a homomorphism $B \to A$ (see \cite[IX Corollaire 1.4]{raynaudFaisceauxAmplesSchemas1970}) which is automatically an isogeny. Its kernel $K$ is a finite flat group scheme which coincides with $D_U$ over $U$ (and so $D \simeq K$ as finite flat group schemes over $\Spec \, R$ by \Cref{fact:purity for finite flat group schemes} below). We will argue that there is an identification $D \simeq K$ as $B$-schemes, which then implies that $D \to H$ is a monomorphism over the whole base, contradiction.

    Consider the cartesian diagram:
    \[
    \begin{tikzcd}
    D \times_H K \arrow[r] \arrow[d, hook] & K \arrow[d, hook] \\
    D \arrow[r]                            & H                
    \end{tikzcd}
    \]
    It follows that $D \times_H K$ is a finite flat subgroup scheme of $D$, which coincides with $D_U$ over $U$, hence the left vertical arrow is an isomorphism. In particular, $D \to H \to B$ factors through $K \to H \to B$. This defines a morphism $D \to H$ compatible with the embeddings in $B$. This is an isomorphism over $U$, hence also over $\Spec \, R$ by \Cref{fact:purity for finite flat group schemes}.
\end{proof}

For the convenience of the reader, we provide a proof of the following known purity result for finite flat group schemes used in the proof of \Cref{lem:setup for abelian scheme not extending}.

\begin{fact}\label{fact:purity for finite flat group schemes}
    Let $X$ be a noetherian normal scheme, $j:U \hookrightarrow X$ the complement of a closed point.
    Then the pullback functor $j^*$ from vector bundles on $X$ to vector bundles on $U$ is fully faithful. Moreover, its restriction to the subcategories of finite flat group schemes is again fully faithful.
\end{fact}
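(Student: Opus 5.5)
The plan is to reduce everything to the classical Hartogs-type statement for reflexive modules. Set $X$ noetherian normal and $x$ the removed closed point, with $\dim \Ocal_{X,x} \ge 2$. If the local ring at $x$ has dimension $\le 1$, then $x$ is an isolated point of a normal noetherian scheme; the case where $x$ is a whole component should be excluded or treated trivially. In the intended application $\dim \ge 2$. For the first assertion, take vector bundles $\Ecal,\Ecal'$ on $X$. Then $\Hom(\Ecal,\Ecal') = H^0(X,\Ecal^\vee\otimes\Ecal')$, and similarly on $U$. Since $\Ecal^\vee \otimes \Ecal'$ is again a vector bundle, fully faithfulness reduces to showing that for a vector bundle $\Fcal$ on $X$ the restriction
\[H^0(X,\Fcal) \longrightarrow H^0(U,\Fcal)\]
is bijective. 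The question is local around $x$, so we may assume $X=\Spec A$ with $A$ local noetherian normal of dimension $\ge 2$ and $\Fcal$ free. This reduces to $A \to H^0(U,\Ocal_U)$ being an isomorphism. That follows from Serre's criterion $(S_2)$: the local cohomology groups $H^0_{\mf m}(A)$ and $H^1_{\mf m}(A)$ vanish because $\mathrm{depth}\, A \ge 2$. Equivalently, $A=\bigcap_{\mathrm{ht}\,\mf p=1}A_{\mf p}$ and every height-one prime lies in $U$.

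For the second assertion, view a finite flat group scheme $G=\Spec \Ecal$ over $X$ as a vector bundle $\Ecal$, namely a locally free $\Ocal_X$-algebra, equipped with comultiplication $\Ecal\to\Ecal\otimes\Ecal$, counit and antipode. A homomorphism $G\to G'$ is a morphism of vector bundles $\Ecal'\to\Ecal$ that is compatible with the multiplication, the unit and the comultiplication. Faithfulness follows at once from the first part. For fullness, start from a homomorphism over $U$. By the first part its underlying linear map extends uniquely to $\phi:\Ecal'\to\Ecal$. Each compatibility condition is an equality of two morphisms between vector bundles on $X$; for instance
\[\phi\circ m' \;=\; m\circ(\phi\otimes\phi):\Ecal'\otimes\Ecal'\to\Ecal.\]
These two morphisms agree on $U$, so by faithfulness they agree on $X$. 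Note that tensor products of vector bundles are vector bundles, so the first part applies to them.

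The main subtlety is the reduction in the first step. One must make sure that the relevant local ring has dimension $\ge 2$, so that depth $\ge 2$ holds by normality. One must also check that local sections on $U$ glue, which is handled by working Zariski-locally around $x$, since away from $x$ nothing changes. Everything else is formal.
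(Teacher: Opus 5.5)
Your proposal is correct and is essentially the paper's proof: the paper also reduces full faithfulness to bijectivity of $\Fcal(X)\to\Fcal(U)$ for the vector bundle $\Fcal=\underline{\Hom}(\Ecal_1,\Ecal_2)$, citing Hartshorne, Exercise III.3.5, which is your depth $\ge 2$ / $(S_2)$ argument. It then treats finite flat group schemes as vector bundles with Hopf-algebra structure maps subject to commutative diagrams, exactly as you do. One correction to your side remark: if $\dim \Ocal_{X,x}=1$ the point is not isolated and the statement genuinely fails (take $X$ the spectrum of a DVR, where $\Ocal(X)\neq\Ocal(U)$), so the hypothesis $\dim\Ocal_{X,x}\ge 2$ must be imposed rather than ``treated trivially''---as you ultimately do, and as the paper tacitly does through the depth hypothesis of the cited exercise.
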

\begin{proof}
    Let $\Ecal_1,\Ecal_2$ be vector bundles on $X$; the Hom sheaf $\Fcal \coloneq \u{\Hom}(\Ecal_1,\Ecal_2)$ is again a vector bundle. We aim to show that the restriction map
    \[\Fcal(X) \longrightarrow \Fcal(U)\]
    is a bijection. This is the content of \cite[Exercice III.3.5]{hartshorneAlgebraicGeometry1977}. Note that normality could be weakened to $X$ having property $(S_2)$.

    The statement on finite flat group schemes follows from the observation that such a group scheme $G$ over $X$ (or $U$) is completely understood by its corresponding $\Ocal_X$-Hopf algebra $\Ocal_G$, which itself is the datum of a vector bundle over $X$ with additional morphisms between vector bundles induced from $\Ocal_G$ (multiplication/comultiplication, unit/counit, and so on...) subject to a collection of commutative diagrams.
\end{proof}

\begin{rk}\label{rk:base change of ab scheme not extending}
    Consider the setup of \Cref{lem:setup for abelian scheme not extending}. If $R'$ is a faithfully flat local $R$-algebra of relative dimension 0, with maximal ideal $\mf m'$, let $U' \coloneq U \times_R \Spec \, R' = \Spec \, R' \setminus \{\mf m'\}$. Then $A_U \times_U \Spec \, R' \to U'$ is an abelian scheme which also does not extend into an abelian scheme over $\Spec \, R'$. Indeed, The morphism $\Spec \, R' \to \Spec \, R$ is fpqc, so the assumptions on monomorphisms still hold after base-change to $R'$. Let $D', H', A_U', B'$ denote the group schemes base-changed to $R'$, then $A_U'$ coincides with $B'_U / D'_U$, and the conclusion follows from \Cref{lem:setup for abelian scheme not extending}.

    We will mainly use this fact to replace $R$ with an étale extension so as to allow for arbitrary level structure on $A_U$.
\end{rk}

Disregarding polarizations for now, the proof of \Cref{prop:ab scheme over formally smooth base} is the following. According to \cite[Theorem 28 (i)]{vasiu2010purity} and its proof we can take the power series ring $R = \Ocal_F[[T]]$ and find a homomorphism of group schemes $D \to H$ over $R$ satisfying the assumptions of \Cref{lem:setup for abelian scheme not extending}, hence an abelian scheme $A_U \to U$ which does not extend to an abelian scheme over $\Spec \, R$. By \Cref{rk:base change of ab scheme not extending} we can replace $R$ with an appropriate étale extension of local rings to satisfy the monodromy condition on $A_{\overline{\eta}}[N]$.

To find an example with degree-$p^2$ polarization, we take a deeper dive into the proof of \cite[Theorem 28 (i)]{vasiu2010purity} and argue in three steps. First we will see that $D$ can be arranged to be of order $p$. Second, we shall observe that the abelian scheme $B \to U$ in which $H$ embeds can be chosen to be principally polarized. Lastly, we will show that under these conditions, the dual abelian scheme $A_U^{\vee} \to U$ has degree-$p^2$ polarization and does not extend to $\Spec \, R$.

\medskip

\textbf{Step 1.} We argue that $D$ can be chosen to have order $p$. Set $R \coloneq \Ocal_F[[T_1]]$. Let $k$ denote the residue field of $F$, so that $\Ocal_F / W(k)$ is totally ramified of index $\ge p$. In particular, we can write $\Ocal_F \simeq W(k)[T_2]/(E(T_2))$ where $E(T_2)$ is an Eisenstein polynomial. Multiplying by a unit, we may assume its constant coefficient is $p$, so we can write $E(T_2) = h-p$ where $h \in (T_2) \subseteq W(k)[T_2]$. Now, if we write $\mf{S} \coloneq W(k)[[T_1,T_2]]$, we have shown that
\[R \simeq \mf{S} / (h-p)\]  
where $h \in (T_1,T_2) \setminus p \mf{S}$ is such that the reduction $\overline{h}$ of $h$ modulo $p$ is divisible by $T_2^p$. This shows that \cite[Theorem 28 (i)]{vasiu2010purity} applies, the proof of which finds a homomorphism of group schemes $D \to H$ over $R$ satisfying the assumptions of \Cref{lem:setup for abelian scheme not extending}. Let us sketch this construction to argue that $D$ can be chosen of order $p$. The key ingredient is this.

\begin{fact}[\cite{lauFrames}, Theorem 10.7]
    Let $R$ be a complete regular local ring with perfect residue field $k$ of characteristic $p>0$. Thus we can write $R = \mf{S} / E \mf{S}$ where
    \[\mf{S} = W(k)[[T_1,\dots,T_r]]\]
    and $E \in \mf{S}$ is a power series with constant term $p$.

    Then the category of connected finite flat group schemes over $R$ of $p$-power order is equivalent\footnote{For $p \neq 2$, \cite[Theorem 1.2]{lauFrames} states an equivalence between finite flat group schemes and Breuil modules. The adjectives \emph{connected} and \emph{nilpotent} allow the result to extend to characteristic 2.} to the category of nilpotent Breuil modules relative to $\mathfrak{S}$.
\end{fact}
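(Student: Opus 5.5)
We do not reprove this from scratch; the plan is to deduce it from the corresponding classification of $p$-divisible groups, following the strategy of \cite{lauFrames}. Set $\sigma$ to be a Frobenius lift on $\mf S = W(k)[[T_1,\dots,T_r]]$ (say $T_i \mapsto T_i^p$) and consider the frame $\mathscr B = (\mf S, E\mf S, R, \sigma, \sigma_1)$ with $\sigma_1(Ex) = \sigma(x)$ up to the unit $\sigma(E)/p$. The first step is to construct a comparison morphism of frames from $\mathscr B$ to the Zink frame of Witt vectors $\mathbb W(R)$ (a ring homomorphism $\varkappa:\mf S \to W(R)$ lifting $R$, compatible with Frobenius, obtained from the Cartier map $x \mapsto \sigma$-adic expansion). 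The key point, which is Lau's ``crystalline'' deformation argument, is to check that $\varkappa$ is a $u$-frame homomorphism which is \emph{crystalline}, i.e. induces an equivalence on windows; this is proved by induction along the nilpotent thickenings $\mf S/(T_1,\dots,T_r)^n$, using that the kernels carry divided-power-like nilpotent structure, and is where the hypothesis that the constant term of $E$ is $p$ (and nilpotence for $p=2$) is used.

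Granting this, Zink--Lau's theorem that connected (formal) $p$-divisible groups over $R$ are equivalent to nilpotent Dieudonné displays (windows over $\mathbb W(R)$) yields an equivalence between connected $p$-divisible groups over $R$ and nilpotent Breuil windows over $\mathscr B$, i.e. free $\mf S$-modules $M$ with $\sigma$-linear $\phi$ whose linearization has cokernel killed by $E$.

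The second step passes from $p$-divisible groups to finite flat group schemes. Any connected finite flat $G$ of $p$-power order over $R$ admits, by Raynaud's embedding (via an abelian scheme, or directly over the complete local base), a resolution $0 \to G \to \mathcal G_0 \to \mathcal G_1 \to 0$ by connected $p$-divisible groups, which can be chosen functorially up to homotopy. Applying the window functor gives an injective map of Breuil windows $M_1 \to M_0$ whose cokernel is a Breuil module (an $\mf S$-module of projective dimension $\le 1$ killed by a power of $p$, with $\phi$); one defines the functor $G \mapsto \mathrm{coker}$ and checks independence of the resolution and full faithfulness by comparing two resolutions through a common refinement. Essential surjectivity: every nilpotent Breuil module admits a presentation by nilpotent windows, which one realizes by $p$-divisible groups and takes the kernel of the corresponding isogeny.

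The main obstacle is the first step, showing the frame homomorphism $\mathscr B \to \mathbb W(R)$ induces an equivalence of (nilpotent) windows; I would attack it via Lau's deformation criterion for frames, reducing to the statement that $\sigma_1$ is ``topologically nilpotent'' on the relevant kernel, verified by explicit estimates on $T$-adic filtrations.
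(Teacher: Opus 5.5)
The paper gives no proof of this statement (it simply imports Lau's theorem), and your outline is essentially Lau's own proof: the Breuil frame $(\mf S,E\mf S,R,\sigma,\sigma_1)$, a Frobenius-compatible $u$-frame homomorphism $\varkappa$ into a Witt-vector frame shown crystalline by deformation along the thickenings $\mf S/(T_1,\dots,T_r)^n$, Zink's classification of formal $p$-divisible groups, and then the passage to connected finite flat group schemes through isogenies of connected $p$-divisible groups. Two details to tidy up: $\sigma_1$ is defined by $\sigma_1(Ex)=\sigma(x)$ exactly, and the unit $u=f_1(\varkappa(E))$ enters only through $f_1\circ\varkappa=u\cdot\varkappa\circ\sigma_1$; it equals $\varkappa(\sigma(E))/p$ computed in $W(R)$, not in $\mf S$, where $\sigma(E)/p$ need not exist, and it is a unit precisely because $\varkappa(E)\mapsto p$ in $W(k)$. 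Also, for $p=2$ in mixed characteristic Zink's ring $\mathbb W(R)$ is not $V$-stable, so there you must run the argument with the Witt frame on $W(R)$ and Zink's nilpotent displays.
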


Following \cite[Definition 8.4]{lauFrames}, a \emph{Breuil module relative to $\mf{S} \to R$} is a triple $(M,\varphi,\psi)$ consisting of 
\begin{itemize}
    \item a finitely generated $\mf{S}$-module $M$ anihilated by a power of $p$, and of projective dimension $\le 1$,
    \item maps $\varphi:M \to M$ and $\psi:M \to M$ which are $\sigma^{-1}$ and $\sigma$-linear respectively, where $\sigma:\mf{S} \to \mf{S}$ is some appropriate lift of the Frobenius, satisfying
    \[\psi \circ \varphi = \varphi \circ \psi = E.\]
\end{itemize}
A Breuil module is \emph{nilpotent} if $\varphi$ or $\psi$ is nilpotent modulo the maximal ideal of $\mf{S}$. When $R = \Spec \, k$, Breuil modules relative to $W(k) \to k$ recover the more familiar notion of Dieudonné modules. In our setting we consider the case $r = 2$ and the map
\[\mf{S} \to R \simeq \mf{S} / (h-p)\]
is the quotient map. In the proof of \cite[Theorem 28 (i)]{vasiu2010purity}, the authors define an epimorphism $H^t \to D^t$ by constructing nilpotent Breuil modules and an appropriate morphism between them. In particular, the Breuil module for $D$ has underlying module $\mf{S} / p \mf{S}$. This implies that $D$ has order $p$. To see this, observe that it suffices to prove it for the fiber of $D$ above the closed point $s: \Spec \, k \to \Spec \, R$. The Dieudonné module of $D_s$ is the base change
\[\mf{S} / p \mf{S} \otimes_{\mf{S}} W(k) \simeq k\]
which is a $W(k)$-module of length 1. Thus $D_s$ has order $p$. The same argument shows that $H$ has order $p^3$, although we make no use of it.

\medskip

\textbf{Step 2.} We argue that $H$ embeds in a principally polarized abelian scheme. This follows from the following theorem of Raynaud.

\begin{fact}[\cite{berthelotTheorieDieudonneCristalline1982}, Théorème 3.1.1]
   Let $S$ be a noetherian local scheme, $G$ a finite flat group scheme over $S$. Then there exists a principally polarized abelian scheme $B \to S$ in which $G$ embeds as a closed subgroup scheme.
\end{fact}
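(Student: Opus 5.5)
The statement is Raynaud's theorem as cited from Berthelot–Breen–Messing. I would prove it in two stages: first embed $G$ into some abelian scheme, then upgrade to a principally polarized one.

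\textbf{Stage 1: embedding $G$ into a projective abelian scheme.} Since $S$ is local noetherian and $G$ is finite flat, the Hopf algebra $\Ocal_G$ is a free $\Ocal_S$-module of rank $n$. The regular representation then gives a closed immersion of group schemes $G \hookrightarrow \GL_n$, the usual Cayley-type embedding.

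Next I would find a smooth projective scheme $P \to S$, say a suitable complete intersection in a projective space equipped with a free $G$-action, such that the quotient $P/G$ is again smooth and projective. The standard trick is to take $P$ inside a product of projective spaces, remove the locus with nontrivial stabilizers, and choose a generic complete intersection of dimension $\ge 2$ avoiding that locus. Such a choice exists because the bad locus has high codimension, using Bertini over an infinite or large residue field after a finite étale extension of the local base.

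I would then arrange $P$ to have trivial Picard scheme in a suitable sense, so that its Albanese (equivalently, the dual of $\mathrm{Pic}^0$) behaves well. The $G$-torsor $P \to P/G$ is classified by a class in $H^1(P/G, G)$. Via Cartier duality and the Albanese $\mathrm{Alb}(P/G)$, this yields an injection $G \hookrightarrow \mathrm{Alb}$ or into $\mathrm{Pic}$ of an abelian scheme.

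More concretely, I would embed $G^D$ into $\mathrm{Pic}(P/G)$ via the torsor, so that $G^D$ sits in $\mathrm{Pic}^0$. Dualizing then embeds $G$ into the Albanese abelian scheme $B_0$. Injectivity comes from the torsor being geometrically connected, which is where the complete intersection condition is used.

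\textbf{Stage 2: making the polarization principal.} Given $G \subseteq B_0$ with $B_0$ projective, choose a polarization $\lambda_0$. I would apply Zarhin's trick: the abelian scheme $B = (B_0 \times B_0^{\vee})^4$ carries a principal polarization. Moreover, $G$ embeds into $B$ as a closed subgroup through the first factor $B_0$.

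\textbf{Main obstacle.} I expect the hardest step to be Stage 1 over a general noetherian local base rather than over a field, namely producing the smooth projective $P$ with free action and connected fibres uniformly over $S$. To handle this, I would first try to reduce to $S$ complete or henselian with infinite residue field. Then I would construct $P$ via the projectivization of a representation as in Serre's construction for finite groups, and descend if needed.
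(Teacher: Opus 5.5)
Your Stage 2 is sound, but Stage 1 never produces a positive-dimensional abelian scheme. Take $P$ a smooth complete intersection of relative dimension $\ge 2$, so that it has trivial Picard scheme and simply connected geometric fibres, as you want, and let $G$ act freely. Each geometric fibre $X_{\bar s}$ of $X=P/G$ is the quotient of the simply connected $P_{\bar s}$ by a free action of a finite group scheme. Hence it has finite étale fundamental group, so $b_1(X_{\bar s})=0$, $\mathrm{Alb}(X_{\bar s})=0$ and $(\mathrm{Pic}^0_{X_{\bar s}})_{\mathrm{red}}=0$.

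The homomorphism $G^D\to\mathrm{Pic}_{X/S}$ coming from the torsor is indeed injective. However, it lands in $\mathrm{Pic}^\tau_{X/S}$, whose fibres are zero-dimensional: these are torsion classes in Néron--Severi, exactly as in Serre's construction of varieties with prescribed finite $\pi_1$. They do not lie in an abelian scheme. So the step ``$G^D$ sits in $\mathrm{Pic}^0$; dualize into the Albanese'' fails precisely because of the hypotheses you impose on $P$.

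Two further problems. In relative dimension $\ge 2$ over a mixed-characteristic local base, $\mathrm{Pic}^0_{X/S}$ need not even be smooth, so there is no a priori abelian ``Albanese scheme'' to use. And even granting $G^D\subset A^\vee$ with $A$ abelian, dualizing the isogeny $A^\vee\to A^\vee/G^D$ embeds $G$ into $(A^\vee/G^D)^\vee$, not into $A$.

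The missing idea, which is Raynaud's and is what the paper's remark refers to, is to cut down to a \emph{curve}, for which $\mathrm{Pic}^\tau=\mathrm{Pic}^0$ is the Jacobian. Let the Cartier dual $G^D$ act linearly on $\mathbb{P}(V)$, with $V$ a sum of enough copies of the regular representation that the non-free locus has codimension $\ge 2$ in every fibre. This uses that $G$ is commutative, which is implicit in the statement, since closed subgroups of abelian schemes are commutative. Choose a smooth complete intersection curve $C\subset\mathbb{P}(V)/G^D$ missing the image of the non-free locus, by a Bertini argument on the closed fibre followed by spreading out over the local base $S$.

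Its preimage $\tilde C\to C$ is a $G^D$-torsor and a complete intersection curve in $\mathbb{P}(V)$, so $H^0(\tilde C_{\bar s},\mathcal{O})=k(\bar s)$. This forces the induced map $G=(G^D)^D\to\mathrm{Pic}^0_{C/S}=J_C$ to be a monomorphism on the closed fibre: a nontrivial kernel $H$ would make $C_{\bar s}\times H^D$ a quotient of $\tilde C_{\bar s}$. Since $S$ is local and $G$ is finite, Nakayama then makes it a closed immersion.

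The Jacobian $J_C$ carries its canonical principal polarization, and this is what the paper invokes. Your Zarhin step is a valid alternative for the polarization: it works over any base, as the paper itself uses for $(A_U\times A_U^\vee)^4$. It has the merit of upgrading the bare conclusion of the reference (a projective abelian scheme) without opening up Raynaud's proof. But it cannot substitute for the curve in Stage 1.
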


In the reference $B \to S$ is only claimed to be projective with no mention of polarization. In the proof, $B$ is constructed as the Jacobian of a relative smooth curve over $S$, hence it is principally polarized.

\medskip

\textbf{Step 3.} Let $A_U \coloneq B_U / D_U$ as in \Cref{lem:setup for abelian scheme not extending}; we argue that the dual $A_U^{\vee} \to U$ is a degree-$p^2$ polarized abelian scheme that does not extend to $\Spec \, R$. Consider the dual isogeny
\[q^{\vee}:A_U^{\vee} \to B_U^{\vee}\]
to the quotient $q:B_U \to A_U$. A priori this is a morphism of algebraic spaces over $\Spec \, R$, but $B_U$ is principally polarized, hence $B_U^{\vee}$ is a scheme, and so is its cover $A_U^{\vee}$. As $q$ and $q^{\vee}$ have same degree, which is the order of $D_U$, we find that the pullback of the principal polarization on $B_U^{\vee}$ through $q^{\vee}$ defines a degree-$p^2$ polarization on $A_U^{\vee}$. If $A_U^{\vee}$ were to extend to $\Spec \, R$, its dual would extend $A_U$, which is impossible according to \Cref{lem:setup for abelian scheme not extending}.

\medskip

It remains to deduce a principally polarized example. Let $A_U$ denote the abelian scheme from above, the Zarhin trick states that $(A_U \times A_U^{\vee})^4\to U$ is principally polarized. The following lemma shows that it does not extend over $\Spec \, R$. This concludes the proof of \Cref{prop:ab scheme over formally smooth base}.

\begin{lem}
    Let $W$ be a regular scheme, and $V \subseteq W$ a dense open subscheme. Let $A_V,B_V$ be abelian schemes over $V$. If $C_V \coloneq A_V \times B_V$ extends to an abelian scheme $C$ over $W$, then so do $A_V$ and $B_V$.
\end{lem}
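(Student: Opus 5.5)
By symmetry it suffices to treat $A_V$. The plan is to realize $A_V$ as the image of an idempotent endomorphism of $C_V$. We then extend that endomorphism to $C$ and take its image there.

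First consider the endomorphism $e_V \coloneq \iota_A \circ \mrm{pr}_A$ of $C_V = A_V \times B_V$, where $\iota_A$ is the inclusion $a \mapsto (a,0)$ and $\mrm{pr}_A$ is the projection. It satisfies $e_V^2 = e_V$. Since $W$ is regular (in particular normal) and $V$ is dense, homomorphisms between abelian schemes extend uniquely across $V$: this is the extension result \cite[IX Corollaire 1.4]{raynaudFaisceauxAmplesSchemas1970} already used in the proof of \Cref{lem:setup for abelian scheme not extending}. So $e_V$ extends uniquely to an endomorphism $e$ of $C$. The identity $e^2 = e$ holds over $V$, and homomorphisms of abelian schemes that agree on a dense open subset of a reduced base are equal (the rigidity/separatedness argument). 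Hence $e^2 = e$, and likewise $1-e$ is an idempotent with $e(1-e) = 0$.

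Next, define $A \coloneq \ker(1-e) \subseteq C$, which is a closed subgroup scheme. The key point is that $A$ is an abelian scheme. The morphism $e: C \to C$ factors through $A$, because $(1-e)e = 0$, and the composite $A \hookrightarrow C \xrightarrow{e} A$ is the identity. So $A$ is a retract of $C$ in the category of $W$-schemes. Retracts of smooth morphisms are smooth: formal smoothness passes to retracts, and local finite presentation holds since $A$ is a closed subscheme of $C$, which is of finite type over $W$ and hence of finite presentation over noetherian $W$. Properness holds because $A$ is closed in $C$. For geometrically connected fibers, on each geometric fiber $A_{\bar s}$ is the image of the connected variety $C_{\bar s}$ under $e$. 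Therefore $A$ is a smooth proper group scheme with connected fibers, that is, an abelian scheme. Over $V$ we get $A|_V = \ker(1-e_V) = A_V \times 0 \simeq A_V$, so $A$ extends $A_V$. The same argument applied to $1-e$ extends $B_V$.

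The main obstacle is the step showing that $\ker(1-e)$ is an abelian scheme. The retract argument is the cleanest route. If that route raises doubts, there is an alternative: use that $C \to A \times \ker(e)$, given by $(e,1-e)$, is an isomorphism with inverse given by addition, so that $C \simeq A \times_W \ker(e)$. Then flatness and smoothness of $A$ follow from those of $C$ by faithfully flat descent along the other factor, which has a section.
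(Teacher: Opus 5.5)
Your proof is correct and follows the same route as the paper: you extend the idempotent projectors of $C_V$ to idempotent endomorphisms of $C$ using the extension theorem for homomorphisms of abelian schemes over a normal (here regular) base, then take kernels or images. Your retract argument also spells out why $\ker(1-e)$ is an abelian scheme, a point the paper leaves implicit.
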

\begin{proof}
    Let $p_U,q_u$ be the idempotent morphisms from $C_V \to C_V$ with images $A_V$ and $B_V$ respectively. As all these abelian schemes are regular, the idempotents extend to idempotent morphisms on $C$ (see \cite[8.4, Corollary 6]{boschNeronModels1990a}). Then the images (or kernels) of $p$ and $q$ define the sought-after abelian schemes.
\end{proof}

\subsection{Abelian schemes with pointwise but not global good reduction}\label{subsec:abelian schemes with PGR not GGR}
Consider $F$ and $N$ as in the previous paragraph. We first argue that \Cref{prop:ab scheme over formally smooth base} also holds when $R$ is a smooth local $\Ocal_F$-algebra.

\begin{prop}\label{prop:Vasiu finite type}
    There exists a smooth local $\Ocal_F$-algebra $R$ with maximal ideal $\mf m$ and an abelian scheme $A_U \to U \coloneq \Spec \, R \setminus \{\mf m\}$, together with a degree-$p^2$ (resp. principal) polarization $\lambda:A_U \to A_U^{\vee}$, such that $A_U$ does not extend to an abelian scheme over $\Spec \, R$.

    \noindent Moreover, we can arrange for $A_U \to U$ to have any prime-to-$p$ level structure.
\end{prop}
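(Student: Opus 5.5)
The plan is to descend the construction of \Cref{prop:ab scheme over formally smooth base} from the local étale $\Ocal_F[[T]]$-algebra $R$ to a local ring of a smooth finite type $\Ocal_F$-scheme. The natural candidate is $R_0 \coloneq \Ocal_F[T]_{(\mf p, T)}$, the local ring of $\A^1_{\Ocal_F}$ at the origin of the special fiber, where $\mf p$ denotes the maximal ideal of $\Ocal_F$. Its completion is $\Ocal_F[[T]]$. I will rerun the construction over the henselization $R_0^h$, or over a local étale $R_0$-algebra, and then descend by limit arguments. A local ring of an étale $R_0$-algebra is again a smooth local $\Ocal_F$-algebra, which is what the statement asks for.

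\textbf{Step 1: descend the group-scheme data.} The construction only needs a homomorphism $D \to H$ of finite flat group schemes that is a monomorphism on the punctured spectrum but not globally, together with an embedding of $H$ into a principally polarized abelian scheme $B$.
\begin{itemize}
    \item Over $\widehat{R_0} = \Ocal_F[[T]]$ these data come from Steps 1--2 of the proof of \Cref{prop:ab scheme over formally smooth base}.
    \item Apply Artin approximation to $R_0^h$, which is excellent henselian. The data $(D, H, D \to H, B, H \hookrightarrow B, \text{polarization})$ is a finite presentation moduli problem, described by finitely many equations on Hopf-algebra structure constants, a Jacobian of a curve, and so on. So it can be approximated by data over $R_0^h$ that agree with the original modulo $\mf m^c$ for any chosen $c$.
    \item Alternatively, build $B$ directly over $R_0^h$ via Raynaud's theorem, and approximate only $D \to H$.
\end{itemize}

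\textbf{Step 2 (the main obstacle): preserve the non-monomorphism and the monomorphism on $U$.} Failing to be a monomorphism is detected on the closed fiber: $D_s \to H_s$ has a nontrivial kernel. This property is preserved once the approximation agrees modulo $\mf m$, so it is the easy half.

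Being a monomorphism over $U$ is harder. The kernel $\ker(D \to H)$ is a closed subgroup scheme of $D$, which has order $p$. Its support is a closed subset, and the condition is that this support equals exactly $\{\mf m\}$. A single element of $\mf m$ does not cut out the closed point, so this is not an open condition on the approximation. The best handle is the concrete Breuil-module description. The kernel is governed by the vanishing of explicit elements, roughly $T_1$, $T_2$ and $h-p$ in $\mf S$, generating an $\mf m$-primary ideal. Requiring the approximated elements to still generate an $\mf m$-primary ideal is an open condition, since $\mf m$-primary means containing $\mf m^c$ for some $c$. So I would try to phrase the monomorphism-on-$U$ condition as ``the ideal of the kernel contains $\mf m^c$'' and choose the approximation order accordingly.

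A cleaner alternative is to apply \Cref{lem:setup for abelian scheme not extending} directly over $R_0^h$. The Breuil–Kisin/Lau classification works over complete rings, but the explicit Breuil modules of Vasiu–Zink use only polynomial data in $T_1, T_2$. I would check whether they already define finite flat group schemes over $R_0^h$ via a Zink-display description. If so, no approximation is needed for this step.

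\textbf{Step 3: pass to finite type and conclude.} $R_0^h$ is a filtered colimit of local rings $R$ of étale $R_0$-algebras, and the finitely presented data descends to some such $R$. Define $A_U \coloneq B_U/D_U$ over $U = \Spec R \setminus \{\mf m\}$, and apply \Cref{lem:setup for abelian scheme not extending}; this needs $R$ to be regular of dimension $2$, which holds. Then:
\begin{itemize}
    \item \textbf{Degree-$p^2$ polarization.} Step 3 of the previous proof goes through verbatim and gives the degree-$p^2$ polarized dual $A_U^\vee$.
    \item \textbf{Principal polarization.} The Zarhin trick together with the product lemma gives the principally polarized example.
    \item \textbf{Level structure.} Trivial monodromy of $A[N]$ is obtained by a further finite étale extension, using \Cref{rk:base change of ab scheme not extending}. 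The resulting local ring is still smooth over $\Ocal_F$.
\end{itemize}
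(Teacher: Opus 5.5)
Your argument is correct, but it takes a different route from the paper's.

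\textbf{The paper's route.} It applies Popescu's theorem directly to the regular homomorphism $\Ocal_F \to \tilde R$, where $\tilde R$ is the local étale $\Ocal_F[[T]]$-algebra of \Cref{prop:ab scheme over formally smooth base}. It then descends the \emph{output}: the abelian scheme on the punctured spectrum, with its polarization and level structure. Non-extension costs nothing there, since an extension over the descended base would pull back to one over $\Spec \tilde R$.

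\textbf{Your route.} You approximate the \emph{input} $D \to H$ (and $B$, or rebuild $B$ by Raynaud over $R$), then re-apply \Cref{lem:setup for abelian scheme not extending}. So you must check that its hypotheses survive approximation. That is your Step 2, and it does go through without Breuil modules or displays. Let $I_D \subseteq \Ocal_D$ be the augmentation ideal, and $J$ the ideal generated by the pullback of the augmentation ideal of $\Ocal_H$. The non-trivial part of $\ker(D \to H)$ is the finite $R$-module $I_D/J$. It is the cokernel of a map of free modules whose entries are polynomial in the structure constants. ``Monomorphism on $U$ but not globally'' means exactly that $\mathrm{Fitt}_0(I_D/J)$ is $\mf m$-primary. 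Suppose $\mf m^c \subseteq \mathrm{Fitt}_0 \subseteq \mf m$. Any approximation modulo $\mf m^{c+1}$ still satisfies $\mathrm{Fitt}_0' \subseteq \mf m$ and $\mf m^c \subseteq \mathrm{Fitt}_0' + \mf m^{c+1}$, hence $\mf m^c \subseteq \mathrm{Fitt}_0'$ by Nakayama. Faithful flatness of $R \to R_0^h$ preserves this in the final descent. Your literal phrasing ``the ideal of the kernel contains $\mf m^c$'' cannot hold, since that ideal lies in $I_D$; the Fitting ideal is the right object.

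\textbf{What each route buys.} In yours, everything you descend lives on all of $\Spec R_0^h$, and $R$ stays two-dimensional with $\mf m_R R_0^h$ maximal. So $U$ is exactly the preimage of the punctured spectrum. In the paper's limit argument, the Popescu rings $R_i$ may have larger dimension, and their punctured spectra need not form a system with limit $\tilde U$. One really has to remove the vanishing locus of lifts of generators of $\tilde{\mf m}$, which can be more than the closed point. The paper's route, in exchange, is shorter and never looks inside the Vasiu--Zink construction.
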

\begin{proof}
    We denote by $\tilde R$ the finite local étale $\Ocal_F[[T]]$-algebra with maximal ideal $\tilde{\mf m}$, by $\tilde{U} \coloneq \Spec \, \tilde{R} \setminus \{\tilde{\mf m}\}$ the open subset and by $f:\tilde{A}_U \to \tilde{U}$ an abelian scheme provided by \Cref{prop:ab scheme over formally smooth base}. 
    We wish to descend all of this data over a smooth $\Ocal_F$-algebra.  

    We first argue that $\Ocal_F \to \tilde{R}$ is a regular ring homomorphism, i.e. that $\Spec \, \tilde{R} \to \Spec \, \Ocal_F$ is flat with geometrically regular fibers. To do so we may assume $\tilde{R} = \Ocal_F[[T]]$.
    As $\tilde{R}$ is torsion-free over the DVR $\Ocal_F$, the morphism is flat.
    It is clear that $k[[T]]$ is geometrically regular, where $k$ is the residue field of $\Ocal_F$. 
    It remains to show that $W = \Spec (\Ocal_F[[T]] \otimes_{\Ocal_F} F) = \Spec (\Ocal_F[[T]][1/p])$ is geometrically regular, i.e. regular since $F$ is of characteristic zero. 
    As $\tilde{R}$ is a UFD, so is its localization $\Ocal_F[[T]][1/p]$. Thus $W$ is a normal noetherian $F$-scheme of dimension 1, hence it is regular, as desired.

    We may now apply Popescu's theorem, see \cite[\href{https://stacks.math.columbia.edu/tag/07GC}{Tag 07GC}]{stacks-project}: there is a cofiltered system $(R_i)$ of smooth $\Ocal_F$-algebras such that $\tilde{R} = \varinjlim R_i$. All maps between the $R_i$ and $\tilde{R}$ factor through local and integral rings, so we may assume that the $R_i$ are local integral with maximal ideal $\mf{m}_i$, and set $U_i \coloneq \Spec \, R_i \setminus \{\mf{m}_i\}$. 
    We let $p_i:\tilde{U} \to U_i$ be the maps induced from the projective system. 
    We wish to descend $f$ to an abelian scheme over some $U_i$, along with the extra data.

    Since the $U_i$ are Noetherian, by \cite[\href{https://stacks.math.columbia.edu/tag/0CNL}{Tag 0CNL}]{stacks-project} for large enough $i$ there is a smooth and proper morphism $f_i:A_i \to    U_i$ descending $f$. Using \cite[\href{https://stacks.math.columbia.edu/tag/0CNR}{Tag 0CNR}]{stacks-project} we can assume that the group scheme structure, polarization and level structure descend to $f_i$, thus $f_i$ defines an abelian scheme. Let $A_U=A_i$ and $U = U_i$ for this appropriate choice of $i$.

    If $A_U \to U$ were to extend to an abelian scheme over $\Spec \, R$, its pullback to $\Spec \, \tilde{R}$ would extend $\tilde{A}_U \to \tilde{U}$, contradicting the definition of $\tilde{A}_U$.
\end{proof}

\begin{cor}\label{cor:counterexample PGR not GGR}
    With notations from \Cref{prop:Vasiu finite type}, set $\Xcal = \Spec \, R$ which is a smooth model over $\Ocal_F$ of its generic fiber $\iota \colon X \hookrightarrow \Xcal$, and set $A_X \coloneq \iota^* A_U$. Then $A_X \to X$ is a degree-$p^2$ (resp. principally) polarized abelian scheme having pointwise good reduction but not global good reduction as an abelian scheme.

    Moreover, we can arrange for $A_X \to X$ to have any prime-to-$p$ level structure.
\end{cor}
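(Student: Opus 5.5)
The proof has two halves: pointwise good reduction, which I would get from the $\ell$-adic Tate module together with Zariski--Nagata purity, and failure of global good reduction, which I would get from the Néron-type extension property of abelian schemes over normal bases combined with \Cref{prop:Vasiu finite type}. Throughout, $R$ is a smooth local $\Ocal_F$-algebra as provided by \Cref{prop:Vasiu finite type}. In particular $R$ is regular and flat over $\Ocal_F$, so $X = \Spec\, R[1/p]$ is a dense open subscheme of $U$. Also $\dim R \ge 2$, since otherwise $U$ would just be the generic point and \Cref{lem:setup for abelian scheme not extending} would be vacuous.

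\textbf{Pointwise good reduction.} Fix $x \in |\Xcal|_\eta^{\mrm{int}}$, realised by some $\Spec\,\Ocal_{E'} \to \Xcal = \Spec\, R$ with generic point landing at $x$. Fix a prime $\ell \neq p$. For each $n$, $A_U[\ell^n] \to U$ is finite étale. Since $R$ is regular local of dimension $\ge 2$ and $U$ is the complement of the closed point, Zariski--Nagata purity (together with normality, to extend the finite étale cover and then its group structure, as in \Cref{fact:purity for finite flat group schemes}) shows that $A_U[\ell^n]$ extends uniquely to a finite étale group scheme over $\Spec\, R$.

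Pulling this extension back along $\Spec\,\Ocal_{E'} \to \Spec\, R$ gives a finite étale extension of $A_{x}[\ell^n]_{E'}$ over $\Ocal_{E'}$. Hence the $\ell$-adic Tate module of $A_{x,E'}$ is unramified, and the Néron--Ogg--Shafarevich criterion shows that $A_{x,E'}$ has good reduction. This settles pointwise good reduction. The argument is uniform: it does not matter whether the closed point of $\Spec\,\Ocal_{E'}$ lands in $U$ or at $\mf m$.

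If good reduction of $Y_x$ is meant over $F(x)$ itself rather than over $E'$, I would instead use that $T_\ell$ is unramified already over $F(x)$. The inertia of $F(x)$ acts through that of $E'$ and the extension of $A_U[\ell^n]$ over $\Spec\,R$ gives the unramifiedness directly. The polarization and the prime-to-$p$ level structure extend over the Néron model by the Néron mapping property.

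\textbf{No global good reduction.} Suppose that $A_X$ extends to an abelian scheme $A \to \Xcal$. Both $A|_U$ and $A_U$ are abelian schemes over the normal scheme $U$ that agree over the dense open subscheme $X$. By \cite[IX Corollaire 1.4]{raynaudFaisceauxAmplesSchemas1970}, applied in both directions, the identity over $X$ extends to mutually inverse homomorphisms, so $A|_U \simeq A_U$. Thus $A$ would extend $A_U$ over $\Spec\, R$, contradicting \Cref{prop:Vasiu finite type}. Since the contradiction concerns the underlying abelian scheme, it persists for any additional structure, so in particular for the polarized abelian scheme with level structure.

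\textbf{Level structure and main obstacle.} The "moreover" clause is inherited directly from \Cref{prop:Vasiu finite type} by restricting the level structure along $X \subseteq U$. The step needing the most care is the purity input. One must check that $\dim R \ge 2$, so that $\mf m$ has codimension at least two, and that the extended covers are étale group schemes; this uses the Zariski--Nagata theorem for regular local rings together with \Cref{fact:purity for finite flat group schemes}.
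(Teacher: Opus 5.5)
Your strategy is the paper's: Zariski--Nagata purity across the closed point of $\Spec R$ followed by Serre--Tate gives pointwise good reduction, and comparing a putative extension with $A_U$ over $U$ rules out global good reduction. In the second half you apply Raynaud's extension of homomorphisms over the normal base $U$ in both directions. The paper instead extends the polarization (\Cref{fact:polarization extends}) and uses separatedness of the stack of polarized abelian schemes (\Cref{prop:morphisms isomorphic over dense subset}). Your variant is slightly cleaner: it never needs the polarization and gives the statement ``as an abelian scheme'' directly.

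One step needs repair. Pointwise good reduction requires $(A_X)_x$ to have good reduction over $F(x)$. Your main argument only gives good reduction of $A_{x,E'}$, i.e.\ potential good reduction at $x$. Your fallback, that the inertia of $F(x)$ acts through that of $E'$, is not valid. $I_{E'}$ is merely an open subgroup of $I_{F(x)}$, and triviality of the action on it yields nothing beyond potential good reduction.

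What makes the argument work is that $\Xcal=\Spec R$ is a scheme. The composite $R\to F(x)\hookrightarrow E'$ equals $R\to\Ocal_{E'}\subseteq E'$, so $R\to F(x)$ lands in $F(x)\cap\Ocal_{E'}=\Ocal_{F(x)}$. Hence $x$ itself extends to a morphism $\Spec \Ocal_{F(x)}\to\Xcal$. Pulling back your extension of $A_U[\ell^n]$ along it shows that the Tate module of $(A_X)_x$ is unramified as a $\mrm{Gal}(\overline{F(x)}/F(x))$-module. Néron--Ogg--Shafarevich then gives good reduction over $F(x)$. The paper's assertion that $\L_x$ is unramified tacitly uses the same observation.

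Finally, your reason for $\dim R\ge 2$ does not work as stated. $R$ is produced by Popescu descent rather than by \Cref{lem:setup for abelian scheme not extending}, and over a DVR the non-extension claim would not be vacuous. The correct reason is that $R\to\tilde R$ is a local map into a two-dimensional ring. Suppose $R$ were a DVR. Take a uniformizer $\varpi$ of $\Ocal_F$ and, after enlarging the index in the Popescu system, a preimage $t\in\mf m$ of $T$. Then $\varpi^b=ut^a$ with $u\in R^\times$ and $a,b\ge 1$, forcing $\varpi^b\in T\tilde R$. This is impossible, because $\tilde R/T\tilde R$ is nonzero and étale over $\Ocal_F$, hence $\varpi$-torsion-free. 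The paper also uses this codimension-two fact without comment.
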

\begin{proof}
    We argue that the polarized abelian scheme $A_X \to X$ does not have global good reduction.
    Assume $A_X \to X$ extends to an abelian scheme $A \to \Xcal$, we argue that this also extends $A_U \to U$, contradicting \Cref{prop:Vasiu finite type}. The polarization on $A_X$ extends to a polarization on $A$ due to \Cref{fact:polarization extends} below. Let $j\colon U \to \Xcal$ be the inclusion of open set, we find that the polarized abelian schemes $A_U \to U$ and $j^* A \to U$ have isomorphic generic fibers. Separatedness of the stack of polarized abelian schemes implies that they are isomorphic (see \Cref{prop:morphisms isomorphic over dense subset} below).
    
    It remains to check that $A_X \to X$ has pointwise good reduction. Let $\l \neq p$ be a prime number, the local system $\L = R^1f_* \Ql$ over $U$ corresponds to an $\l$-adic representation of the étale fundamental group $\pi_1(U)$. As $\Xcal \setminus U$ has codimension at least 2 in $\Xcal$, Zariski-Nagata purity implies that $\L$ extends to a local system on $\Xcal$. In particular, for any $x \in |\Xcal|_{\eta}^{\mrm{int}}$ the local system $\L_x$ is unramified, which by \cite{serre1971good} means that $(A_X)_x$ has good reduction.
\end{proof}

\begin{fact}[\cite{milne1992points}, Proposition 2.14]\label{fact:polarization extends}
        Let $A \to \Xcal$ be an abelian scheme on a normal scheme $\Xcal$, let $\lambda:A_X \to A_X^{\vee}$ be a polarization on $A_X$ where $X \subseteq \Xcal$ is a dense open subscheme. Then $\lambda$ extends to a polarization of $A$.
\end{fact}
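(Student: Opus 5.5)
The statement to prove is \Cref{fact:polarization extends}: a polarization $\lambda$ on $A_X$ extends to a polarization of $A$ over the normal base $\Xcal$. The plan is to extend $\lambda$ first as a homomorphism $A \to A^{\vee}$, and then check that the extension is a polarization fiberwise.

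\textbf{Step 1: extend $\lambda$ as a homomorphism.} I would use the rigidity/extension result already invoked in the proof of \Cref{lem:setup for abelian scheme not extending}, namely \cite[IX Corollaire 1.4]{raynaudFaisceauxAmplesSchemas1970}. It says that over a normal base, a homomorphism between abelian schemes defined over a dense open subscheme extends uniquely to the whole base. Applying it to $\lambda : A_X \to A_X^{\vee}$ gives a unique $\tilde\lambda : A \to A^{\vee}$ restricting to $\lambda$.

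If one prefers to avoid Raynaud's result, there is a more hands-on route. First reduce to $\Xcal$ local, normal and noetherian. Extend $\lambda$ over codimension-one points using the Néron property of abelian schemes over DVRs. Then pass to all of $\Xcal$ by Hartogs-type extension of morphisms into the separated scheme $A^{\vee}$, using normality. Either way, uniqueness follows from separatedness of $A^{\vee}$ together with density of $X$.

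\textbf{Step 2: symmetry.} The extension $\tilde\lambda$ is symmetric, i.e.\ $\tilde\lambda^{\vee} = \tilde\lambda$ under $A \simeq A^{\vee\vee}$. Indeed, both sides are homomorphisms $A \to A^{\vee}$ that agree over the dense open $X$, so they agree everywhere by the uniqueness in Step 1.

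\textbf{Step 3: positivity.} Consider $L = (1,\tilde\lambda)^* \mathcal{P}$, the pullback of the Poincaré bundle. By the standard theory (Mumford, GIT Ch.~6), $\tilde\lambda$ is a polarization exactly when $L$ is relatively ample, equivalently when each geometric fiber $\tilde\lambda_{\bar s}$ is a polarization. It therefore suffices to check this fiberwise.

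Fix $s \in \Xcal$. Since $X$ is dense, choose a DVR $\Spec\, V \to \Xcal$ sending the generic point into $X$ and the closed point to $s$; a normal base is not even needed for this. Pulling back reduces the claim to the following: a symmetric homomorphism over a DVR whose generic fiber is a polarization is a polarization on the special fiber.

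\textbf{Main obstacle: the DVR case.} This is where I expect the real work. I would handle it with the criterion that a symmetric $\mu$ is a polarization iff $\mu$ is an isogeny and $L^\Delta=(1,\mu)^*\mathcal P$ is ample.

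First, $\tilde\lambda$ is an isogeny. Its kernel is quasi-finite generically, and its degree is locally constant, so the special fiber is an isogeny as well. Note that the kernel is flat over the DVR, being a closed subgroup of the same dimension and constant degree.

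Second, ampleness specializes. The pulled-back bundle $L$ is a line bundle on an abelian scheme over $V$. Its Euler characteristic is $\chi = \pm\sqrt{\deg}$, and this is constant in the family. Mumford's vanishing theorem says $L_s$ is ample iff $\chi(L_s)\neq 0$ and the index of $L_s$ is $0$. The index is semicontinuous and specializes, which is what I expect to be the delicate point to verify.

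A simpler alternative is to pass to $L^{\otimes 2}$ and use that the set of points where a fiber is ample is open, applied to the proper scheme $A_V$. Openness alone, however, does not give closedness. So I would instead use that $\tilde\lambda_s$ is a limit of polarizations in the Néron–Severi group. The Néron–Severi group specializes injectively, and the ample cone criterion via $\chi(L^n)>0$ for the relevant degree-$2g$ polynomial, given by Riemann–Roch as $\chi(L^n)=n^g\chi(L)$, together with constancy of cohomology of $L$ (all $H^i$ vanish except $H^0$ by flatness and semicontinuity with $\chi$ constant), forces $L_s$ to be effective and nondegenerate, hence ample.
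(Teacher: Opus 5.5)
The paper gives no argument for this statement beyond the citation to Milne. Your outline is the standard route and is sound: extend $\lambda$ to $\tilde\lambda\colon A\to A^{\vee}$ by \cite[IX Corollaire 1.4]{raynaudFaisceauxAmplesSchemas1970}, get symmetry from uniqueness, then check the polarization condition fibre by fibre by specializing along a DVR from a point of $X$ to $s$.

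The gap is in the DVR case, which you rightly call the crux but do not actually establish.

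First, your isogeny step is circular. Saying ``the degree is locally constant'' and ``the kernel is flat'' presupposes that $\ker\tilde\lambda$ is quasi-finite, which is what has to be shown. Argue instead as follows. The image $\tilde\lambda(A)$ is closed by properness. It contains the generic fibre of $A^{\vee}$, which is dense because $A^{\vee}$ is flat over $V$. Hence $\tilde\lambda_s$ is a surjective homomorphism between abelian varieties of the same dimension, i.e.\ an isogeny. Alternatively, extend some $\mu_\eta$ with $\mu_\eta\lambda_\eta=[n]$ by the N\'eron property, and uniqueness gives $\ker\tilde\lambda_s\subseteq A_s[n]$.

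Second, your positivity paragraph rests on false statements.
\begin{itemize}
\item Positivity of $\chi(L^n)$ does not detect ampleness. For $M$ ample on an abelian surface, $\chi(M^{-n})=n^2\chi(M)>0$, although $M^{-1}$ is anti-ample.
\item The function $n\mapsto n^g\chi(L)$ has degree $g$, not $2g$.
\item Semicontinuity together with constancy of $\chi$ cannot give $H^i(L_s)=0$ for $i>0$, since semicontinuity only lets cohomology grow on the special fibre.
\end{itemize}
None of this is needed, and neither is index theory or specialization of N\'eron--Severi groups. Upper semicontinuity gives $h^0(L_s)\ge h^0(L_\eta)=\chi(L_\eta)>0$, so $L_s$ is effective. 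The group $K(L_s)=\ker(2\tilde\lambda_s)$ is finite by the isogeny step. An effective line bundle with finite $K(L)$ on an abelian variety is ample (Mumford, \emph{Abelian Varieties}, \S6, Application~1). That closes the DVR case, and with it your proof.

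Two smaller corrections. In your alternative to Step 1, ``Hartogs-type extension of morphisms into the separated scheme $A^{\vee}$'' is false as a general principle; for instance $\mathbb{A}^2\setminus\{0\}\to\mathbb{P}^1$ does not extend. What makes the codimension-$\ge 2$ step work is that $A^{\vee}$ is a smooth separated group scheme, via Weil's extension theorem \cite[4.4, Theorem 1]{boschNeronModels1990a}. Also, producing a DVR through $s$ whose generic point lands in $X$ requires $\Xcal$ to be locally noetherian (otherwise you must work with valuation rings). Normality is indeed irrelevant there, but density alone does not suffice.
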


For this next proposition we deal with Deligne-Mumford stacks over the pro-étale site over a base scheme $S$. The reader is refered to \Cref{subsec:alg stacks on the big pro-étale site}, and more precisely \Cref{dfn:algebraic and DM stacks}, for the meaning of these objects.

\begin{prop}\label{prop:morphisms isomorphic over dense subset}
    Let $\Xscr$ be a separated Deligne-Mumford stack over some base scheme $S$. Let $Y$ be a locally Noetherian, normal $S$-scheme and $U$ a dense open subscheme of $Y$. Let $f,g:Y \to \Xscr$ be two morphisms such that $f_{|U} \simeq g_{|U}$. Then $f \simeq g$.
\end{prop}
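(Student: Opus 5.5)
The plan is to translate "$f \simeq g$" into the existence of a section of an Isom algebraic space, and then extend that section from $U$ to $Y$.

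Since $\Xscr$ is a separated Deligne-Mumford stack, the diagonal $\Delta:\Xscr \to \Xscr \times_S \Xscr$ is representable, finite and unramified. Consider the fiber product
\[I \coloneq \uIsom_Y(f,g) = Y \times_{(f,g),\,\Xscr \times_S \Xscr,\,\Delta} \Xscr.\]
The projection $\pi:I \to Y$ is then finite and unramified; in particular $I$ is a scheme, finite over $Y$. An isomorphism $f \simeq g$ over an open $V \subseteq Y$ is the same thing as a section of $\pi$ over $V$. The hypothesis therefore provides a section $s_U:U \to I$, and the task is to extend it to a section $s:Y \to I$. Uniqueness is not needed, although it would follow because $\pi$ is separated and $U$ is dense in the reduced scheme $Y$.

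The construction of the extension is as follows. The question is local on $Y$, so assume $Y$ is affine, and also connected; as a normal locally Noetherian scheme, $Y$ is then integral. Let $Z \subseteq I$ be the scheme-theoretic closure of $s_U(U)$, taken with its reduced structure. Then $Z$ is integral and $Z \to Y$ is finite, since $Z$ is closed in the finite $Y$-scheme $I$. This map is birational, because $Z \cap \pi^{-1}(U) = s_U(U)$ maps isomorphically onto $U$; that equality uses that $s_U$ is a closed immersion into $\pi^{-1}(U)$, which holds since $\pi$ is separated. It is also surjective, since it is finite and dominant. We thus have a finite birational morphism onto a normal integral scheme, and such a morphism is an isomorphism: $\Ocal_Y \to \pi_*\Ocal_Z$ is an inclusion of domains with the same fraction field, and $\pi_*\Ocal_Z$ is integral over $\Ocal_Y$, which is integrally closed. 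The inverse $Y \xrightarrow{\sim} Z \hookrightarrow I$ is the desired section $s$, and it restricts to $s_U$ on $U$. The local sections obtained in this way glue, because sections agreeing on a dense open of a reduced scheme agree on the nose.

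The main obstacle is the bookkeeping around $I$. One must check that, with the paper's conventions from \Cref{dfn:algebraic and DM stacks} over the pro-étale site, the diagonal of a separated DM stack is finite and representable by schemes, so that $I$ is a finite $Y$-scheme rather than merely an algebraic space. If only representability by algebraic spaces is available, I would argue that a finite morphism of algebraic spaces to a scheme is affine, hence $I$ is a scheme. It then remains to see that the closure of $s_U(U)$ behaves as claimed, for which it suffices that $\pi$ be separated. The commutative algebra step at the end is standard.
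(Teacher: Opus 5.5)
Your proposal is correct and follows the paper's proof essentially step by step. Both arguments form the Isom scheme $I = \uIsom(f,g)$, finite over $Y$ (the paper cites Laumon--Moret-Bailly for $I$ being a scheme), take the scheme-theoretic closure $Z$ of the section over $U$, and conclude that $Z \to Y$ is a finite birational morphism onto a normal integral scheme, hence an isomorphism. The only differences are cosmetic: the paper gets birationality from the section being an open immersion (unramified plus separated) and works on irreducible components, whereas you use that the section is closed in $\pi^{-1}(U)$ and localize to affine connected opens before gluing.
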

\begin{proof}
    Let $I = \u{\Isom}(f,g)$ the algebraic stack obtained by the pullback of the diagonal $\Delta$
     \[
     \begin{tikzcd}
    I \arrow[r, "\theta"] \arrow[d]     & Y \arrow[d, "f \times g"] \\
    \Xscr \arrow[r, "\Delta"] & \Xscr \times_S \Xscr.     
    \end{tikzcd}
    \]
    By assumption, $\Delta$, and hence also $\theta$, is finite, so by \cite[Théorème A.2]{laumonChampsAlgebriques2000} $I$ is a scheme. The assumption $f_{|U} \simeq g_{|U}$ means that $\theta$ has a section over $U$ $s:U \to I_{|U}$ which is an open immersion by \cite[\href{https://stacks.math.columbia.edu/tag/024T}{Tag 024T}]{stacks-project}. Let $Z$ denote the scheme-theoretic closure of $s(U)$ in $I$. We want to show that $\theta_{|Z}:Z \to Y$ is an isomorphism. Working componentwise, we can assume that $Y$, and so $U$ and $Z$, are irreducible. Since $\theta(Z)$ contains $U$, $\theta_{|Z}$ is dominant. By \cite[\href{https://stacks.math.columbia.edu/tag/0356}{Tag 0356}]{stacks-project} $Z$ is reduced. Thus $\theta_{|Z}:Z \to Y$ is a finite birational morphism between integral schemes, with $Y$ normal: by \cite[\href{https://stacks.math.columbia.edu/tag/0AB1}{Tag 0AB1}]{stacks-project} it is an isomorphism.
\end{proof}

\begin{rk}
    The normality assumption on $Y$ in \Cref{prop:morphisms isomorphic over dense subset} cannot be dropped. Over some base field $k$, consider $Y$ to be $\P^1$ with $0$ and $\infty$ glued together, let $U$ be $Y$ minus the singular point, and set $\Xscr = \BB \Z / 2\Z$. Let $Y_1$ be two disjoint copies of $Y$ and $Y_2$ be two copies of $\P^1$ glued at zero and glued at infinity. Then there are obvious projection maps $Y_i \to Y$, $i \in \{1,2\}$, which are double étale covers, and which both restrict to the trivial double cover over $U$. In this way, we find two non-isomorphic maps $Y \to \Xscr$ which are isomorphic over the dense open subset $U$. 
\end{rk}

In \Cref{sec:extension properties for good reduction} we will apply \Cref{prop:morphisms isomorphic over dense subset} repeatedly as follows: for $\Xcal/ \Ocal_F$ a regular scheme, and $\Sscr / \Ocal_F$ a separated Deligne-Mumford stack, two $\Ocal_F$ morphisms $f,g:\Xcal \to \Sscr$ that are isomorphic over $\Spec \, F$ are isomorphic.

\section{A pointwise criterion for extending local systems}\label{sec:pointwise criterion}

Here the residue characteristic of $\Ocal_F$ is $p>0$. In this section, we recall a theorem of Cadoret-Tamagawa in \cite{cadoret2025pointwise} to be used in \Cref{sec:extension properties for good reduction}, giving a pointwise criterion to check whether a local system over a smooth variety $X/F$ extends to a smooth model $\Xcal / \Ocal_F$.

\medskip

Fix $\Xcal/\Ocal_F$ a smooth, separated scheme of finite type, let $X$ denote its generic fiber. Consider a finite Galois\footnote{By this, we mean that the pre-image of any connected component of $X$ defines a finite Galois cover in the usual sense (see \cite[\href{https://stacks.math.columbia.edu/tag/03SF}{Tag 03SF}]{stacks-project}).} cover $\psi:Y \to X$, set $\Ycal$ the relative normalization of $\Xcal$ in $Y \to X \to \Xcal$. We begin with a pointwise criterion to check whether $\Ycal \to \Xcal$ is again étale. For any closed point $x \in |X|$ we say that $\psi$ is \emph{unramified at $x$} if for any (equiv. all) $y \in |Y|$ mapping to $x$, the induced morphism $\Spec \, \Ocal_{F(y)} \to \Spec \, \Ocal_{F(x)}$ is étale (i.e. $F(y)/F(x)$ is unramified). For a subset $\Sigma \subseteq |X|$ we say that $\psi$ is \emph{unramified at $\Sigma$} if it is unramified at every $x \in \Sigma$. 

\begin{thm}[\cite{cadoret2025pointwise}, Theorem 12 (U)]\label{thm:pointwise criterion for étale}
    The following are equivalent:
    \begin{itemize}
        \item[(i)] The map $\Ycal \to \Xcal$ is étale, and
        \item[(ii)] The map $\psi$ is unramified at $|\Xcal|_{\eta}^{\mrm{int}}$ (recall \Cref{dfn:integral locus}).
    \end{itemize}
\end{thm}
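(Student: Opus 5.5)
The implication (i) $\Rightarrow$ (ii) is the routine direction and I will dispose of it first. Let $x \in |\Xcal|_{\eta}^{\mrm{int}}$. By \Cref{dfn:integral locus}, after a finite extension $E'/F(x)$ there is an $\Ocal_F$-morphism $\Spec \, \Ocal_{E'} \to \Xcal$ extending $x$. Since $\Ycal \to \Xcal$ is finite étale, its pullback to $\Spec \, \Ocal_{E'}$ is finite étale. Its generic fiber is $Y \times_X \Spec \, E'$, a disjoint union of spectra of fields unramified over $E'$. Hence each compositum $F(y)E'/E'$ is unramified.

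This is not yet enough, because $E'/F(x)$ may itself be ramified. I handle this with Abhyankar-type reasoning. Because $\Xcal$ is smooth over $\Ocal_F$, Hensel's lemma lets us choose the specialization so that $E' = F(x)$ after all. Concretely, an $\Ocal_{E'}$-point of a smooth $\Xcal$ reduces to a point of the special fiber, and that point lifts to an $\Ocal_{F(x)'}$-point through $x$ with $F(x)'/F(x)$ unramified. Alternatively one can use Galois descent of finite étale covers along $\Spec \, \Ocal_{E'} \to \Spec \, \Ocal_{F(x)}$ together with normality. Either route gives that $Y_x$ extends to a finite étale cover of $\Spec \, \Ocal_{F(x)}$, which means $\psi$ is unramified at $x$.

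For (ii) $\Rightarrow$ (i) the plan has four steps.

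First, $\Ycal$ is normal and finite over $\Xcal$ because $\Xcal$ is excellent. By Zariski–Nagata purity of the branch locus, $\Ycal \to \Xcal$ is étale as soon as it is étale at the generic points of the special fiber $\Xcal_s$. Those generic points are codimension-one points with DVR local rings. So the first step reduces the claim to these finitely many points.

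Second, fix such a generic point $\xi$, with $\Ocal_{\Xcal,\xi}$ a DVR with uniformizer $\pi$. Ramification of $\Ycal$ over $\xi$ is measured by the extension of the fraction field $F(X)$ cut out by $Y$, and we need to test it against a closed point.

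Third, using smoothness, choose étale coordinates near a closed point of $\Xcal_s$ lying in the closure of $\xi$. Then take $x \in |X|$ specializing into $\Xcal_s$, for instance a section $\Spec \, \Ocal_{E} \to \Xcal$ that is sufficiently generic in the sense that its reduction is "transverse", so that it meets only the component of $\xi$. The inertia at $\xi$ should then inject into, or be detected by, the inertia of the residue extension at $x$. Hypothesis (ii) says the latter vanishes.

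Fourth, the hard part is precisely this comparison. I would try to prove it by a Krasner/continuity argument: the ramification of the function-field extension at $\xi$ is detected by the discriminant or different of $\Ycal$ along $\xi$. Specializing along sections through general points of the closure of $\xi$ preserves a nonzero different, since ramification in mixed characteristic is not killed by generic specialization. I would make this precise via the generic fiber over the henselization or completion of $\Ocal_{\Xcal,\xi}$ and Hilbert irreducibility-type density of points. I expect this step, handling wild ramification in mixed characteristic, to be the main obstacle, and it is essentially the content of the reference \cite{cadoret2025pointwise}.
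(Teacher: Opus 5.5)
There is a genuine gap: your direction (ii)$\Rightarrow$(i) is not a proof. Steps 1--2 are fine: $\Ycal$ is finite and normal, and Zariski--Nagata purity reduces the question to the generic points $\xi$ of $\Xcal_s$. But Steps 3--4 carry the whole content of the theorem, and you leave them as heuristics.

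Worse, the heuristic as formulated is false. A ``transverse'' $\Ocal_E$-point need not detect ramification along $\xi$, because the residue field $E$ can absorb it. Take $p$ odd, $\Xcal=\Spec\,\Z_p[T,T^{-1}]$ and $Y\colon y^2=pT$. Then $\Ycal=\Spec\,\Z_p[T^{\pm1},y]/(y^2-pT)$ is ramified along $\xi$, since its special fiber is non-reduced. Yet along the closed immersion $\Spec\,\Z_p[\sqrt p]\to\Xcal$, $T\mapsto 1+\sqrt p$, the fiber splits, because $p(1+\sqrt p)$ is a square in $\Q_p(\sqrt p)$. A genuine argument must control the residue fields of the test points and then show that wild (or fierce, i.e.\ inseparable on residue fields at $\xi$) ramification survives at one of them. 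That is exactly the Cadoret--Tamagawa theorem.

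The paper does not reprove that result either; it cites it. But the cited result assumes $X$ geometrically connected, and the paper's proof consists precisely of the reduction to that case, which your proposal omits. The steps are:
\begin{enumerate}
\item Split $\Xcal$ into the closures of the components of $X$; these are disjoint since $\Xcal$ is smooth.
\item Discard the pieces with empty special fiber, where the statement is trivial.
\item Pass to a finite unramified $F'/F$ after which the components of $X_{F'}$ are geometrically connected (\Cref{lem:geom connected after unram base change}, proved via the special fiber and a descent-datum argument).
\item Conclude using that relative normalization commutes with étale base change, that étaleness is étale-local, and that hypothesis (ii) is inherited by $\Xcal_{\Ocal_{F'}}$.
\end{enumerate}

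In (i)$\Rightarrow$(ii), both of your fixes for a ramified $E'/F(x)$ fail as stated. Hensel's lemma lifts the reduction of the point to some $\Ocal_{F''}$-point with $F''/F$ unramified, but nothing forces its generic fiber to be $x$. Galois descent also fails: finite étale covers do not descend along $\Ocal_{F(x)}\to\Ocal_{E'}$ in general, e.g.\ $\Q_p(\sqrt p)/\Q_p$ becomes split over $E'=\Q_p(\sqrt p)$.

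The correct argument shows no extension is needed. Let $z$ be the image of the closed point. The composite $\Ocal_{\Xcal,z}\to\Ocal_{E'}\subset E'$ factors through $F(x)$, hence lands (as a local map) in $F(x)\cap\Ocal_{E'}=\Ocal_{F(x)}$. So $x$ itself extends to $\Spec\,\Ocal_{F(x)}\to\Xcal$, and pulling back the finite étale $\Ycal$ along it gives a finite étale extension of $Y_x$. This is why the paper treats this direction as immediate.
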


The reference has the added assumption that $X$ be geometrically connected. Let us show that it can be removed. We only need to show (ii)$\implies$(i).
First, as $\Xcal / \Ocal_F$ is smooth, we can write
\[\Xcal = \bigsqcup_i \Xcal_i\]
where each $\Xcal_i$ is the closure in $\Xcal$ of a component of $X$. Arguing with each $\Xcal_i$ individually, we may assume that $X$ is connected. Furthermore we may assume that the special fiber of $\Xcal$ is nonempty, otherwise the theorem is vacuously true.

According to \Cref{lem:geom connected after unram base change} below, for some unramified extension $F'/F$ the components of $X_{F'}$ are geometrically connected. As relative normalization commutes with étale base-change (see \cite[\href{https://stacks.math.columbia.edu/tag/0ABP}{Tag 0ABP}]{stacks-project}) and étaleness is étale-local, it suffices to show that $\Ycal_{\Ocal_{F'}} \to \Xcal_{\Ocal_{F'}}$ is étale. The reference ensures that this is true above every component of $\Xcal_{\Ocal_{F'}}$, and we are done.

\begin{lem}\label{lem:geom connected after unram base change}
    Let $\Xcal$ be a smooth scheme of finite type over $\Ocal_F$, with connected generic fiber $X/F$ and nonempty special fiber $\Xcal_s$. Then for some finite unramified field extension $F'/F$, the components of the base-change $X_{F'}$ are geometrically connected.
\end{lem}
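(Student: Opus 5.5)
The approach is to use the algebraic closure $L$ of $F$ inside the function field of $X$, i.e. the field of constants $H^0(X,\Ocal_X)$ made precise. Since $X$ is connected, smooth, and of finite type over $F$, it is normal and hence integral. Set $L := \Gamma(X,\Ocal_X) \cap \overline{F}$, the algebraic closure of $F$ in the function field $F(X)$. This is a finite extension of $F$: it embeds into the residue field of any closed point, and those fields are finite over $F$. Two standard facts then apply. First, $X$ is naturally an $L$-scheme, since $L \subseteq \Gamma(X,\Ocal_X)$ because $X$ is normal. Second, as an $L$-scheme $X$ is geometrically connected, since $L$ is algebraically closed in $F(X)$ and we are in characteristic $0$. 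Consequently, for any finite Galois $F'/F$ containing the Galois closure of $L$, the connected components of $X_{F'}$ are indexed by the embeddings $L \hookrightarrow F'$, and each is $X \times_{L,\sigma} F'$, which is geometrically connected. It therefore suffices to show that $L/F$ is unramified; then its Galois closure $F'$ is unramified as well.

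The key step is to use the nonempty special fibre to show that $L/F$ is unramified. Because $\Xcal$ is smooth over $\Ocal_F$ and $X$ is dense in the component $\Xcal' \subseteq \Xcal$ (the closure of $X$, which is open and closed in $\Xcal$ by smoothness), $\Xcal'$ is normal and flat over $\Ocal_F$. We may assume its special fibre is nonempty; otherwise we restrict attention to the component that actually meets it. The integral closure of $\Ocal_F$ in $\Gamma(\Xcal',\Ocal)$, intersected with $L$, is $\Ocal_L$, and normality gives $\Ocal_L \subseteq \Gamma(\Xcal',\Ocal_{\Xcal'})$. So $\Xcal' \to \Spec \Ocal_F$ factors through $\Spec \Ocal_L$.

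Now take a closed point $s$ of the special fibre of $\Xcal'$. Smoothness lets us lift it, via a section through an étale neighbourhood, to a morphism $\Spec \Ocal_{E} \to \Xcal'$ with $E/F$ finite unramified: by Hensel's lemma, smooth morphisms admit sections over $\Ocal_{E}$, where $E$ is the unramified extension with residue field $k(s)$. Composing with the factorization above gives an $\Ocal_F$-algebra map $\Ocal_L \to \Ocal_E$, that is, an $F$-embedding $L \hookrightarrow E$. Since $E/F$ is unramified, so is $L/F$. Taking $F'$ to be the Galois closure of $L$ (or simply $E$ after enlarging it to be Galois, which keeps it unramified) completes the argument.

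The main obstacle is the factorization through $\Spec \Ocal_L$, namely showing that $\Ocal_L$ lands in the global functions on $\Xcal'$. This follows from the normality of $\Xcal'$: elements of $L$ are integral over $\Ocal_F$ up to the action of $\Ocal_L$, so they lie in the normal ring $\Gamma(\Xcal',\Ocal)$. The thing to check carefully is that $\Xcal'$ is integral, which holds because it is normal with connected, dense generic fibre. One could avoid global sections altogether by working locally on an affine open meeting the special fibre, but that affine piece must still be connected, and the same argument applies to it.
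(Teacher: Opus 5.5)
Your argument is correct, but it takes a genuinely different route from the paper. The paper never looks at the field of constants. It first chooses an unramified $F'/F$ that makes the connected components of the special fibre $\Xcal_{\Ocal_{F'},s}=(\Xcal_s)_{k_{F'}}$ geometrically connected, which is a purely residue-field statement. It then transfers this to the generic fibre via \Cref{lem:not geom connected from generic to special}, a descent-datum argument: a splitting of $X$ after a finite extension would force a splitting of the components of the special fibre.

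You instead identify the obstruction directly as the algebraic closure $L$ of $F$ in $F(X)$. You show by normality that $\Xcal\to\Spec\,\Ocal_F$ factors through $\Spec\,\Ocal_L$. You then use a Hensel section $\Spec\,\Ocal_E\to\Xcal$ over an unramified $E$ to force $L\hookrightarrow E$.

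Your approach gives the minimal choice of $F'$ (the normal closure of $L$), and the stronger fact that $X$ is geometrically connected over its unramified constant field. It also makes the auxiliary lemma unnecessary. The paper's approach avoids function fields, integral closures and sections, working only with connected components and descent data.

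Three points to tidy up:
\begin{itemize}
\item Since $\Xcal$ is flat over $\Ocal_F$, $X$ is dense in $\Xcal$. So the closure of $X$ is all of $\Xcal$, which is irreducible and normal, hence integral, and its special fibre is $\Xcal_s\neq\varnothing$. No restriction to a component is needed.
\item Your justification of $\Ocal_L\subseteq\Gamma(\Xcal,\Ocal_{\Xcal})$ via ``integral up to the action of $\Ocal_L$'' should be replaced by the actual reason. Elements of $\Ocal_L$ are integral over $\Ocal_F$ and lie in $F(X)$, the common fraction field of the integrally closed domains $\Gamma(U,\Ocal_{\Xcal})$ for affine $U$.
\item If the residue field $k$ is imperfect, you must choose the closed point $s$ with $k(s)/k$ separable. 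Otherwise there is no unramified $E$ with residue field $k(s)$. Such points are dense in the smooth $k$-scheme $\Xcal_s$, and the condition is automatic when $F/\Qp$ is finite.
\end{itemize}
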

\begin{proof}
    Let $F'/F$ be a large enough finite unramified extension so that the special fiber $\Xcal_{\Ocal_{F'},s}$ of the base-change has geometrically connected components. We want to show that the components of $X_{F'}$ are also geometrically connected. Let $\Xcal'$ denote the closure in $\Xcal_{\Ocal_{F'}}$ of a component of $X_{F'}$; it is an irreducible component of $\Xcal_{\Ocal_{F'}}$, hence a connected component as $\Xcal_{\Ocal_{F'}}$ is smooth. The special fiber $\Xcal'_s$ is nonempty: this is because all components of $\Xcal_{\Ocal_{F'}}$ are isomorphic over $\Ocal_F$, and at least one of them has a nonempty special fiber since $\Xcal_s \neq \varnothing$. Moreover $\Xcal'_s$ has geometrically connected components, so the result follows from \Cref{lem:not geom connected from generic to special}.
\end{proof}

\begin{lem}\label{lem:not geom connected from generic to special}
    Let $\Xcal$ be a smooth scheme of finite type over $\Ocal_F$, with connected generic fiber $X/F$ and nonempty special fiber $\Xcal_s$. If the components of the special fiber $\Xcal_s$ are geometrically connected, then $X$ is geometrically connected.
\end{lem}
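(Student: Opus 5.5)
We have to prove \Cref{lem:not geom connected from generic to special}: if $\Xcal$ is smooth of finite type over $\Ocal_F$, with connected generic fiber $X$ and nonempty special fiber $\Xcal_s$ whose components are geometrically connected, then $X$ is geometrically connected. The approach is to study the algebraic closure of $F$ in the function field of $X$ and its integral counterpart. First, since $\Xcal$ is smooth over $\Ocal_F$, it is flat and normal, and $X$ is dense in $\Xcal$. Hence $\Xcal$ is irreducible, being the closure of the irreducible (connected and smooth, hence irreducible) scheme $X$, and integral. Let $L$ be the algebraic closure of $F$ in the function field $F(X)$. Since $X$ is normal, geometric connectedness of $X$ is equivalent to $L = F$ (for $X$ normal and connected, $H^0(X,\Ocal_X)$ contains $L$, and $X \to \Spec L$ exists).

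\textbf{Integral descent of $L$.} The finite extension $L/F$ is the field of constants of $X$. I would show that the integral closure $\Ocal_L$ of $\Ocal_F$ in $L$ maps into $\Gamma(\Xcal,\Ocal_\Xcal)$. Elements of $\Ocal_L$ are integral over $\Ocal_F$, hence integral over $\Gamma(\Xcal,\Ocal_{\Xcal})$, and they lie in the function field. Since $\Xcal$ is normal, they are regular functions on $\Xcal$. This gives a morphism $\Xcal \to \Spec \Ocal_L$ over $\Ocal_F$.

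\textbf{Restriction to the special fiber.} Restricting to the special fiber gives $\Xcal_s \to \Spec(\Ocal_L \otimes_{\Ocal_F} k) = \Spec(\Ocal_L/\mf m_F\Ocal_L)$, with $\Xcal_s$ nonempty. This ring has residue field $k_L$, and $\mf m_F \Ocal_L = \mf m_L^{e(L/F)}$. Take a component $C$ of $\Xcal_s$; it is smooth over $k$ and geometrically connected by hypothesis. The map $C \to \Spec(\Ocal_L/\mf m_F\Ocal_L)$ factors through $\Spec k_L$ on the reduced scheme $C$. This makes $k_L \subseteq H^0(C,\Ocal_C)$, and the latter has field of constants $k$ by geometric connectedness. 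Therefore $k_L = k$, so $L/F$ is totally ramified.

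\textbf{Ruling out ramification (main obstacle).} It remains to show $e(L/F) = 1$. Smoothness of $\Xcal$ gives the leverage here. The fiber $\Xcal_s$ is reduced (smooth), yet the uniformizer $\pi_F = u\,\pi_L^{e}$ in $\Gamma(\Xcal,\Ocal)$ with $u$ a unit. Work locally at a point $x$ of $\Xcal_s$, where $\Ocal_{\Xcal,x}$ is regular and $\pi_F$ is a regular parameter (part of a regular system of parameters, since the special fiber is regular). The image of $\pi_L$ lies in the maximal ideal, because $\pi_L^e$ does and the local ring is reduced. Then $\pi_F \in \mf m_x^{e}$, and $\pi_F \notin \mf m_x^2$ because it is a regular parameter, so $e = 1$. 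Combined with $k_L = k$, this gives $L = F$, hence $X$ is geometrically connected. The step most requiring care is justifying that $\pi_L$ actually lands in $\mf m_x$ rather than being a unit, which follows from $\pi_L^e \in \mf m_x$.
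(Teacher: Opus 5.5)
Your proof is correct, and it takes a genuinely different route from the paper's.

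\textbf{The paper's route.} The paper argues by contraposition, essentially topologically. It chooses a finite $F'/F$ over which $X_{F'}=\bigsqcup_i X^{(i)}$ splits nontrivially. It then takes the closures $\Xcal^{(i)}$ in $\Xcal_{\Ocal_{F'}}$, which are pairwise disjoint since $\Xcal_{\Ocal_{F'}}$ is smooth, and each of which has nonempty special fiber. Finally it compares the descent datum of $\Xcal_{\Ocal_{F'}}\to\Xcal$ with its generic fiber, to show that no piece $\Xcal^{(i)}_s$ of $(\Xcal_s)_{k_{F'}}$ descends to $\Xcal_s$.

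\textbf{Your route.} You use normality to factor $\Xcal\to\Spec\Ocal_L$ through the constant field. You then split the obstruction $[L:F]=e(L/F)f(L/F)$ into two parts, each killed by a different hypothesis: $f=1$ by geometric connectedness of one component of $\Xcal_s$, and $e=1$ because smoothness of $\Xcal_s$ makes $\pi_F$ a regular parameter of $\Ocal_{\Xcal,x}$.

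\textbf{What each buys.} Your approach makes the role of each hypothesis transparent. In particular, a ramified constant field, which in the paper is absorbed into the descent-datum argument, is ruled out explicitly. It also shows that a single geometrically connected component of $\Xcal_s$ suffices. The cost is the constant-field machinery and the fact that $\Ocal_L$ is a DVR with $[L:F]=ef$, which uses completeness of $\Ocal_F$.

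\textbf{Small points to tighten.}
First, your parenthetical justifies the direction of ``$X$ geometrically connected $\Leftrightarrow L=F$'' that you do not need. The direction you actually use is: if $X$ is normal and $F$ is algebraically closed in $F(X)$, then $X$ is geometrically irreducible, because $F(X)\otimes_F F'$ is a field for every finite $F'/F$. This is standard (EGA IV, 4.5.9) and should be cited as such.
Second, $\pi_L\in\mf m_x$ follows because $\mf m_x$ is prime (a unit has unit powers), not from reducedness.
Third, if the residue field $k$ is imperfect, concluding $k_L=k$ also uses that $k_L/k$ is separable. This holds because $k_L\subseteq H^0(C,\Ocal_C)$ with $C$ smooth, hence geometrically reduced, over $k$.
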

\begin{proof}
    Suppose $X$ is not geometrically connected, we show that neither are the components of $\Xcal_s$. For some finite extension $F'/F$ we have a non-trivial decomposition into connected (automatically irreducible) components of the base-change
    \[X_{F'} = \bigsqcup_i X^{(i)}.\]
    Let $\Xcal^{(i)}$ denote the closure of $X^{(i)}$ in $\Xcal_{\Ocal_{F'}}$. The $\Xcal^{(i)}$ are pairwise disjoint as $\Xcal_{\Ocal_{F'}}$ is smooth. Thus we find a decomposition of the special fiber
    \[(\Xcal_{s})_{k_{F'}} = \Xcal_{\Ocal_{F'},s} = \bigsqcup_i \Xcal^{(i)}_s\]
    with $k_{F'}$ the residue field of $F'$, and $\Xcal^{(i)}_s$ the special fiber of $\Xcal^{(i)}$ (which may not be connected in general). None of the $\Xcal^{(i)}_s$ are empty: this is because all components $\Xcal^{(i)}$ are isomorphic over $\Ocal_F$, and at least one of them has a nonempty special fiber since $\Xcal_s \neq \varnothing$. Now we claim that none of the $\Xcal^{(i)}_s$ descend to a component of $\Xcal_s$. It is enough to show that the descent datum
    \[\varphi_s: \Xcal_{\Ocal_{F'},s} \times_{\Xcal_s} \Xcal_{\Ocal_{F'},s} \xrightarrow{\sim} \Xcal_{\Ocal_{F'},s} \times_{\Xcal_s} \Xcal_{\Ocal_{F'},s},\]
    when restricted to an open and closed subscheme of the form $\Xcal^{(i)}_{\Ocal_{F'},s} \times_{\Xcal_s} \Xcal^{(i)}_{\Ocal_{F'},s}$, does not map into the same subscheme. Note that $\varphi_s$ is obtained by base-change from the datum
    \[\varphi: \Xcal_{\Ocal_{F'}} \times_{\Xcal} \Xcal_{\Ocal_{F'}} \xrightarrow{\sim} \Xcal_{\Ocal_{F'}} \times_{\Xcal} \Xcal_{\Ocal_{F'}},\]
    so that it suffices to show that the restriction of $\varphi$ to the subscheme $\Xcal^{(i)}_{\Ocal_{F'}} \times_{\Xcal} \Xcal^{(i)}_{\Ocal_{F'}}$ does not map into the same subscheme. If this were the case, this would imply that the datum
    \[\varphi_{\eta}: X_{F'} \times_{X} X_{F'} \xrightarrow{\sim} X_{F'} \times_{X} X_{F'}\]
    maps $X^{(i)} \times_X X^{(i)}$ into itself, so that $X^{(i)}$ descends to a component of $X$. But $X$ is connected, so $X_{F'} = X^{(i)}$, contradicting our assumption on $F'$.
\end{proof}

Let $\Gcal$ be a sheaf of groups, consider $\Xcal, X$ as above. Let $P$ be a $\Gcal$-torsor on $X$ (see \Cref{dfn:Q-torsor}). For any closed point $x \in |X|$, we say that \emph{$P$ is unramified at $x$} if the $\Gcal$-torsor $P_x$ over $\Spec \, F(x)$ extends to a $\Gcal$-torsor over $\Spec \, \Ocal_{F(x)}$. For a subset $\Sigma \subseteq |X|$ we say that $P$ is \emph{unramified at $\Sigma$} if it is unramified at every $x \in \Sigma$. 

\begin{cor}\label{cor:pointwise criterion for torsor}
    Let $Q$ be a profinite group, suppose $\Gcal = \Fcal_Q$. The following are equivalent:
    \begin{enumerate}
        \item[(i)] $P$ extends to a $G$-torsor over $\Xcal$, and
        \item[(ii)] $P$ is unramified at $|\Xcal|_{\eta}^{\mrm{int}}$.
    \end{enumerate}
\end{cor}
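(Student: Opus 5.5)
The plan is to reduce to finite quotients of $Q$ and apply \Cref{thm:pointwise criterion for étale} at each finite level. The implication (i)$\implies$(ii) is immediate. If $P$ extends to a torsor $\Pcal$ over $\Xcal$ and $x \in |\Xcal|_{\eta}^{\mrm{int}}$, then after a finite extension $E'/F(x)$ there is a map $\Spec \, \Ocal_{E'} \to \Xcal$ extending $x$. Pulling back $\Pcal$ along it gives an extension of $P_x$ over $\Ocal_{E'}$. To pass from $\Ocal_{E'}$ back to $\Ocal_{F(x)}$ I will interpret ``unramified'' through the inertia of $F(x)$. This is consistent with the definition via $\Spec \, \Ocal_{F(x)}$ only if we read ``extends'' up to a finite extension, or simply note that for the reverse direction we only need the points themselves. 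If this becomes an issue, I would restate (ii) as the inertia condition, which is what \Cref{thm:pointwise criterion for étale} actually uses.

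For (ii)$\implies$(i), first reduce to finite $Q$. Write $Q = \varprojlim_K Q/K$ over $K \in \Ccal(Q)$. By \Cref{prop:universal fit system}, $P$ is the same as a $Q$-fit system $(P/K)_K$ of finite $Q/K$-torsors $Y_K \to X$, namely $Y_K = X \times_{\BB Q} \BB K$. Each $Y_K$ is unramified at $|\Xcal|_{\eta}^{\mrm{int}}$, since it is induced from $P$ (via \Cref{lem:quotient sheaf is sheaf of quotient}) and unramifiedness passes to induced torsors. Suppose each $Y_K$ extends to a finite étale $Q/K$-torsor $\Ycal_K \to \Xcal$. These extensions are unique up to unique isomorphism: a finite étale cover of the normal scheme $\Xcal$ is the normalization of $\Xcal$ in its generic fiber. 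Hence the transition maps and the $Q$-actions extend uniquely, again by normalization functoriality. So $(\Ycal_K)_K$ is a $Q$-fit system over $\Xcal$, and by \Cref{prop:universal fit system}(ii) it defines a $Q$-torsor over $\Xcal$ extending $P$.

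For finite $Q$, the torsor $Y \to X$ is a finite étale cover with a free transitive $Q$-action, hence Galois over each connected component in the sense of \Cref{thm:pointwise criterion for étale}. One caveat is that $Y$ need not be connected. Over a component $X_i$, the cover splits as $Q \times_{Q_i} Y_i$ with $Y_i$ connected Galois with group $Q_i$. Unramifiedness of $P_x$ translates to $F(y)/F(x)$ unramified for $y \in Y_i$ above $x$, since the torsor over $\Spec \, F(x)$ extends over $\Ocal_{F(x)}$ exactly when each residue field of the fiber is unramified. Now let $\Ycal$ be the relative normalization of $\Xcal$ in $Y$. \Cref{thm:pointwise criterion for étale}, whose geometric connectedness hypothesis has already been removed, gives that $\Ycal \to \Xcal$ is étale. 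It is finite because normalization of an excellent scheme is finite. The $Q$-action extends by functoriality of normalization. The action map $Q \times \Ycal \to \Ycal \times_{\Xcal} \Ycal$ is an isomorphism over $X$, and both sides are finite étale over $\Xcal$, hence normal. So it is an isomorphism by uniqueness of normal extensions, and $\Ycal$ is a $Q$-torsor.

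The main obstacle I expect is this dictionary between ``$P_x$ extends over $\Ocal_{F(x)}$'' and ``$\psi$ unramified at $x$'', together with keeping track of non-connected covers. Everything else is formal from normalization and \Cref{prop:universal fit system}.
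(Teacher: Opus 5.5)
Your proposal is correct and follows the paper's proof: the finite case goes through the relative normalization, \Cref{thm:pointwise criterion for étale} applied to the connected Galois components, and full faithfulness of finite étale covers over the normal $\Xcal$ to extend the torsor structure, and the profinite case extends each $P/K$ and passes to the limit (your use of \Cref{prop:universal fit system} only packages this limit step, and your remarks on connectedness and on finiteness of the normalization make explicit what the paper leaves implicit). Your hesitation in (i)$\implies$(ii) is unnecessary, and you should drop the fallback of restating (ii), since the inertia condition at $F(x)$ is the same condition: because $\Xcal$ is a scheme, an $\Ocal_{E'}$-point extending $x$ lands in an affine open $\Spec\, A$, and $A$ maps into $F(x)\cap\Ocal_{E'}=\Ocal_{F(x)}$, so $x$ itself extends to $\Spec\,\Ocal_{F(x)}\to\Xcal$ and pulling back the extended torsor extends $P_x$ over $\Ocal_{F(x)}$.
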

\begin{proof}
    We only need to show (ii)$\implies$(i). Let us first assume that $Q$ is finite. The torsor $P$ is represented by a scheme whose components are finite Galois covers of $X$. Assumption $(ii)$ implies that the morphism $P \to X$ is unramified at $|\Xcal|_{\eta}^{\mrm{int}}$, hence by \Cref{thm:pointwise criterion for étale} the relative normalization $\Pcal$ of $\Xcal$ in $P \to X \to \Xcal$ defines an étale map $\Pcal \to \Xcal$. It remains to show that this is a $Q$-torsor. Normality of $\Xcal$ implies that the restriction from finite étale covers of $\Xcal$ to those of $X$ is fully faithful, so that the isomorphism
    \[\Fcal_Q \times_X P \longrightarrow P \times_X P\]
    extends to an isomorphism $\Fcal_Q \times_{\Xcal} \Pcal \to \Pcal \times_{\Xcal} \Pcal$, and we are done.

    For the general case, write $Q = \varprojlim_i Q/K_i$ where each $K_i$ is an open subgroup of $Q$. For each $i$ write $Q_i \coloneq Q/K_i$, and $P_i \coloneq P / K_i$, so that $P_i$ is a $Q_i$-torsor over $X$, unramified at $|\Xcal|_{\eta}^{\mrm{int}}$. From the finite group case, $P_i$ extends to a $Q_i$-torsor $\Pcal_i$ over $\Xcal$, and the quotient morphisms $P_i \to P_j$ extend to quotient morphisms $\Pcal_i \to \Pcal_j$. Now, set
    \[\Pcal \coloneq \varprojlim_i \Pcal_i\]
    which is a $Q$-torsor over $\Xcal$ extending $P$. 
\end{proof}

Let $R$ be a topological ring, $V$ a free $R$-module of finite rank. Let $\L$ be an $R$-local system with fiber $V$ over $X$ (see \Cref{dfn:R-local systems}). For any closed point $x \in |X|$, we say that \emph{$\L$ is unramified at $x$} if the local system $\L_x$ over $\Spec \, F(x)$ extends to a local system over $\Spec \, \Ocal_{F(x)}$. For a subset $\Sigma \subseteq |X|$ we say that $\L$ is \emph{unramified at $\Sigma$} if it is unramified at every $x \in \Sigma$. 

\begin{cor}[\cite{cadoret2025pointwise}, Theorem 1 (U-1)]\label{cor:pointwise criterion for local system}
    Suppose $R$ is profinite. The following are equivalent:
        \begin{enumerate}
        \item[(i)] $\L$ extends to a local system over $\Xcal$, and
        \item[(ii)] $\L$ is unramified at $|\Xcal|_{\eta}^{\mrm{int}}$.
    \end{enumerate}
\end{cor}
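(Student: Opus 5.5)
The plan is to reduce to \Cref{cor:pointwise criterion for torsor} using the equivalence $\Triv:\Loc_V \to \BB \GL(V)$ of \Cref{fact:loc sys equiv GL(V) torsor}. Only (ii)$\implies$(i) needs proof, since (i)$\implies$(ii) follows by pulling back an extension along $\Spec \, \Ocal_{E'} \to \Xcal$ for each point of the integral locus.

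The obstacle is that $\GL(V)$ need not be profinite when $R$ is profinite, because it is only open in the profinite monoid $\mrm{End}_R(V)$. We get around this by working with a profinite subgroup. Since $R$ is profinite, $\mrm{End}_R(V) \simeq M_n(R)$ is profinite, and $\GL(V) = \GL_n(R)$ is the closed subset of $M_n(R) \times M_n(R)$ cut out by $AB = BA = 1$. Hence $\GL(V)$ is itself compact, totally disconnected and Hausdorff, i.e.\ a profinite group. Indeed $\GL_n(R) = \varprojlim \GL_n(R/I)$ over open ideals $I$, and each $R/I$ is finite.

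Set $P = \Triv(\L) = \uIsom(\L,\Fcal_V)$, a $\GL(V)$-torsor on $X$. The next step is to check that $\L$ being unramified at $x$ is equivalent to $P$ being unramified at $x$. If $\L_x$ extends to a local system $\L'$ on $\Spec \, \Ocal_{F(x)}$, then $\Triv(\L')$ extends $P_x$. Conversely, if $P_x$ extends to a torsor, applying the quasi-inverse of $\Triv$ (or equivalently inducing along the standard representation) extends $\L_x$. This uses only that $\Triv$ is an equivalence of stacks compatible with pullback along $\Spec \, F(x) \to \Spec \, \Ocal_{F(x)}$.

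Now apply \Cref{cor:pointwise criterion for torsor} with $Q = \GL(V)$ and $\Gcal = \Fcal_{\GL(V)}$. This extends $P$ to a $\GL(V)$-torsor $\Pcal$ over $\Xcal$, and the local system corresponding to $\Pcal$ under $\Triv^{-1}$ then restricts to $\L$ on $X$.

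The one point needing care is that the $\Fcal_{\GL(V)}$ appearing in \Cref{fact:loc sys equiv GL(V) torsor} is the same sheaf of groups as the $\Fcal_Q$ used in \Cref{cor:pointwise criterion for torsor}. This holds because both are defined as continuous maps into the topological group $\GL(V)$, which we have shown to be profinite. No other step looks like an obstacle.
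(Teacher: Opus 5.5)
Your proposal is correct and is the paper's argument: the paper's proof is the one-line reduction via the equivalence $\Triv$ of \Cref{fact:loc sys equiv GL(V) torsor} to \Cref{cor:pointwise criterion for torsor} with $Q=\GL(V)$ profinite. You additionally verify that $\GL_n(R)$ is profinite (it is closed in $M_n(R)$, so the ``obstacle'' you raise never arises) and that the two notions of being unramified match. One small repair in (i)$\implies$(ii): pull back along $\Spec\,\Ocal_{F(x)}\to\Xcal$ rather than along $\Spec\,\Ocal_{E'}\to\Xcal$; this map exists because $\Xcal$ is a scheme and $\Ocal_{E'}\cap F(x)=\Ocal_{F(x)}$, and it is needed because an extension of $\L_x$ over $\Ocal_{E'}$ alone does not give one over $\Ocal_{F(x)}$ when $E'/F(x)$ is ramified.
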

\begin{proof}
    By \Cref{fact:loc sys equiv GL(V) torsor} the statement is equivalent to that of \Cref{cor:pointwise criterion for torsor} with $Q = \GL(V)$ profinite.
\end{proof}

\begin{rk}
    In \cite{cadoret2025pointwise}, \emph{local systems} are defined over a \emph{connected} scheme $T$ as morphism $\pi_1(T) \to Q$ where $Q$ is some profinite group. With the terminology of \Cref{sec:can loc sys on schemes and stacks} such morphisms define $Q$-torsors on $T$, which induce a local system only after a choice of embedding $Q \hookrightarrow \GL(V)$.
\end{rk}

\section{Extension properties for good reduction}\label{sec:extension properties for good reduction}

Here the residue characteristic of $\Ocal_F$ is $p>0$. In this section we will deal with schemes and stacks over $\Ocal_F$: we use the subscripts $\eta$ and $s$ to denote the generic and special fibers of these objects respectively.

\begin{dfn}
    Let $\Xcal, \Sscr / \Ocal_F$ be stacks. A \emph{generic morphism} $f:\Xcal \rightsquigarrow \Sscr$ is a representable morphism of stacks between the generic fibers $f_{\eta}:\Xcal_{\eta} \to \Sscr_{\eta}$. 
\end{dfn}

A separated Deligne-Mumford stack $\Sscr/\Ocal_F$ such that certain types of generic morphisms extend to actual morphisms is said to have an \emph{extension property}. In \Cref{subsec:L and PGR extension} we define two such properties, L-extension and PGR-extension, and argue that the former implies the latter. Unlike L-extension, PGR-extension is preserved under pullback by any étale map, as is shown in \Cref{subsec:pullback PGR}; this will be of use in \Cref{sec:applications}. Lastly, we define smooth-extension in \Cref{subsec:smooth extension} and argue that it implies L-extension in the following sense: if $\Sscr_{\infty} = \varprojlim_K \Sscr_K$ is the limit of an appropriate tower of stacks and $\Sscr_{\infty}$ has smooth-extension, then each $\Sscr_K$ is endowed with a canonical local system $\L$ such that $(\Sscr_K,\L)$ has smooth extension. The question of finding stacks $\Sscr_{\infty}$ with smooth-extension will be addressed in \Cref{sec:extension for Shimura} where we will apply some of the lemmas of the second half of \Cref{subsec:smooth extension}.

\subsection{L-extension and PRG-extension}\label{subsec:L and PGR extension}

\begin{dfn}\label{dfn:L-extension property}
    Let $\Sscr / \Ocal_F$ be a separated Deligne-Mumford stack, and $R$ a profinite ring. Let $\mbb L$ be an $R$-local system on $\Sscr$ (see \Cref{dfn:R-local systems}, \Cref{dfn:loc sys and torsor on stack}). We say that the pair $(\Sscr, \mbb L)$ has \emph{L-extension} if any generic morphism $f:\Xcal \rightsquigarrow \Sscr$ from a smooth scheme $\Xcal / \Ocal_F$, satisfying that the pullback $f_{\eta}^{-1} \L$ to $\Xcal_{\eta}$ extends to a local system over $\Xcal$, automatically and uniquely extends to an $\Ocal_F$-morphism $\Xcal \to \Sscr$.
\end{dfn}

The uniqueness in \Cref{dfn:L-extension property} is automatic by \Cref{prop:morphisms isomorphic over dense subset}. This will also be the case for all other extension properties considered.

\begin{lem}\label{lem:atlas has L-extension}
    Let $\Sscr/\Ocal_F$ be a stack, $\L$ a local system on $\Sscr$ such that $(\Sscr, \L)$ has $\L$-extension. Consider $\pi:\Sscr' \to \Sscr$ a surjective étale morphism over $\Sscr / \Ocal_F$. Then $(\Sscr',\pi^* \L)$ has $\L$-extension.
\end{lem}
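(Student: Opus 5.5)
The plan is to reduce to the L-extension property of $(\Sscr,\L)$ and then lift the extended morphism back along $\pi$ by a closure argument in a fiber product. Let $\Xcal/\Ocal_F$ be a smooth scheme and $f:\Xcal \rightsquigarrow \Sscr'$ a generic morphism such that $f_\eta^{-1}\pi^*\L$ extends to a local system on $\Xcal$.

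\textbf{Step 1: extend to $\Sscr$.} The composite $\pi_\eta\circ f_\eta:\Xcal_\eta \to \Sscr_\eta$ is a generic morphism $\Xcal \rightsquigarrow \Sscr$. It is representable because $\pi$ is étale, hence representable by our conventions on atlases-type morphisms. Its pullback of $\L$ is $f_\eta^{-1}\pi^*\L$, which extends by hypothesis. By L-extension of $(\Sscr,\L)$ there is an $\Ocal_F$-morphism $g:\Xcal\to\Sscr$ with $g_\eta\simeq \pi_\eta\circ f_\eta$.

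\textbf{Step 2: pass to the fiber product.} Set $\Xcal' \coloneq \Xcal\times_{g,\Sscr,\pi}\Sscr'$. This is an algebraic space, étale, separated and surjective over $\Xcal$; separatedness comes from $\Sscr,\Sscr'$ being separated Deligne-Mumford. In particular $\Xcal'$ is smooth over $\Ocal_F$. The pair $(\id, f_\eta)$, together with the 2-isomorphism $g_\eta\simeq\pi_\eta f_\eta$, gives a section $s:\Xcal_\eta\to\Xcal'_\eta$ of the projection. A section of a separated étale morphism is an open and closed immersion, as in the proof of \Cref{prop:morphisms isomorphic over dense subset}. Let $\Zcal$ be the closure of $s(\Xcal_\eta)$ in $\Xcal'$. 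Since $\Xcal'$ is smooth, its irreducible components are disjoint, so $\Zcal$ is a union of connected components of $\Xcal'$. Hence $\Zcal\to\Xcal$ is étale and separated, and it is an isomorphism over $\Xcal_\eta$.

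\textbf{Step 3: conclude.} It remains to show that $\Zcal\to\Xcal$ is an isomorphism. Once that holds, the composite $\Xcal\simeq\Zcal\to\Xcal'\to\Sscr'$ extends $f$, and uniqueness follows from \Cref{prop:morphisms isomorphic over dense subset}. The morphism $\Zcal\to\Xcal$ is quasi-finite, separated and birational onto the normal scheme $\Xcal$. By Zariski's main theorem it is therefore an open immersion, and injectivity on the special fiber is automatic from this.

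\textbf{Main obstacle.} The hard step is surjectivity of $\Zcal\to\Xcal$ onto the special fiber $\Xcal_s$. This is where surjectivity of $\pi$ must be used, and I expect it to be the crux. The first approach I would try is to argue component by component. For each generic point of $\Xcal_s$, surjectivity of $\pi$ gives points of $\Xcal'$ above it. I would then try to show that one of these lies in $\Zcal$. For this I would use that $\Xcal$ is smooth, hence locally irreducible, so that each component of $\Xcal_s$ lies in the closure of a single component of $\Xcal_\eta$. If that fails, I would fall back on the valuative criterion. The plan there is to take an $\Ocal_{E}$-point of $\Xcal$ through a point of $\Xcal_s$, lift it generically via $s$, and use that the finite diagonal of $\Sscr'$ over $\Sscr$ makes $\pi$-lifts of $\Ocal_E$-points through $g$ extend. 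Openness of $\Zcal$ in $\Xcal$ together with closedness of its image would then give equality on each connected component of $\Xcal$ meeting $\Xcal_\eta$.
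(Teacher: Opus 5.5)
Your Steps~1--3 are a sound reduction, including the open-immersion claim via Zariski's main theorem. But the step you single out as the crux cannot be carried out, because the statement is false without further hypotheses. Surjectivity of $\pi$ gives points of $\Xcal'$ over every point of $\Xcal_s$, but nothing forces any of them into $\mathcal{Z}$. Likewise, an étale $\pi$ satisfies only the uniqueness half of the valuative criterion, which is all the finite diagonal buys you, so the $E$-point $s(x_\eta)$ of $\Xcal'$ need not extend to an $\Ocal_E$-point.

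Here is a counterexample. Let $\Sscr'=\Sscr_\eta\sqcup\Sscr$, with $\pi$ the open immersion on the first summand and the identity on the second; this is a surjective étale map of separated Deligne--Mumford stacks. Take any $g\colon\Xcal\to\Sscr$ with $\Xcal$ connected, smooth over $\Ocal_F$, and $\Xcal_s\neq\varnothing$ (e.g.\ an $\Ocal_F$-point). Let $f_\eta$ be $g_\eta$ viewed in the first summand. Then $f_\eta^{-1}\pi^*\L=(g^{-1}\L)|_{\Xcal_\eta}$ extends. Yet any extension $\Xcal\to\Sscr'$ would factor through the open and closed summand $\Sscr_\eta$, since its preimage is open and closed and contains $\Xcal_\eta\neq\varnothing$. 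That is impossible because $\Xcal_s\neq\varnothing$.

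In your notation, $\Xcal'=\Xcal_\eta\sqcup\Xcal$ and $\mathcal{Z}=\Xcal_\eta$, and all points over $\Xcal_s$ lie in the other summand. This is the paper's own remark that L-extension fails along $\Sscr_\eta\hookrightarrow\Sscr$, made surjective by adjoining a copy of $\Sscr$. A scheme-atlas variant is $(\Sscr\smallsetminus\{x_0\})\sqcup\Sscr$, with $x_0$ a closed point of $\Sscr_s$ hit by $g$.

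The paper's descent argument does not get around this. It asserts that the two composites $\Xcal'\times_{\Xcal}\Xcal'\rightrightarrows\Xcal'\to\Sscr'$ agree on generic fibres. But a point of $\Xcal'_\eta$ over $x$ is an arbitrary $\pi$-lift of $g(x)$, not $f_\eta(x)$, so they do not. What must be extended is exactly your section $s$, so your set-up locates the problem correctly.

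Your argument closes under either of two extra inputs.
\begin{itemize}
\item \emph{If $\pi$ is finite:} $\mathcal{Z}$ is closed in $\Xcal'$, which is finite over $\Xcal$. So the image of $\mathcal{Z}$ is closed and contains the dense $\Xcal_\eta$, hence is all of $\Xcal$.
\item \emph{If $f$ is a PGR-morphism:} this is the analogue actually invoked in the proof of \Cref{thm:PGR extension pulls back for stacks}. Take an $\Ocal_E$-point $x$ of $\Xcal$ through a closed point of $\Xcal_s$. Then $f_\eta(x_\eta)$ extends to an $\Ocal_{E'}$-point $y$ of $\Sscr'$. By (the proof of) \Cref{prop:morphisms isomorphic over dense subset}, the generic isomorphism $\pi\circ y_\eta\simeq g\circ x_\eta$ extends over $\Ocal_{E'}$. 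So $(x,y)$ is an $\Ocal_{E'}$-point of $\Xcal'$ with generic fibre in $s(\Xcal_\eta)$, and it factors through $\mathcal{Z}$ as in \Cref{lem:factoring morphisms through closed substacks}. The open image of $\mathcal{Z}$ then contains every closed point of $\Xcal_s$, hence equals $\Xcal$.
\end{itemize}
The missing information is which $\pi$-lift $f_\eta$ picks: the L-extension hypothesis only controls $\pi\circ f_\eta$ and says nothing about it.
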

\begin{proof}
    Let $\Xcal / \Ocal_F$ be a smooth scheme and $f:\Xcal \rightsquigarrow \Sscr'$ a generic morphism. Using $\L$-extension, the composition $\Xcal \rightsquigarrow \Sscr' \to \Sscr$ extends uniquely to $h:\Xcal \to \Sscr$.
    The situation is summarized in the following commutative diagram
    \[
    \begin{tikzcd}
    \Xcal_{\eta} \arrow[r, "f_{\eta}"] \arrow[d, hook]        & \Sscr'_{\eta} \arrow[r] \arrow[d, hook] & \Sscr_{\eta} \arrow[d, hook] \\
    \Xcal \arrow[r, "?", dashed] \arrow[rr, "h"', bend right] & \Sscr' \arrow[r]                 & \Sscr.                
    \end{tikzcd}
    \]
    Let $\Xcal' \coloneq \Xcal \times_{\Sscr} \Sscr'$ which is an étale cover of $\Xcal$. We wish to show that $h': \Xcal' \to \Sscr'$ descends to a morphism $\Xcal \to \Sscr'$, in other words we need to show that both composition arrows 
    \[\Xcal' \times_{\Xcal} \Xcal' \rightrightarrows \Xcal' \to \Sscr'\]
    coincide. Since this holds over the generic fibers, the result follows from \Cref{prop:morphisms isomorphic over dense subset}.
\end{proof}

In our applications however, we will be considering non-surjective étale morphisms $\Ucal \to \Sscr$: it is no longer true that $\Ucal$ inherits L-extension. Fortunately, L-extension implies a weaker property called PGR-extension, which we define below. In the next paragraph we show that PGR-extension pulls back under étale maps.

\begin{dfn}\label{dfn:PGR-morphism}

    Let $\Xcal,\Mscr / \Ocal_F$ be stacks. A \emph{PGR-morphism} (as in Pointwise Good Reduction) is a generic morphism $f:\Xcal \rightsquigarrow \Mscr$ such that $f_{\eta}(|\Xcal|_{\eta}^{\mrm{int}}) \subseteq |\Mscr|_{\eta}^{\mrm{int}}$. In other words, for any finite extension $E/F$ and any $\Ocal_E$-point of $\Xcal$, the image by $f_{\eta}$ of the induced $E$-point $x$ of $\Xcal_{\eta}$ extends to an $\Ocal_{E'}$-point of $\Sscr$ for some finite extension $E'/E$:
    \[
    \begin{tikzcd}
                                                                           & {\Spec \, E} \arrow[r,"x"] \arrow[d] & \Xcal_{\eta} \arrow[d] \arrow[r, "f_{\eta}"] & \Sscr_{\eta} \arrow[d] \\
    {\Spec \, \Ocal_{E'}} \arrow[r, "\exists"] \arrow[rrr, "\exists", dashed, bend right] & {\Spec \, \Ocal_{E}} \arrow[r]   & \Xcal                            & \Sscr.                 
    \end{tikzcd}
    \]
\end{dfn}

The restriction of an $\Ocal_F$-morphism $\Xcal \to \Mscr$ to the generic fibers is a PGR-morphism.

\begin{rk}\label{rk:PGR implies PGR morphism}
    The acronym PGR is motivated by the case, of interest to us, where $\Mscr/\Ocal_F$ is a moduli stack for some families of smooth and proper algebraic spaces over $\Ocal_F$. In this case, a PGR-morphism $f:\Xcal \rightsquigarrow \Mscr$ corresponds to a smooth proper family $Y \to \Xcal_{\eta}$ with pointwise good reduction as in \Cref{dfn:pointwise good reduction}.
\end{rk}

\begin{dfn}\label{dfn:PGR-extension property}
    Let $\Mscr/\Ocal_F$ be a separated Deligne-Mumford stack. We say that it has \emph{PGR-extension} if every PGR-morphism $f:\Xcal \rightsquigarrow \Mscr$ from a smooth scheme $\Xcal / \Ocal_F$ automatically and uniquely extends to an $\Ocal_F$-morphism $\Xcal \to \Mscr$.
\end{dfn}

\begin{rk}\label{rk:PGR extensions means PGR implies GGR}
    Assuming $\Mscr/\Ocal_F$ is a moduli stack for some smooth and proper families of schemes over $\Ocal_F$, having PGR-extension exactly states that such a family with pointwise good reduction has global good reduction. 
\end{rk}

\begin{prop}\label{prop:L-extension implies PGR extension}
    Let $\Sscr/\Ocal_F$ be a separated Deligne-Mumford stack with a local system $\L$. Assume the pair $(\Sscr, \mbb L)$ has L-extension as in \Cref{dfn:L-extension property}. Then $\Sscr$ has PGR-extension.
\end{prop}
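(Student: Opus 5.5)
Given a PGR-morphism $f:\Xcal \rightsquigarrow \Sscr$ from a smooth scheme $\Xcal/\Ocal_F$, we reduce to the L-extension property. It suffices to show that the local system $f_\eta^{-1}\L$ on $\Xcal_\eta$ extends to a local system over $\Xcal$. By \Cref{cor:pointwise criterion for local system}, which applies since $R$ is profinite, this amounts to checking that $f_\eta^{-1}\L$ is unramified at every $x \in |\Xcal|_{\eta}^{\mrm{int}}$. Once this is known, L-extension of $(\Sscr,\L)$ yields an extension $\Xcal \to \Sscr$. Uniqueness follows from \Cref{prop:morphisms isomorphic over dense subset}, since $\Xcal$ is smooth, hence normal, and $\Xcal_\eta$ is dense: if $\Xcal_s$ met a component of $\Xcal$ not meeting $\Xcal_\eta$, that component would be a flat $\Ocal_F$-scheme with empty generic fiber, which is impossible.

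Fix $x \in |\Xcal|_{\eta}^{\mrm{int}}$, given by $\Spec\, E \to \Xcal_\eta$. Since $f$ is a PGR-morphism, $f_\eta(x)$ lies in $|\Sscr|_{\eta}^{\mrm{int}}$. So there are a finite extension $E'/E$ and a morphism $\tilde y:\Spec\,\Ocal_{E'} \to \Sscr$ whose generic fiber is 2-isomorphic to $\Spec\, E' \to \Spec\, E \xrightarrow{x} \Xcal_\eta \to \Sscr_\eta$. Pulling $\L$ back along $\tilde y$ gives a local system on $\Spec\,\Ocal_{E'}$ that extends $(f_\eta^{-1}\L)_x|_{E'}$. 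Hence this restricted local system is unramified over $E'$.

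The main obstacle is passing from $E'$ back to $E$: the local system is only known to extend after the finite, possibly ramified, base change $E'/E$. We need to check that the notion of ``unramified at $x$'' used in \Cref{cor:pointwise criterion for local system} is insensitive to such extensions. I would handle this in either of two ways. The first is to note that in \Cref{thm:pointwise criterion for étale}, and hence in the corollaries, it is enough to test unramifiedness after finite extensions of the residue field of the point. This holds because $|\Xcal|_{\eta}^{\mrm{int}}$ is defined up to finite extensions, and the proof of Cadoret--Tamagawa only uses the $\Ocal_{E'}$-points. The second is to argue directly with the $\GL(V)$-torsor, or its finite quotients. A finite Galois cover $P_i \to \Spec\, E$ with image $H \subseteq Q_i$ of the Galois group $G_E$ becomes unramified after base change to $E'$. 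This means the inertia $I_{E'}$ acts trivially, and $I_{E'}$ is open in $I_E$ but possibly not equal to it. So the direct route only gives potential unramifiedness.

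I therefore expect the clean route to be the first: replace the test set by the points $x$ together with the $\Ocal_{E'}$-points witnessing integrality. To justify it, re-run the reduction in the proof of \Cref{cor:pointwise criterion for torsor} through \Cref{thm:pointwise criterion for étale}, checking that condition (ii) there is used only via points that extend integrally after finite extension. Concretely, for a finite Galois cover $Y \to X$ and $x$ with an $\Ocal_{E'}$-point $\tilde x$ of $\Xcal$, what matters is that the base change along $\tilde x$ is étale over $\Ocal_{E'}$, and this is exactly what the pullback along $\tilde y$ provides. With this in hand, $f_\eta^{-1}\L$ extends to $\Xcal$, and L-extension finishes the proof.
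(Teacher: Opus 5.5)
You follow the same route as the paper: reduce via \Cref{cor:pointwise criterion for local system} to unramifiedness of $f_\eta^{-1}\L$ at each $x\in|\Xcal|_\eta^{\mrm{int}}$, and get it from an integral point of $\Sscr$ through $f_\eta(x)$. You also correctly isolate the one delicate step: an $\Ocal_{E'}$-point of $\Sscr$ only gives unramifiedness over $E'$. The paper's proof passes over this by simply asserting that the pullback of $\L$ to $x$ is unramified.

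Your proposed fix, however, is false. Weakening the Cadoret--Tamagawa criterion to ``unramified after some finite extension of $F(x)$'' makes it vacuous. For any finite Galois cover $Y\to X$ and any integral $x$, you can enlarge $E'$ until $Y\times_X\Spec\,E'$ splits, and then its normalization over $\Ocal_{E'}$ is étale. For instance, $\Xcal=\Spec\,\Ocal_F$ and $Y=\Spec\,F(\sqrt{\pi})$, with $\pi$ a uniformizer, satisfy your relaxed condition, yet $Y$ does not extend.

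The finite extension in the definition of $|\Xcal|_\eta^{\mrm{int}}$ does not help either. For a scheme $\Xcal$, an $\Ocal_{E'}$-point through $x$ already gives an $\Ocal_{F(x)}$-point: the local ring at the image of the closed point maps into $\Ocal_{E'}\cap F(x)=\Ocal_{F(x)}$. So the flexibility only concerns which points are tested. The criterion itself genuinely requires $F(y)/F(x)$ to be unramified.

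Your second observation, that one only obtains potential unramifiedness, is the sharp one. With the definitions as written, the gap cannot be closed at the stated generality, because the statement itself fails for non-representable $\Sscr$.

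Take $\Sscr=\BB(\Z/2\Z)$ over $\Ocal_F$, and $\L$ the rank-one $\Zl$-local system given by the sign character $\Z/2\Z\hookrightarrow\Zl^\times$. This character is injective, and $\pi_1(\Xcal_\eta)\to\pi_1(\Xcal)$ is surjective on each connected component of a smooth $\Xcal$. Hence a $\Z/2\Z$-torsor on $\Xcal_\eta$ extends as soon as its associated local system does, so $(\Sscr,\L)$ has L-extension. But every torsor over a field becomes trivial after a finite extension, so $|\Sscr|_\eta^{\mrm{int}}$ is all of $|\Sscr_\eta|$. Therefore the torsor $\Spec\,F(\sqrt{\pi})\to\Spec\,F$ is a PGR-morphism $\Spec\,\Ocal_F\rightsquigarrow\Sscr$ that does not extend.

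The missing ingredient, in your proposal and in the paper, is an $\Ocal_{F(x)}$-point of $\Sscr$ extending $f_\eta(x)$, not merely an $\Ocal_{E'}$-point. There are two ways to get one:
\begin{itemize}
\item If $\Sscr$ is a separated algebraic space, the two pullbacks of the $\Ocal_{E'}$-point to $\Spec(\Ocal_{E'}\otimes_{\Ocal_{F(x)}}\Ocal_{E'})$ agree on its schematically dense generic fiber, hence agree everywhere, and fpqc descent applies.
\item Alternatively, read the PGR condition as requiring $\Spec\,F(x)\to\Sscr$ itself to extend over $\Ocal_{F(x)}$. This is what \Cref{dfn:pointwise good reduction} actually asks for, and it is how the paper uses the notion in the proof of \Cref{prop:PGR descends along finite torsors}.
\end{itemize}
With either fix, pulling $\L$ back along that point shows $f_\eta^{-1}\L$ is unramified at $x$, and the rest of your argument goes through.
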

\begin{proof}
    Let $\Xcal / \Ocal_F$ be as in \Cref{dfn:PGR-extension property}, consider $f:\Xcal \rightsquigarrow \Sscr$ a PGR-morphism. We want to show that $f$ extends to a morphism $\Xcal \to \Sscr$ over $\Ocal_F$. It suffices to show that $f^{-1} \L$ extends to a local system on $\Xcal$ and apply L-extension.
    
    Let $x \in |\Xcal|_{\eta}^{\mrm{int}}$, then $f(x) \in |\Sscr|_{\eta}^{\mrm{int}}$ is in the image of some $\overline{\Ocal_F}$-point of $\Sscr$. In particular, the Galois representation defined by the pullback of $\L$ to $f(x)$ is unramified, hence so is its pullback to $x$. Thus $f^{-1} \L$ is unramified at every point of $|\Xcal|_{\eta}^{\mrm{int}}$; by \Cref{cor:pointwise criterion for local system} it extends to a local system over $\Xcal$.
\end{proof}

\subsection{Pulling back PGR-extension along étale maps}\label{subsec:pullback PGR}

We aim to show the following statement.
\begin{thm}\label{thm:PGR extension pulls back for stacks}
    Let $\Sscr/\Ocal_F$ be a separated Deligne-Mumford stack with PGR-extension. Let $\Ucal/\Ocal_F$ be an algebraic stack étale over $\Sscr$. Then $\Ucal$ has PGR-extension.
\end{thm}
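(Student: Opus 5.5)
The plan is to reduce to the extension property of $\Sscr$, and then to recover the lift to $\Ucal$ as a section of an étale cover of $\Xcal$. Let $\pi:\Ucal\to\Sscr$ be the given étale morphism, $\Xcal/\Ocal_F$ a smooth scheme and $f:\Xcal\rightsquigarrow\Ucal$ a PGR-morphism. Uniqueness of an extension is automatic by \Cref{prop:morphisms isomorphic over dense subset}, since $\Ucal$ is again separated Deligne--Mumford (its diagonal factors through that of $\Sscr$ and the diagonal of the étale map $\pi$). Only existence needs proof.

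\textbf{Step 1.} The composite $\pi_\eta\circ f_\eta$ is a PGR-morphism $\Xcal\rightsquigarrow\Sscr$. Indeed, any $\Ocal_{E'}$-point of $\Ucal$ extending $f_\eta(x)$ maps to an $\Ocal_{E'}$-point of $\Sscr$ extending $\pi_\eta f_\eta(x)$. By PGR-extension of $\Sscr$ we therefore get $h:\Xcal\to\Sscr$ extending $\pi_\eta\circ f_\eta$.

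\textbf{Step 2.} Form $\Xcal'\coloneq\Xcal\times_{h,\Sscr}\Ucal$. Since $\pi$ is étale and representable by algebraic spaces, this is an algebraic space étale over $\Xcal$, hence smooth over $\Ocal_F$ and in particular normal. Lifting $h$ to $\Ucal$ is the same as giving a section of $\Xcal'\to\Xcal$. The generic morphism $f$ provides such a section $s$ over $\Xcal_\eta$. Because $\Xcal'_\eta\to\Xcal_\eta$ is étale, $s$ is an open immersion (\cite[\href{https://stacks.math.columbia.edu/tag/024T}{Tag 024T}]{stacks-project}). If $\Xcal'\to\Xcal$ is only locally separated, I would first pass to the separated quotient or argue locally; I expect separatedness of $\Ucal\to\Sscr$ to make this unnecessary in our applications. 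Granting separatedness, $s$ is also closed in $\Xcal'_\eta$, so $s(\Xcal_\eta)$ is a union of connected components of $\Xcal'_\eta$.

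Let $\Zcal$ be the closure of $s(\Xcal_\eta)$ in $\Xcal'$. Since $\Xcal'$ is normal, its irreducible components are open and disjoint, so $\Zcal$ is open and closed in $\Xcal'$. Hence $\Zcal\to\Xcal$ is étale, and it is an isomorphism over $\Xcal_\eta$. An étale separated morphism which is an isomorphism over a dense open of a normal base is an open immersion, by Zariski's main theorem applied componentwise. So $\Zcal\to\Xcal$ is an open immersion whose image contains $\Xcal_\eta$.

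\textbf{Step 3 (the main point, where PGR of $f$ is genuinely used).} It remains to show that $\Zcal\to\Xcal$ is surjective onto the special fiber. Take a closed point $z\in\Xcal_s$. By smoothness of $\Xcal$ and Hensel's lemma, $z$ lifts to an $\Ocal_E$-point $\tilde z$ of $\Xcal$ for some finite unramified $E/F$, whose generic point $x$ lies in $|\Xcal|^{\mrm{int}}_\eta$. Since $f$ is a PGR-morphism, $f_\eta(x)$ extends to an $\Ocal_{E'}$-point $u$ of $\Ucal$. Both $\pi\circ u$ and $h\circ\tilde z$ are $\Ocal_{E'}$-points of $\Sscr$ agreeing generically, so by \Cref{prop:morphisms isomorphic over dense subset} they are isomorphic. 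The resulting pair $(\tilde z,u)$ is therefore an $\Ocal_{E'}$-point of $\Xcal'$ whose generic point is $s(x)$.

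Its closed point then lies in the closure of $s(\Xcal_\eta)$, that is, in $\Zcal$, and it maps to $z$. Thus $\Zcal\to\Xcal$ is a surjective open immersion, hence an isomorphism. Its inverse, composed with the projection $\Xcal'\to\Ucal$, gives the required extension $\Xcal\to\Ucal$ of $f$.

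The delicate points are the 2-categorical bookkeeping in identifying the fiber-product point $(\tilde z,u)$, and the separatedness of $\Xcal'\to\Xcal$ needed for Zariski's main theorem.
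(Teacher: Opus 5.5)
Your argument is correct, granting the two hypotheses discussed in the next paragraph. In substance it is the paper's proof of the scheme case, \Cref{prop:PGR extension pulls back for schemes}, run directly over the stack. The paper's $\tilde Z$ is your $\Xcal'=\Xcal\times_{h,\Sscr}\Ucal$, its $Z$ is your closure of $s(\Xcal_\eta)$, and \Cref{lem:closure of graph has surjective projection} is your Step~3. Where you use Zariski's main theorem, the paper instead counts geometric points in fibers (EGA IV, 18.2.8) and concludes by finite birationality.

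The real difference is how stacks are handled. For schemes the paper works inside $\Xcal\times_{\Ocal_F}\Ucal$, where the integral point $(x,y)$ falls into $Z$ for free by \Cref{lem:factoring morphisms through closed substacks}. It then treats stacks by passing to an étale atlas $\Sscr'\to\Sscr$ and descending. When $\Sscr$ is a genuine stack, $\Xcal'\to\Xcal\times_{\Ocal_F}\Ucal$ is no longer a closed immersion, so you must build the $\Ocal_{E'}$-point of $\Xcal'$ yourself. The bookkeeping you flag is therefore not cosmetic: with an arbitrary isomorphism $\pi\circ u\simeq h\circ\tilde z$, the generic point may be a point of $\Xcal'_\eta$ over $x$ other than $s(x)$, since such points differ by automorphisms of $h_\eta(x)$. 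What resolves this is the proof, rather than the statement, of \Cref{prop:morphisms isomorphic over dense subset}: $\uIsom(\pi\circ u,h\circ\tilde z)$ is finite over $\Spec\,\Ocal_{E'}$, so the given generic isomorphism extends.

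In exchange you avoid the atlas step. Its key input is that $\Sscr'$ inherits PGR-extension by the PGR analogue of \Cref{lem:atlas has L-extension}, and that is really the case $\Ucal=\Sscr'$ of the theorem itself. The descent in the proof of that lemma does not work as written: for $\Sscr'=\Sscr\sqcup\Sscr$ the two composites $\Xcal'\times_{\Xcal}\Xcal'\rightrightarrows\Sscr'$ already differ on generic fibers. The L-version of the lemma even fails, e.g.\ for $\Sscr=\Spec\,\Ocal_F$ and $\Sscr'=\Sscr\sqcup\Sscr_\eta$. Repairing it requires exactly your argument, so your direct route is the more self-contained of the two.

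Two hypotheses should be assumed rather than derived.

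\emph{Separatedness of $\Ucal$.} Your opening claim that $\Ucal$ is automatically separated is false. The diagonal of an étale morphism is an open immersion, not a closed one; for instance, the affine line with doubled origin is étale over $\A^1$ but not separated. Separatedness of $\Ucal$ is implicit in the statement, since PGR-extension is only defined for separated Deligne--Mumford stacks. It makes $\Ucal\to\Sscr$, hence $\Xcal'\to\Xcal$, separated, which disposes of your hedge in Step~2 (and makes $\Xcal'$ a scheme).

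\emph{Representability of $\pi$.} You also assume this, and it is genuinely needed. The morphism $\BB(\Z/2\Z)_{\Ocal_F}\to\Spec\,\Ocal_F$ is étale and $\Spec\,\Ocal_F$ has PGR-extension. However, every closed point of $\BB(\Z/2\Z)_F$ lies in the integral locus, so a ramified quadratic extension of $F$ gives a PGR-morphism $\Spec\,\Ocal_F\rightsquigarrow\BB(\Z/2\Z)_{\Ocal_F}$ with no extension. The paper uses representability tacitly as well, when it asserts that $\Ucal'$ is a scheme.
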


We first consider the case when $\Sscr$ is a scheme.

\begin{prop}\label{prop:PGR extension pulls back for schemes}
    Let $\Sscr/\Ocal_F$ be a separated scheme with PGR-extension. Let $\Ucal/\Ocal_F$ be a scheme étale over $\Sscr$. Then $\Ucal$ has PGR-extension.
\end{prop}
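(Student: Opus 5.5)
Given a PGR-morphism $f:\Xcal \rightsquigarrow \Ucal$ from a smooth $\Ocal_F$-scheme, I first compose it with the étale map $\pi:\Ucal\to\Sscr$. The composite $\pi\circ f$ is again a PGR-morphism, since integral points of $\Ucal$ map to integral points of $\Sscr$. PGR-extension of $\Sscr$ then gives a unique extension $h:\Xcal\to\Sscr$ with $h_\eta=\pi_\eta\circ f_\eta$. The problem is now to lift $h$ through $\pi$ on all of $\Xcal$, compatibly with $f_\eta$ on the generic fiber.

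To do this, I form $\Xcal' \coloneq \Xcal\times_{h,\Sscr}\Ucal$, which is étale over $\Xcal$ (hence smooth over $\Ocal_F$) and separated, since $\Ucal$ is separated over $\Sscr$ after shrinking. Lifts of $h$ correspond to sections of $\Xcal'\to\Xcal$. The generic morphism $f_\eta$ gives a section $\sigma:\Xcal_\eta\to\Xcal'_\eta$, which is an open and closed immersion because it is a section of an étale separated map. Let $Z$ be the scheme-theoretic closure of $\sigma(\Xcal_\eta)$ in $\Xcal'$; it is an open subscheme of the smooth scheme $\Xcal'$ (a union of components), hence normal. The map $Z\to\Xcal$ is étale, separated and birational, and it is an isomorphism over $\Xcal_\eta$. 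It suffices to show that $Z\to\Xcal$ is surjective, hence finite. I expect this by a Zariski main theorem argument: an étale, separated, birational map onto a normal scheme is an open immersion. So $Z\to\Xcal$ is an open immersion, and the only remaining question is whether its image contains every point of the special fiber.

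This surjectivity is the main obstacle, and here the PGR hypothesis on $f$ must be used a second time, not merely on $\pi\circ f$. Let $s\in\Xcal_s$ be a closed point. Since $\Xcal$ is smooth, Hensel's lemma lets me lift $s$ to a point $\Spec\,\Ocal_E\to\Xcal$ with $E/F$ finite, so its generic point $x$ lies in $|\Xcal|^{\mrm{int}}_\eta$. By the PGR property, $f_\eta(x)$ extends to some $u:\Spec\,\Ocal_{E'}\to\Ucal$. Both $\pi\circ u$ and $h|_{\Ocal_{E'}}$ extend the same $E'$-point of $\Sscr$, so by separatedness they agree, in the sense of \Cref{prop:morphisms isomorphic over dense subset} applied to $\Spec\,\Ocal_{E'}$. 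The point $u$ therefore defines a section $\Spec\,\Ocal_{E'}\to\Xcal'$ extending $\sigma$ at $x$. Its image lies in the closure of $\sigma(x)$, hence in $Z$, and it lies over $s$. So $Z$ contains a point over every closed point of $\Xcal_s$. Since the image of $Z$ is open and contains all closed points of the Jacobson scheme $\Xcal$ (finite type over $\Ocal_F$), it is all of $\Xcal$.

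The map $Z\to\Xcal$ is therefore an isomorphism, and its inverse gives the desired section and extension $\Xcal\to\Ucal$. Uniqueness follows from \Cref{prop:morphisms isomorphic over dense subset}. For this I still need to check that a general étale $\Ucal\to\Sscr$ between schemes can be treated as separated, and I would reduce to that case by working Zariski-locally on $\Ucal$ together with uniqueness.
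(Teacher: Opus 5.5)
Your proposal is correct and follows essentially the paper's proof. It uses the same $Z$ (the closure of the graph of $f_\eta$, which sits inside $\Xcal\times_{\Sscr}\Ucal$), the same surjectivity argument (lift closed points of $\Xcal_s$ to $\Ocal_E$-points and apply the PGR hypothesis to $f$ itself), and a final ``étale, separated, birational onto normal'' step, which you do via Zariski's main theorem where the paper gets finiteness from EGA IV 18.2.8 and then applies Stacks Tag 0AB1. Two small fixes: separatedness of $\Ucal$ is built into the definition of PGR-extension, so no Zariski-local reduction is needed (and none could work, since uniqueness fails for a non-separated étale $\Ucal\to\Sscr$); and $\Xcal$ is not Jacobson because $\Ocal_F$ is not, but your conclusion still holds, since the closed complement of the open image lies in $\Xcal_s$ and, if nonempty, would contain a closed point of the Jacobson scheme $\Xcal_s$, which you have already hit.
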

\begin{proof}
    Let $\Xcal / \Ocal_F$ be as in \Cref{dfn:PGR-morphism}, consider $f:\Xcal \rightsquigarrow \Ucal$ a PGR-morphism. We want to show that $f$ extends to a morphism $\Xcal \to \Ucal$ over $\Ocal_F$. The composition $\Xcal_{\eta} \to \Ucal_{\eta} \to \Sscr_{\eta}$ is again PGR, hence extends to $h:\Xcal \to \Sscr$. 
    The situation is summarized in the following commutative diagram:
    \[
    \begin{tikzcd}
    \Xcal_{\eta} \arrow[r, "f_{\eta}"] \arrow[d, hook]                     & \Ucal_{\eta} \arrow[r, "g_{\eta}"] \arrow[d, hook] & \Sscr_{\eta} \arrow[d, hook] \\
    \Xcal \arrow[r, "?", dashed] \arrow[rr, "h"', bend right] & \Ucal \arrow[r, "g"]     & \Sscr.                
    \end{tikzcd}
    \]
    Let $\Gamma_{f_{\eta}} \hookrightarrow \Xcal_{\eta} \times_F \Ucal_{\eta}$ be the graph of $f_{\eta}$, let $Z$ denote its closure in $\Xcal \times_{\Ocal_F} \Ucal$. It suffices to show that the projection $Z \to \Xcal$ is an isomorphism, so that the extension of $f_{\eta}$ is provided by
    \[\tilde{f}:\Xcal \to Z \hookrightarrow \Xcal \times_{\Ocal_F} \Ucal \to \Ucal.\]
    Let $\Gamma_h \hookrightarrow \Xcal \times_{\Ocal_F} \Sscr$ denote the graph of $h$. Let $\tilde{Z}$ denote the pullback of $\Gamma_h \to \Sscr$ along $g$:
    \[\begin{tikzcd}
    \tilde{Z} \arrow[d] \arrow[r, hook] & \Xcal \times_{\Ocal_F} \Ucal \arrow[d, "\id \times g"] \arrow[r, "p_2"] & \Ucal \arrow[d, "g"] \\
    \Gamma_h \arrow[r, hook]            & \Xcal \times_{\Ocal_F} \Sscr \arrow[r, "p_1"]                                   & \Sscr.                       
    \end{tikzcd}
    \]
    By definition $\tilde{Z}$ is a closed subscheme of $\Xcal \times_{\Ocal_F} \Ucal$ containing $\Gamma_{f_{\eta}}$, hence $Z$ is a closed subscheme of $\tilde{Z}$. By \Cref{lem:closure of graph has surjective projection} below, the projection $Z \to \Xcal$ is surjective (here we use again that $f$ is PGR). The commutative diagram
    \[
    \begin{tikzcd}
    Z \arrow[d] \arrow[r, hook]                              & \Xcal \times_{\Ocal_F} \Ucal \arrow[d, "\id \times \tilde{g}"] \arrow[r, "p_1"] & \Xcal \arrow[d, "\id"] \\
    \Gamma_h \arrow[r, hook] \arrow[rr, "\sim"', bend right] & \Xcal \times_{\Ocal_F} \Sscr \arrow[r, "p_1"]                                   & \Xcal                 
    \end{tikzcd}
    \]
    shows that this projection factors through $Z \to \Gamma_h \xrightarrow{\sim} \Xcal$, thus $\theta:Z \to \Gamma_h$ is surjective. By \Cref{lem:restriction to irred comp is etale} below, $Z \to \Gamma_h$ is étale. By \cite[Proposition 18.2.8]{EGAIV} the number of points in the geometric fibers of $\theta$ is lower semicontinuous, bounded below by 1 by surjectivity, and equals 1 on the dense open subscheme $\Gamma_{f_{\eta}}$ (as $\Gamma_{f_{\eta}} \to \Gamma_{g_{\eta} \circ f_{\eta}}$ is an isomorphism); it is thus constant equal to 1. By \textit{loc.cit.}, $\theta$ is finite, and so by \cite[\href{https://stacks.math.columbia.edu/tag/0AB1}{Tag 0AB1}]{stacks-project} it is an isomorphism.
\end{proof}

\begin{lem}\label{lem:closure of graph has surjective projection}
    Let $\Xcal, \Ucal / \Ocal_F$ be schemes of finite type, with $\Xcal / \Ocal_F$ smooth, and $f:\Xcal \rightsquigarrow \Ucal$ a PGR-morphism with graph $\Gamma \hookrightarrow \Xcal_{\eta} \times_F \Ucal_{\eta}$. Let $Z$ be the closure of $\Gamma$ in $\Xcal \times_{\Ocal_F} \Ucal$. Then the projection $Z \to \Xcal$ is surjective.
\end{lem}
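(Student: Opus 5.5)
Since $Z$ is closed in $\Xcal \times_{\Ocal_F} \Ucal$, its image under the projection to $\Xcal$ need not be closed: $\Ucal$ is not assumed proper. So the plan is to prove surjectivity point by point, and to reach the closed points of the special fiber using the PGR hypothesis. Over the generic fiber there is nothing to do. Indeed, $Z \cap (\Xcal_\eta \times_F \Ucal_\eta)$ contains $\Gamma$, which is closed in $\Xcal_\eta \times_F \Ucal_\eta$ and maps isomorphically onto $\Xcal_\eta$. Hence $Z_\eta = \Gamma$ surjects onto $\Xcal_\eta$.

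For the special fiber, first note that the image of $Z \to \Xcal$ is constructible by Chevalley, since everything is of finite type over $\Ocal_F$. A constructible subset of the finite type $k$-scheme $\Xcal_s$ that contains all closed points of $\Xcal_s$ is all of $\Xcal_s$, because $\Xcal_s$ is Jacobson. So it suffices to hit every closed point $\bar x \in \Xcal_s$. For this I use smoothness of $\Xcal/\Ocal_F$. By Hensel's lemma, or the local structure of smooth morphisms, $\bar x$ (after passing to a finite unramified extension realizing its residue field) lifts to a section $\tilde x : \Spec \, \Ocal_E \to \Xcal$ for some finite extension $E/F$, with closed point mapping to $\bar x$.

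Its generic point $x \in |\Xcal_\eta|$ then lies in $|\Xcal|_\eta^{\mrm{int}}$. Since $f$ is PGR, $f_\eta(x) \in |\Ucal|_\eta^{\mrm{int}}$. Concretely, after a further finite extension $E'/E$ there is a morphism $u : \Spec \, \Ocal_{E'} \to \Ucal$ whose generic point is $f_\eta(x)$. Here I must check that the integral point lies over the specific $E'$-point $f_\eta(x)$ and not merely over a point in the same class in $|\Ucal_\eta|$; this follows by adjusting $E'$ via the identification in the definition of $|\cdot|$. Now consider the product morphism $(\tilde x, u) : \Spec \, \Ocal_{E'} \to \Xcal \times_{\Ocal_F} \Ucal$. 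Its generic point maps into $\Gamma$, since it is $(x, f_\eta(x))$. Because $\Spec \, \Ocal_{E'}$ is integral with dense generic point and $Z$ is closed, the whole morphism factors set-theoretically through $Z$. Therefore the image of the closed point lies in $Z$ and projects to $\bar x$.

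The main obstacle is the lifting step together with the bookkeeping of field extensions. One has to make sure that $\tilde x$ exists for every closed point of the special fiber, which uses smoothness and not mere flatness. One also has to match the PGR extension with the actual point $f_\eta(x)$ after base change, and choose $E'$ compatibly so that the pair $(\tilde x_{\Ocal_{E'}}, u)$ really gives a single $\Ocal_{E'}$-point of the product.
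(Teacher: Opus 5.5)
Your proposal is correct and follows the paper's proof. Chevalley plus a density argument reduces the claim to closed points of $\Xcal_s$. You reach those by lifting them to $\Ocal_E$-points via smoothness, using the PGR hypothesis to get an $\Ocal_{E'}$-point of $\Ucal$ over $f_\eta(x)$, and checking that the resulting $\Ocal_{E'}$-point of $\Xcal\times_{\Ocal_F}\Ucal$ lands in $Z$.

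The differences are minor. You factor through $Z$ set-theoretically, using density of the generic point, where the paper invokes its lemma on factoring through closed substacks. Your Jacobson argument on $\Xcal_s$ is the precise form of the paper's Chevalley step. Your aside that $\Gamma$ is closed (so $Z_\eta=\Gamma$) needs $\Ucal_\eta$ separated, but only $\Gamma\subseteq Z$ is actually used.
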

\begin{proof}
    We show that the set-theoretic image of $p:Z \to \Xcal$ contains all closed points. Then by Chevalley's theorem the image is a locally constructible subset of $\Xcal$ containing all closed points, thus it is the whole of $\Xcal$. Since $\Gamma \subseteq Z$, $\Xcal_{\eta}$ is contained in $p(Z)$, so it remains to show that the closed points of the special fiber $\Xcal_s$ of $\Xcal$ are attained. 

    Let $x_s \in \Xcal_s$ be a closed point. We use the smoothness of $\Xcal$: by the infinitesimal lifting criterion, $x_s$ lifts to an $\Ocal_E$-point $x: \Spec \, \Ocal_E \to \Xcal$ for some finite extension $E/F$. Let $x_0: \Spec \, E \to \Xcal_{\eta}$ denote the generic fiber of $x$. Let $y_0 = f_{\eta}(x_0)$, $y_0 \in |\Ucal|_{\eta}^{\mrm{\mrm{int}}}$ by the PGR assumption, hence $y_0$ is the generic fiber of some $\Ocal_{E'}$-point $y:\Spec \, \Ocal_{E'} \to \Ucal$ for $E' /E$ a finite extension. We may assume $E = E'$, so that the pair $(x,y)$ defines an $\Ocal_E$-point of the product $\Xcal \times_{\Ocal_F} \Ucal$. The situation is summarized in the following commutative diagram:
    \[
    \begin{tikzcd}
    \Spec \, E \arrow[r, "{(x_0,y_0)}"] \arrow[d, hook]                     & \Gamma_{f_{\eta}} \arrow[r,hook] \arrow[d, hook] & \Xcal_{\eta} \times_F \Ucal_{\eta} \arrow[d, hook] \\
    \Spec \, \Ocal_E \arrow[r, "?", dashed] \arrow[rr, "{(x,y)}",bend right] & Z \arrow[r, hook]     &\Xcal \times_{\Ocal_F} \Ucal.                
    \end{tikzcd}
    \]
    The factorization via the dashed arrow exists because of \Cref{lem:factoring morphisms through closed substacks} below.
    Now the composition morphism
    \[\Spec \, \Ocal_E \longrightarrow Z \longrightarrow \Xcal \times_{\Ocal_F} \Ucal \longrightarrow \Xcal\]
    gives back the $\Ocal_E$-point $x$, so $x_s$ lies in the image of $Z \to \Xcal$, as desired.
\end{proof}

\begin{lem}\label{lem:factoring morphisms through closed substacks}
    Let $\Xcal / \Ocal_F$ be a flat scheme, $\Sscr / \Ocal_F$ a Deligne-Mumford stack, and $\Zscr \hookrightarrow \Sscr$ a closed substack. Let $f:\Xcal \rightsquigarrow \Zscr$ be a generic morphism, such that the composition $\Xcal \rightsquigarrow \Sscr$ extends to $h:\Xcal \to \Sscr$. Then $h$ factors through $\Zscr$.
\end{lem}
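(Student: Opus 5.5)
The plan is to reduce to the statement that a closed immersion is detected by the vanishing of an ideal, and then to use flatness of $\Xcal$ to show that this vanishing can be checked on the generic fiber. Since $\Zscr \hookrightarrow \Sscr$ is a closed immersion, hence representable, the base change $\Xcal \times_{h,\Sscr} \Zscr \hookrightarrow \Xcal$ is a closed immersion of schemes. Write it as $V(\Ical) \subseteq \Xcal$ for a quasi-coherent ideal $\Ical \subseteq \Ocal_{\Xcal}$ (I will write $\mathcal{I}$ in the final text, since the paper does not define $\Ical$). A factorization of $h$ through $\Zscr$ is the same thing as a section of this closed immersion, i.e.\ the statement $\Ical = 0$. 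Uniqueness up to unique isomorphism is automatic because $\Zscr \to \Sscr$ is a monomorphism.

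First I would use the hypothesis on the generic fiber. The composite $\Xcal_\eta \xrightarrow{f_\eta} \Zscr_\eta \to \Sscr_\eta$ is isomorphic to $h_\eta$, so $h_\eta$ factors through $\Zscr$. Hence $\Xcal_\eta \times_{\Sscr} \Zscr \to \Xcal_\eta$ has a section, and since it is a closed immersion it is an isomorphism. As $\Xcal_\eta \to \Xcal$ is flat, forming the ideal commutes with this base change, so $\Ical|_{\Xcal_\eta} = \Ical \otimes_{\Ocal_F} F = 0$.

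Next I would work locally on an affine open $\Spec A \subseteq \Xcal$, where $\Ical$ corresponds to an ideal $I \subseteq A$ with $I[1/\varpi] = 0$. Every element of $I$ is then killed by a power of the uniformizer $\varpi$. Flatness of $A$ over the DVR $\Ocal_F$ means $A$ is $\varpi$-torsion free, so $I = 0$. Thus $\Ical = 0$ and $h$ factors through $\Zscr$.

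The main obstacle is bookkeeping on the stack side. One has to check that the factorization of $h_\eta$ really follows from the 2-isomorphism between $h_\eta$ and the composite through $f_\eta$. This holds because $\Xcal_\eta \times_{\Sscr} \Zscr$ depends on $h_\eta$ only up to 2-isomorphism, and $(\mathrm{id}, f_\eta)$ together with that 2-isomorphism gives the section. Beyond this, the argument is the standard fact that the scheme-theoretic closure of the generic fiber of a flat $\Ocal_F$-scheme is the whole scheme.
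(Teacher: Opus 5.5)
Your proof is correct and is essentially the paper's argument. Both rest on the same point: an ideal of the $\varpi$-torsion-free ring $A$ that vanishes after inverting $\varpi$ is zero; in the paper this ideal is the image of $I$ under $B \to A$. Pulling $\Zscr \hookrightarrow \Sscr$ back along $h$ to a closed subscheme of $\Xcal$ is just a cleaner version of the paper's ``étale-local'' reduction, since it avoids choosing charts of $\Sscr$ and gluing the local factorizations.
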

\begin{proof}
    We may argue étale-locally and assume that $\Xcal, \Sscr$ and $\Zscr$ are affine schemes over $\Ocal_F$; denote by $A, B$ and $B/I$ the corresponding $\Ocal_F$-algebras. As $\Xcal / \Ocal_F$ is flat, $A$ is torsion-free. The situation is summarized by the following commutative diagram with solid arrows:
    \[
    \begin{tikzcd}
    A \otimes_{\Ocal_F} F & B/I \otimes_{\Ocal_F} F \arrow[l] & B \otimes_{\Ocal_F} F \arrow[l]                        \\
    A \arrow[u]           & B/I \arrow[u] \arrow[l, dashed, "?"']   & B. \arrow[u] \arrow[l, two heads] \arrow[ll, bend left]
    \end{tikzcd}
    \]
    As $A$ is torsion-free, the left-most vertical arrow is injective. A diagram chase shows that $I$ is in the kernel of $B \to A$, hence there exists a dashed-arrow making the diagram commute, as desired.
\end{proof}

\begin{lem}\label{lem:restriction to irred comp is etale}
    Let $\phi:\tilde{Z} \to \Gamma$ be an étale morphism of schemes with $\Gamma$ regular irreducible. Let $Z \hookrightarrow \tilde{Z}$ be an irreducible closed subscheme such that $\theta:Z \to \Gamma$ is again surjective. Then $\theta$ is étale. 
\end{lem}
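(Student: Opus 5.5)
The plan is to show that $Z$ is a connected component of $\tilde Z$, as a scheme and not only set-theoretically. An open and closed subscheme of an étale $\Gamma$-scheme is again étale over $\Gamma$, so this proves the claim. The tool is that an étale scheme over a regular base is regular, and a regular scheme has no intersecting irreducible components.

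First, since $\phi$ is étale and $\Gamma$ is regular (in particular locally noetherian), $\tilde Z$ is regular. Every local ring of $\tilde Z$ is then a domain, so each point lies on exactly one irreducible component. The irreducible components are therefore pairwise disjoint, and since they form a locally finite family they are open and closed. Each such component $C$, with its reduced structure, is an integral open and closed subscheme of $\tilde Z$, and $C \to \Gamma$ is étale. The irreducible closed subset $Z$ lies in some component $C$.

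Next I compare generic points. Let $\zeta$ be the generic point of $Z$. Since $\theta$ is surjective and $Z$ is irreducible, $\theta(\zeta)$ is the generic point $\gamma$ of $\Gamma$: the image of $Z$ lies in the closure of $\theta(\zeta)$, and this closure must be all of $\Gamma$. The map $C \to \Gamma$ is flat, so it satisfies going-down. Hence any point of $C$ over $\gamma$ has no proper generization in $C$, for such a generization would also map to $\gamma$. Since the fibre over $\gamma$ is discrete, $\zeta$ must be the generic point of $C$. Therefore $Z = C$ as closed subsets of $\tilde Z$.

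The remaining point, which I expect to be the only delicate one, is the scheme structure. I would take $Z$ reduced. This is the case in the intended application, where $Z$ is defined as a scheme-theoretic closure of the reduced scheme $\Gamma_{f_\eta}$ and is reduced by \cite[\href{https://stacks.math.columbia.edu/tag/0356}{Tag 0356}]{stacks-project}. Under this assumption $Z$ and $C$ are both reduced closed subschemes with the same support, so $Z = C$ as schemes. Since $C$ is open in $\tilde Z$, the map $\theta = \phi_{|C}$ is étale. If non-reduced structures were allowed, one would instead note that $Z$ is a closed subscheme of the reduced scheme $C$ with the same support, which again forces $Z = C$.
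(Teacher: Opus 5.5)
Your proof is correct. Its second half is the same as the paper's: $\tilde Z$ is regular because it is étale over a regular scheme, so its irreducible components are open and closed, and a component is étale over $\Gamma$.

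The difference is in how you show that $Z$ is a whole component. The paper uses the dimension count $\dim \tilde Z \ge \dim Z \ge \dim \Gamma = \dim \tilde Z$. This implicitly relies on surjectivity giving $\dim Z \ge \dim \Gamma$, on étale maps not raising dimension, and on these dimensions being finite. All of this is harmless in the finite-type setting where the lemma is applied.

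You argue with generic points instead. The generic point $\zeta$ of $Z$ maps to the generic point $\gamma$ of $\Gamma$. The generic point of the component $C \supseteq Z$ is a generization of $\zeta$, so it also lies over $\gamma$. Discreteness of $\phi^{-1}(\gamma)$ then forces the two points to coincide. This step needs only dominance of $\theta$ and discreteness of the fibre of $\phi$ over $\gamma$, with no dimension hypotheses. The going-down property you mention is never actually used: continuity of $\phi$ and discreteness of the fibre do all the work.

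You are also more careful than the paper about scheme structures. Your last observation settles the general case: a closed subscheme of the reduced scheme $C$ with full support has ideal sheaf contained in the nilradical, so it equals $C$. Hence the provisional assumption that $Z$ is reduced can simply be dropped. The paper passes silently from \emph{$Z$ is an irreducible component} to \emph{$Z \hookrightarrow \tilde Z$ is an open immersion}, and this is exactly the fact needed to justify that step.
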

\begin{proof}
    The surjectivity of $\theta$ ensures $\dim \tilde{Z} \ge \dim Z \ge \dim \Gamma = \dim \tilde{Z}$, so $Z$ is an irreducible component of $\tilde{Z}$.
    As $\phi$ is étale, $\tilde{Z}$ is also regular, hence its irreducible components are connected components: $Z \hookrightarrow \tilde{Z}$ is an open immersion. Hence $\theta: Z \hookrightarrow \tilde{Z} \to \Gamma$ is étale.
\end{proof}

\begin{proof}[Proof of \Cref{thm:PGR extension pulls back for stacks}]
    Let $\Xcal / \Ocal_F$ be as in \Cref{dfn:PGR-extension property}, and $f:\Xcal \rightsquigarrow \Ucal$ a PGR-morphism. We want to show that $f$ extends to a morphism $\Xcal \to \Ucal$ over $\Ocal_F$.

    Consider $\Sscr'/\Ocal_F$ a scheme which is an étale atlas over $\Sscr / \Ocal_F$. 
    By \Cref{lem:atlas has L-extension} (rather, its PGR analogue), $\Sscr'$ has PGR-extension. Let $\Xcal' / \Ocal_F$ and $\Ucal' / \Ocal_F$ be the algebraic spaces obtained by base change by $\Sscr' \to \Sscr$. Since $\Sscr' \to \Sscr$ is an étale presentation of a Deligne-Mumford stack with separated and quasi-compact diagonal, $\Xcal'$ and $\Ucal'$ are schemes (see \cite[Remark 3.1.5]{alper2024stacks}). By \Cref{prop:PGR extension pulls back for schemes}, $\Ucal'$ has PGR-extension, and since $f':\Xcal' \rightsquigarrow \Ucal'$ is again PGR, thus we find a unique morphism
    \[\tilde{f}':\Xcal' \to \Ucal'\]
    extending $f'$. Now it suffices to show that the composition $\Xcal' \to \Ucal' \to \Ucal$ descends to a morphism $\Xcal \to \Ucal$, in other words we need to show that both composition arrows
    \[\Xcal' \times_{\Xcal} \Xcal' \rightrightarrows \Xcal' \to \Ucal' \to \Ucal\]
    are isomorphic. This follows from \Cref{prop:morphisms isomorphic over dense subset}.
\end{proof}

We end this paragraph by showing that PGR-extension descends along finite torsors.

\begin{prop}\label{prop:PGR descends along finite torsors}
    Let $H$ be a finite group. Let $\Mscr/\Ocal_F$ be a separated Deligne-Mumford stack. Let $P:\Mscr \to \BB H$ denote an $H$-torsor on $\Mscr$, consider $\Ucal \to \Mscr$ the corresponding finite étale cover. 
    If $\Ucal$ has PGR-extension, then so does $\Mscr$.
\end{prop}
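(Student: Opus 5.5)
Let $\Xcal/\Ocal_F$ be a smooth scheme and $f:\Xcal \rightsquigarrow \Mscr$ a PGR-morphism; we must extend $f_\eta$ to $\Xcal \to \Mscr$. The plan is to lift $f$ to $\Ucal$ after a finite étale cover of $\Xcal$, extend there using PGR-extension of $\Ucal$, and descend back to $\Xcal$.

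First, the composition $P \circ f_\eta : \Xcal_\eta \to \BB H$ is an $H$-torsor $P_\eta$ on $\Xcal_\eta$, represented by the finite étale cover $X' \coloneq \Xcal_\eta \times_{\Mscr_\eta} \Ucal_\eta \to \Xcal_\eta$. We check that $P_\eta$ is unramified at every $x \in |\Xcal|_\eta^{\mrm{int}}$. By the PGR assumption, $f_\eta(x)$ lies in $|\Mscr|_\eta^{\mrm{int}}$: after a finite extension $E'/F(x)$, it comes from an $\Ocal_{E'}$-point $\Spec\,\Ocal_{E'} \to \Mscr$. Pulling $P$ back along this point gives an $H$-torsor over $\Spec\,\Ocal_{E'}$, so $P_x$ becomes unramified after restriction to $E'$. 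This is the step I expect to be the main obstacle. Unramifiedness over $E'$ does not imply unramifiedness over $F(x)$, since $E'/F(x)$ may be ramified. The fix is to note that $x$ itself is the generic fiber of an $\Ocal_E$-point of $\Xcal$, and that the definition of the integral locus allows passing to finite extensions. So we may work with the $E'$-point $x_{E'}$, which still lies in $|\Xcal|_\eta^{\mrm{int}}$. I would therefore apply \Cref{thm:pointwise criterion for étale} in the form where unramifiedness is tested after arbitrary finite extensions of the residue fields. If the theorem of Cadoret--Tamagawa really needs unramifiedness over $F(x)$ itself, I would instead reduce as follows. Pass to the finite étale cover $X' \to \Xcal_\eta$ and compare ramification of $X'$ and $\Xcal_\eta$ at points over $x$. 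The key observation is that an $\Ocal_{E'}$-lift through $\Ucal$ gives a point of $X'$ defined over $E'$ that is unramified over $E'$.

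Granting this, \Cref{cor:pointwise criterion for torsor} extends $P_\eta$ to an $H$-torsor over $\Xcal$, that is, a finite étale cover $\Xcal' \to \Xcal$ with generic fiber $X'$. Since $\Xcal'$ is étale over $\Xcal$, it is a smooth $\Ocal_F$-scheme. The projection $f'_\eta : X' \to \Ucal_\eta$ is a generic morphism $\Xcal' \rightsquigarrow \Ucal$. We check that it is PGR: if $x' \in |\Xcal'|_\eta^{\mrm{int}}$, its image $x$ lies in $|\Xcal|_\eta^{\mrm{int}}$, so $f_\eta(x)$ extends to an $\Ocal_{E'}$-point of $\Mscr$. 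After further enlarging $E'$, this point lifts to $\Ucal$, because $\Ucal \to \Mscr$ is finite étale and so its pullback to $\Spec\,\Ocal_{E'}$ splits after a finite unramified extension. The lift can be chosen to agree with $f'_\eta(x')$ generically, since the generic points of the pulled-back torsor over $E'$ all extend. Hence $f'_\eta(x') \in |\Ucal|_\eta^{\mrm{int}}$, and PGR-extension of $\Ucal$ gives $\tilde f' : \Xcal' \to \Ucal$.

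Finally, we descend the composite $\Xcal' \to \Ucal \to \Mscr$ along the étale cover $\Xcal' \to \Xcal$. The two composites $\Xcal' \times_\Xcal \Xcal' \rightrightarrows \Xcal' \to \Mscr$ agree on generic fibers, because there they both factor through $f_\eta$. Since $\Xcal' \times_\Xcal \Xcal'$ is smooth, hence normal, and $\Mscr$ is separated Deligne--Mumford, \Cref{prop:morphisms isomorphic over dense subset} identifies the two composites. For a stack we also need the cocycle condition on these identifications. It holds because the identifications agree generically, and isomorphisms between morphisms into a separated DM stack are determined on a dense open of a reduced scheme, the Isom scheme being finite and hence separated. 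Descent then yields $\Xcal \to \Mscr$ extending $f_\eta$. Uniqueness follows again from \Cref{prop:morphisms isomorphic over dense subset}.
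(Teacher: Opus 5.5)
You follow the same route as the paper: extend $P\circ f_\eta$ to an $H$-torsor $\Xcal'\to\Xcal$ by \Cref{cor:pointwise criterion for torsor}, show $\Xcal'\rightsquigarrow\Ucal$ is PGR, extend, and descend. Your second and third steps are fine. But the step you flag is a genuine gap, and neither of your fixes closes it.

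\textbf{Why the fixes fail.} Points of $|\Xcal_\eta|$ are taken up to field extension, so $x_{E'}$ is the same element of $|\Xcal|_\eta^{\mrm{int}}$ as $x$, and nothing is gained. Nor can any version of the Cadoret--Tamagawa criterion test unramifiedness only after finite extensions of $F(x)$. Every torsor under a finite group over a $p$-adic field is trivialized by a finite extension, so that hypothesis is vacuous. For $\Xcal=\Spec\,\Ocal_F$ such a criterion would say that every $H$-torsor over $F$ extends over $\Ocal_F$, which is false. Your second fix is not an argument.

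The paper's own proof contains the same gap. It asserts that, by definition, $\Spec\,F(x)\to\Mscr$ extends to $\Spec\,\Ocal_{F(x)}\to\Mscr$. When $\Mscr$ is a stack, \Cref{dfn:integral locus} only gives such an extension after a finite extension $E'/F(x)$. For a scheme the two agree, by mapping into an affine neighbourhood of the image of the closed point.

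\textbf{A counterexample.} In fact the gap cannot be filled, since the statement fails as formulated.
\begin{itemize}
\item Take $\Mscr=\BB H$ and $P=\mrm{id}$, so that $\Ucal=\Spec\,\Ocal_F$, which trivially has PGR-extension.
\item Let $\Xcal=\Spec\,\Ocal_F$, and let $f_\eta\colon\Spec\,F\to\BB H$ be a ramified $H$-torsor. For example, take $H=\Z/2\Z$ and $F(\sqrt{\pi})$, with $p$ odd and $\pi$ a uniformizer.
\item The only point of $|\Xcal|_\eta^{\mrm{int}}$ is $\Spec\,F$. Over the splitting field $E'$ the torsor is trivial, hence extends over $\Ocal_{E'}$, so $f$ is a PGR-morphism.
\item But $f$ does not extend to $\Spec\,\Ocal_F\to\BB H$, since $H$-torsors over $\Ocal_F$ are unramified.
\end{itemize}
This is the same phenomenon as a ramified quadratic twist of an elliptic curve with good reduction: potentially good, but not good, reduction.

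\textbf{What would make it true.} The proposition, and both your argument and the paper's, become correct if a PGR-morphism is required to send each $x\in|\Xcal|_\eta^{\mrm{int}}$ to a point extending over $\Ocal_{F(x)}$ itself. This is genuine pointwise good reduction, as in \Cref{dfn:pointwise good reduction}. Then $P_x$ extends by pulling $P$ back along that $\Ocal_{F(x)}$-point. Your second step then goes through without enlarging fields, because $\Ucal\to\Mscr$ is finite and so sections over $F(x')$ extend over $\Ocal_{F(x')}$.
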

\begin{proof}
    Let $\Xcal / \Ocal_F$ be as in \Cref{dfn:PGR-extension property}, and $f:\Xcal \rightsquigarrow \Mscr$ a PGR-morphism. We want to show that $f$ extends to a morphism $\Xcal \to \Mscr$ over $\Ocal_F$.

    Let $\Ucal'_{\eta} \coloneq \Xcal_{\eta} \times_{\Mscr_{\eta}} \Ucal_{\eta}$. We will argue that $\Ucal'_{\eta}/F$ has a smooth model $\Ucal'/\Ocal_F$, and that there is an $H$-torsor $\Ucal' \to \Xcal$ extending $\Ucal'_{\eta} \to \Xcal_{\eta}$. The following commutative diagram summarizes the situation
    \[
    \begin{tikzcd}
    \Ucal'_{\eta} \arrow[rrd] \arrow[d] \arrow[r] & \Xcal_{\eta} \arrow[rrd] \arrow[d, "f_{\eta}"'] &                                    &                         \\
    \Ucal_{\eta} \arrow[rrd] \arrow[r]            & \Mscr_{\eta} \arrow[rrd]            & \Ucal' \arrow[r] \arrow[d, dashed, "?"] & \Xcal \arrow[d, dashed, "?"] \\
                                                &                                     & \Ucal \arrow[r]                    & \Mscr                  
    \end{tikzcd}
    \]
    It then follows from \Cref{lem:semi base change of PGR is PGR} below that the projection $\Ucal' \rightsquigarrow \Ucal$ is a PGR-morphism, hence extends\footnote{While the existence of $\Ucal$ holds even assuming $H$ profinite, here we need $H$ finite because $\Ucal'/\Ocal_F$ is required to be smooth (in particular, finite type) in \Cref{dfn:PGR-extension property}.} to $\Ucal' \to \Ucal$. It remains to show that the composition $\Ucal' \to \Ucal \to \Mscr$ descends to a morphism $\Xcal \to \Mscr$; this is done exactly as in the proof of \Cref{thm:PGR extension pulls back for stacks} using \Cref{prop:morphisms isomorphic over dense subset}.

    We construct $\Ucal'$ as follows. From \Cref{rk:universal Q torsor} we have the 2-cartesian diagram
    \[
    \begin{tikzcd}
        \Ucal'_{\eta} \arrow[r] \arrow[d] & \Xcal_{\eta} \arrow[d] \\
        \mrm{pt} \arrow[r]            & \BB H.              
    \end{tikzcd}
    \]
    If we can show that $\Xcal_{\eta} \to \BB H$ extends to $\Xcal \to \BB H$, we then define $\Ucal'$ as the fiber product
    \[\begin{tikzcd}
        \Ucal' \arrow[r] \arrow[d] & \Xcal \arrow[d] \\
        \mrm{pt} \arrow[r]            & \BB H              
    \end{tikzcd}\]
    (i.e. the total space of the $H$-torsor on $\Xcal$, which is a scheme). Then $\Ucal' \to \Xcal$ has all required properties. To extend $\Xcal_{\eta} \to \BB H$, we use \Cref{cor:pointwise criterion for torsor}. Let $x \in |\Xcal|_{\eta}^{\mrm{int}}$ with residue field $F(x)$, consider the $H$-torsor $P = \Ucal \times_{\Mscr} \Spec \, F(x)$ over $\Spec \, F(x)$. By Definition, $\Spec \, F(x) \to \Mscr$ extends to $\Spec \, \Ocal_{F(x)} \to \Mscr$, so $\Pcal \coloneq \Ucal \times_{\Mscr} \Spec \, \Ocal_{F(x)}$ is an $H$-torsor over $\Ocal_{F(x)}$ extending $P$.
\end{proof}

\begin{lem}\label{lem:semi base change of PGR is PGR}
    Let $\Mscr/\Ocal_F$ be an algebraic stack, $\Ucal \to \Mscr$ a proper affine morphism of stacks, and let $\Xcal / \Ocal_F$ be as in \Cref{dfn:PGR-extension property}, together with a PGR-morphism $f:\Xcal \rightsquigarrow \Mscr$. Let $\Ucal'_{\eta} \coloneq \Ucal_{\eta} \times_{\Mscr_{\eta}} \Xcal_{\eta}$, suppose it admits a model $\Ucal' / \Ocal_F$ such that the generic morphism $\Ucal' \rightsquigarrow \Xcal$ extends to $\Ucal' \to \Xcal$ over $\Ocal_F$. Then the generic morphism $\Ucal' \rightsquigarrow \Ucal$ is PGR.
\end{lem}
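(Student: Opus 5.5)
We check the PGR condition directly on points. Let $u \in |\Ucal'|_{\eta}^{\mrm{int}}$. We must show that its image $g_{\eta}(u) \in |\Ucal_{\eta}|$ under the projection $g_{\eta}:\Ucal'_{\eta} \to \Ucal_{\eta}$ lies in $|\Ucal|_{\eta}^{\mrm{int}}$. By \Cref{dfn:integral locus}, after replacing $F(u)$ by a finite extension $E$, the point $u$ is the generic fiber of some $\tilde u:\Spec \, \Ocal_E \to \Ucal'$.

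The first step is to push $\tilde u$ down to $\Xcal$. Compose with the given extension $\Ucal' \to \Xcal$ to get an $\Ocal_E$-point $\tilde x$ of $\Xcal$. Its generic fiber $x$ is the image of $u$ in $\Xcal_{\eta}$, so $x \in |\Xcal|_{\eta}^{\mrm{int}}$. Since $f$ is PGR, after a further finite extension $E'/E$ (which we rename $E$) the point $f_{\eta}(x):\Spec \, E \to \Mscr_{\eta}$ extends to some $\tilde m:\Spec \, \Ocal_E \to \Mscr$.

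Next we lift to $\Ucal$. The point $u$ gives an $E$-point of $\Ucal_{\eta}\times_{\Mscr_{\eta}}\Spec\,E$ lying over $f_\eta(x)$, that is, an $E$-point of the generic fiber of $\Ucal_{\tilde m} \coloneq \Ucal \times_{\Mscr,\tilde m} \Spec \, \Ocal_E$. Because $\Ucal \to \Mscr$ is proper and representable (affine), $\Ucal_{\tilde m}$ is a proper algebraic space over $\Spec\,\Ocal_E$. By the valuative criterion of properness, possibly after another finite extension of $E$, this $E$-point extends to an $\Ocal_E$-point of $\Ucal_{\tilde m}$. Composing with $\Ucal_{\tilde m} \to \Ucal$ gives an $\Ocal_E$-point of $\Ucal$ whose generic fiber is $g_{\eta}(u)$. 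Hence $g_{\eta}(u) \in |\Ucal|_{\eta}^{\mrm{int}}$, which is what we needed.

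The main point requiring care is the compatibility of the 2-commutative diagrams. The generic fiber of $\tilde m$ equals $f_{\eta}(x)$ only up to a 2-isomorphism, and the $E$-point of $\Ucal_{\tilde m}$ has to be built from $u$ using that isomorphism together with the 2-cartesian square defining $\Ucal'_{\eta}$. This is routine bookkeeping, since everything happens at the level of $E$-points. The other thing to check is the form of the valuative criterion for a proper morphism of algebraic spaces over a DVR. We use the existence part only, and for affine (hence representable, indeed finite-type scheme-valued over $\Spec\,\Ocal_E$) proper maps the existence part needs no extension of $E$. In fact $\Ucal_{\tilde m}$ is then finite over $\Ocal_E$, and the lift comes from integrality.
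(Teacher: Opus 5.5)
Your proof is correct and is essentially the paper's argument. The paper also observes that the composite $\Ucal' \to \Xcal \rightsquigarrow \Mscr$ is PGR, and then shows that any $E$-point of $\Ucal_{\eta}$ lying over an integral point of $\Mscr$ is integral, by base-changing $\Ucal \to \Mscr$ along the $\Ocal_E$-point and applying the valuative criterion to the resulting proper affine scheme over $\Spec \, \Ocal_E$; your remark that affineness makes this scheme finite, so no further extension of $E$ is needed, is exactly the role the paper's footnote gives to the affineness hypothesis.
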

\begin{proof}
    Consider the 2-cartesian diagram
    \[
    \begin{tikzcd}
        \Ucal'_{\eta} \arrow[r] \arrow[d] & \Xcal_{\eta} \arrow[d] \\
        \Ucal_{\eta} \arrow[r]            & \Mscr_{\eta}.              
    \end{tikzcd}
    \]
    Since $\Ucal' \rightsquigarrow \Xcal$ extends, it is a PGR morphism, hence so is the composition $\Ucal' \rightsquigarrow \Mscr$. It is enough to show that the pre-image of $|\Mscr|_{\eta}^{\mrm{int}}$ via $\Ucal \to \Mscr$ is $|\Ucal|_{\eta}^{\mrm{int}}$. Let $x:\Spec \, E \to \Ucal$, for some finite $E/F$, mapping to $|\Mscr|_{\eta}^{\mrm{int}}$; up to extending $E$, we may assume that there is a 2-commutative diagram of solid arrows
    \[
    \begin{tikzcd}
        \Spec \, E \arrow[r] \arrow[d] & \Ucal \arrow[d] \\
        \Spec \, \Ocal_E \arrow[r] \arrow[ur,dashed]            & \Mscr.              
    \end{tikzcd}
    \]
    We argue that there exists a dashed arrow making the diagram 2-commutative, so that $x$ lies in $|\Ucal|_{\eta}^{\mrm{int}}$. First, we base-change $\Ucal \to \Mscr$ along the bottom horizontal arrow so as to reduce to the case:
    \[
    \begin{tikzcd}
        \Spec \, E \arrow[r] \arrow[d] & \Ucal_{\Ocal_E} \arrow[d] \\
        \Spec \, \Ocal_E \arrow[r, "\mrm{id}"] \arrow[ur,dashed]            & \Spec \, \Ocal_E              
    \end{tikzcd}
    \]    
    where the vertical arrow on the right is proper and affine. We conclude with the valuative criterion for properness of a morphism of schemes\footnote{Dropping the affineness assumption, the valuative criterion for properness of a morphism of stacks only gives the dashed arrow after an extension of valuation ring, which may not be finite in general, so we cannot conclude. More generally, we could have replaced "affine" with "representable by a scheme".}.
\end{proof}

\subsection{Smooth-extension}\label{subsec:smooth extension}

\begin{dfn}\label{dfn:smooth extension}
    Let $\Sscr/\Ocal_F$ be a separated Deligne-Mumford stack. We say that it has \emph{smooth-extension} if every generic morphism $f:\Xcal \rightsquigarrow \Sscr$ from a regular scheme $\Xcal$ formally smooth over $\Ocal_F$, automatically and uniquely extends to an $\Ocal_F$-morphism $\Xcal \to \Sscr$.
\end{dfn}

Restricting \Cref{dfn:smooth extension} to smooth separated schemes $\Xcal/\Ocal_F$ would recover the definition of Néron models. As we will be dealing with pro-schemes, we need to remove the finite type assumption. 

\medskip

L-extension can be inferred from smooth-extension. The following statement is a reformulation of \cite[§1.3]{bakker2024integral}.

\begin{prop}\label{prop:smooth extension implies L extension}
    Let $R$ be a topological ring, let $V$ be a free $R$-module of finite rank, let $K_0$ be a profinite subgroup of $\GL(V)$. Recall that $\Ccal(K_0)$ denotes the set of all open normal subgroups of $K_0$.

    Let $(\Sscr_K)_{K \in \Ccal(K_0)}$ be a projective system of separated Deligne-Mumford stacks over $\Ocal_F$ which is fit for $K_0$ as in \Cref{dfn:fit system of stacks}. Let 
    \[\Sscr_{\infty} \coloneq \varprojlim_{K \in \Ccal(K_0)} \Sscr_K.\]
    Let $\L^{\can}$ be the canonical local system on $\Sscr_{K_0}$ as in \Cref{dfn:can loc sys on stack}. If $\Sscr_{\infty}$ has smooth-extension, then $(\Sscr_{K_0},\L^{\can})$ has L-extension.
\end{prop}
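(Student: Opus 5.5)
Let $\Xcal/\Ocal_F$ be a smooth scheme and $f:\Xcal \rightsquigarrow \Sscr_{K_0}$ a generic morphism such that $f_\eta^{-1}\L^{\can}$ extends to a local system on $\Xcal$. Uniqueness of an extension is automatic by \Cref{prop:morphisms isomorphic over dense subset}, so only existence needs proof. The plan is to pass to the pro-étale $K_0$-torsor over $\Xcal$ attached to $f$, extend there using smooth-extension of $\Sscr_\infty$, and descend. By \Cref{prop:universal fit system} and the remark after it, $f_\eta$ corresponds to the $K_0$-torsor $X_\infty \coloneq \Xcal_\eta \times_{\Sscr_{K_0}} \Sscr_\infty$, and the fit system is $X_K \coloneq \Xcal_\eta\times_{\Sscr_{K_0}}\Sscr_K$. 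By \Cref{dfn:can loc sys on stack}, $f_\eta^{-1}\L^{\can}$ is the local system attached to $\Ind_{K_0}^{\GL(V)} h_{X_\infty}$.

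\textbf{Step 1: extend the $K_0$-torsor.} First I show that $h_{X_\infty}$ extends to a $K_0$-torsor on $\Xcal$. By \Cref{cor:pointwise criterion for torsor}, it suffices to check that it is unramified at every $x\in|\Xcal|^{\mrm{int}}_\eta$. Because $f_\eta^{-1}\L^{\can}$ extends, it is unramified at $x$, so the Galois representation $\mrm{Gal}(\overline{F(x)}/F(x))\to K_0\hookrightarrow \GL(V)$ attached to the $K_0$-torsor at $x$ kills inertia. Since $K_0\to\GL(V)$ is injective, the map to $K_0$ kills inertia as well. By the remark describing canonical local systems via monodromy, this means the $K_0$-torsor at $x$ is unramified. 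I therefore obtain a pro-finite-étale $K_0$-torsor $\Xcal_\infty=\varprojlim_K \Xcal_K\to\Xcal$ extending $X_\infty$, where $\Xcal_K$ is the normalization of $\Xcal$ in $X_K$ and is finite étale over $\Xcal$.

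\textbf{Step 2: extend on the tower and descend.} Each $\Xcal_K$ is smooth over $\Ocal_F$, so $\Xcal_\infty$ is a limit of smooth schemes along affine étale transition maps. I expect it to be regular and formally smooth over $\Ocal_F$. Regularity should follow because its local rings are filtered colimits of regular local rings along flat local maps with unramified extensions, and formal smoothness because the lifting property passes to filtered colimits of ind-étale algebras. The natural map $X_\infty\to\Sscr_\infty$ is then a generic morphism $\Xcal_\infty\rightsquigarrow\Sscr_\infty$, and smooth-extension gives $g:\Xcal_\infty\to\Sscr_\infty$. Composing with $\Sscr_\infty\to\Sscr_{K_0}$ yields $\Xcal_\infty\to\Sscr_{K_0}$. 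To descend this map to $\Xcal$, I check that the two composites $\Xcal_\infty\times_\Xcal\Xcal_\infty\rightrightarrows\Sscr_{K_0}$ agree. This fiber product is isomorphic to $\Xcal_\infty\times K_0$, a profinite disjoint union of copies of $\Xcal_\infty$. The two maps agree on the generic fiber, which is dense. To apply \Cref{prop:morphisms isomorphic over dense subset}, however, I need a locally noetherian scheme. I would therefore avoid working at infinite level and argue at finite level: smooth-extension at level $\infty$ yields, by a limit argument using that $\Sscr_{K}$ is locally of finite presentation, compatible maps $\Xcal_{K}\to \Sscr_{K_0}$ for $K$ small enough. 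The $K_0/K$-equivariance of these maps, up to 2-isomorphism, then follows from \Cref{prop:morphisms isomorphic over dense subset} applied to the noetherian normal schemes $\Xcal_K$. Finite étale descent along $\Xcal_K\to\Xcal$ then produces the desired morphism $\Xcal\to\Sscr_{K_0}$.

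\textbf{Main obstacle.} I expect the limit step to be the hardest: deducing from $g$ that some finite-level $\Xcal_K\to\Sscr_{K_0}$ extends, compatibly with the Galois action, while the generic descent datum is encoded by 2-morphisms over a non-noetherian base. A cleaner alternative I would try first is to work with the graph closure over $\Xcal_K$. Alternatively, I would pass to an étale atlas of $\Sscr_{K_0}$ so that the target is a separated scheme. In that case equality of morphisms out of the reduced scheme $\Xcal_\infty$ can be checked on its dense generic fiber, since that fiber is schematically dense in a flat $\Ocal_F$-scheme. Verifying regularity of $\Xcal_\infty$ is a secondary but routine point.
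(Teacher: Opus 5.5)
Your proposal is correct and follows the paper's proof. The steps are the same: extend the $K_0$-torsor to $\Xcal$, form the pro-finite-étale tower $\Xcal_\infty$, apply smooth-extension of $\Sscr_\infty$, and descend the composite $\Xcal_\infty\to\Sscr_{K_0}$ using \Cref{prop:morphisms isomorphic over dense subset}; your Step 1 via \Cref{cor:pointwise criterion for torsor} and injectivity of $K_0\hookrightarrow\GL(V)$ just spells out what the paper asserts in one line. Your worry about local noetherianity is justified, since the paper applies that proposition directly to the non-noetherian $\Xcal_\infty\times_{\Xcal}\Xcal_\infty$, and there are two ways around it: your finite-level limit argument works if you add the hypothesis that $\Sscr_{K_0}$ is locally of finite presentation over $\Ocal_F$ (true in all applications, but not part of the statement), or, with no extra hypothesis, the argument of \Cref{prop:morphisms isomorphic over dense subset} adapts to any $\Ocal_F$-flat source whose structure sheaf is integrally closed in its generic fiber (the closure of the generic section of the finite $\Isom$-scheme is then a finite algebra squeezed between $\Ocal_Y$ and $\Ocal_Y[1/p]$), and $\Xcal_\infty\times_{\Xcal}\Xcal_\infty$ has this property as a filtered limit of smooth $\Ocal_F$-schemes along affine maps.
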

\begin{proof}
    Let $\Xcal / \Ocal_F$ be a smooth scheme, let $f:\Xcal \rightsquigarrow \Sscr_{K_0}$ such that $f_{\eta}^* \L^{\can}$ extends to a local system $\L$ on $\Xcal$. We want to show that $f$ extends to $\tilde{f}:\Xcal \to \Sscr_{K_0}$.

    By definition, $\L^{\can}$ is induced from a $K_0$-torsor on $\Sscr_{K_0}$, thus $\L$ is induced from a $K_0$-torsor on $\Xcal$. From \Cref{prop:universal fit system}, the corresponding morphism $\Xcal \to \BB K_0$ defines a $K_0$-fit system of schemes $(\Xcal_K)_{K \in \Ccal(K_0)}$, with $\Xcal_{K_0} = \Xcal$. Let
    \[\Xcal_{\infty} \coloneq \varprojlim_K \Xcal_K\]
    which is a regular scheme formally smooth over $\Ocal_F$. The $K_0$-fit system of generic fibers $((\Xcal_K)_{\eta})_K$ over $F$ coincides with the pullback along $f_{\eta}$ of the fit system $((\Sscr_K)_{\eta})_K$. In particular, there is an induced $F$-morphism
    \[(\Xcal_{\infty})_{\eta} \to (\Sscr_{\infty})_{\eta}\]
    By smooth-extension, there is an induced morphism
    \[\Xcal_{\infty} \to \Sscr_{\infty}.\]
    Now it suffices to show that the composition $\Xcal_{\infty} \to \Sscr_{\infty} \to \Sscr_{K_0}$ descends to a morphism $\Xcal_{K_0} \to \Sscr_{K_0}$, in other words we need to show by fpqc descent that both composition arrows
    \[\Xcal_{\infty} \times_{\Xcal_{K_0}} \Xcal_{\infty} \rightrightarrows \Xcal_{\infty} \to \Sscr_{\infty} \to \Sscr_{K_0}\]
    are isomorphic. This follows from \Cref{prop:morphisms isomorphic over dense subset}.
\end{proof}

We state some lemmas to be used in \Cref{sec:extension for Shimura}.

\begin{lem}\label{lem:normalized closed substack has smooth extension}
    Let $\Sscr/\Ocal_F$ be a separated Deligne-Mumford stack with smooth-extension. Let $\Zscr \hookrightarrow \Sscr$ a closed substack. Then $\Zscr$ has smooth-extension.
\end{lem}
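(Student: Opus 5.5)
First I would check that $\Zscr$ is again a separated Deligne-Mumford stack, so that \Cref{dfn:smooth extension} makes sense for it. A closed immersion is representable, finite and separated. An étale atlas of $\Sscr$ pulls back to an étale atlas of $\Zscr$. The diagonal of $\Zscr$ is the diagonal of $\Sscr$ composed with a closed immersion, hence it is again proper. Uniqueness of extensions follows from \Cref{prop:morphisms isomorphic over dense subset}: a regular scheme is normal, and its generic fiber is dense because a formally smooth $\Ocal_F$-scheme is flat. Strictly, that proposition is stated for locally Noetherian $Y$, so for pro-schemes I would either work locally or note that the argument only uses normality and finiteness of the diagonal.

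For existence, I would take a regular scheme $\Xcal$, formally smooth over $\Ocal_F$, and a generic morphism $f:\Xcal \rightsquigarrow \Zscr$. Composing with the closed immersion $\Zscr_\eta \hookrightarrow \Sscr_\eta$ gives a generic morphism $\Xcal \rightsquigarrow \Sscr$, which is still representable. By smooth-extension of $\Sscr$, it extends to some $h:\Xcal \to \Sscr$ over $\Ocal_F$. I would then apply \Cref{lem:factoring morphisms through closed substacks}, using that $\Xcal$ is flat over $\Ocal_F$: this says $h$ factors through $\Zscr$. The resulting $\tilde f:\Xcal \to \Zscr$ restricts on the generic fiber to $f$, up to isomorphism. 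This holds because $\Zscr \to \Sscr$ is a monomorphism of stacks, so it is fully faithful on groupoids of points, and $\tilde f_\eta$ and $f_\eta$ become isomorphic after composing to $\Sscr$.

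The step I expect to be the main obstacle is the stacky content of \Cref{lem:factoring morphisms through closed substacks}. Its proof "works étale-locally" with affine charts, so I would make that reduction explicit. I would pick an étale atlas $\Sscr' \to \Sscr$ by a scheme and set $\Xcal' = \Xcal \times_{h,\Sscr} \Sscr'$, which is étale over $\Xcal$ and hence still flat. The closed substack $\Zscr$ pulls back to a closed subscheme $\Zscr' \subseteq \Sscr'$, and the generic part of $\Xcal'$ maps into $\Zscr'$. The affine torsion-freeness argument shows that the closed subscheme $\Xcal \times_\Sscr \Zscr$ of $\Xcal$ contains the scheme-theoretic closure of $\Xcal_\eta$, which is all of $\Xcal$ by flatness. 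Therefore $\Xcal \times_\Sscr \Zscr \to \Xcal$ is a closed immersion that is étale-locally an isomorphism, so it is an isomorphism, and this gives the factorization.

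Finally, I would check that this also works when $\Xcal$ is not of finite type. The argument only uses flatness and the torsion-freeness of rings, so no Noetherian hypothesis is needed.
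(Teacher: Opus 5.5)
Your proposal is correct and follows the paper's own argument. You compose $f_\eta$ with $\Zscr_\eta \hookrightarrow \Sscr_\eta$, extend to $h:\Xcal\to\Sscr$ by smooth-extension of $\Sscr$, and factor $h$ through $\Zscr$ by \Cref{lem:factoring morphisms through closed substacks}, using that $\Xcal$ is flat over $\Ocal_F$.

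Your extra checks are details the paper leaves implicit: that $\Zscr$ is separated Deligne--Mumford, uniqueness, and $\tilde f_\eta\simeq f_\eta$ because $\Zscr\to\Sscr$ is a monomorphism. The atlas detour is harmless but unnecessary, since $\Xcal\times_{\Sscr}\Zscr$ is already a closed subscheme of $\Xcal$ and the torsion-freeness argument applies to it directly.
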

\begin{proof}
    Let $\Xcal/\Ocal_F$ be as in \Cref{dfn:smooth extension}, $f:\Xcal \rightsquigarrow \Zscr$ a PGR-morphism. Then the composition $\Xcal \rightsquigarrow \Sscr$ is again PGR, hence extends to $g:\Xcal \to \Sscr$. It follows from \Cref{lem:factoring morphisms through closed substacks} that $g$ factors through $\Zscr$; note that $\Xcal / \Ocal_F$ is flat because formally smooth (see \cite[Théorème 19.7.1]{EGAIV}).
\end{proof}

\begin{rk}\label{rk:closed substack has extension}
    \Cref{lem:normalized closed substack has smooth extension} and its proof still hold with smooth-extension replaced with any extension property involving generic morphisms from flat $\Ocal_F$-schemes $\Xcal$.
\end{rk}

\begin{lem}\label{lem:descent of smooth extension}
    Let $\Ocal'$ be a DVR which is faithfully flat, formally smooth over $\Ocal_F$, with the following property: for any regular scheme $\Xcal$ formally smooth over $\Ocal_F$, the scheme $\Xcal_{\Ocal'}$ is again regular.

    \noindent Let $\Sscr/\Ocal_F$ be a separated Deligne-Mumford stack such that $\Sscr_{\Ocal'}/\Ocal'$ has smooth extension. Then $\Sscr/\Ocal_F$ has smooth extension.
\end{lem}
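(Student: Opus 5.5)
The plan is to base change to $\Ocal'$, use smooth-extension there, and then descend the resulting morphism back along the faithfully flat map $\Xcal_{\Ocal'} \to \Xcal$ by fpqc descent, with \Cref{prop:morphisms isomorphic over dense subset} providing the cocycle condition.

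Let $\Xcal$ be a regular scheme, formally smooth over $\Ocal_F$, and let $f:\Xcal \rightsquigarrow \Sscr$ be a generic morphism. Uniqueness of an extension is automatic by \Cref{prop:morphisms isomorphic over dense subset}, since $\Xcal$ is regular, hence normal, and its generic fiber is dense in it (it is flat over $\Ocal_F$). So only existence needs proof.

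\textbf{Step 1 (base change).} Set $\Xcal' \coloneq \Xcal_{\Ocal'} = \Xcal \times_{\Ocal_F} \Spec \, \Ocal'$. By hypothesis $\Xcal'$ is regular. It is formally smooth over $\Ocal'$, since formal smoothness is stable under base change.

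\textbf{Step 2 (extend over $\Ocal'$).} The generic morphism $f$ base changes to a generic morphism $f':\Xcal' \rightsquigarrow \Sscr_{\Ocal'}$ over $\Ocal'$. Its generic fiber is
\[\Xcal_\eta \times_F \mrm{Frac}(\Ocal') \to \Sscr_\eta \times_F \mrm{Frac}(\Ocal'),\]
which is representable because representability is stable under base change. Since $\Sscr_{\Ocal'}$ has smooth-extension, $f'$ extends to an $\Ocal'$-morphism $\tilde f':\Xcal' \to \Sscr_{\Ocal'}$. Composing with the projection gives $g:\Xcal' \to \Sscr$ over $\Ocal_F$.

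\textbf{Step 3 (descent).} The map $\Xcal' \to \Xcal$ is faithfully flat and quasi-compact (affine), so it is an fpqc cover. Since $\Sscr$ is an fpqc stack (this is the convention recalled in \Cref{subsec:alg stacks on the big pro-étale site}, via Tag 0GRH for separated DM stacks), it suffices to exhibit an isomorphism between the two composites
\[\Xcal'' \coloneq \Xcal' \times_{\Xcal} \Xcal' \rightrightarrows \Xcal' \xrightarrow{g} \Sscr\]
that satisfies the cocycle condition on the triple product. Both composites agree on the generic fiber: there they are both $f$ composed with the projection, and the identification is canonical. So we want to apply \Cref{prop:morphisms isomorphic over dense subset}, and with it the uniqueness of that identification, which comes from the isomorphism sheaf being a finite scheme in which the generic section has a closure that maps isomorphically. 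Granting uniqueness of isomorphisms extending a given one, the cocycle condition is automatic, because it holds generically.

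\textbf{Main obstacle.} The hard part is checking the hypotheses of \Cref{prop:morphisms isomorphic over dense subset} for $\Xcal'' = \Xcal \times_{\Ocal_F} \Spec(\Ocal' \otimes_{\Ocal_F} \Ocal')$, which need not be locally noetherian or normal. I would try first to avoid this as follows. For each of the two projections $\Xcal'' \to \Xcal'$, pull back to the regular scheme $\Xcal'$ through a section: the diagonal $\Xcal' \to \Xcal''$ handles the comparison trivially. For the general comparison, I would instead use that the isomorphism sheaf $I \to \Xcal''$ is finite, and argue that the closure of the generic section is isomorphic to $\Xcal''$ after checking that $\Xcal''$ is reduced and flat over $\Ocal_F$ with dense generic fiber. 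This holds because $\Ocal' \otimes_{\Ocal_F} \Ocal'$ is flat over $\Ocal'$, and $\Xcal''$ is formally smooth over $\Xcal'$, hence reduced by regularity-type arguments. If normality fails, I would fall back on the stronger hypothesis in the lemma and apply it to $\Ocal' \otimes \Ocal'$-localizations, or reduce via a limit argument to noetherian normal pieces.
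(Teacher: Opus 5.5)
Your strategy (extend over $\Ocal'$, then descend along the fpqc cover $\Xcal'=\Xcal_{\Ocal'}\to\Xcal$ using \Cref{prop:morphisms isomorphic over dense subset}) is the paper's. However, the step you single out as the main obstacle is not resolved, and the workarounds you sketch would not suffice. Knowing that $\Xcal''=\Xcal'\times_{\Xcal}\Xcal'$ is reduced and flat with dense generic fiber does not show that the closure of the generic section of the finite isomorphism scheme $I\to\Xcal''$ maps isomorphically onto $\Xcal''$. A finite (even unramified) morphism that is an isomorphism over a dense open need not be an isomorphism over a non-normal base; this is exactly what happens in the nodal example in the remark following \Cref{prop:morphisms isomorphic over dense subset}. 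Pulling back along the diagonal only compares the two composites on the diagonal. The ``limit argument'' and ``$\Ocal'\otimes\Ocal'$-localizations'' are not made precise. Even your reducedness claim is unsupported, since $\Xcal''$ is not known to be locally noetherian.

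The missing idea is to apply the lemma's regularity hypothesis a second time, to $\Xcal'$ instead of $\Xcal$. Notice that your argument never uses that $\Ocal'$ is formally smooth over $\Ocal_F$; this is what that hypothesis is for. Since $\Ocal_F\to\Ocal'$ is formally smooth, so is its base change $\Xcal'\to\Xcal$. Hence $\Xcal'$ is a regular scheme formally smooth over $\Ocal_F$. Moreover
\[\Xcal''=\Xcal\times_{\Ocal_F}\Spec\,\Ocal'\times_{\Ocal_F}\Spec\,\Ocal'=(\Xcal')_{\Ocal'},\]
so the hypothesis gives that $\Xcal''$ is regular, hence locally noetherian and normal. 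It is also flat over $\Ocal_F$, so its generic fiber is dense, and \Cref{prop:morphisms isomorphic over dense subset} applies directly to the two composites. Iterating once more, $\Xcal''$ is again regular and formally smooth over $\Ocal_F$, so $(\Xcal'')_{\Ocal'}=\Xcal'\times_{\Xcal}\Xcal'\times_{\Xcal}\Xcal'$ is regular too. Thus your remark that the cocycle condition follows from uniqueness of generically prescribed isomorphisms (a point the paper leaves implicit) also goes through.
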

\begin{proof}
    Consider a generic morphism $f:\Xcal \rightsquigarrow \Sscr$ from a regular scheme $\Xcal$ formally smooth over $\Ocal_F$. By assumption $\Xcal_{\Ocal'}$ is a regular scheme formally smooth over $\Ocal'$. By smooth-extension we find $\Xcal_{\Ocal'} \to \Sscr_{\Ocal'}$. Now it suffices to show that the composition $\Xcal_{\Ocal'} \to \Sscr_{\Ocal'} \to \Sscr$ descends to a morphism $\Xcal \to \Sscr$, in other words we need to show by fpqc descent that both composition arrows
    \[\Xcal_{\Ocal'} \times_{\Xcal} \Xcal_{\Ocal'} \rightrightarrows \Xcal_{\Ocal'} \to \Sscr_{\Ocal'} \to \Sscr\]
    are isomorphic. This follows from \Cref{prop:morphisms isomorphic over dense subset} once we show that $\Xcal_{\Ocal'} \times_{\Xcal} \Xcal_{\Ocal'}$ is regular. Observe that $\Xcal_{\Ocal'}$ is a regular scheme formally smooth over $\Ocal_F$ (here we use that $\Ocal'$ is formally smooth over $\Ocal_F$), so its base-change along $\Spec \, \Ocal' \to \Spec \, \Ocal_F$ is regular by assumption. This base-change recovers $\Xcal_{\Ocal'} \times_{\Xcal} \Xcal_{\Ocal'}$.
\end{proof}

To prove \Cref{lem:proétale lifting of smooth extension} below, we need the following fact:
\begin{fact}[\cite{milne1992points}, Proposition 2.4]\label{fact:Milne regular ring}
    Let $(R_i)_i$ be a inductive system of regular local rings with étale transition maps. Then $R = \varinjlim_i R_i$ is again regular.
\end{fact}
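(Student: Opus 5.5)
The statement to prove is the Fact: if $(R_i)_i$ is an inductive system of regular local rings with étale transition maps, then $R = \varinjlim_i R_i$ is regular. Here étale transition maps are understood as local homomorphisms $R_i \to R_j$ that are localizations of étale algebras; these are the ind-étale local rings appearing in our towers.

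The plan is to show three things in order: that $R$ is a local ring, that $R$ is noetherian, and that $\dim R = \dim_{k} \mf m/\mf m^2$.

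\textbf{Step 1: $R$ is local.} The maps $R_i \to R_j$ are local, so $R$ is local with maximal ideal $\mf m = \varinjlim \mf m_i$ and residue field $k = \varinjlim k_i$.

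\textbf{Step 2: $\mf m$ is generated by a regular system of parameters.} Since $R_i \to R_j$ is flat and unramified and local, we have $\mf m_i R_j = \mf m_j$. Fix an index $i_0$ and a regular system of parameters $t_1,\dots,t_d$ of $R_{i_0}$, where $d = \dim R_{i_0}$. Then $t_1,\dots,t_d$ generate $\mf m_j$ for every $j \ge i_0$, and hence generate $\mf m$. Flatness and unramifiedness give $\mf m_i/\mf m_i^2 \otimes_{k_i} k_j \simeq \mf m_j/\mf m_j^2$, and flat local maps with zero-dimensional closed fibre preserve dimension. So $\dim R_j = d$ for all $j \ge i_0$, and $t_1,\dots,t_d$ is a regular system of parameters in each such $R_j$.

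\textbf{Step 3: $R$ is noetherian.} This is the main obstacle, since a direct limit of noetherian rings need not be noetherian. I would argue through completions, using the Stacks Project criterion for a local ring with finitely generated maximal ideal whose completion is noetherian and faithfully flat.

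First, each map $R_i \to R_j$ is flat and local, hence faithfully flat. A filtered colimit of faithfully flat maps is faithfully flat, so $R_{i_0} \to R$ is faithfully flat.

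Next, consider the quotients $R/\mf m^n = \varinjlim_j R_j/\mf m_j^n$. By Step 2, $R_j/\mf m_j^n \simeq R_{i_0}/\mf m_{i_0}^n \otimes_{k_{i_0}\text{-lift}} \cdots$ is an étale extension of $R_{i_0}/\mf m_{i_0}^n$. Via the Cohen structure theorem, the completion $\hat R$ is then identified with a power series ring over the complete ring $W$ with residue field $k$ (in mixed characteristic, modulo an Eisenstein-type relation). Concretely, $\hat R \simeq \widehat{R_{i_0}} \,\hat\otimes\, C$, where $C$ is the Cohen ring of $k$ over the Cohen ring of $k_{i_0}$. This ring is noetherian because $\mf m$ is finitely generated and $\hat R$ is $\mf m$-adically complete with noetherian graded pieces.

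Finally, $R \to \hat R$ is flat. The ideal-theoretic argument is that $R$ is a colimit of noetherian rings $R_j$ with $\widehat{R_j} \to \hat R$ flat. It then follows that $R$ is noetherian: for an ideal $I \subseteq R$, faithful flatness of $R \to \hat R$ gives $I = I\hat R \cap R$, and ascending chains stabilize in $\hat R$.

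\textbf{Step 4: conclude regularity.} Once $R$ is noetherian, Step 2 gives $\dim R = \dim \hat R = d$ and $\mf m = (t_1,\dots,t_d)$. Hence $R$ is regular.

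The main obstacle is Step 3, and in particular establishing that $R \to \hat R$ is faithfully flat without circularity. As a fallback I would cite the standard result in the Stacks Project that a colimit of noetherian local rings along flat local maps with $\mf m_i R_j = \mf m_j$ is noetherian. This is exactly the situation at hand.
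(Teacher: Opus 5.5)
The paper gives no argument of its own here; it only cites Milne. Your outline is the standard one: $R$ is local, $\mf m=\mf m_{i_0}R$ is generated by a regular system of parameters of $R_{i_0}$, the dimension is constant along the system, and then one proves noetherianity. Steps 1 and 2 are fine. In Step 4, $\dim R=d$ comes from the dimension formula for the flat local map $R_{i_0}\to R$, whose closed fibre is the field $k$; Step 2 alone does not give it.

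The gap is Step 3, which is the only nontrivial point, and it is not actually proved. The displayed isomorphism for $R_j/\mf m_j^n$ is not a complete statement. The appeal to the Cohen structure theorem is circular, since that theorem is about complete \emph{noetherian} local rings. It is also unnecessary. Because $\mf m$ is finitely generated, $\hat R$ is complete for $\mf m\hat R$ with $\hat R/\mf m^n\hat R=R/\mf m^n$. A ring complete for a finitely generated ideal with noetherian quotient is noetherian, so $\hat R$ is noetherian local with residue field $k$. More seriously, flatness of $R\to\hat R$ is only asserted. For non-noetherian $R$ this is not automatic. Your remark that $\widehat{R_j}\to\hat R$ is flat is itself unproved, and passing from flatness over the $R_j$ to flatness over $R$ needs an argument you do not supply. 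Without it, your descent of the ascending chain condition has nothing to rest on.

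The gap can be closed as follows.
\begin{itemize}
\item For each $j$, the map $R_j\to\hat R$ is a local map of noetherian local rings, and
\[
\hat R/\mf m_j^n\hat R \;=\; R/\mf m^n \;=\; \varinjlim_l R_l\otimes_{R_j}R_j/\mf m_j^n
\]
is a filtered colimit of flat $R_j/\mf m_j^n$-modules.
\item The local criterion of flatness then shows $R_j\to\hat R$ is flat, hence faithfully flat.
\item Given an ideal $I\subseteq R$, let $I_j$ be its preimage in $R_j$. Choose $j_0$ with $I_{j_0}\hat R$ maximal among the $I_j\hat R$.
\item For $j\ge j_0$, faithful flatness gives $I_j=I_j\hat R\cap R_j=(I_{j_0}R_j)\hat R\cap R_j=I_{j_0}R_j$.
\item Hence $I=I_{j_0}R$ is finitely generated.
\end{itemize}
Alternatively, flatness of $R_j\to R$ gives $\mathrm{Tor}^R_1(R/I_jR,\hat R)=\mathrm{Tor}^{R_j}_1(R_j/I_j,\hat R)=0$ for finitely generated $I_j$. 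This yields the $R$-flatness of $\hat R$ you wanted, after which your faithful-flatness argument works.

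Your fallback is essentially this argument: the standard lemma that a filtered colimit of noetherian local rings along flat local maps with $\mf m_iR_j=\mf m_j$ is noetherian (EGA $0_{\mathrm{III}}$, \S10.3). Citing it would make your proof complete.
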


\begin{lem}\label{lem:proétale lifting of smooth extension}
    Let $\Sscr, \Sscr' / \Ocal_F$ be separated Deligne-Mumford stacks, suppose $\Sscr$ has smooth-extension. Consider a pro-étale morphism $\Sscr' \to \Sscr$ representable by schemes. Then $\Sscr'$ has smooth extension.
\end{lem}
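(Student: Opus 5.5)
Given a generic morphism $f:\Xcal \rightsquigarrow \Sscr'$ from a regular scheme $\Xcal$ formally smooth over $\Ocal_F$, we first compose with $\pi:\Sscr' \to \Sscr$. Smooth-extension of $\Sscr$ then extends $\pi_\eta \circ f_\eta$ to an $\Ocal_F$-morphism $h:\Xcal \to \Sscr$. Next we form $\Xcal' \coloneq \Xcal \times_{\Sscr} \Sscr'$. Since $\pi$ is representable by schemes, $\Xcal'$ is a scheme, and the projection $\Xcal' \to \Xcal$ is pro-étale. The morphism $f_\eta$ together with the identity gives a section $s_\eta:\Xcal_\eta \to \Xcal'_\eta$ of the generic fiber. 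Extending $f$ to $\Xcal \to \Sscr'$ lifting $h$ is therefore the same as extending $s_\eta$ to a section $s:\Xcal \to \Xcal'$. Uniqueness is automatic by \Cref{prop:morphisms isomorphic over dense subset}, since $\Xcal$ is regular, hence normal, and $\Xcal_\eta$ is dense because $\Xcal$ is flat over $\Ocal_F$. We also assume $\Xcal$ is locally noetherian, as implicitly required there.

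To construct $s$, we take $Z$ to be the scheme-theoretic closure of $s_\eta(\Xcal_\eta)$ in $\Xcal'$ and aim to show that $Z \to \Xcal$ is an isomorphism, in the spirit of \Cref{prop:PGR extension pulls back for schemes}. We work locally on $\Xcal$, so we may assume $\Xcal = \Spec A$ with $A$ regular and connected, and write $\Xcal' = \varprojlim_i \Xcal'_i$ with $\Xcal'_i \to \Xcal$ étale, using that pro-étale morphisms locally admit such presentations. The section $s_\eta$ induces sections $s_{i,\eta}$ of $\Xcal'_i$ over $\Xcal_\eta$. The key claim is that for étale $\Xcal'_i \to \Xcal$ with $\Xcal$ normal, the closure $Z_i$ of $s_{i,\eta}(\Xcal_\eta)$ is an open subscheme of $\Xcal'_i$: since $\Xcal'_i$ is étale over a normal scheme, it is normal, so its irreducible components are open. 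Hence $Z_i \to \Xcal$ is étale, separated, and birational, in particular an open immersion.

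The main obstacle is surjectivity of $Z_i \to \Xcal$ onto the special fiber. An étale neighbourhood can miss part of the special fiber, which is exactly the failure of L-extension recorded in item (iii) of the introduction, so surjectivity has to be forced by the extension $h$ together with smooth-extension. The trick we will try is to apply smooth-extension a second time, now to the regular, formally smooth scheme $Z_\infty \coloneq \varprojlim_i Z_i$. This scheme is regular by \Cref{fact:Milne regular ring} on local rings, and formally smooth over $\Ocal_F$ because it is pro-étale over $\Xcal$. Alternatively, and more robustly, we would work with the local rings of $\Xcal$: for each point $x$ of the special fiber, replace $\Xcal$ by the strict henselization $\Spec \Ocal^{sh}_{\Xcal,x}$, which is regular by \Cref{fact:Milne regular ring} and formally smooth. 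Over a strictly henselian local base, the pro-étale scheme $\Xcal'$ decomposes locally as a limit of disjoint unions of copies of the base plus pieces missing the closed point, so the generic section lands in a component that must be a full copy of the base, which gives the lift. The separatedness of $\Sscr'$ will then let these local sections glue, using \Cref{prop:morphisms isomorphic over dense subset} once more on overlaps and fpqc descent, exactly as in the proof of \Cref{prop:smooth extension implies L extension}.

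Concretely, we would first carry out the strictly henselian local case, where the claim that the generic section closes up to a whole component is the crux, and then descend to $\Xcal$ by fpqc descent along $\coprod_x \Spec \Ocal^{sh}_{\Xcal,x} \to \Xcal$. The descent requires regularity of the fibre products of these strict henselizations, which we check as in \Cref{lem:descent of smooth extension}. We expect this henselian component argument, namely ruling out that the closure of the generic section misses the closed point, to be the genuinely hard step. If it fails, the fallback is to show that $Z = \varprojlim Z_i \to \Xcal$ is surjective by lifting closed points of the special fiber to $\Ocal_E$-points, as in \Cref{lem:closure of graph has surjective projection}, and then applying smooth-extension to $\Spec \Ocal_E$.
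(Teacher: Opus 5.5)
Your reduction is correct up to the step you flag: extending $f$ amounts to extending the section $s_\eta$, and each $Z_i \to \Xcal$ is an open immersion. The remaining surjectivity, however, is not a hard step but a false one, and so is the statement as written for arbitrary pro-étale morphisms.

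\textbf{Counterexample.} Take $\Sscr = \Spec \, \Ocal_F$, which trivially has smooth-extension. Take $\Sscr' = \Spec \, F \hookrightarrow \Sscr$, an open immersion, hence étale and representable by schemes. For $\Xcal = \Spec \, \Ocal_F$, the identity of $\Spec \, F$ is a generic morphism $\Xcal \rightsquigarrow \Sscr'$. It cannot extend, since there is no $\Ocal_F$-algebra map $F \to \Ocal_F$. In your notation, $h = \mrm{id}$ and $Z = \Xcal' = \Spec \, F$, which misses the closed point. This is the example the introduction gives under item (iii) for L-extension. It shows that the existence of $h$ forces nothing, so each of your devices fails at exactly this point:
\begin{itemize}
\item Smooth-extension of $\Sscr$ applied to $Z_\infty$ only returns $Z_\infty \to \Xcal \xrightarrow{h} \Sscr$; applying that of $\Sscr'$ would be circular.
\item Over a strictly henselian base, the generic section can land in a piece missing the closed point, as it does here.
\item The $\Ocal_E$-point fallback needs integral points of $\Sscr'$ itself, a PGR-type input you do not have. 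Moreover, $\Spec \, \Ocal_E$ is not formally smooth over $\Ocal_F$ when $E/F$ is ramified.
\end{itemize}

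\textbf{The paper's proof.} It starts as yours does and does not get past this either. It claims that the projection $\Xcal' \to \Sscr'$ descends to $\Xcal$ by \Cref{prop:morphisms isomorphic over dense subset}. But the generic fiber of that projection is not $f_\eta$ composed with $\Xcal'_\eta \to \Xcal_\eta$. The two composites $\Xcal' \times_{\Xcal} \Xcal' \rightrightarrows \Xcal' \to \Sscr'$ already differ on generic fibers, for instance when $\Sscr' = \Sscr \sqcup \Sscr$.

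\textbf{What is true.} The statement holds when $\Sscr' = \varprojlim_j \Sscr'_j$ is a cofiltered limit of finite étale morphisms $\Sscr'_j \to \Sscr$. This is all the paper uses: in Step 3 of the proof of \Cref{thm:abelian has smooth extension}, the map is the projection $C \times \Sscr_{K_{1,p}}(G_1,\Omega_1)^+ \to \Sscr_{K_{1,p}}(G_1,\Omega_1)^+$ with $C$ profinite. In this case your setup works with one change.
\begin{itemize}
\item Each $\Xcal'_j = \Xcal \times_{\Sscr} \Sscr'_j \to \Xcal$ is finite étale.
\item The closure $Z_j$ of $s_{j,\eta}(\Xcal_\eta)$ is a union of connected components of the normal scheme $\Xcal'_j$, hence finite étale over $\Xcal$.
\item Its degree is one over the dense open $\Xcal_\eta$, which meets every connected component of $\Xcal$ by flatness. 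So $Z_j \to \Xcal$ is an isomorphism.
\end{itemize}
Finiteness supplies exactly the surjectivity that fails for open immersions.

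Equivalently: since $\Xcal$ is normal, restricting finite étale covers from $\Xcal$ to $\Xcal_\eta$ is fully faithful, as used in the proof of \Cref{cor:pointwise criterion for torsor}. So $s_{j,\eta}$ extends uniquely to a section $s_j$. Uniqueness makes the $s_j$ compatible, and $s = \varprojlim_j s_j \colon \Xcal \to \Xcal'$ gives the required $\Xcal \to \Sscr'$. This uses only normality of $\Xcal$ and density of $\Xcal_\eta$. Neither \Cref{fact:Milne regular ring} nor any descent along strict henselizations is needed.
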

\begin{proof}
    If $\Xcal / \Ocal_F$ is as in \Cref{dfn:smooth extension} with a generic morphism $\Xcal \rightsquigarrow \Sscr'$, composing to $\Sscr$ and applying smooth-extension yields $\Xcal \longrightarrow \Sscr.$
    Setting $\Xcal' \coloneq \Xcal \times_{\Sscr} \Sscr'$ which is a scheme, it is again regular by \Cref{fact:Milne regular ring}. Thus by \Cref{prop:morphisms isomorphic over dense subset} and pro-étale descent, the morphism $\Xcal' \to \Sscr'$ descends to $\Xcal \to \Sscr'$. 
\end{proof}

\section{Smooth-extension for integral models of Shimura varieties}\label{sec:extension for Shimura}

Producing schemes with smooth-extension will be our starting point for finding stacks with PGR-extension. One source of such schemes is the theory of \emph{integral canonical models} of Shimura varieties, which we recall in the first paragraph below.

The remaining paragraphs deal with the following problem: having smooth-extension is not preserved under general base change. As integral canonical models of Shimura varieties are usually defined over a valuation ring $\Ocal_{(v)}$ of their reflex field, it is not certain that their base-change to $\Ocal_F$ still has smooth-extension if, say, $F/\Q_p$ is a finite, possibly ramified extension. We argue that there is no issue whenever $F / \Qp$ has absolute ramification index $e \le p-1$. To do so, we sketch the original constructions (by works of Milne, Moonen, Kisin and Madapusi Pera) of integral canonical models $\Sscr$ in the cases of Siegel, Hodge, abelian and orthogonal-type, and investigate the proofs that they have smooth-extension to see that $\Sscr_{\Ocal_F}$ also does if $\Ocal_F$ has absolute ramification index $e \le p-1$.

\medskip

The reader willing to take the results of this section for granted is advised to read only \Cref{subsec:int can model} and \Cref{subsec:moduli of polar ab}, and to go back to the statements of the remaining paragraphs while reading \Cref{sec:applications}. For this section and the next, we let $\widehat{\Z}^p = \prod_{\l \neq p} \Zl$ denote the prime-to-$p$ profinite integers and $\A_f^p$ the prime-to-$p$ adèles.

\subsection{Integral canonical models}\label{subsec:int can model}
Let $(G,\Omega)$ be a Shimura datum with reflex field $E/\Q$, and $K_p \subseteq G(\Qp)$ a compact open subgroup; $K_p$ will often, but not always, be a hyperspecial subgroup. Let $v$ be a place of $E$ lying above $p$, $\Ocal_{(v)}$ the local ring of $\Ocal_E$ at $v$ and $\Ocal_v$ its $v$-adic completion. Consider a neat level $K = K^p K_p$ where $K^p$ is a compact open subgroup of $G(\A_f^p)$, write $\Sh_K(G,\Omega)/E$ for the corresponding variety over $E$. Set
\[\Sh_{K_p}(G,\Omega) = \varprojlim_{K^p} \Sh_{K^p K_p}(G,\Omega)\]
which is an $E$-scheme with continuous $G(\A_f^p)$-action (see \Cref{rk:continuous action} below). If $K=K^p K_p$ is no longer assumed neat, we let $\Sh_K[G,\Omega] = [\Sh_{K_p}(G,\Omega) / K^p]$ denote the quotient stack; for $K$ neat this is a scheme which coincides with $\Sh_K(G,\Omega)$.

Suppose $\Ocal_F$ is a discrete valuation ring, faithfully flat over $\Ocal_{(v)}$, let $\mf m_{F}$ denote its maximal ideal. We recall (from \cite[§2]{milne1992points}, \cite[§3]{moonen1998models}, \cite[§1.0]{vasiu1999integral} and \cite[2.3]{kisin2010integral}) 
the notions of integral canonical models of Shimura varieties.

\begin{dfn}\label{dfn:int model}
    An \emph{integral model} of $\Sh_{K_p}(G,\Omega)$ over $\Ocal_F$ is a faithfully flat $\Ocal_F$-scheme denoted $\Sscr_{K_p}(G,\Omega)$ with an continuous $G(\A_f^p)$-action (see \Cref{rk:continuous action} below), such that
    \begin{itemize}
        \item[(i)] there is a $G(\A_f^p)$-equivariant identification $\Sscr_{K_p}(G,\Omega) \times_{\Ocal_F} F \simeq \Sh_{K_p}(G,\Omega) \times_{\Ocal_E} F$, and
        \item[(ii)] there exists a compact open subgroup $K_0^p \subseteq G(\A_f^p)$ such that for any $K_2^p \subseteq K_1^p \subseteq K_0^p$ open subgroups, the canonical quotient map
            \[\Sscr_{K_p}(G,\Omega) / K_1^p \to \Sscr_{K_p}(G,\Omega) / K_2^p\]
            is a finite étale morphism between separated schemes of finite type over $\Ocal_F$.
    \end{itemize} 
\end{dfn}

\begin{rk}\label{rk:continuous action}
    The notion of (right) "continuous action" of a locally compact, totally disconnected group $\Gamma$ on an $S$-scheme $T$ is to be understood in the sense of Deligne, (see \cite[§2.7]{deligne1979varietes}, also \cite[Paragraph following Definition 2.1]{milne1992points}). This means that we can write
    \[T = \varprojlim_K T_K\]
    where $K$ ranges over all compact open subgroups of $\Gamma$, and the $T_K$ are schemes with ample line bundles (so that quotients by finite group actions exxist as schemes), so that
    \begin{itemize}
        \item[(i)] For any $g \in \Gamma$, there is an isomorphism of projective systems
            \[\rho(g) = (\rho_K(g))_K:(T_{K})_K \xrightarrow{\sim} (T_{g^{-1}Kg})_K\]
            such that $\rho(g_1g_2) = \rho(g_2) \circ \rho(g_1)$,
        \item[(ii)] For any $K$ as above and $k \in K$, $\rho_K(k) = \id_{T_K}$, and
        \item[(iii)] For any $K$ as above and any open normal subgroup $L \subseteq K$, the quotient morphism
            \[T_K / (K/L) \longrightarrow T_L\]
        is an isomorphism.
    \end{itemize}
    With the terminology of \Cref{sec:can loc sys on schemes and stacks}, this is equivalent to a $\Gamma$-sheaf structure
    \[h_T \times \Fcal_{\Gamma} \longrightarrow h_T\]
    on the functor of points of $T$, along with some representability conditions on the quotient stacks $[T / K]$.
\end{rk}

We want our models to be canonical in some sense, by requiring that $\Sscr_{K_p}(G,\Omega)$ satisfies some extension property. Namely, let $\Pcal$ be some property satisfied by $\Ocal_F$-schemes, then one could ask for the following:
\begin{enumerate}
    \item[(i)] the scheme $\Sscr_{K_p}(G,\Omega)$ satisfies $\Pcal$, and
    \item[(ii)] every generic morphism $f:\Xcal \rightsquigarrow \Sscr_{K_p}(G,\Omega)$ from a scheme $\Xcal / \Ocal_F$ satisfying $\Pcal$, automatically and uniquely extends to an $\Ocal_F$-morphism $\Xcal \to \Sscr$.
\end{enumerate}
These properties ensure the uniqueness of the integral model $\Sscr_{K_p}(G,\Omega)$ over $\Ocal_F$ up to unique isomorphism. \Cref{dfn:smooth extension} motivates the case 
\[\Pcal = \text{being a regular scheme formally smooth over } \Ocal.\]
In fact, item (i) above is replaced with a slightly stronger condition.

\begin{dfn}\label{dfn:smooth int model}
    Let $\Sscr_{K_p}(G,\Omega)$ be an integral model over $\Ocal_F$ as in \Cref{dfn:int model}. It is a \emph{smooth integral canonical model} if moreover:
    \begin{enumerate}
        \item[(i)] For small enough $K^p \subseteq K_0^p$, the scheme $\Sscr_{K_p}(G,\Omega) / K^p$ is smooth, and
        \item[(ii)] the scheme $\Sscr_{K_p}(G,\Omega)$ has smooth extension as in \Cref{dfn:smooth extension}.
    \end{enumerate}
\end{dfn}

However, useful integral models for orthogonal-type Shimura varieties constructed in \cite{pera2016integral} are not all shown to be formally smooth. Nevertheless, they satisfy an extension property which we now describe. The following notions originate from the work of Vasiu.

\begin{dfnBreak}[\cite{pera2016integral}, Definition 2.12]\label{dfn:healthy} ~
    \vspace{-1.3em}
    \begin{itemize}
        \item A regular, local, faithfully flat $\Z_{(p)}$-algebra $R$ with maximal ideal $\mf{m}$ is \emph{quasi-healthy} if every abelian scheme over $\Spec(R) \setminus \{\mf m\}$ extends uniquely to an abelian scheme over $\Spec \,R$.
        \item A regular, faithfully flat $\Z_{(p)}$-scheme $\Xcal$ is \emph{healthy} is for every open subscheme $U$ containing the generic fiber $\Xcal_{\Q}$ and all generic points of the special fiber $\Xcal_{\F_p}$, every abelian scheme over $U$ extends uniquely to an abelian scheme over $\Xcal$.
        \item A regular, faithfully flat $\Z_{(p)}$-scheme $\Xcal$ is \emph{locally healthy} if for every $x \in \Xcal_{\F_p}$ of codimension at least $2$, the complete local ring $\widehat{\Ocal_{\Xcal,x}}$ is quasi-healthy.
    \end{itemize}
\end{dfnBreak}

Note that locally healthy implies healthy (see \cite[Remark 2.13]{pera2016integral}). The following extension property will not be referenced outside of \Cref{sec:extension for Shimura}.

\begin{dfn}\label{dfn:healthy extension}
    Let $\Sscr/\Ocal_F$ be a separated Deligne-Mumford stack. We say that it has \emph{healthy-extension} if every generic morphism $f:\Xcal \rightsquigarrow \Sscr$ from a regular, locally healthy scheme $\Xcal / \Ocal_F$, automatically and uniquely extends to an $\Ocal_F$-morphism $\Xcal \to \Sscr$.
\end{dfn}

We define integral canonical models in general by choosing $\Pcal$ following \Cref{dfn:healthy extension}.

\begin{dfn}[\cite{madapusi2015tate}, Definition 4.3]\label{dfn:int can model}
    Let $\Sscr_{K_p}(G,\Omega)$ be an integral model over $\Ocal_F$ as in \Cref{dfn:int model}. It is an \emph{integral canonical model} if moreover:
    \begin{enumerate}
        \item[(i)] The scheme $\Sscr_{K_p}(G,\Omega)$ is regular, locally healthy, and
        \item[(ii)] The scheme $\Sscr_{K_p}(G,\Omega)$ has healthy-extension.
    \end{enumerate}
\end{dfn}

Although integral canonical models may not be formally smooth, the work of Vasiu-Zink still shows that they have smooth extension whenever $\Ocal_F$ is not too ramified.

\begin{prop}\label{prop:smooth implies healthy}
   Suppose $\Ocal_F$ has absolute ramification index $e \le p-1$. Then any regular scheme $\Xcal$ formally smooth over $\Ocal_F$ is locally healthy, hence healthy.

   \noindent In particular, if $\Sscr / \Ocal_F$ is a separated Deligne-Mumford stack with healthy-extension, then it has smooth extension.
\end{prop}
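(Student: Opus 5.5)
We will reduce the first assertion to the purity theorem of Vasiu--Zink \cite[Corollary 5]{vasiu2010purity}. That result states the following. Let $R$ be a regular local ring of mixed characteristic $(0,p)$ and dimension at least $2$. Suppose $R$ is faithfully flat over $\Z_{(p)}$ and admits a presentation $\widehat R \simeq \mf S/(h)$, with $\mf S = W(k)[[T_1,\dots,T_r]]$ and $h \notin (p,T_1,\dots,T_r)^p$ (i.e.\ $h \bmod p$ does not lie in the $p$-th power of the maximal ideal of $k[[T_1,\dots,T_r]]$). Then $R$ is quasi-healthy. So let $\Xcal$ be regular and formally smooth over $\Ocal_F$, and let $x \in \Xcal_{\F_p}$ be a point of codimension at least $2$. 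We must show that $\widehat{\Ocal_{\Xcal,x}}$ is quasi-healthy. Regularity of $\Xcal$ gives regularity of this local ring, and flatness over $\Z_{(p)}$ comes from flatness of formally smooth $\Ocal_F$-schemes \cite[Théorème 19.7.1]{EGAIV}.

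The key step is to describe $\widehat{\Ocal_{\Xcal,x}}$ explicitly. Formal smoothness of $\Xcal$ over $\Ocal_F$ gives $\widehat{\Ocal_{\Xcal,x}} \simeq \Ocal_{F'}[[T_1,\dots,T_{d}]]$ for some complete DVR $\Ocal_{F'}$. Here $\Ocal_{F'}$ is a Cohen-ring-type extension of $\Ocal_F$ with the same ramification index $e$, its residue field being that of $x$ (possibly imperfect, which we handle via a $p$-basis/Cohen ring replacing $W(k)$). Arguing as in Step 1 of \Cref{subsec:abelian schemes that do not extend}, we then write $\Ocal_{F'} \simeq C[T_0]/(E(T_0))$ with $E$ Eisenstein of degree $e$. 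After multiplying by a unit this is $E = h - p$ with $\overline h \equiv u\,T_0^{e}$ modulo $p$, $u$ a unit. Hence $\widehat{\Ocal_{\Xcal,x}} \simeq C[[T_0,\dots,T_d]]/(h-p)$. Since $e \le p-1$, the reduction $\overline h$ is not in $(T_0,\dots,T_d)^p$, so the hypothesis of Vasiu--Zink holds and the local ring is quasi-healthy. When $e=1$ we may instead take $h = p$ up to a unit, and the result is the classical Faltings--Chai / Vasiu--Zink case. Local healthiness then gives healthiness by \cite[Remark 2.13]{pera2016integral}.

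The second assertion is formal. Let $f \colon \Xcal \rightsquigarrow \Sscr$ be a generic morphism from a regular scheme $\Xcal$ formally smooth over $\Ocal_F$. By the first part, $\Xcal$ is regular and locally healthy. Healthy-extension of $\Sscr$ therefore extends $f$, and uniqueness follows from \Cref{prop:morphisms isomorphic over dense subset}.

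The main obstacle is the identification of the complete local ring together with the precise form of the hypothesis in \cite[Corollary 5]{vasiu2010purity}. In particular, when the residue field of $x$ is imperfect we must check that the Cohen-ring version of their criterion applies. If it does not directly, we would first pass to a faithfully flat extension with perfect residue field (e.g.\ a strict henselization followed by a perfection-type Cohen ring extension). This should preserve regularity and the ramification index, and quasi-healthiness descends along such faithfully flat extensions by fpqc descent of abelian schemes and \Cref{fact:purity for finite flat group schemes}.
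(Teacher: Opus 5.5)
Your proof is correct, but it reaches Vasiu--Zink by a different route from the paper's.

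\textbf{The paper's route.} The paper never computes $\widehat{\Ocal_{\Xcal,x}}$. It checks the hypotheses of the two-variable criterion \cite[Theorem 3]{vasiu2010purity} using only the lifting property. It lifts $R\to\overline{k}$ to an $\Ocal_F$-point $R\to B=W(\overline{k})[T_1]/(p-h)$, produces a tangent direction via \Cref{lem:create tangent vector}, and lifts it to $B[[T_2]]$.

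\textbf{Your route.} You use relative Cohen structure theory \cite[19.8]{EGAIV}. This gives a complete DVR $\Ocal_{F'}$, formally smooth over $\Ocal_F$, with $\mf m_F\Ocal_{F'}$ maximal and residue field $\kappa(x)$. Over it, $\widehat{\Ocal_{\Xcal,x}}$ is formally smooth with the same residue field, hence a power series ring. You deduce $\widehat{\Ocal_{\Xcal,x}}\cong C[[T_0,\dots,T_d]]/(p-h(T_0))$ with $\overline{h}=\overline{v}^{-1}T_0^{e}$. In other words, the local ring has index of ramification $e\le p-1$.

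\textbf{How to finish.} The paper attributes to \cite[Corollary 5]{vasiu2010purity} the healthiness statement itself, not the $d$-variable criterion you quote; the ring-theoretic input it uses is Theorem 3. Your presentation plugs straight into Theorem 3, so your fallback paragraph is really the cleanest ending:
\begin{itemize}
\item choose a local lift $C\to W(\overline{\kappa})$ of $\kappa\subseteq\overline{\kappa}$;
\item set $\overline{A}=W(\overline{\kappa})[[T_0,\dots,T_d]]/(p-h(T_0))$, which is faithfully flat over $\widehat{\Ocal_{\Xcal,x}}$, regular of the same dimension, and satisfies $\mf m\overline{A}=\mf m_{\overline{A}}$;
\item kill $T_2,\dots,T_d$.
\end{itemize}
Since $h$ involves only $T_0$ and $e<p$, we get $h\notin(p,T_0^p,T_1^p,T_0^{p-1}T_1^{p-1})$. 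No descent argument of your own is needed, because Theorem 3 already allows passing to $\overline{A}$.

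\textbf{What each route buys.} The paper's argument avoids any structure theory of complete local rings. Yours makes the equal-dimension hypothesis of Theorem 3 automatic. In the paper's construction, the closed fibre of $R\to\overline{R}$ is a completed localization of $\kappa(x)\otimes_{k_F}\overline{k}$. That ring is positive-dimensional once $\kappa(x)$ is transcendental over $k_F$ (e.g.\ at non-closed points of a finite type $\Xcal$), so the dimension hypothesis needs extra care there.

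\textbf{Minor points.}
\begin{itemize}
\item The ``i.e.'' in your statement of the criterion is not an equivalence. For instance, $h=p-T_1^p$ lies outside $(p,T_1,\dots,T_r)^p$ although $\overline{h}\in(T_1,\dots,T_r)^p$. Only the condition on $h \bmod p$ is a valid hypothesis, and fortunately it is the one you actually check.
\item The separate $e=1$ remark is unnecessary: that case is covered by the general argument, with $\overline{h}$ a unit times $T_0$.
\end{itemize}
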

\begin{proof}
    It is stated in \cite[Corollary 5]{vasiu2010purity} that such an $\Xcal$ is healthy. Let us run through the argument to see that $\Xcal$ is in fact locally healthy. Let $x \in \Xcal_{\F_p}$ be of codimension at least 2, set $R = \Ocal_{\Xcal,x}$. Then the first statement is given by \Cref{lem:smooth implies healthy} below. The second statement follows.
\end{proof}

\begin{lem}\label{lem:smooth implies healthy}
    Suppose $\Ocal_F$ has absolute ramification index $e \le p-1$. Let $R$ be a regular local ring of dimension at least 2, formally smooth over $\Ocal_F$, and let $\widehat{R}$ be the completion with respect to its maximal ideal. Then $\widehat{R}$ is quasi-healthy.
\end{lem}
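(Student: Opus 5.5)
The plan is to reduce to the statement of Vasiu--Zink for complete regular local rings of the form $W(k)[[T_1,\dots,T_r]]/(E)$, whose quasi-healthiness they prove under a ramification condition on $E$. In \cite[Theorem 3/Corollary 4]{vasiu2010purity} they show: if $\widehat R \simeq \mf S/(h-p)$ with $\mf S = W(k)[[T_1,\dots,T_r]]$, $r \ge 2$, $k$ perfect, and $h \in (T_1,\dots,T_r)$ whose reduction modulo $p$ does not lie in the ideal $(p, T_1^p,\dots,T_r^p)$, then $\widehat R$ is quasi-healthy. This is exactly the dual of Step 1 of \Cref{subsec:abelian schemes that do not extend}, where divisibility by $T_2^p$ produced counterexamples. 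I will therefore aim to present $\widehat R$ in this shape with $\overline h \notin (T_1^p,\dots,T_r^p)$.

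First, I reduce to a perfect residue field. The residue field $\kappa$ of $R$ need not be perfect, so I pass to a faithfully flat local extension $\widehat R \to R'$ of relative dimension $0$ with perfect residue field. Concretely, I take the completion of $\widehat R \otimes_{C(\kappa)} C(\kappa^{\mrm{perf}})$, with $C(-)$ a Cohen ring; this remains regular and formally smooth over $\Ocal_F$ with the same $e$. Quasi-healthiness descends along such extensions: an abelian scheme over the punctured spectrum extends over $R'$, and fpqc descent together with \Cref{prop:morphisms isomorphic over dense subset} (separatedness of the moduli of abelian schemes, normal base) supplies the descent datum. I expect this step to need care but to be standard; Vasiu--Zink handle it in the same way.

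Next comes the structure step, with $k$ perfect. Formal smoothness of $\widehat R$ over $\Ocal_F$ together with Cohen structure theory gives $\widehat R \simeq \Ocal_{F'}[[T_1,\dots,T_{d}]]$. Here $F'$ is the unramified extension of $F$ (inside the completion) with residue field $k$, so $\Ocal_{F'}$ still has ramification index $e$, and $d = \dim R - 1 \ge 1$. Writing $\Ocal_{F'} = W(k)[T_0]/(E(T_0))$ with $E$ Eisenstein of degree $e$ normalized to constant term $p$, i.e.\ $E = p - h(T_0)$ up to sign, I get
\[\widehat R \simeq W(k)[[T_0,T_1,\dots,T_d]]/(h - p), \qquad \overline h = u\,T_0^{e}\]
with $u$ a unit mod $p$. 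Since $e \le p-1$, $\overline h \notin (T_0^p,\dots,T_d^p)$. Moreover $r = d+1 \ge 2$ because $\dim R \ge 2$. (If $e=1$ one may instead write $\widehat R = W(k)[[T_1,\dots,T_d]]$ directly, which is also covered.)

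Finally, I apply the Vasiu--Zink criterion to conclude that $R'$, and hence $\widehat R$, is quasi-healthy. The main obstacle is pinning down the exact hypothesis of the cited theorem: checking that the nondivisibility condition on $\overline h$ is the one stated there, and that it applies to $W(k)[[T_0,\dots,T_d]]$ rather than only when $h$ is in a specific normal form. If the citation only covers $h \notin (p,T_1,\dots,T_r)^p$, I verify this directly: $\overline h = uT_0^e$ with $e<p$ is not in $\mf m^p$ modulo $p$.
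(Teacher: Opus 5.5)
Your argument is correct in substance and follows a different route from the paper. Two of its steps should be replaced rather than patched.

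\textbf{The criterion is misquoted.} \cite[Theorem 3]{vasiu2010purity}, as the paper uses it, does not ask for a presentation of $\widehat R$ itself. It asks for a faithfully flat, regular, complete local algebra $\bar A$ of the same dimension, with algebraically closed residue field $\bar k$. It also asks for an epimorphism $\bar A \twoheadrightarrow W(\bar k)[[T_1,T_2]]/(p-h)$ onto a \emph{two}-variable ring, with $h\in(T_1,T_2)$ and $h\notin(p,T_1^p,T_2^p,T_1^{p-1}T_2^{p-1})$. Your presentation supplies exactly this data:
\begin{itemize}
\item take $\bar A=W(\bar k)[[T_0,\dots,T_d]]/(p-h)$;
\item kill $T_2,\dots,T_d$, which is possible since $d\ge 1$;
\item note that $\bar h=uT_0^e$ with $e\le p-1$ is divisible by none of the monomial generators.
\end{itemize}

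\textbf{The perfect-residue-field reduction and its descent should be dropped.} The faithfully flat extension is built into the criterion, so this reduction is unnecessary. That is fortunate, because the descent you sketch does not go through. \Cref{prop:morphisms isomorphic over dense subset} needs a locally Noetherian normal base. But $R'\otimes_{\widehat R}R'$ has closed fiber $\kappa^{\mrm{perf}}\otimes_{\kappa}\kappa^{\mrm{perf}}$, which is non-reduced and non-Noetherian whenever $\kappa$ is imperfect (its nilpotents have unbounded order). Perfectness is not needed for your structure step either. Formal smoothness gives $\widehat R\simeq\Lambda[[T_1,\dots,T_d]]$ with $\Lambda$ a complete DVR of index $e$, and $\Lambda=C(\kappa)[T_0]/(E(T_0))$ over a Cohen ring $C(\kappa)$. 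Base-changing along a flat local map $C(\kappa)\to W(\bar\kappa)$ then produces $\bar A$ directly, with the same computation of $\bar h$.

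\textbf{Comparison with the paper.} The paper never identifies $\widehat R$. It proceeds as follows:
\begin{itemize}
\item it tensors $R$ with $B=W(\bar k)[T_1]/(p-h)$ and localizes at the section supplied by formal smoothness;
\item it obtains a surjection onto $B[T_2]/(T_2^2)$ via \Cref{lem:create tangent vector};
\item it lifts this to $B[[T_2]]$ by formal smoothness, and completes.
\end{itemize}
This uses only the lifting property of formal smoothness and no structure theorem. Your Cohen-structure route is shorter and makes every hypothesis of the criterion transparent. In particular it gives the equality $\dim\bar A=\dim\widehat R$ for an arbitrary residue field. In the paper's construction, the closed fiber of $R\to\tilde R$ is a localization of $k\otimes_{k_F}\bar k$. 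That fiber has positive dimension when $k$ is transcendental over $k_F$, so the dimension requirement needs extra care in that case.
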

\begin{proof}
    \cite[Theorem 3]{vasiu2010purity} provides a criterion on 
    a regular local ring $A$ of dimension $d \ge 2$, characteristic $(0,p)$ and residue field $k$, implying that $A$ is quasi-healthy. The criterion is as follows: if there exists a faithfully flat, regular, local, complete $A$-algebra $\overline{A}$ of dimension $d$ with algebraically closed residue field $\overline{k}$, together with an epimorphism
    \[\overline{A} \twoheadrightarrow W(\overline{k})[[T_1,T_2]] / (p-h)\]
    where $h \in (T_1,T_2) W(\overline{k})[[T_1,T_2]]$ does not lie in the ideal $(p, T_1^p,T_2^p,T_1^{p-1}T_2^{p-1})$, then $A$ is quasi-healthy. We argue below that the criterion is satisfied for $A=R$ and $\overline{k}$ an algebraic closure of $k$, so $R$ is quasi-healthy. As $R \to \overline{R}$ factors through the completion $\widehat{R}$, the latter is also quasi-healthy.

    First, we write
    \[\Ocal_F = W(k_F)[T_1] / (p-h)\]
    where $k_F \subseteq k$ is the residuel field of $F$ and $p-h \in W(k_F)[T_1]$ is an appropriate Eisenstein polynomial; here $h \in T_1 W(k_F)[T_1]$ is monic of degree at most $p-1$, thus $h \not\in (p,T_1^p)$. 
    Consider the $\Ocal_F$-algebra
    \[B \coloneq W(\overline{k})[T_1]/(p-h)\]
    which is a DVR. By formal smoothness of $\Ocal_F \to R$, the morphism $R \twoheadrightarrow k \to \overline{k}$ extends to a morphism $R \to B$ over $\Ocal_F$, hence to a morphism of $B$-schemes
    \[\Spec \, B \longrightarrow \Spec (R \otimes_{\Ocal_F} B).\]
    Define $\tilde{R}$ as the localization of $R \otimes_{\Ocal_F} B$ at the maximal ideal in the image of this morphism, so that $\Spec \, \tilde{R}$ is a $B$-scheme with a section of the structure morphism. By \Cref{lem:create tangent vector} below we find a surjection of $B$-algebras
    \[\tilde{B} \twoheadrightarrow B[T_2]/(T_2^2).\]
    Formal smoothness of $B \to \tilde{R}$ lifts this surjection to $\tilde{R} \to B[T_2]/(T_2^n)$ for any $n \ge 1$, which is again surjective. Indeed, arguing by induction, if $\tilde{R} \to B[T_2]/(T_2^n)$ is surjective and lifts to $\tilde{R} \to B[T_2]/(T_2^{n+1})$, the image of this lift is a $B$-algebra containing elements of the form $T_2 + x T_2^n$ and $T_2^{n-1} + y T_2^n$ with $x,y \in B$, hence it contains their product $T_2^n$, as well as the generator $T_2 = (T_2 + x T_2^n) - x T_2^n$ which proves surjectivity. Summarizing, we find a morphism of $B$-algebras
    \[\tilde{R} \to B[[T_2]] = W(\overline{k})[[T_1,T_2]] / (p-h)\]
    mapping surjectively onto the quotients $B[T_2]/(T_2^n)$. This map factors through the completion $\overline{R} \coloneq \widehat{\tilde{R}}$ of $\tilde{R}$ at its maximal ideal into a surjective map
    \[\overline{R} \twoheadrightarrow B[[T_2]] = W(\overline{k})[[T_1,T_2]] / (p-h)\]
    as desired.
\end{proof}

\begin{lem}\label{lem:create tangent vector}
    Let $R$ be a regular local ring of dimension at least 2, faithfully flat over some DVR $B$. Suppose $\Spec \, R \to \Spec \, B$ admits a section. Then there exists a surjection $R \twoheadrightarrow B[T]/(T^2)$ of $B$-algebras. 
\end{lem}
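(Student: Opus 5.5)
The plan is to build the surjection by hand out of a regular system of parameters adapted to the section. Let $\sigma: R \to B$ be the retraction, i.e. the $B$-algebra map corresponding to the section, and set $I \coloneq \ker \sigma$. Then $R = B \oplus I$ as $B$-modules, and $R/I \simeq B$.

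A surjection of $B$-algebras $R \twoheadrightarrow B[T]/(T^2)$ that lifts $\sigma$ is the same as a $B$-algebra map sending $I$ into $TB$ and vanishing on $I^2$. Concretely, it amounts to a surjective $B$-linear map $I/I^2 \to B$. Given such a map $\lambda$, we define
\[ r = b + i \longmapsto b + \lambda(\bar i)\, T. \]
This is a ring map because $I^2 \mapsto 0$ and $T^2 = 0$. It is surjective because $1 \mapsto 1$ and some $i \mapsto T$. So the task reduces to showing that the conormal module $I/I^2$ of the section admits a surjection onto $B$.

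To produce $\lambda$, first observe that $R/I \simeq B$ is regular. Since $R$ is also regular, $I$ is generated by part of a regular system of parameters of $R$ (EGA IV 17 / Stacks on regular immersions). Hence $I/I^2$ is a free $R/I = B$-module of rank
\[ c = \dim R - \dim B. \]
Choosing a basis, any coordinate projection $I/I^2 \to B$ is surjective, provided $c \ge 1$.

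The point I expect to require care is this inequality $c \ge 1$, i.e. $\dim R > \dim B$. Since $B$ is a DVR, $\dim B = 1$, and by hypothesis $\dim R \ge 2$, so $c \ge 1$. (Faithful flatness is only needed to make sense of the setup; the dimension count does the work.)

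I would double-check the regular-immersion claim, namely that a quotient of a regular local ring by an ideal with regular quotient is cut out by part of a regular system of parameters. This follows since
\[ \dim_k (\mathfrak m_R/\mathfrak m_R^2) - \dim_k \bigl(\mathfrak m_B/\mathfrak m_B^2\bigr) = \dim R - \dim B, \]
which lets us lift a basis of the kernel of $\mathfrak m_R/\mathfrak m_R^2 \to \mathfrak m_B/\mathfrak m_B^2$ to generators of $I$ (Stacks, Tag 00NR).
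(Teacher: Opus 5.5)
Your proof is correct, but it differs from the paper's at the key step. Both arguments reduce, via the retraction $\sigma$ and $I=\ker\sigma$, to finding a suitable $B$-linear map $I/I^2\to B$. They differ in how they show that $I/I^2$ has positive rank.

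The paper never determines the structure of $I/I^2$. It uses flatness of $B\to R$ to pass to the generic fibre $R_0=R\otimes_B B_0$, where $I_0=IR_0$ is a maximal ideal of positive height. Hence $(I/I^2)\otimes_B B_0=I_0/I_0^2\neq 0$, so $\Hom_B(I/I^2,B)\neq 0$.

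You instead use the structure of regular quotients of regular local rings (Tag 00NR) to see directly that $I/I^2$ is free of rank $\dim R-1\geq 1$, so a coordinate projection is surjective. Your route costs that one lemma, but it buys three things:
\begin{itemize}
\item It needs no flatness, as you correctly observe.
\item It produces a surjective $\lambda$ directly. The paper claims that only the zero element of $\Hom_B(I/I^2,B)$ gives a non-surjective map, which is not literally true: a $\lambda$ with image $\pi B$, $\pi$ a uniformizer, gives one. So the paper tacitly needs the extra step of dividing a nonzero $\lambda$ by a power of $\pi$.
\item It correctly restricts to $B$-algebra maps lifting $\sigma$. The paper identifies all of $\Hom_{B\text{-alg}}(R,B[T]/(T^2))$ with $\Hom_B(I/I^2,B)$, which tacitly assumes every such map reduces to $\sigma$ modulo $T$.
\end{itemize}
Similarly, the paper's count $\dim R_0=\dim R-1$ should really be the statement $\mathrm{ht}(I)=\dim R-1$, which holds because $R$ is a catenary local domain. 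In exchange, the paper's approach is lighter: it needs only this generic-rank count, not an explicit description of $I$ by part of a regular system of parameters.
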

\begin{proof}
    As the structure morphism is separated, the section $\Spec \, B \to \Spec \, R$ is a closed embedding. Let $I$ denote the ideal corresponding to its image, so that $R / I \simeq B$ as $B$-algebras. Any morphism $R \to B[T]/(T^2)$ restricts to the identity on $B$, hence is determined by its restriction to $I$ mapping to the ideal $(T)$, which automatically factors through the quotient $I / I^2$. In other words:
    \[\Hom_{B \text{-alg}}(R, B[T]/(T^2)) = \Hom_{B \text{-mod}}(I / I^2, B).\]
    The only non-surjective $R \to B[T]/(T^2)$ is the one mapping to the zero morphism on the right-hand side. Thus it suffices to show that there are nonzero elements of $\Hom_{B \text{-mod}}(I / I^2, B)$. As $R$ is regular local, it is Noetherian, so $I/I^2$ is a finitely generated module over the DVR $B$: there is a nonzero map $I / I^2 \to B$ if and only if $(I / I^2) \otimes_{B} B_0 \neq 0$, where $B_0 = \mrm{Frac}(B)$. By flatness of $B \to B_0$ the latter equals $I_0 / I_0^2$, where $I_0 = IR_0$ is a maximal ideal of the localization $R_0 \coloneq R \otimes_{B} B_0$. Now, by flatness of the $B$-algebra $R$
    \[\dim_F (R_0) = \dim(R) -1 \ge 1\]
    and so 
    at the maximal prime $I_0$ we find $\dim_F(I_F / I_F^2) \ge 1$, hence $I_0 / I_0^2 \neq 0$.
\end{proof}

Consequently:

\begin{cor}
    Suppose $\Ocal_F$ has absolute ramification index $e \le p-1$. Let $\Sscr_{K_p}(G,\Omega)$ be an integral canonical model over $\Ocal_{(v)}$ as in \Cref{dfn:int can model}. Then the base-change $\Sscr_{K_p}(G,\Omega)_{\Ocal_F}$ has smooth extension.
\end{cor}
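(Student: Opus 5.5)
The plan is to reduce the corollary to \Cref{prop:smooth implies healthy}: since $\Sscr_{K_p}(G,\Omega)_{\Ocal_F}$ is a scheme (hence a separated Deligne-Mumford stack, separatedness coming from \Cref{dfn:int model}), it suffices to show that it has healthy-extension over $\Ocal_F$. So let $\Xcal/\Ocal_F$ be a regular scheme formally smooth over $\Ocal_F$, and let $f:\Xcal \rightsquigarrow \Sscr_{K_p}(G,\Omega)_{\Ocal_F}$ be a generic morphism. We want to extend $f$ to an $\Ocal_F$-morphism.

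First, I will view $\Xcal$ as a scheme over $\Ocal_{(v)}$ via $\Ocal_{(v)} \to \Ocal_F$. Then $\Xcal$ is regular and faithfully flat over $\Z_{(p)}$; flatness follows from formal smoothness as in the proof of \Cref{lem:normalized closed substack has smooth extension}. By \Cref{prop:smooth implies healthy}, using $e \le p-1$, it is locally healthy. Note that local healthiness is a condition on $\Xcal$ as a $\Z_{(p)}$-scheme only, so it does not depend on which base we regard $\Xcal$ over.

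Next, I compose the generic morphism with the projection of the base change. This gives $\Xcal_\eta = \Xcal \times_{\Ocal_F} F \to \Sh_{K_p}(G,\Omega)\times_E F \to \Sh_{K_p}(G,\Omega)$. Here $\Xcal \times_{\Ocal_{(v)}} E = \Xcal \times_{\Ocal_F} F$, since $p$ is invertible exactly on the generic fiber in both cases. This is therefore a generic morphism $\Xcal \rightsquigarrow \Sscr_{K_p}(G,\Omega)$ over $\Ocal_{(v)}$. By healthy-extension of the integral canonical model, item (ii) of \Cref{dfn:int can model}, it extends to an $\Ocal_{(v)}$-morphism $g:\Xcal \to \Sscr_{K_p}(G,\Omega)$. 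The pair $(g, \Xcal \to \Spec\,\Ocal_F)$ then defines an $\Ocal_F$-morphism $\Xcal \to \Sscr_{K_p}(G,\Omega)_{\Ocal_F}$. Its generic fiber agrees with $f_\eta$ by the universal property of the fiber product, because $g_\eta$ is the composite above.

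Uniqueness comes from the separatedness of $\Sscr_{K_p}(G,\Omega)$ together with the fact that $\Xcal_\eta$ is schematically dense in the flat scheme $\Xcal$. One cannot quite cite \Cref{prop:morphisms isomorphic over dense subset}, because $\Sscr_{K_p}$ is only a pro-scheme, not of finite type. Instead, the equalizer of two extensions is a closed subscheme of $\Xcal$ containing $\Xcal_\eta$, hence equal to $\Xcal$.

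The main point to check carefully is the compatibility of the definitions of healthy-extension over the two bases $\Ocal_{(v)}$ and $\Ocal_F$. One must verify that the healthy-extension property in \Cref{dfn:int can model} applies to test schemes over $\Ocal_{(v)}$ that are not of finite type, such as $\Xcal$, which is only formally smooth over $\Ocal_F$. This is harmless, since \Cref{dfn:healthy extension} requires no finiteness.
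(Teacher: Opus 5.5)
Your argument is correct and is essentially the paper's own implicit deduction from \Cref{prop:smooth implies healthy}: regular, formally smooth test schemes over $\Ocal_F$ are locally healthy, so the healthy-extension of the model over $\Ocal_{(v)}$ applies to them, and you correctly supply the base-change bookkeeping (composing to $\Sscr_{K_p}(G,\Omega)$, extending, then pairing with the structure map to $\Spec\,\Ocal_F$) that the paper leaves unstated. One small framing slip: you announce that you will prove healthy-extension over $\Ocal_F$ but then test directly with formally smooth $\Xcal$. This is harmless, since the same composite-and-pair argument works verbatim for any regular locally healthy $\Xcal/\Ocal_F$, and components of $\Xcal$ with empty special fiber (which are not faithfully flat over $\Z_{(p)}$) need no extension at all.
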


Let $\Sscr_{K_p}(G,\Omega)$ be an integral model of $\Sh_{K_p}(G,\Omega)$ over $\Ocal_(v)$.
Suppose either that
\begin{itemize}
    \item it is a smooth integral canonical model, or
    \item it is an integral canonical model, and $e \le p-1$.
\end{itemize}
Then we have established that $\Sscr_{K_p}(G,\Omega)_{\Ocal_F} / \Ocal_F$ has smooth extension, which has the following consequence. For any level $K = K^p K_p$, we let $\Sscr_{K}[G,\Omega] \coloneq [\Sscr_{K_p}(G,\Omega) / K^p]$ denote the quotient stack over $\Ocal_{(v)}$. Fix $K_0^p$ any compact open subgroup of $G(\A_f^p)$, then the projective system $(\Sscr_{K^p K_p}[G,\Omega])_{K^p \in \Ccal(K_0^p)}$ is fit for $K_0^p$, so from \Cref{dfn:can loc sys on stack} there is a canonical local system $\L^{\can}$ on $\Sscr_{K_0^p K_p}[G,\Omega]$. After base-change to $\Ocal_F$, we find:

\begin{prop}\label{prop:int model has PGR}
    The pair $(\Sscr_{K_0^p K_p}[G,\Omega]_{\Ocal_F}, \L^{\can})$ has L-extension, hence also PGR-extension, over $\Ocal_F$.
\end{prop}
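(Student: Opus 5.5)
The plan is to apply \Cref{prop:smooth extension implies L extension} to the fit system $(\Sscr_{K^p K_p}[G,\Omega]_{\Ocal_F})_{K^p \in \Ccal(K_0^p)}$, and then \Cref{prop:L-extension implies PGR extension}.

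First I will check that the base-changed system is fit for $K_0^p$ in the sense of \Cref{dfn:fit system of stacks}. Each $\Sscr_{K^pK_p}[G,\Omega]$ is the quotient stack $[\Sscr_{K_p}(G,\Omega)/K^p]$, so for $K^p$ normal in $K_0^p$ the map $[\Sscr_{K_p}/K^p] \to [\Sscr_{K_p}/K_0^p]$ is a $K_0^p/K^p$-torsor with the evident equivariance. By \Cref{dfn:int model}(ii) it is finite étale: for neat levels the quotients are schemes and the transition maps are finite étale, and in general one argues on a neat normal open subgroup. Base change to $\Ocal_F$ preserves all of this. These stacks are separated Deligne-Mumford because the quotients are finite étale quotients of separated finite type schemes; I would reduce this to a neat $K_1^p \subseteq K_0^p$, where $[\Sscr/K_0^p] = [\Sscr_{K_1^p}/(K_0^p/K_1^p)]$ is a quotient by a finite group. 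The canonical local system $\L^{\can}$ is the one attached to this system via $K_0^p \subseteq G(\A_f^p) \hookrightarrow \GL(V_{\A_f^p})$, using a faithful representation of $G$. To fit the hypotheses literally, one takes $R=\widehat{\Z}^p$ and a $K_0^p$-stable lattice, so that $K_0^p$ is a profinite subgroup of $\GL(V)$.

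Next I will identify the limit $\varprojlim_{K^p} [\Sscr_{K_p}/K^p]_{\Ocal_F}$ with $\Sscr_{K_p}(G,\Omega)_{\Ocal_F}$. This follows because $\Sscr_{K_p} \to [\Sscr_{K_p}/K_0^p]$ is the $K_0^p$-torsor associated with the system (\Cref{prop:universal fit system}), together with the continuity of the action in \Cref{rk:continuous action}(iii). The preceding discussion established that $\Sscr_{K_p}(G,\Omega)_{\Ocal_F}$ has smooth-extension, in both cases: either directly for a smooth integral canonical model, or through \Cref{prop:smooth implies healthy} when $e\le p-1$.

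Then \Cref{prop:smooth extension implies L extension} gives L-extension for $(\Sscr_{K_0^pK_p}[G,\Omega]_{\Ocal_F},\L^{\can})$, and \Cref{prop:L-extension implies PGR extension} gives PGR-extension.

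The main obstacle is a subtlety with smooth-extension for a smooth integral canonical model: it is stated over $\Ocal_{(v)}$, and base change to $\Ocal_F$ is not automatic. I would handle this with the corollary/\Cref{prop:smooth implies healthy} route when $e\le p-1$. In the smooth case I would instead use \Cref{lem:descent of smooth extension}-type reasoning in reverse, extending along $\Ocal_{(v)} \to \Ocal_F$. If that fails, I would assume the statement's standing hypotheses already provide smooth-extension over $\Ocal_F$, as the preceding paragraph asserts. A secondary check is that the limit stack is a scheme, so that \Cref{dfn:smooth extension} applies even though $\Sscr_{K_p}$ is not of finite type.
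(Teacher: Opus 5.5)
Your proposal is correct and is the paper's argument: the proposition is exactly \Cref{prop:smooth extension implies L extension}, applied to the fit system $(\Sscr_{K^pK_p}[G,\Omega]_{\Ocal_F})_{K^p}$ with limit $\Sscr_{K_p}(G,\Omega)_{\Ocal_F}$, followed by \Cref{prop:L-extension implies PGR extension}, under the standing hypothesis set up just before the statement that $\Sscr_{K_p}(G,\Omega)_{\Ocal_F}$ has smooth-extension; your choice of a $K_0^p$-stable $\widehat{\Z}^p$-lattice is a good way to make $R$ profinite, as \Cref{dfn:L-extension property} requires. Do drop the idea of running \Cref{lem:descent of smooth extension} in reverse along $\Ocal_{(v)}\to\Ocal_F$, since smooth-extension does not ascend along ramified extensions (\Cref{rk:optimality of e} shows the Siegel model loses it when $e\ge p$), so your fallback of taking smooth-extension over $\Ocal_F$ as a hypothesis is the correct reading of the smooth case.
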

\begin{proof}
    This follows from \Cref{prop:smooth extension implies L extension} and \Cref{prop:L-extension implies PGR extension}.
\end{proof}

Integral canonical models have been shown to exist for Shimura varieties of Siegel (in \cite{milne1992points} and \cite{moonen1998models}), Hodge (in \cite{kottwitzPointsShimuraVarieties1992} for PEL, \cite{kisin2010integral} for the general case) and abelian-type (in \cite{kisin2010integral}), as well as of certain orthogonal-type\footnote{The orthogonal-type Shimura varieties condisered by Madapusi are abelian-type, yet the construction of Madapusi does not follow from that of Kisin. Indeed, Kisin constructs integral models whenever $K_p$ is hyperspecial, while Madapusi chooses a specific $K_p$, called the \emph{discriminant kernel}, which may not be hyperspecial in general.} (in \cite{pera2016integral}), over a localization of the ring of integers of their reflex field. In the following paragraphs, we sketch their constructions. The important takeaway is that the abelian and orthogonal cases are bootstrapped from the Hodge case, itself bootstrapped from the Siegel case, itself boostrapped from moduli spaces of polarized abelian schemes.

\subsection{Smooth-extension for moduli spaces of polarized abelian schemes}\label{subsec:moduli of polar ab}

We begin with some recollections on moduli spaces of polarized abelian schemes following \cite[§1]{dejongModuliSpacesPolarized1993a}. Set $g,d>0$ integers; all polarized abelian schemes in this section are of relative dimension $g$ and degree $d$. Fix a sequence of integers $\delta = (\delta_1,\dots,\delta_g)$ such that $\delta_1 | \dots | \delta_g$ and $d = \prod_{i=1}^g \delta_i$. Consider the finite flat group scheme over $\Spec \, \Z$
\[K(\delta) \coloneq \prod_{i=1}^g \mu_{\delta_i, \Spec \, \Z} \times \u{\Z / \delta_i \Z}_{\Spec \, \Z}.\]
The following moduli stacks are defined over $\Z$; we restrict to $\Z_{(p)}$ for our applications.
\begin{dfn}\label{dfn:Ag}
    For any $\Z_{(p)}$-scheme $T$, let $\Ascr_{g,d}(T)$ denote the groupoid of polarized abelian schemes $(A,\lambda)$ over $T$ of relative dimension $g$ and degree $d$.
    Let $\Ascr_{g,\delta}(T)$ denote the subcategory of all objects $(A,\lambda)$ such that for some faithfully flat $T' \to T$, the base-change $(\ker \lambda)_{T'}$ is isomorphic to $K(\delta)_{T'}$.
\end{dfn}
Then (see Remarks 1.2 and Proposition 1.5 of \textit{loc. cit.}) $\Ascr_{g,d}$ and $\Ascr_{g,\delta}$ are separated Deligne-Mumford stacks of finite type over $\Spec \, \Z_{(p)}$, and the morphism
\[\Ascr_{g,\delta} \longrightarrow \Ascr_{g,d}\]
is represented by a locally-closed immersion. We let $\overline{\Ascr}_{g,\delta} \hookrightarrow \Ascr_{g,d}$ denote the substack obtained as the closure of $\Ascr_{g,\delta}$. The distinction between $\Ascr_{g,\delta}$ and its closure is only relevant when $p | d$:

\begin{fact}[\cite{dejongModuliSpacesPolarized1993a}, Remarks 1.2, Theorem 1.12]\label{fact:Agdelta and its closure}
    If $p \nmid d$ then $\Ascr_{g,d}$ and $\Ascr_{g,\delta}$ are smooth over $\Spec \, \Z_{(p)}$, and $\Ascr_{g,\delta} \longrightarrow \Ascr_{g,d}$ is a closed embedding, i.e. $\Ascr_{g,\delta} = \overline{\Ascr}_{g,\delta}$.
\end{fact}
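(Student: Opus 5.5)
The plan is to prove both assertions of \Cref{fact:Agdelta and its closure} from deformation theory of polarized abelian schemes together with a direct analysis of $\ker \lambda$. Throughout, $p \nmid d$.

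\emph{Step 1: smoothness of $\Ascr_{g,d}$.} We use the infinitesimal lifting criterion. Let $(A_0,\lambda_0)$ be a polarized abelian scheme over an Artinian local $\Z_{(p)}$-algebra $A'/I$, where $I$ is a square-zero ideal, and assume the residue characteristic is $p$ (in characteristic $0$ everything is smooth). By Grothendieck--Messing, or Serre--Tate combined with Grothendieck's existence theorem, deforming $(A_0,\lambda_0)$ amounts to deforming the Hodge filtration in the Dieudonné crystal $\mbb D(A_0[p^\infty])$ compatibly with the pairing $\langle\ ,\ \rangle_\lambda$ induced by $\lambda$. Since $\deg \lambda = d^2$ is prime to $p$, the isogeny $\lambda[p^\infty]:A_0[p^\infty]\to A_0^\vee[p^\infty]$ is an isomorphism, so this pairing is perfect symplectic. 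The relevant deformation space is then a Lagrangian Grassmannian, which is smooth. Hence obstructions vanish and lifts exist, so $\Ascr_{g,d}$ is formally smooth, and therefore smooth over $\Z_{(p)}$.

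\emph{Step 2: $\Ascr_{g,\delta}$ is open and closed in $\Ascr_{g,d}$.} For $(A,\lambda)$ over $T$, the kernel $\ker\lambda$ is a finite flat group scheme of order $d^2$, prime to $p$, so it is étale over the $\Z_{(p)}$-scheme $T$. It carries the nondegenerate alternating Weil pairing $e^\lambda:\ker\lambda\times\ker\lambda\to\mu_d$. By the elementary divisor theorem on geometric fibers, $\ker\lambda$ is étale-locally isomorphic to $K(\delta')$ for some $\delta'$ with $\prod_i\delta'_i=d$. Since $\mu_{\delta_i}$ is étale over $\Z_{(p)}$ here, $K(\delta)$ is itself étale.

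For finite étale group schemes, the isomorphism type of a geometric fiber is locally constant on $T$: the scheme $\uIsom(\ker\lambda, K(\delta)_T)$ is finite étale over $T$, so its image is open and closed. Hence the locus where $\ker\lambda\simeq K(\delta)$ fppf-locally is open and closed in $T$, and $\Ascr_{g,d}=\bigsqcup_\delta\Ascr_{g,\delta}$, the union running over the finitely many admissible $\delta$. In particular $\Ascr_{g,\delta}\to\Ascr_{g,d}$ is an open and closed immersion. Thus $\Ascr_{g,\delta}$ is closed, so it equals its closure $\overline{\Ascr}_{g,\delta}$, and it is smooth, being open in a smooth stack.

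\emph{Main obstacle.} The delicate point is Step 1, namely making the Grothendieck--Messing argument rigorous for polarizations that are not principal. Should the crystalline route be awkward, I would instead reduce to the principal case: étale-locally choose a maximal isotropic (Lagrangian) subgroup $L\subset\ker\lambda$, which is étale of order $d$. Since $L$ is étale, it deforms uniquely along nilpotent thickenings, and the quotient $A/L$ inherits a principal polarization. Deformations of $(A,\lambda)$ then correspond to deformations of the principally polarized $A/L$ together with an étale subgroup, which are unobstructed by the classical smoothness of $\Ascr_{g,1}$.
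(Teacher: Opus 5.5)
Your proposal is correct. The paper gives no argument of its own here: the statement is quoted from de Jong, so what you have written is a self-contained version of the standard reasoning behind that citation.

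Both of your steps use $p \nmid d$ in the expected way. Because $\lambda[p^\infty]$ is an isomorphism, the pairing on the Dieudonn\'e crystal is perfect, and lifts of the Hodge filtration are points of a smooth Lagrangian Grassmannian. Your fallback through a Lagrangian subgroup $L \subset \ker \lambda$ and the principally polarized quotient $A/L$ works equally well. Because $d \in \Z_{(p)}^\times$, both $\ker \lambda$ and $K(\delta)$ are finite \'etale. Hence $\uIsom(\ker\lambda, K(\delta)_T)$ is finite \'etale, the condition defining $\Ascr_{g,\delta}$ cuts out an open and closed locus, and $\Ascr_{g,d} = \bigsqcup_\delta \Ascr_{g,\delta}$.

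What the citation buys is the general picture when $p \mid d$. There $\Ascr_{g,\delta} \to \Ascr_{g,d}$ is only a locally closed immersion and the closure acquires new points, as in the remark following the statement; the statement itself does not need this. What your route buys is transparency about exactly where the hypothesis enters.

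Two small remarks. First, the parenthetical ``in characteristic $0$ everything is smooth'' deserves a sentence of justification. $\Ascr_{g,d}$ is not proper, so there are points of the generic fibre that do not specialize into the special fibre, e.g.\ an elliptic curve over $\Q$ with $j$-invariant $1/p$. Smoothness at such points does not follow from openness of the smooth locus. The same Hodge-filtration argument in de Rham cohomology, with $d$ invertible, handles them. Second, Grothendieck's existence theorem plays no role over Artinian bases, where Serre--Tate already produces the deformed abelian scheme.
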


\begin{rk}
    \Cref{fact:Agdelta and its closure} fails if $p |d$. Consider a principally polarized supersingular elliptic curve over $\overline{\F_p}$ and $E$ some lift of it to a local ring $R$ of characteristic zero (which exists by \cite[Corollary 3.2]{OortNormanModuli}). Consider the degree $p^2$ polarization $\lambda$ on $E$ obtained by pullback of the principal polarization via multiplication by $p$ on $E$. Let $(E_{\eta},\lambda_{\eta})$ and $(E_s,\lambda_s)$ be the degree $p^2$-polarized elliptic curves coming from the generic and closed fibers of $E \to \Spec \, R$. Then $(E_{\eta},\lambda_{\eta})$ is a point of $\Ascr_{g,\delta}$ for $\delta = (p,p)$, so that $(E_s,\lambda_s)$ is a point of the closure $\overline{\Ascr}_{g,\delta}$, yet it is not in $\Ascr_{g,\delta}$ since $\ker \lambda_s = E_s[p]$ does not equal $K(\delta)$ fppf-locally.
\end{rk}

If $(A,\lambda)$ is a polarized abelian variety over a field $k$ of characteristic 0 or $p$, then $\lambda$ induces a symplectic form on the $\widehat{\Z}^p$-Tate module of $A$, defined to be the product of all $\l$-adic Tate modules of $A$ for $p \neq \l >0$. There are positive integers $(d_1,\dots,d_g)$ (uniquely defined up to powers of $p$) satisfying $d_1 |\dots | d_g$ such that, for some appropriate choice of a basis, the symplectic form on the Tate-module is prescribed by the matrix
\[J_{\Delta} \coloneq 
\begin{pmatrix}
    0 & \Delta \\
    -\Delta & 0
\end{pmatrix}\]
where $\Delta = \mrm{diag}(d_1,\dots,d_g)$ is called a \emph{symplectic type}; we then say that $(A,\lambda)$ has \emph{symplectic type} $\Delta$. For convenience we sometimes write $\Delta$ as a $g$-tuple $(d_1,\dots,d_g)$.

The symplectic type is completely determined by the orders of the $N$-torsion subgroups $(\ker \lambda)[N]$ for all $p \nmid N$. In particular, if $\ker \lambda \simeq K(\delta)$ then $\Delta = \Delta_{\delta} \coloneq (\delta_1,\dots,\delta_g)$. For any base $\Z_{(p)}$-scheme $T$, a polarized abelian $T$-scheme has \emph{symplectic type} $\Delta$ if all its geometric fibers have symplectic type $\Delta$; thus any object of $\Ascr_{g,\delta}(T)$ has symplectic type $\Delta = \Delta_{\delta}$.

\begin{lem}
    Let $T$ be a $\Z_{(p)}$-scheme and $(A,\lambda)$ an object of $\overline{\Ascr}_{g,\delta}(T)$. Then $(A,\lambda)$ has symplectic type $\Delta = \Delta_{\delta}$.
\end{lem}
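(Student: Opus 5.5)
Since symplectic type is defined fiberwise on geometric fibers, it suffices to treat $T = \Spec \, k$ with $k$ algebraically closed, i.e. to show every geometric point of $\overline{\Ascr}_{g,\delta}$ has symplectic type $\Delta_\delta$. The key input is that the symplectic type is determined by the orders of the finite group schemes $(\ker\lambda)[N]$ for $p \nmid N$. So the plan is to show that the order of $(\ker \lambda)[N]$ is locally constant on $\Ascr_{g,d}$ for each $p\nmid N$, hence constant on the closure of $\Ascr_{g,\delta}$.

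First I will fix $N$ prime to $p$ and consider the universal polarized abelian scheme $(A,\lambda)$ over $\Ascr_{g,d}$ (or over an étale atlas, which is a scheme). The kernel $\ker \lambda$ is a finite flat group scheme of order $d$ over the base. Its $N$-torsion $(\ker\lambda)[N] = \ker\lambda \times_A A[N]$ is the kernel of $\lambda$ restricted to $A[N]$, i.e. of $A[N]\to A^\vee[N]$. Over $\Z_{(p)}$ with $p\nmid N$, both $A[N]$ and $A^\vee[N]$ are finite étale, so this kernel is a closed and open subgroup scheme of the finite étale $A[N]$. Hence it is itself finite étale, and its degree is a locally constant function on the base. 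The step needing the most care is this claim that the kernel of a homomorphism between finite étale group schemes is finite étale. I would argue that the kernel is the pullback of the zero section of $A^\vee[N]$, which is an open and closed immersion because $A^\vee[N]$ is étale and separated.

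Second, locally constant functions are constant on connected components of the atlas. On the points of $\Ascr_{g,\delta}$, the order of $(\ker\lambda)[N]$ equals that of $K(\delta)[N]$, which corresponds to $\Delta_\delta$. Since a locally constant function equal to a fixed value on a subset takes that value on its closure, every point of $\overline{\Ascr}_{g,\delta}$ has the same orders $|(\ker\lambda)[N]|$ as points of $\Ascr_{g,\delta}$. The closure is taken in $\Ascr_{g,d}$; working on an étale atlas, the preimage of the closure is the closure of the preimage, because étale maps are open.

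Finally, given $T$ and $(A,\lambda)\in\overline{\Ascr}_{g,\delta}(T)$, each geometric fiber is a geometric point of $\overline{\Ascr}_{g,\delta}$. By the previous step its $N$-torsion kernel orders agree with those for $\Delta_\delta$ for all $p\nmid N$, so its symplectic type is $\Delta_\delta$ (up to powers of $p$, as in the convention). Hence $(A,\lambda)$ has symplectic type $\Delta_\delta$.
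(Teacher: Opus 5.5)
Your proposal is correct and is essentially the paper's argument: for $p \nmid N$ the order of $(\ker\lambda)[N]$ is locally constant because the group scheme is finite flat (indeed finite étale, as you show), so it is constant on the closure of $\Ascr_{g,\delta}$, and these orders determine the symplectic type. The paper runs this on an étale chart $T \to \overline{\Ascr}_{g,\delta}$ in which the preimage of $\Ascr_{g,\delta}$ is dense, rather than on an atlas of $\Ascr_{g,d}$, and your open--closed kernel argument supplies the flatness of $(\ker\lambda)[N]$ that the paper only asserts.
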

\begin{proof}
    This is a statement about the geometric points of $\overline{\Ascr}_{g,\delta}$, thus it suffices to show the result when $T \to \overline{\Ascr}_{g,\delta}$ is étale so that its images meets $\Ascr_{g,\delta}$. Then the result holds true over a dense subscheme $T_0$ of $T$. For $p \nmid N$ the group scheme $(\ker \lambda)[N]$ is finite flat with constant order over $T_0$, hence its order remains constant over $T$. Thus the symplectic type is constant over $T$, and equals that over $T_0$. 
\end{proof}

Let $V$ be a finite rank, free $\Z$-module with a symplectic form $\psi$ which is represented in an appropriate basis by the integral matrix
\[J_{\delta} \coloneq \begin{pmatrix}
    0 & \Delta_{\delta} \\
    -\Delta_{\delta} & 0.
\end{pmatrix}\]
Consider the $\Z$-group scheme $\Gcal_{\delta} \coloneq \GSp(V,\psi)$, set $G \coloneq \Gcal_{\delta,\Q}$ for its generic fiber. In an appropriate basis of $V$, for any ring $S$ we have
\[\Gcal_{\delta}(S) = \{M \in \GL_{2g}(S) \; | \; M J_{\delta} M^t = J_{\delta}\}.\]
The restriction $\Gcal_{\delta,\Z_{(p)}}$ is reductive (equiv. smooth) if and only if $p \nmid d$.

\medskip

We now add level structures using the formalism of \Cref{subsec:can vs univ}. Set $R \coloneq \widehat{\Z}^p = \prod_{\l \neq p} \Zl$ and $V_R \coloneq V \otimes R$, as well as $Q = \Gcal_{\delta}(R) \hookrightarrow \GL(V_R)$. Consider the following objects (see the first paragraph of \Cref{subsec:can vs univ} for the simpler case of principally polarized abelian schemes over $\Q$).
\begin{enumerate}
    \item[(i)] Set $S \coloneq \Spec \, \Z_{(p)}$ and let $\Cscr$ denote the big pro-étale site over $S$ (see \Cref{subsec:alg stacks on the big pro-étale site} for details), so that $\Ascr_{g,d}$ is a stack over $\Cscr$.
    \item[(ii)] For any $T \to \Ascr_{g,d}$ corresponding to a polarized abelian scheme $f:A \to T$, we set $\L(A)$ the $R$-local system defined as its (relative) $\widehat{\Z}^p$-Tate module\footnote{This is a nonstandard notation for Tate modules, which we use to suggest that they are candidates for L-extension.}. This functor defines a local system on $\Ascr_{g,d}$
    \[\L:\Ascr_{g,d} \to \Loc_{V_R}\]
    as in \Cref{dfn:loc sys and torsor on stack}.
    \item[(iii), (iv)] Similarly, the functor sending $f:A \to T$ to the $Q$-subsheaf of $\uIsom(\L(A), \Fcal_{V_R})$ consisting of those isomorphisms compatible with the symplectic structures, defines a $Q$-torsor on $\Ascr_{g,d}$
    \[\Lev:\Ascr_{g,d} \to \BB Q\]
    as in \Cref{dfn:loc sys and torsor on stack}.
\end{enumerate}

This data specifies a $Q$-moduli datum as in \Cref{dfn:Q moduli datum}. We may replace every occurence of $\Ascr_{g,d}$ with $\overline{\Ascr}_{g,\delta}$ so as to find a second $Q$-moduli datum. We will now use \Cref{dfn:Q moduli stack}. Set $K_0 = \Gcal_{\delta}(\widehat{\Z})$, so that $K_0^p = Q$ is a compact subgroup\footnote{We emphasize on this tautology to warn the reader that $K_0^p$ (resp. $K^p$) here is what plays the role of $K_0$ (resp. $K$) in \Cref{dfn:Q moduli stack}.} of $Q$, and $K_p \coloneq K_{0,p}$ is the stabilizer of $V_{\Zp}$ in $G(\Qp)$. For any compact open subgroup $K^p \subseteq K_0^p$, set $K = K_{p}K^p$. 

\begin{dfn}\label{dfn:AgK}
    We write $\Ascr_{g,d,K}$ (resp. $\overline{\Ascr}_{g,\delta,K}$) for the moduli stack over $\Z_{(p)}$ of level $K^p$ as in \Cref{dfn:Q moduli stack}. Its objects are pairs $(A,[\eta]_{K^p})$ where $A$ is an object of $\Ascr_{g,d,K}$ (resp. $\overline{\Ascr}_{g,\delta}$), and $[\eta]_{K^p}$ is a global section of the quotient sheaf $\Lev(A) / \Fcal_{K^p}$. Additionally, we set
    \[\Ascr_{g,d,K_p} \coloneq \varprojlim_{K^p \in \Ccal(K_0^p)} \Ascr_{g,d,K}\]
    (and similarly for $\overline{\Ascr}_{g,\delta,K_p}$).
    Its objects are pairs $(A,[\eta])$ where $A$ is an object of $\Ascr_{g,d,K}$ (resp.$\overline{\Ascr}_{g,\delta}$), and $[\eta]$ is a global section of $\Lev(A)$. In particular, the $\widehat{\Z}^p$-Tate module of $A$ has trivial monodromy.
\end{dfn}

The natural morphism $\overline{\Ascr}_{g,\delta,K} \hookrightarrow \Ascr_{g,d,K}$ is a closed embedding. For $K^p$ sufficiently small $\Ascr_{g,d,K}$ is represented by a quasi-projective $\Z_{(p)}$-scheme (see \cite[Theorem 7.9]{GeometricInvariantTheory}), hence so is $\overline{\Ascr}_{g,\delta,K}$. The stack $\Ascr_{g,d,K}$ coincides with Kisin's construction in \cite[§2.3.3]{kisin2010integral}.

\medskip

Sppose $\Ocal_F$ is a DVR that is a faithfully flat $\Z_{(p)}$-algebra.
Our goal for the end of this paragraph is to show the following.

\begin{fact}\label{fact:Ag has healthy extension}
    The stack $\overline{\Ascr}_{g,\delta,K_p,\Ocal_F} / \Ocal_F$ has healthy-extension as in \Cref{dfn:healthy extension}.
\end{fact}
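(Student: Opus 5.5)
We reduce the statement to the definition of healthiness applied to the universal abelian scheme, together with the pointwise criterion for extending local systems. Let $\Xcal/\Ocal_F$ be a regular, locally healthy scheme and $f:\Xcal \rightsquigarrow \overline{\Ascr}_{g,\delta,K_p,\Ocal_F}$ a generic morphism. By \Cref{dfn:AgK}, $f_\eta$ is a pair $(A_\eta,\lambda_\eta,[\eta])$: a polarized abelian scheme over $\Xcal_\eta$ lying in $\overline{\Ascr}_{g,\delta}$, together with a full trivialization $[\eta]$ of its $\widehat{\Z}^p$-Tate module respecting the symplectic forms. Uniqueness of the extension follows from \Cref{prop:morphisms isomorphic over dense subset}, since $\Xcal$ is regular, hence normal. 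So it suffices to prove existence.

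\textbf{Extension over an open $U$.} Since the $\widehat{\Z}^p$-Tate module of $A_\eta$ is trivial, $A_\eta[N]$ is constant for every $N$ prime to $p$. We would first extend $A_\eta$ over an open $U \subseteq \Xcal$ containing $\Xcal_\eta$ and the generic points of the special fiber. At such a generic point $\xi$, the local ring $\Ocal_{\Xcal,\xi}$ is a DVR, since $\Xcal$ is regular and $\xi$ has codimension $1$. Trivial $\ell$-adic monodromy ($\ell\neq p$) gives good reduction by the Néron--Ogg--Shafarevich criterion. The Néron model is then an abelian scheme over $\Spec\,\Ocal_{\Xcal,\xi}$, and by a standard spreading-out argument it extends over an open neighbourhood of $\xi$. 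Gluing these neighbourhoods with $\Xcal_\eta$ gives an abelian scheme $A_U \to U$.

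\textbf{Extension over $\Xcal$ and of the extra data.} Since $\Xcal$ is locally healthy, it is healthy (\cite[Remark 2.13]{pera2016integral}), so $A_U$ extends uniquely to an abelian scheme $A \to \Xcal$. The polarization extends by \Cref{fact:polarization extends}. Since $\Xcal$ is normal, the Tate module $\L(A)$ is a local system on $\Xcal$ whose restriction to $\Xcal_\eta$ is trivialized by $[\eta]$. The restriction $\pi_1(\Xcal_\eta)\to\pi_1(\Xcal)$ is surjective on each component because $\Xcal$ is normal, so the trivialization extends, together with its compatibility with the symplectic forms. This yields an object of $\Ascr_{g,d,K_p}(\Xcal)$, i.e.\ a morphism $\Xcal \to \Ascr_{g,d,K_p,\Ocal_F}$ extending $f$.

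\textbf{Factoring through the closure.} Because $\overline{\Ascr}_{g,\delta,K_p}\hookrightarrow\Ascr_{g,d,K_p}$ is a closed immersion (a limit of the closed embeddings $\overline{\Ascr}_{g,\delta,K}\hookrightarrow\Ascr_{g,d,K}$) and $\Xcal$ is flat over $\Ocal_F$, \Cref{lem:factoring morphisms through closed substacks} shows that the morphism factors through $\overline{\Ascr}_{g,\delta,K_p,\Ocal_F}$. This is where passing to the closure $\overline{\Ascr}_{g,\delta}$, rather than $\Ascr_{g,\delta}$, is essential: it lets the kernel of $\lambda$ degenerate on the special fiber.

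The main obstacle is the codimension-one step, namely extending $A_\eta$ to the open $U$ with the right spreading behaviour. The healthiness hypothesis only handles extension from such a $U$, so the codimension-one points have to be treated separately via Néron models and Néron--Ogg--Shafarevich. One should also check that the infinite-level limit stack is harmless here: $\overline{\Ascr}_{g,\delta,K_p}$ is a scheme over $\Ascr_{g,d}$, and the level data is simply a section of a pro-finite étale cover.
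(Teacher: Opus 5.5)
Your proposal is correct and follows the paper's argument. Healthiness of $\Xcal$ reduces the problem to the generic points $\xi$ of the special fiber, where $\Ocal_{\Xcal,\xi}$ is a DVR and the trivial prime-to-$p$ monodromy forced by the level structure gives good reduction by the Serre--Tate (N\'eron--Ogg--Shafarevich) criterion, which is exactly \Cref{lem:Ag has DVR extension}; polarizations are handled by \Cref{fact:polarization extends}, and you additionally make explicit two points the paper leaves implicit, namely the extension of the level structure via normality of $\Xcal$ and the factorization through the closed substack $\overline{\Ascr}_{g,\delta,K_p}$ via \Cref{lem:factoring morphisms through closed substacks}.
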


Before proving \Cref{fact:Ag has healthy extension} we introduce one more extension property.

\begin{dfn}\label{dfn:DVR extension}
    Let $\Ocal$ be a DVR and let $\Sscr/\Ocal_F$ be a separated Deligne-Mumford stack. We say that it has \emph{DVR-extension}\footnote{This is Moonen's \emph{extended extension property}, see \cite[Remark 3.20]{moonen1998models}.} if every generic morphism $f:\Spec \, \Ocal' \rightsquigarrow \Sscr$ where $\Ocal \to \Ocal'$ is a faithfully flat extension of DVR, automatically and uniquely extends to an $\Ocal$-morphism $\Spec \, \Ocal' \to \Sscr$.
\end{dfn}

It is clear that if $\Sscr / \Ocal$ has DVR-extension, then so does $\Sscr_{\Ocal'}$ for any $\Ocal \to \Ocal'$ faithfully flat extension of DVR.

\begin{lem}\label{lem:Ag has DVR extension}
    The stack $\overline{\Ascr}_{g,\delta,K_p} / \Z_{(p)}$ has DVR-extension.
\end{lem}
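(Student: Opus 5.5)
We must show: given a faithfully flat extension of DVRs $\Z_{(p)} \to \Ocal'$ and a morphism $\Spec \, \mrm{Frac}(\Ocal') \to \overline{\Ascr}_{g,\delta,K_p}$, it extends uniquely to $\Spec \, \Ocal' \to \overline{\Ascr}_{g,\delta,K_p}$. Uniqueness is \Cref{prop:morphisms isomorphic over dense subset} applied at each finite level (a DVR is normal), combined with the limit description of $\overline{\Ascr}_{g,\delta,K_p}$. For existence, the generic point is a triple $(A_\eta,\lambda_\eta,[\eta])$ with $[\eta]$ a global section of $\Lev(A_\eta)$. In particular the $\widehat{\Z}^p$-Tate module of $A_\eta$ has trivial monodromy, hence for any prime $\l \neq p$ the $\l$-adic Tate module is unramified over $\Ocal'$. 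The plan is to apply the Néron–Ogg–Shafarevich criterion (\cite{serre1971good}, valid over a general DVR) to get that the Néron model of $A_\eta$ over $\Ocal'$ is an abelian scheme $A$.

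If $\Ocal'$ has residue characteristic $p$ we pick $\l \neq p$; if $\Ocal'$ had residue characteristic $0$ (impossible since the extension is faithfully flat over $\Z_{(p)}$, so the closed point maps to $p$), nothing changes. The main technical point here is that Serre–Tate is stated for DVRs with arbitrary (possibly imperfect) residue field; I would invoke the Néron model existence of \cite[§1.4]{boschNeronModels1990a} for general DVRs and the criterion as in \cite[IX]{SGA7}-style arguments; this step I expect to be the main obstacle, mainly bookkeeping about non-strictly-henselian, non-excellent $\Ocal'$ (one can pass to the strict henselization, which is faithfully flat, and descend by fpqc descent plus uniqueness).

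Next, the polarization $\lambda_\eta$ extends to a polarization $\lambda$ of $A$ by \Cref{fact:polarization extends} ($\Spec \, \Ocal'$ is normal), of degree $d$ since the degree is locally constant. Hence $(A,\lambda)$ defines $\Spec \, \Ocal' \to \Ascr_{g,d}$ whose generic point lies in $\overline{\Ascr}_{g,\delta}$; since $\overline{\Ascr}_{g,\delta} \hookrightarrow \Ascr_{g,d}$ is a closed substack and $\Ocal'$ is flat over $\Z_{(p)}$, \Cref{lem:factoring morphisms through closed substacks} shows the morphism factors through $\overline{\Ascr}_{g,\delta}$. Finally, the level structure: $\L(A)$ is a local system on $\Spec \, \Ocal'$ restricting to $\L(A_\eta)$, and since restriction of finite étale covers from $\Spec \, \Ocal'$ to its generic point is fully faithful (normality), the sheaf $\Lev(A)$ of symplectic trivializations (symplectic form coming from $\lambda$) has global sections equal to those of $\Lev(A_\eta)$. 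Thus $[\eta]$ extends uniquely to a section of $\Lev(A)$, giving compatible $K^p$-level structures for each $K^p \in \Ccal(K_0^p)$, i.e. an $\Ocal'$-point of each $\overline{\Ascr}_{g,\delta,K}$, compatible in $K^p$, hence of the limit $\overline{\Ascr}_{g,\delta,K_p}$ extending the given generic point.
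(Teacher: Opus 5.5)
Your proposal is correct and follows the paper's proof. The paper likewise applies the Serre--Tate criterion, with the $\widehat{\Z}^p$-Tate module unramified because the level structure $[\eta]$ in the moduli interpretation of $\overline{\Ascr}_{g,\delta,K_p}$ forces the Galois action to be trivial; your extra steps (extending $\lambda$ via \Cref{fact:polarization extends}, factoring through the closure via \Cref{lem:factoring morphisms through closed substacks}, and extending $[\eta]$ by normality) are routine details the paper leaves implicit.
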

\begin{proof}
    Let $\Z_{(p)} \to \Ocal'$ be a faithfully flat extension of DVR, and $\Spec \, \Ocal' \rightsquigarrow \Sscr$ a generic morphism. Let $A/\mrm{Frac}(\Ocal')$ the abelian variety corresponding to it. By the Serre-Tate criterion, it suffices to show that its $\widehat{\Z}^p$-Tate module is unramified as a Galois-module. In fact, the moduli interpretation of $\overline{\Ascr}_{g,\delta,K_p}$ (see \Cref{dfn:AgK}) ensures that the Galois action is trivial.
\end{proof}

\begin{proof}[Proof of \Cref{fact:Ag has healthy extension}]
    The proof is adapted from \cite[Theorem 2.10]{milne1992points}. \Cref{fact:polarization extends} ensures that we can ignore polarizations and focus solely on the abelian schemes.

    Let $\Xcal$ be a regular scheme over $\Ocal_F$, locally healthy, together with a generic morphism
    \[f: \Xcal \rightsquigarrow \overline{\Ascr}_{g,\delta,K_p,\Ocal_F}.\]
    Thus $f$ corresponds to a polarized abelian scheme $A_{\Xcal_{\eta}} \to \Xcal_{\eta}$. It suffices to show that it extends to an abelian scheme over some open subscheme $U$ containing $\Xcal_{\eta}$ and each generic point of the special fiber $\Xcal_s$. Let $\xi$ be such a generic point; since $\Xcal$ is regular we find that the local ring $\Ocal_{\Xcal,\xi}$ is a DVR. The result follows from \Cref{lem:Ag has DVR extension}.
\end{proof}

\begin{cor}\label{cor:Ag has smooth extension}
    Suppose $\Ocal_F$ has absolute ramification index $e \le p-1$. Then $\overline{\Ascr}_{g,\delta,K_p,\Ocal_F} / \Ocal_F$ has smooth-extension.
\end{cor}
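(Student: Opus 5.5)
This corollary follows by combining \Cref{fact:Ag has healthy extension} with the second statement of \Cref{prop:smooth implies healthy}. The plan is as follows.

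First, I check that $\overline{\Ascr}_{g,\delta,K_p,\Ocal_F}$ is an object to which these extension properties apply. It is the limit over $K^p \in \Ccal(K_0^p)$ of the stacks $\overline{\Ascr}_{g,\delta,K,\Ocal_F}$. For $K^p$ small enough, each of these is a quasi-projective scheme, being a closed subscheme of $\Ascr_{g,d,K}$. The transition maps are finite étale, so the limit is a separated scheme over $\Ocal_F$ with affine transition maps. If one insists on finite-type separated Deligne-Mumford stacks, note that \Cref{prop:morphisms isomorphic over dense subset} only uses separatedness, which holds for this limit scheme. In particular, uniqueness of extensions is automatic.

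Next, let $\Xcal$ be a regular scheme formally smooth over $\Ocal_F$, together with a generic morphism $f:\Xcal \rightsquigarrow \overline{\Ascr}_{g,\delta,K_p,\Ocal_F}$. Since $e \le p-1$, \Cref{prop:smooth implies healthy} (via \Cref{lem:smooth implies healthy} and \cite[Corollary 5]{vasiu2010purity}) shows that $\Xcal$ is locally healthy. Here I must check one point: the notion of health in \Cref{dfn:healthy} is phrased over $\Z_{(p)}$, so $\Xcal$ needs to be faithfully flat over $\Z_{(p)}$. Formal smoothness gives flatness over $\Ocal_F$, hence over $\Z_{(p)}$. If $\Xcal$ has empty special fiber, the extension statement is trivial, so we may assume it does not.

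Then \Cref{fact:Ag has healthy extension} applies to $f$ and yields the desired extension $\Xcal \to \overline{\Ascr}_{g,\delta,K_p,\Ocal_F}$. Uniqueness follows from \Cref{prop:morphisms isomorphic over dense subset}, since $\Xcal$ is regular, hence normal, and the generic fiber is dense because $\Xcal$ is flat.

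The only delicate point is the first step: \Cref{prop:smooth implies healthy} is stated for Deligne-Mumford stacks, whereas here the target is a pro-scheme. I will handle this by observing that its proof uses only the implication "formally smooth and regular $\Rightarrow$ locally healthy", which concerns the source $\Xcal$ alone. The extension property of the target is then taken directly from \Cref{fact:Ag has healthy extension}, so the gap is purely formal.
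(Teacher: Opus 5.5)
Your proposal is correct and matches the paper's proof, which is exactly the combination of \Cref{fact:Ag has healthy extension} with \Cref{prop:smooth implies healthy}: regular formally smooth $\Ocal_F$-schemes are locally healthy when $e \le p-1$. Your extra checks are sound and make explicit what the paper leaves implicit. These are faithful flatness over $\Z_{(p)}$, the empty special fiber case, and the remark that only the source-side implication of \Cref{prop:smooth implies healthy} is needed, so the pro-scheme target causes no trouble.
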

\begin{proof}
    This follows from \Cref{fact:Ag has healthy extension} and \Cref{prop:smooth implies healthy}.
\end{proof}

\subsection{Smooth integral canonical models for Siegel and Hodge-type}\label{subsec:smooth ext for Siegel and Hodge}

We follow the construction from \cite[§2.3]{kisin2010integral}. Beginning with the Siegel case, let $\delta, (V, \psi), \Gcal_{\delta}, G$ and $K_0 = \Gcal_{\delta}(\widehat{\Z})$ as in \Cref{subsec:moduli of polar ab}. Let $\Omega$ be the space of complex structures on $V_{\R}$ for which $(V_{\R},\psi_{\R})$ is a polarized Hodge structure. Then $(G,\Omega)$ is a Siegel Shimura datum with reflex field $\Q$, and for any compact open subgroup $K^p \subseteq K_0^p$ we let $\Sh_K[G,\Omega]$ denote the associated Shimura stack of level $K$.

We define an integral model $\Sscr_{K_p}(G,\Omega)$ of $\Sh_{K_p}[G,\Omega]$ over $\Z_{(p)}$ with smooth-extension. It should be noted that $\Sscr_{K_p}(G,\Omega)$ implicitely\footnote{More accurately, the dependence lies in $K_p = \Gcal_{\delta}(\Zp)$.} depends on $\delta$ and will not be smooth in general (and so will not be a smooth integral canonical model as in \Cref{dfn:smooth int model}) unless $\delta$ is prime to $p$. 

\begin{fact}
    For $K^p$ sufficiently small so that $\overline{\Ascr}_{g,\delta,K}$ is represented by a quasi-projective scheme, we can identify
    \[\Sh_{K}(G,\Omega) = \overline{\Ascr}_{g,\delta,K,\Q}\]
    and similarly replacing $K$ with $K_p$.
\end{fact}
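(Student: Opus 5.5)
The plan is to compare moduli problems over $\Q$ on the level of the projective systems in $K^p$, and to verify the identification at the finite level $K$ first. By construction, $\Sh_K[G,\Omega]$ is the quotient stack $[\Sh_{K_p}(G,\Omega)/K^p]$, and for neat $K$ it coincides with Deligne's Siegel modular variety. I would use the classical moduli interpretation of Siegel Shimura varieties over the reflex field $\Q$ (Deligne, \emph{Travaux de Shimura}, and Kottwitz's PEL formalism). Concretely, $\Sh_K(G,\Omega)(T)$ classifies, up to isogeny or equivalently up to prime-to-$p$ quasi-isogeny, abelian schemes $A/T$ equipped with:
\begin{itemize}
    \item a $\Q^\times$-homogeneous polarization, and
    \item a $K$-orbit of symplectic similitudes $V\otimes\A_f \simeq \widehat{V}(A)$.
\end{itemize}
The argument then has two steps.

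\textbf{Step 1: rigidify isogeny classes into actual polarized abelian schemes.} Since $K_p=\Gcal_\delta(\Zp)$ and $K^p\subseteq \Gcal_\delta(\widehat{\Z}^p)$, the level structure singles out lattices. The orbit determines a $\widehat{\Z}^p$-lattice (the image of $V_R$) in $\widehat V^p(A)$, and the $K_p$-orbit determines a $\Zp$-lattice in $V_p(A)$. Over $\Q$ one can use étale $p$-adic Tate modules, since all characteristics are $0$. Together these lattices pick out a unique abelian scheme in the prime-to-nothing isogeny class whose integral Tate module is the image of $V\otimes\widehat{\Z}$. Among the polarizations in the $\Q^\times$-class, there is then a unique one (up to sign, fixed by positivity) under which $\psi$ corresponds to the Weil pairing up to a $\widehat{\Z}^\times$-multiple. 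This polarization has kernel fppf-locally isomorphic to $K(\delta)$, because over a characteristic-zero base the kernel is étale and is determined by the elementary divisors of $J_\delta$ on the Tate module. It therefore defines an object of $\Ascr_{g,\delta,\Q}$ carrying a $K^p$-level structure in the sense of \Cref{dfn:AgK}.

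\textbf{Step 2: check that generic fibers agree.} Over $\Q$ we have $\overline{\Ascr}_{g,\delta,\Q}=\Ascr_{g,\delta,\Q}$. Indeed, $\Ascr_{g,\delta}\to\Ascr_{g,d}$ is an immersion whose generic fiber is open and closed, since over a characteristic-zero base $\ker\lambda$ is étale and its isomorphism type is locally constant. The closure therefore adds nothing in the generic fiber.

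\textbf{Step 3: construct the inverse and conclude.} The inverse map sends $(A,\lambda,[\eta]_{K^p})$ to its isogeny class, with the level structure obtained by tensoring with $\Q$ and adjoining the canonical $K_p$-orbit of trivializations of $T_p A$. Such trivializations exist étale-locally precisely because $\ker\lambda\simeq K(\delta)$ on the $p$-part. Checking that the two maps are mutually inverse and functorial in $T$ gives an isomorphism of stacks. For $K^p$ small enough both sides are schemes, and the identification is $G(\A_f^p)$-equivariant on the projective system. Passing to the limit over $K^p$ then gives the statement with $K$ replaced by $K_p$.

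I expect the main obstacle to be Step 1. The difficulty is the $p$-part of the level structure: it has to be shown that the $K_p$-orbit really rigidifies the $p$-power isogeny class and forces the polarization type to be exactly $\delta$ at $p$, not just away from $p$. I would handle this by computing the stabilizer of $V_{\Zp}$ inside $G(\Qp)$ and comparing it with the automorphisms of the lattice $T_pA$ together with the induced pairing. This is where the hypothesis $K_p=\Gcal_\delta(\Zp)$ is used.
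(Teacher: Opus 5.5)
Your proposal is correct and follows the same route as the paper. Deligne's moduli interpretation, rigidified by the lattice $V\otimes\widehat{\Z}$ that $K$ stabilizes, gives the embedding $\Sh_K(G,\Omega)\hookrightarrow\Ascr_{g,d,K,\Q}$ of \cite[\S2.3.3]{kisin2010integral}; its image is then identified with $\Ascr_{g,\delta,K,\Q}=\overline{\Ascr}_{g,\delta,K,\Q}$, the $p$-part of the level structure being canonically determined by the polarization. The only difference is that the paper checks this identification after base change to $\C$, whereas you work over arbitrary $\Q$-schemes with an explicit inverse functor, which fills in what the paper leaves as a sketch.
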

\begin{proof}[Sketch of proof]
    From the moduli interpretation of $\Sh_{K}(G,\Omega)$ provided in \cite[§4]{deligneTravaux} there is an embedding $\Sh_{K}(G,\Omega) \hookrightarrow \Ascr_{g,d,K,\Q}$ (see \cite[§2.3.3]{kisin2010integral}), which we claim induces the identification. It suffices to prove this over $\C$, where we find that $\Sh_{K}(G,\Omega)_{\C}$ and $\overline{\Ascr}_{g,\delta,K,\C} = \Ascr_{g,\delta,K,\C}$ represent the same functor.
\end{proof}

\begin{dfn}\label{dfn:Siegel integral model}
    For $K^p$ sufficiently small so that $\overline{\Ascr}_{g,\delta,K}$ is represented by a quasi-projective scheme, let $\Sscr_{K}(G,\Omega) / \Z_{(p)}$ (resp. $\Sscr_{K_p}(G,\Omega) / \Z_{(p)}$) denote the closure of $\Sh_{K}(G,\Omega)$ (resp. $\Sh_{K_p}(G,\Omega)$) in $\overline{\Ascr}_{g,\delta,K}$ (resp. $\overline{\Ascr}_{g,\delta,K_p}$).
\end{dfn}

We extend this definition to all $K^p \subseteq K_0^p$ as we did before, by setting $\Sscr_{K}[G,\Omega] \coloneq [\Sscr_{K_p}(G,\Omega) / K^p]$.

\begin{prop}\label{prop:Siegel has smooth-extension}
    Let $\Ocal_F$ denote a discrete valuation ring that is a faithfully flat $\Z_{(p)}$-algebra. Then $\Sscr_{K_p}(G,\Omega)_{\Ocal_F} / \Ocal_F$ has DVR-extension and healthy-extension.

    \noindent If $\Ocal_F$ has absolute ramification index $e \le p-1$, then $\Sscr_{K_p}(G,\Omega)_{\Ocal_F} / \Ocal_F$ has smooth-extension.
\end{prop}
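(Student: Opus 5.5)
The plan is to deduce everything from the corresponding properties of the ambient moduli stack $\overline{\Ascr}_{g,\delta,K_p}$, using that $\Sscr_{K_p}(G,\Omega)$ is by definition a closed subscheme of it. By \Cref{lem:Ag has DVR extension}, $\overline{\Ascr}_{g,\delta,K_p}/\Z_{(p)}$ has DVR-extension, and this property is preserved by any faithfully flat extension of DVRs. So $\overline{\Ascr}_{g,\delta,K_p,\Ocal_F}/\Ocal_F$ has DVR-extension. By \Cref{fact:Ag has healthy extension} it also has healthy-extension.

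We then transfer both properties to the closed subscheme $\Sscr_{K_p}(G,\Omega)_{\Ocal_F}\hookrightarrow \overline{\Ascr}_{g,\delta,K_p,\Ocal_F}$ via \Cref{rk:closed substack has extension}. The relevant test objects are the spectra of DVRs $\Ocal'$ faithfully flat over $\Ocal_F$, and regular locally healthy schemes, which are faithfully flat over $\Z_{(p)}$ by definition. Both kinds are flat over $\Ocal_F$, so \Cref{lem:factoring morphisms through closed substacks} applies. Concretely, take a generic morphism $\Xcal\rightsquigarrow \Sscr_{K_p}(G,\Omega)_{\Ocal_F}$ and compose it to the ambient stack. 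The composite extends there, and since $\Xcal$ is flat the extension factors through the closed subscheme. Uniqueness comes from \Cref{prop:morphisms isomorphic over dense subset}, since the test schemes are normal (regular, or a DVR).

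Two technical points need checking. First, closure commutes with the flat base change $\Z_{(p)}\to\Ocal_F$: the base change of the closure is still closed in $\overline{\Ascr}_{g,\delta,K_p,\Ocal_F}$, which is all the lemma needs. Second, \Cref{lem:factoring morphisms through closed substacks} is stated for Deligne-Mumford stacks, whereas here we have pro-schemes at level $K_p$. To handle this, I would argue at a finite level $K^p$ small enough that everything is a quasi-projective scheme. The factoring argument is affine-local and only uses that $\Xcal$ is torsion-free, so it works verbatim for limits. Equivalently, $\Sscr_{K_p}(G,\Omega)=\varprojlim_{K^p}\Sscr_K(G,\Omega)$ is closed in the limit, and a morphism factors through each finite level.

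Finally, when $e\le p-1$, healthy-extension implies smooth-extension by \Cref{prop:smooth implies healthy}. I expect the only real subtlety to be this bookkeeping between pro-schemes and the finite-type stacks to which the lemmas literally apply. The extension arguments themselves are formal.
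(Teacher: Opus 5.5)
Your proposal is correct and follows the paper's proof: you obtain DVR- and healthy-extension for $\Sscr_{K_p}(G,\Omega)_{\Ocal_F}$ from those of $\overline{\Ascr}_{g,\delta,K_p,\Ocal_F}$ (\Cref{lem:Ag has DVR extension}, \Cref{fact:Ag has healthy extension}) by the closed-substack argument of \Cref{rk:closed substack has extension}, and then smooth-extension for $e \le p-1$ from \Cref{prop:smooth implies healthy}. Your additional checks are sound and make explicit what the paper leaves implicit: the test schemes are flat over $\Ocal_F$, closed immersions are stable under base change, and the affine-local factoring argument applies to the pro-scheme.
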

\begin{proof}
    DVR-extension (resp. healthy-extension) follows from \Cref{lem:Ag has DVR extension} (resp. \Cref{fact:Ag has healthy extension}) and the DVR-extension counterpart (resp. healthy-extension counterpart) of \Cref{lem:normalized closed substack has smooth extension} (see \Cref{rk:closed substack has extension}). The second statement follows from \Cref{prop:smooth implies healthy}.
\end{proof}

\begin{cor}\label{cor:Siegel smooth int model}
    Let $\Ocal_F$ be as in \Cref{prop:Siegel has smooth-extension} with absolute ramification index $e \le p-1$. If $p \nmid d$ then $K_p$ is hyperspecial and $\Sscr_{K_p}(G,\Omega)_{\Ocal_F} = \overline{\Ascr}_{g,\delta,K_p,\Ocal_F}$ is a smooth integral canonical model of $\Sh_{K_p}(G,\Omega)$ over $\Ocal_F$.
\end{cor}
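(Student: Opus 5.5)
The corollary has three parts: the hyperspecial claim, the equality of the closure with the moduli stack, and the two conditions in \Cref{dfn:smooth int model}. I would prove them in that order.

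\textbf{Hyperspecial level.} Since $p \nmid d$, the group scheme $\Gcal_{\delta,\Z_{(p)}}$ is reductive, as recalled in \Cref{subsec:moduli of polar ab}. Hence $\Gcal_{\delta,\Zp}$ is a reductive model of $G_{\Qp}$. Therefore $K_p = \Gcal_{\delta}(\Zp)$, the stabilizer of $V_{\Zp}$, is hyperspecial by definition.

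\textbf{Identification with the moduli stack.} By \Cref{fact:Agdelta and its closure}, $\overline{\Ascr}_{g,\delta} = \Ascr_{g,\delta}$ is smooth over $\Z_{(p)}$. Adding prime-to-$p$ level $K^p$ gives a finite étale cover, so $\overline{\Ascr}_{g,\delta,K}$ is smooth over $\Z_{(p)}$, and for small $K^p$ it is a quasi-projective scheme. By \Cref{dfn:Siegel integral model}, $\Sscr_K(G,\Omega)$ is the closure of its generic fiber $\Sh_K(G,\Omega) = \overline{\Ascr}_{g,\delta,K,\Q}$. A smooth, hence flat, $\Z_{(p)}$-scheme is the scheme-theoretic closure of its generic fiber, so $\Sscr_K(G,\Omega) = \overline{\Ascr}_{g,\delta,K}$. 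Passing to the limit over $K^p$ and base-changing to $\Ocal_F$ gives $\Sscr_{K_p}(G,\Omega)_{\Ocal_F} = \overline{\Ascr}_{g,\delta,K_p,\Ocal_F}$.

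\textbf{Checking \Cref{dfn:smooth int model}.} First, $\Sscr_{K_p}(G,\Omega)_{\Ocal_F}$ must be an integral model in the sense of \Cref{dfn:int model}.
\begin{itemize}
\item Faithful flatness comes from smoothness, together with nonempty special fibers of the moduli spaces.
\item The $G(\A_f^p)$-action is the usual Hecke action on level structures.
\item Condition (i) is the identification of generic fibers above.
\item Condition (ii) holds because transition maps between prime-to-$p$ levels are finite étale torsors between separated finite type schemes.
\end{itemize}
Next, condition (i) of \Cref{dfn:smooth int model}: smoothness at small level was shown above and is preserved by base change to $\Ocal_F$. Finally, condition (ii), smooth-extension, is exactly the second statement of \Cref{prop:Siegel has smooth-extension}, which needs $e \le p-1$; equivalently it follows from \Cref{cor:Ag has smooth extension}.

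\textbf{Expected obstacle.} The only delicate step is the equality of the closure with the whole moduli space. One must check that every connected component of the special fiber of $\overline{\Ascr}_{g,\delta,K}$ lies in the closure of the generic fiber. Flatness alone handles this, since a flat scheme over a DVR has no components supported in the special fiber. So I expect no real difficulty beyond citing \Cref{fact:Agdelta and its closure} carefully.
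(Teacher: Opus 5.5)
Your proposal is correct and follows the argument the paper leaves implicit for this corollary, which it states without proof. The argument runs as follows: $K_p$ is hyperspecial because $\Gcal_{\delta,\Z_{(p)}}$ is reductive when $p \nmid d$. The equality $\Sscr_{K_p}(G,\Omega)_{\Ocal_F} = \overline{\Ascr}_{g,\delta,K_p,\Ocal_F}$ holds because \Cref{fact:Agdelta and its closure} makes the moduli space smooth, hence flat, so it is the closure of its generic fiber. Smooth-extension for $e \le p-1$ is exactly \Cref{prop:Siegel has smooth-extension}.
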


\begin{rk}
    It follows from \Cref{prop:can equals univ} that the canonical local system on $\Sscr_{K}[G,\Omega]$ coincides with the universal Tate module.
\end{rk}

We move on to Hodge-type Shimura varieties. Let $(G_1,\Omega_1)$ be a Shimura datum of Hodge-type, meaning that there is an embedding
\[(G_1,\Omega_1) \hookrightarrow (G,\Omega).\]
Let $E$ denote the reflex field of $(G_1,\Omega_1)$, fix $v$ a place of $E$ dividing $p$. Fix $K_{1,p} \subseteq G_1(\Qp)$ a hyperspecial level. We sketch the proof of the following.

\begin{thm}[\cite{kisin2010integral}, Theorem 2.3.8]\label{thm:hodge has smooth extension}
    Suppose\footnote{The statement holds true for $p=2$ under additional assumptions. We do not need this.} $p>2$.
    \begin{enumerate}
        \item[(1)] The scheme $\Sh_{K_{1,p}}(G_1,\Omega_1)$ admits a smooth integral canonical model $\Sscr_{K_{1,p}}(G_1,\Omega_1)$ over $\Ocal_{(v)}$.
        \item[(2)] Moreover, for any discrete valuation ring $\Ocal_F$ that is a faithfully flat $\Ocal_{(v)}$-algebra of absolute ramification index $e \le p-1$, $\Sscr_{K_{1,p}}(G_1,\Omega_1)_{\Ocal_F}$ is again a smooth integral canonical model of $\Sh_{K_{1,p}}(G_1,\Omega_1)$ over $\Ocal_F$.
    \end{enumerate}
\end{thm}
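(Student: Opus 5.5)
The plan follows Kisin's construction and records at each step what survives base change to $\Ocal_F$. Choose a symplectic embedding $(G_1,\Omega_1)\hookrightarrow(G,\Omega)$ for which $K_{1,p}$ is realized as the stabilizer in $G_1(\Qp)$ of a $\Zp$-lattice. Kisin's lemma lets us take this lattice self-dual, so we may assume $p\nmid d$ and $K_p=\Gcal_\delta(\Zp)$ hyperspecial. Choose $K^p$ so that $\Sh_{K_{1,p}K_1^p}(G_1,\Omega_1)\to\Sh_{K}(G,\Omega)_E$ is a closed embedding for suitable $K_1^p=K^p\cap G_1(\A_f^p)$. Define $\Sscr^-_{K_{1,p}}$ as the closure of $\Sh_{K_{1,p}}(G_1,\Omega_1)$ in $\Sscr_{K_p}(G,\Omega)_{\Ocal_{(v)}}=\overline{\Ascr}_{g,\delta,K_p,\Ocal_{(v)}}$, and let $\Sscr_{K_{1,p}}(G_1,\Omega_1)$ be its normalization. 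Finite étaleness of the transition maps follows as in the Siegel case, so this is an integral model in the sense of \Cref{dfn:int model}.

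The extension properties come first and are formal. By \Cref{prop:Siegel has smooth-extension} and \Cref{rk:closed substack has extension}, the closure $\Sscr^-$ inherits healthy- and DVR-extension from the Siegel model. Normalization preserves these for normal sources. Indeed, if $\Xcal$ is regular and $\Xcal\to\Sscr^-$ is dominant on each component, it factors uniquely through the normalization by the universal property; this uses $\Xcal$ normal and the generic fiber of $\Sscr^-$ already normal. So $\Sscr_{K_{1,p}}(G_1,\Omega_1)_{\Ocal_F}$ has healthy-extension for every $\Ocal_F$. For $e\le p-1$, \Cref{prop:smooth implies healthy} upgrades this to smooth-extension. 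This gives part (2) once smoothness is known, and it also gives uniqueness in part (1).

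The main obstacle is smoothness of $\Sscr_{K_{1,p}}(G_1,\Omega_1)$ over $\Ocal_{(v)}$. Base change along the unramified-or-mildly-ramified $\Ocal_{(v)}\to\Ocal_F$ preserves smoothness, so the special hypothesis $e\le p-1$ is only needed for extension. I would follow Kisin's deformation-theoretic argument. Fix tensors $(s_\alpha)\subset V^\otimes$ cutting out $\Gcal_1\subset\GL(V_{\Z_{(p)}})$ with $\Gcal_{1,\Zp}$ reductive. At a closed point $x$ of the special fiber over $k$, use Breuil–Kisin modules (this needs $p>2$) to show that the $p$-adic étale tensors $s_{\alpha,\acute et}$ on characteristic-zero lifts give crystalline tensors $s_{\alpha,0}$ on the Dieudonné module $\mathbb D(\mathscr G_x)$. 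Show that $\Gcal_1$ is their stabilizer there. Then construct, via Faltings' deformation theory, the closed formal subscheme $\mathrm{Spf}\,R_{G_1}\subset\mathrm{Spf}\,R_{\GL}$ of the deformation space of the $p$-divisible group along which these tensors remain Frobenius-invariant; it is formally smooth over $W(k)\otimes\Ocal_{(v)}$ of the correct dimension. Finally identify the completed local ring of $\Sscr_{K_{1,p}}$ at $x$ with $R_{G_1}\otimes_{W(k)}\Ocal_{E,v}$, one copy for each branch of $\Sscr^-$ through $x$. This identification is done by checking that characteristic-zero points of the normalization map into $R_{G_1}$, using the $p$-adic comparison, and then comparing dimensions and reducedness.

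The delicate part of this last step is showing that the tensors stay "Hodge" on the whole branch. I would treat it as a black box from \cite[§2.3]{kisin2010integral} and only verify that Kisin's statements and our definitions match.
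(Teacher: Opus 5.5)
Your strategy matches the paper's proof. Part (1) is cited from Kisin. Part (2) comes from the fact that the Hodge-type model sits over a closed subscheme of the Siegel model. That model has healthy- and DVR-extension after any faithfully flat DVR base change (\Cref{prop:Siegel has smooth-extension}, \Cref{rk:closed substack has extension}). For $e\le p-1$ this is upgraded to smooth-extension by \Cref{prop:smooth implies healthy}, and smoothness is stable under any base change.

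The paper differs from you in two inessential ways:
\begin{itemize}
\item It does not arrange $p\nmid d$. Doing so via Zarhin's trick is harmless but unnecessary, since $\overline{\Ascr}_{g,\delta}$ has healthy-extension for every $\delta$.
\item It drops Kisin's normalization by quoting Xu's result that the closure is already the smooth model. Then \Cref{lem:normalized closed substack has smooth extension} applies verbatim.
\end{itemize}
For the smooth-extension in part (1) you implicitly need $e=1$ for $\Ocal_{(v)}$. This holds because $K_{1,p}$ hyperspecial forces $E$ to be unramified at $v$.

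Because you keep the normalization $\nu\colon\Sscr_{K_{1,p}}(G_1,\Omega_1)\to\Sscr^-$, your transfer step has a gap. The universal property of normalization only covers morphisms from normal schemes that are dominant on each irreducible component. The extension properties, however, concern arbitrary generic morphisms, such as $\Spec\,\Ocal'\rightsquigarrow\Sscr$ in DVR-extension, or any $\Xcal_\eta$ mapping into a proper subvariety of the generic fiber.

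What actually works is the following:
\begin{itemize}
\item $\nu$ is integral (finite at each finite level, the base being excellent) and an isomorphism over the generic fiber.
\item Let $h\colon\Xcal\to\Sscr^-$ extend the generic morphism. The generic morphism then gives a section of $\Xcal\times_{\Sscr^-}\Sscr\to\Xcal$ over $\Xcal_\eta$.
\item The scheme-theoretic closure of this section is integral and birational over the normal, $\Ocal_F$-flat scheme $\Xcal$. Hence it maps isomorphically onto $\Xcal$, exactly as in \Cref{prop:morphisms isomorphic over dense subset}.
\end{itemize}
For a DVR source this is just the valuative criterion for $\nu$. The same argument works after base change to $\Ocal_F$, since $\nu_{\Ocal_F}$ remains integral and generically an isomorphism. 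With this fix your argument is complete, modulo the same citation of Kisin's smoothness proof that the paper makes.
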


\begin{proof}[Sketch of proof]

Statement (1) is \cite[Theorem 2.3.8]{kisin2010integral}. Statement (2) follows from the proof of \emph{loc. cit.} and \Cref{prop:Siegel has smooth-extension}.

More precisely, for $K_{1}^p$ a small enough compact open subgroup of $G_1(\A_f^p)$, it is shown in \cite[§2.3.2]{kisin2010integral} that we can find $\delta$ and $(V,\psi)$ as above (note that $\delta$ may not be prime to $p$) and a subgroup 
\[K^p \subseteq K_0^p = \Gcal_{\delta}(\widehat{\Z}^p) \subseteq K_0 = K_p K_0^p = \Gcal_{\delta}(\widehat{\Z})\]
such that the embedding of Shimura data induces an inclusion
\[\iota:\Sh_{K_{1}}(G_1,\Omega_1) \hookrightarrow \Sh_{K}(G,\Omega)_E.\]

\noindent (The right-hand side depends on $\delta$ via $K = K_p K^p$.) Then one defines $\Sscr_{K_{1}}(G_1,\Omega_1)$ as the closure\footnote{Kisin additionally takes the normalization of this closure in order to obtain a smooth $\Ocal_F$-scheme. It was shown in \cite{xuNormalizationIntegralModels2026} that the normalization is an isomorphism, so that $\Sscr_{K_{1}}(G_1,\Omega_1)$ is already smooth.} of the image of $\iota$ in in the model $\Sscr_{K}(G,\Omega)$ from \Cref{dfn:Siegel integral model}. Kisin then considers
\[\Sscr_{K_{1,p}}(G_1,\Omega_1) \coloneq \varprojlim_{K_{1}^p} \Sscr_{K_{1}}(G_1,\Omega_1).\]

\noindent One argues that the pairs $(K_{1}^p,K^p)$ can be chosen coherently so that there are induced closed embedding
\[\Sscr_{K_{1,p}}(G_1,\Omega_1) \hookrightarrow \Sscr_{K_p}(G,\Omega)_{\Ocal_{(v)}}.\]

\noindent Base-changing to $\Ocal_F$, it follows from \Cref{lem:normalized closed substack has smooth extension} and \Cref{prop:Siegel has smooth-extension} that $\Sscr_{K_{1,p}}(G_1,\Omega_1)_{\Ocal_F}$ has smooth extension.
\end{proof}

From the construction of $\Sscr_{K_{1,p}}(G_1,\Omega_1)$ we find:
\begin{cor}\label{cor:Hodge has DVR}
    The scheme $\Sscr_{K_{1,p}}(G_1,\Omega_1) / \Ocal_{(v)}$ has DVR-extension.
\end{cor}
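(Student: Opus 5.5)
The plan is to transport DVR-extension from the Siegel side along the closed embedding used in the construction, exactly as in the proof of \Cref{prop:Siegel has smooth-extension}. Recall from the sketch of proof of \Cref{thm:hodge has smooth extension} that there is a closed embedding
\[\Sscr_{K_{1,p}}(G_1,\Omega_1) \hookrightarrow \Sscr_{K_p}(G,\Omega)_{\Ocal_{(v)}}.\]
It is obtained as the projective limit of the closures of the images of $\iota$ at finite level; by \cite{xuNormalizationIntegralModels2026} no normalization is needed.

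The first step is to establish DVR-extension of the target over $\Ocal_{(v)}$. By \Cref{prop:Siegel has smooth-extension}, applied with $\Ocal_F = \Ocal_{(v)}$, the scheme $\Sscr_{K_p}(G,\Omega)_{\Ocal_{(v)}}$ has DVR-extension over $\Ocal_{(v)}$. The hypothesis there is that $\Ocal_{(v)}$ is a DVR faithfully flat over $\Z_{(p)}$, which holds.

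The second step is to pass to the closed subscheme. Let $\Ocal_{(v)} \to \Ocal'$ be a faithfully flat extension of DVRs, and let $f:\Spec\,\Ocal' \rightsquigarrow \Sscr_{K_{1,p}}(G_1,\Omega_1)$ be a generic morphism. Composing with the embedding gives a generic morphism to $\Sscr_{K_p}(G,\Omega)_{\Ocal_{(v)}}$, and this extends to $h:\Spec\,\Ocal' \to \Sscr_{K_p}(G,\Omega)_{\Ocal_{(v)}}$. Since $\Spec\,\Ocal'$ is flat over $\Ocal_{(v)}$, \Cref{lem:factoring morphisms through closed substacks} shows that $h$ factors through the closed subscheme; this is precisely \Cref{rk:closed substack has extension}. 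Uniqueness of the extension follows from \Cref{prop:morphisms isomorphic over dense subset}, or directly from separatedness, since $\Spec\,\Ocal'$ is normal with dense generic point.

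I expect the only point needing care to be that the objects involved are projective limits, not finite type schemes. \Cref{lem:factoring morphisms through closed substacks} is stated for Deligne-Mumford stacks, but its proof is affine-local and purely algebraic (torsion-freeness of $\Ocal'$), so it applies verbatim to the affine pro-schemes here. Alternatively, one can argue at each finite level $K_1^p$ and pass to the limit, using compatibility of the embeddings.
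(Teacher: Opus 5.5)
Your proposal is correct and is essentially the paper's proof. The paper also uses the closed embedding $\Sscr_{K_{1,p}}(G_1,\Omega_1) \hookrightarrow \Sscr_{K_p}(G,\Omega)_{\Ocal_{(v)}}$, gets DVR-extension of the Siegel model from \Cref{prop:Siegel has smooth-extension}, and passes it to the closed subscheme via the DVR-analogue of \Cref{lem:normalized closed substack has smooth extension} (\Cref{rk:closed substack has extension}). That analogue is exactly your factorization step through \Cref{lem:factoring morphisms through closed substacks}.
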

\begin{proof}
    Indeed, $\Sscr_{K_{1,p}}(G_1,\Omega_1)$ is a closed subscheme of $\Sscr_{K_p}(G,\Omega)_{\Ocal_{(v)}}$ which has DVR-extension by \Cref{prop:Siegel has smooth-extension}. Hence the result by \Cref{lem:normalized closed substack has smooth extension} and \Cref{rk:closed substack has extension}.
\end{proof}

\subsection{Smooth integral canonical models for abelian-type}\label{subsec:smooth ext for abelian type}

We sketch the construction from \cite[§3.4]{kisin2010integral}. Let $(G_2,\Omega_2)$ be a Shimura datum of abelian type, meaning that there exists $(G_1,\Omega_1)$ a Shimura datum of Hodge type, and a central isogeny
\[G_1^{\mrm{der}} \to G_2^{\mrm{der}}\]
inducing an isomorphism $(G_1^{\ad},\Omega_1^{\ad}) \simeq (G_2^{\ad},\Omega_2^{\ad})$ (notations from \cite[§3.3.6]{kisin2010integral}). Let $E_i$ (resp. $E^{\ad}$) denote the reflex field of $(G_i,\Omega_i)$ (resp. $(G_i^{\ad},X^{\ad}_i)$) so that $E_i \subseteq E^{\ad}$, fix $v^{\ad} | v_i | p$ places of $E^{\ad}$ and $E_i$. Consider $K_{2,p} \subseteq G_2(\Qp)$ hyperspecial. We wish to show:

\begin{thm}[\cite{kisin2010integral}, Theorem 3.4.14]\label{thm:abelian has smooth extension}
    Suppose\footnote{The statement holds true for $p=2$ under additional assumptions. We do not need this.} $p>2$.
    \begin{enumerate}
        \item[(1)] The scheme $\Sh_{K_{2,p}}(G_2,\Omega_2)$ admits a smooth integral canonical model $\Sscr_{K_{2,p}}(G_2,\Omega_2)$ over $\Ocal_{E_2,(v_2)}$.
        \item[(2)] Moreover, for any discrete valuation ring $\Ocal_F$ that is a faithfully flat $\Ocal_{E_2,(v_2)}$-algebra of absolute ramification index $e \le p-1$, $\Sscr_{K_{2,p}}(G_2,\Omega_2)_{\Ocal_F}$ is again a smooth integral canonical model of $\Sh_{K_{2,p}}(G_2,\Omega_2)$ over $\Ocal_F$.
    \end{enumerate}
\end{thm}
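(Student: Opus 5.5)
Statement (1) is \cite[Theorem 3.4.14]{kisin2010integral}, so the work is in (2). As in the proof of \Cref{thm:hodge has smooth extension}, I will not redo Kisin's construction. Instead I will trace it and check that each step preserving smooth-extension over $\Ocal_{(v)}$ still does so after base change to $\Ocal_F$ with $e \le p-1$. Smoothness of the finite levels is preserved under any base change. So the only thing to prove is that $\Sscr_{K_{2,p}}(G_2,\Omega_2)_{\Ocal_F}$ has smooth-extension.

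Recall Kisin's construction. Fix the Hodge-type datum $(G_1,\Omega_1)$ with hyperspecial $K_{1,p}$. Let $\Sscr_{K_{1,p}}(G_1,\Omega_1)^+$ be a connected component of the smooth integral canonical model of \Cref{thm:hodge has smooth extension}, defined over the ring of integers $\Ocal^{\mrm{ur}}$ at a place above $v^{\ad}$ of the maximal extension of $E^{\ad}$ unramified at that place. Kisin shows this component also serves as a connected component of $\Sscr_{K_{2,p}}(G_2,\Omega_2)$ after twisting the action through the group $\mathscr{A}(G_2)$ and its subgroup $\mathscr{A}(G_2)^\circ$ (\cite[§3.3–3.4]{kisin2010integral}). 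He then defines
\[\Sscr_{K_{2,p}}(G_2,\Omega_2) = \bigl[\mathscr{A}(G_2) \times \Sscr_{K_{1,p}}(G_1,\Omega_1)^+\bigr]/\mathscr{A}(G_2)^\circ,\]
and descends it from $\Ocal^{\mrm{ur}}$ to $\Ocal_{E_2,(v_2)}$.

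The steps for smooth-extension over $\Ocal_F$ are as follows.

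\textbf{Step 1.} By \Cref{thm:hodge has smooth extension}(2), $\Sscr_{K_{1,p}}(G_1,\Omega_1)$ has smooth-extension after base change to any DVR of ramification $\le p-1$ over $\Ocal_{(v_1)}$. Let $\Ocal'$ be a DVR compositum of $\Ocal_F$ and $\Ocal^{\mrm{ur}}$. Since $\Ocal^{\mrm{ur}}$ is unramified over $\Ocal_{(v)}$, $\Ocal'$ is still of ramification $e \le p-1$. Connected components are open and closed, so they inherit smooth-extension by \Cref{lem:normalized closed substack has smooth extension}. Hence $\Sscr^+_{\Ocal'}$ has smooth-extension.

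\textbf{Step 2.} Pass to $[\mathscr{A}(G_2)\times\Sscr^+]/\mathscr{A}(G_2)^\circ$. This is a disjoint union of copies of quotients of $\Sscr^+$ by pro-finite groups acting freely and continuously. Kisin shows (\cite[Lemma 3.4.7, 3.4.10]{kisin2010integral}) that the quotient map is pro-étale and representable by schemes. I would handle generic morphisms $\Xcal \rightsquigarrow$ the quotient as in \Cref{prop:smooth extension implies L extension}. Pull back the pro-étale torsor over $\Xcal_\eta$. Extend it over $\Xcal$ using the fact that it is unramified; this is the main obstacle, see below. The pullback is regular by \Cref{fact:Milne regular ring}. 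Apply smooth-extension of $\Sscr^+_{\Ocal'}$ to it, then descend using \Cref{prop:morphisms isomorphic over dense subset}.

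\textbf{Step 3.} Descend from $\Ocal'$ to $\Ocal_F$ by \Cref{lem:descent of smooth extension}. Here $\Ocal'/\Ocal_F$ is an unramified (ind-étale) extension of DVRs, hence formally smooth, and it preserves regularity of regular formally smooth schemes by \Cref{fact:Milne regular ring}.

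The main obstacle is the extension in Step 2. Here the torsor is for $\mathscr{A}(G_2)^\circ$-type groups, not for level subgroups, so one cannot simply invoke a canonical local system. I would first try Kisin's own argument. He proves the extension property of the quotient by reducing to $\Sscr^+$ via the observation that the relevant torsor is trivialized by the centre $Z_{G_1}(\Z_{(p)})^-$ action. This uses only the extension property of $\Sscr^+$ and étale-local arguments, and none of it involves the ramification of the base except through Step 1. Failing that, I would use healthy-extension as in \Cref{prop:smooth implies healthy}. By \Cref{lem:smooth implies healthy}, a regular formally smooth $\Xcal$ over $\Ocal_F$ with $e\le p-1$ is locally healthy. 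Kisin's actual proof over $\Ocal_{(v)}$ goes through extending abelian schemes over healthy bases, so every step transfers verbatim.
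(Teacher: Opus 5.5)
Your outer frame matches the paper. You base change to an unramified DVR containing $\Ocal_{\tilde E,(w)}$; the paper takes $\Ocal_F^{\mrm{sh}}$, you take a compositum $\Ocal'$. You note that $\Sscr^+_{\Ocal'}$ has smooth-extension by \Cref{thm:hodge has smooth extension} and \Cref{lem:normalized closed substack has smooth extension}, prove smooth-extension of the quotient over $\Ocal'$, and descend with \Cref{lem:descent of smooth extension}.

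The gap is the middle step, which carries the whole content. Given $\Xcal$ regular and formally smooth over $\Ocal'$ and a generic morphism into $[\Ascr_2\times\Sscr^+]/\Ascr_1$, you must show that the pulled-back pro-finite étale cover of $\Xcal_\eta$ extends over $\Xcal$. Writing ``extend it using the fact that it is unramified'' presupposes exactly this. Smooth-extension of $\Sscr^+$ cannot supply it: compare \Cref{prop:smooth extension implies L extension}, where extension of the local system is a hypothesis, not a conclusion. Neither fallback closes the gap:
\begin{itemize}
\item Kisin's treatment of this step is a pointer to Moonen \cite[Proposition 3.21.4]{moonen1998models}. It is not an argument about a torsor trivialized by $Z_{G_1}(\Z_{(p)})^-$.
\item Healthiness of $\Xcal$ only lets you extend abelian schemes, and the abelian-type quotient carries none. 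The abelian scheme lives on $\Sscr^+$, i.e.\ on the very cover you are trying to extend, so nothing ``transfers verbatim''.
\end{itemize}

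The missing idea is Moonen's two-step argument, whose key input is DVR-extension (\Cref{dfn:DVR extension}).

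First, let $\Xcal=\Spec A$ with $A$ a DVR. By connectedness the generic morphism lands in $\Sscr^+/\ker\phi$, a quotient by the \emph{finite} group $\ker\phi$ acting freely, where $\phi\colon\Ascr_1\to\Ascr_2$. $\Sscr^+_{\Ocal'}$ has DVR-extension because it is a closed subscheme of the Siegel model. The Siegel model has DVR-extension by the Serre--Tate criterion, since full prime-to-$p$ level forces trivial monodromy (\Cref{lem:Ag has DVR extension}, \Cref{cor:Hodge has DVR}). \Cref{fact:moonen argument} then passes DVR-extension to $\Sscr^+/\ker\phi$.

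Second, apply this at the local rings of the generic points of $\Xcal_s$. It shows that the relative normalizations $\Xcal_j\to\Xcal$ of the finite-level pieces of the cover are étale in codimension one, hence finite étale by Zariski--Nagata purity. Then $\Xcal_\infty=\varprojlim_j\Xcal_j$ is formally smooth and regular by \Cref{fact:Milne regular ring}. Next, $C\times\Sscr^+$ has smooth-extension by the pro-étale lifting lemma of \Cref{subsec:smooth extension}, where $C=\phi(\Ascr_1)\backslash\phi(\Ascr_1)g_0H_0$ is the profinite index set. The resulting morphism $\Xcal_\infty\to C\times\Sscr^+$ descends to $\Xcal$ by \Cref{prop:morphisms isomorphic over dense subset}.

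Ramification enters only through smooth-extension of $\Sscr^+_{\Ocal'}$. That is why this argument can be rerun over $\Ocal'$ before descending to $\Ocal_F$.
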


\begin{proof}[Sketch of proof]

Statement (1) is \cite[Theorem 3.4.14]{kisin2010integral}. Statement (2) follows from the proof of \emph{loc. cit.} and \Cref{thm:hodge has smooth extension}. More precisely, we sketch the proof in five steps.

\medskip

\textbf{Step 1.} Kisin shows (see the proof of Corollary 3.4.14 of \textit{loc. cit.}) that it is always possible to choose $(G_1,\Omega_1)$ as above having a hyperspecial level $K_{1,p} \subseteq G_1(\Qp)$; in particular $G_1$ and $G_2$ are the special fibers of reductive groups $\Gcal_1$ and $\Gcal_2$ over $\Z_{(p)}$. It follows that the reflex fields $E_i$ and $E^{\ad}$ are all unramified at $p$. If we let $\tilde{E}/E^{\ad}$ be the maximal extension unramified at $v^{\ad}$ and $w$ a place of it dividing $v^{\ad}$, then $E_2 \subseteq \tilde{E}$. In this setting\footnote{In §3.4.9 of \textit{loc. cit.} Kisin assumes that $E_2 \subseteq E_1$, and defines instead $\tilde{E}/E_1$ to be the maximal extension unramified at $v_1$. It is not clear to us why this assumption holds, so we deviate from his argument.}, Kisin constructs a smooth integral model of $\Sh_{K_{2,p}}(G_2,\Omega_2)$ over the DVR $\Ocal_{\tilde{E},(w)}$.

\medskip

\textbf{Step 2.} To construct this model, let $\Sscr_{K_{1,p}}(G_1,\Omega_1)/\Ocal_{E_1,(v_1)}$ be the smooth integral canonical model of $\Sh_{K_{1,p}}(G_1,\Omega_1)/E_1$ previously constructed. Then it follows from Proposition 2.2.4 of \textit{loc. cit.} that the connected components of the base-change $\Sscr_{K_{1,p}}(G_1,\Omega_1)_{\Ocal_{\tilde{E},(w)}}$ are geometrically connected. Let $\Sscr_{K_{1,p}}(G_1,\Omega_1)^+ / \Ocal_{\tilde{E},(w)}$ denote one such component. As $\Ocal_{\tilde{E},(w)}$ is unramified at $p$, $\Sscr_{K_{1,p}}(G_1,\Omega_1)_{\Ocal_{\tilde{E},(w)}}$ has smooth extension, and so does its closed subscheme $\Sscr_{K_{1,p}}(G_1,\Omega_1)^+$ by \Cref{lem:normalized closed substack has smooth extension}. 
Then Kisin defines a scheme as follows (see (3.4.11) of \textit{loc. cit.}):
\begin{equation}\label{eq:abelian type integral model}
    \Sscr_{K_{2,p}}(G_2,\Omega_2)_{\Ocal_{\tilde{E},(w)}} \coloneq [\Ascr_2 \times \Sscr_{K_{1,p}}(G_1,\Omega_1)^+] / \Ascr_1.
\end{equation}
Let us explain the meaning of \Cref{eq:abelian type integral model} (see also §3.3.5 of \textit{loc. cit.}):
\begin{itemize}
    \item $\Ascr_1$ and $\Ascr_2$ are locally compact, totally disconnected groups (defined in §3.3.2 of \textit{loc. cit.}, we will not use their description) with a proper group homomorphism $\phi:\Ascr_1 \to \Ascr_2$ between them having finite kernel (see §3.3.9 of \textit{loc. cit.}),
    \item The group $\Ascr_1$ acts continuously on the scheme $\Sscr_{K_{1,p}}(G_1,\Omega_1)^+$ (recall \Cref{rk:continuous action}); let us write
        \[\Sscr_{K_{1,p}}(G_1,\Omega_1)^+ = \varprojlim_{H_1} T_{H_1}\]
    where $H_1$ ranges over the compact open subgroups of $\Ascr_1$.
\end{itemize}
Then the right-hand side of \Cref{eq:abelian type integral model} is to be understood as the scheme with continuous $\Ascr_2$-action obtained as the projective limit $\varprojlim_{H} T'_{H}$ of the schemes

\[T'_{H} \coloneq \coprod_{[g] \in \phi(\Ascr_1) \backslash \Ascr_2 / H} T_{\phi^{-1}(g H g^{-1})}\]
where $H$ is a compact open subgroup of $\Ascr_2$. By Proposition 3.3.10 of \textit{loc. cit.}, $\Sscr_{K_{2,p}}(G_2,\Omega_2)_{\Ocal_{\tilde{E},(w)}}$ is a model of $\Sh_{K_{2,p}}(G_2,\Omega_2)_{\tilde{E}}$.

\medskip

\textbf{Step 3.} We argue that $\Sscr_{K_{2,p}}(G_2,\Omega_2)_{\Ocal_{\tilde{E},(w)}}$ has smooth-extension; the strategy is inspired from Moonen's argument\footnote{Kisin claims that Moonen's result can be adapted to prove smooth-extension with no further details. We detail the argument for the convenience of the reader.} in the proof of the following: 
\begin{fact}[\cite{moonen1998models}, Proposition 3.21.4]\label{fact:moonen argument}
    Let $\Sscr / \Ocal_{\tilde{E},(w)}$ be a scheme with DVR-extension, and $\Delta$ a finite group acting freely on $\Sscr$. Then the quotient $\Sscr / \Delta$ has smooth and DVR-extensions over $\Ocal_{\tilde{E},(w)}$.
\end{fact}
The takeaway for Step 5 is that the argument below still holds true after base-change to any faithfully flat DVR extension $\Ocal / \Ocal_{\tilde{E},(w)}$.

Let $\Xcal / \Ocal_{\tilde{E},(w)}$ be as in \Cref{dfn:smooth extension} and consider $\Xcal \rightsquigarrow \Sscr_{K_{2,p}}(G_2,\Omega_2)_{\Ocal_{\tilde{E},(w)}}$ a generic morphism. Fix some compact open subgroup $H_0 \subseteq \Ascr_2$ and some $[g_0] \in \phi(\Ascr_1) \backslash \Ascr_2 / H_0$; we may and will assume that the image of
\[\Xcal \rightsquigarrow \Sscr_{K_{2,p}}(G_2,\Omega_2)_{\Ocal_{\tilde{E},(w)}} \to T'_{H_0}\]
lies in $T_{\phi^{-1}(g_0 H_0 g_0^{-1})}$. Let $\Sscr_2$ denote the inverse image of $T_{\phi^{-1}(g_0 H_0 g_0^{-1})}$ under the right-most arrow, so that we wish to extend the generic morphism $\Xcal \rightsquigarrow \Sscr_2$.

Let us first assume that $\Xcal = \Spec \, A$ is the spectrum of a DVR faithfully flat over $\Ocal_{\tilde{E},(w)}$. Then for every compact open subgroup $H \subseteq H_0$, the generic morphism $\Xcal \rightsquigarrow T'_H$ factors through some $T_{\phi^{-1}(gHg^{-1})}$, compatibly as $H$ varies, so that we find a generic morphism
\[\Xcal \rightsquigarrow \Sscr_{K_{1,p}}(G_1,\Omega_1)^+ / \ker \phi.\]
Now $\Sscr_{K_{1,p}}(G_1,\Omega_1)^+$ has DVR-extension (recall \Cref{dfn:DVR extension}) as a closed subscheme of an integral model of a Hodge-type Shimura variety (this follows from \Cref{cor:Hodge has DVR} and \Cref{rk:closed substack has extension}), hence so does $\Sscr_{K_{1,p}}(G_1,\Omega_1)^+ / \ker \phi$ by \Cref{fact:moonen argument}. Thus we find the desired extension
\[\Xcal \longrightarrow \Sscr_{K_{1,p}}(G_1,\Omega_1)^+ / \ker \phi \longrightarrow \Sscr_2.\]

For the geneal case, we no longer assume $\Xcal = \Spec \, A$. Consider the profinite set
\[C \coloneq \phi(\Ascr_1) \backslash \phi(\Ascr_1) g_0 H_0\]
and write $\tilde{\Sscr}_{\infty} \coloneq C \times_{\Ocal_{\tilde{E},(w)}} \Sscr_{K_{1,p}}(G_1,\Omega_1)^+$. There is a proétale morphism $\tilde{\Sscr}_{\infty} \to \Sscr_2$ and we can find schemes $\tilde{\Sscr}_j$ such that this morphism decomposes as a cofiltered limit of finite étale maps
\[\tilde{\Sscr}_j \longrightarrow \Sscr_2\]
and $\tilde{\Sscr}_{\infty} = \varprojlim_j \tilde{\Sscr}_j$ with finite étale transition maps. We define
\begin{align*}
    \Xcal_{j,\tilde{E}} &\coloneq \Xcal_{\tilde{E}} \times_{\Sscr_{2,\tilde{E}}} \tilde{\Sscr}_{j,\tilde{E}} \\
    \Xcal_{\infty,\tilde{E}} &\coloneq \Xcal_{\tilde{E}} \times_{\Sscr_{2,\tilde{E}}} \tilde{\Sscr}_{\infty}
\end{align*}
and let $\Xcal_j \to \Xcal$ (resp. $\Xcal_{\infty} \to \Xcal$) denote the relative normalization of $\Xcal_{j,\tilde{E}} \to \Xcal$ (resp. $\Xcal_{\infty,\tilde{E}} \to \Xcal$); note that $\Xcal_{\infty} = \varprojlim_j \Xcal_j$. There is a commutative diagram
\[
\begin{tikzcd}
\Xcal_{\infty} \arrow[d] \arrow[r, squiggly] & \tilde{\Sscr}_{\infty} \arrow[d]                 \\
\Xcal_j \arrow[d, "\rho_j"'] \arrow[r, squiggly]        & \tilde{\Sscr}_j \arrow[d, "\text{finite étale}"] \\
\Xcal \arrow[r, squiggly]                    & \Sscr_2                                         
\end{tikzcd}
\]
From the case above, we see that $\Xcal \rightsquigarrow \Sscr_2$ extends at the local ring of codimension 1 points. It follows that $\rho_j$ is étale above those points, hence that $\rho_j$ is finite étale by Zariski-Nagata purity, and so all $\Xcal_j$ are regular schemes, formally smooth over $\Ocal_{\tilde{E},(w)}$. Thus $\Xcal_{\infty} / \Ocal_{\tilde{E},(w)}$ is also formally smooth, and it is regular by \Cref{fact:Milne regular ring}. Now as $\Sscr_{K_{1,p}}(G_1,\Omega_1)^+$ has smooth-extension, so does $\tilde{\Sscr}_{\infty}$ by \Cref{lem:proétale lifting of smooth extension}, and so we find an extension
\[\Xcal_{\infty} \longrightarrow \tilde{\Sscr}_{\infty}\]
which descends down to $\Xcal \to \Sscr_2$ by proétale descent, as desired.

\medskip

\textbf{Step 4.}
We descend the integral model down to the valuation ring of the reflex field $E_2$. Indeed, smooth extension can then be leveraged to extend descent data from $\Sh_{K_{2,p}}(G_2,\Omega_2)_{\tilde{E}}$ to its model $\Sscr_{K_{2,p}}(G_2,\Omega_2)_{\Ocal_{\tilde{E},(w)}}$, which is then defined over $\Ocal_{E_2,(v_2)}$ as desired.

\medskip

\textbf{Step 5.} It remains to show that the base-change $\Sscr_{K_{2,p}}(G_2,\Omega_2)_{\Ocal_F}$ also has smooth extension. If $\Ocal_{\tilde{E},(w)}$ were a subring of $\Ocal_F$, it would be enough to base-change to $\Ocal_F$ in \Cref{eq:abelian type integral model}, state that $\Sscr_{K_{1,p}}(G_1,\Omega_1)_{\Ocal_F}^+$ has smooth and DVR-extensions, and run the argument of Step 3 once again. As $\Ocal_F$ may not contain $\Ocal_{\tilde{E},(w)}$ in general, we take a quick detour.

Let $\Ocal_F^{\mrm{sh}}$ denote the strict henselization of $\Ocal_F$, so that $\Ocal_{\tilde{E},(w)} \subseteq \Ocal_F^{\mrm{sh}}$. Thus we can write
\[\Sscr_{K_{2,p}}(G_2,\Omega_2)_{\Ocal_F^{\mrm{sh}}} = [\Ascr_2 \times \Sscr_{K_{1,p}}(G_1,\Omega_1)_{\Ocal_F^{\mrm{sh}}}^+] / \Ascr_1.\]
Since $\Ocal_F^{\mrm{sh}}$ is a DVR and a faithfully flat $\Ocal_{\tilde{E},(w)}$-algebra, it follows from \Cref{thm:hodge has smooth extension} that $\Sscr_{K_{1,p}}(G_1,\Omega_1)_{\Ocal_F^{\mrm{sh}}}^+$ has smooth and DVR-extension. As before, Step 3 shows that $\Sscr_{K_{2,p}}(G_2,\Omega_2)_{\Ocal_F^{\mrm{sh}}}$ has smooth extension. As regularity is preserved under base-change along $\Ocal_F \to \Ocal_F^{\mrm{sh}}$, we may apply \Cref{lem:descent of smooth extension} to deduce that $\Sscr_{K_{2,p}}(G_2,\Omega_2)_{\Ocal_F}$ has smooth extension.

\end{proof}

\subsection{Integral canonical model for certain orthogonal-type}\label{subsec:smooth ext for orthogonal type}

We sketch the construction from \cite{pera2016integral}. Let $(L,Q)$ be a quadratic lattice over $\Z$ of signature $(n,2)$, $n \ge 1$. Let $\Omega$ denote the space of oriented negative definite planes in $L_{\R}$, set $\Gcal_0 \coloneq \SO(L)$ and $G_0 = (\Gcal_0)_{\Q}$. Then $(G_0,\Omega)$ defines a Shimura datum with reflex field $\Q$. Let $K_L$ be the \emph{discriminant kernel}, i.e. the largest subgroup of $\Gcal_0(\widehat{\Z})$ acting trivially on the discriminant $\mrm{disc}(L) \coloneq L^{\vee} / L$, where $L^{\vee} \subseteq L_{\Q}$ denotes the dual lattice; write $K_{0,p}$ for the $p$-part of $K_L$.

Integral canonical models for $\Sh_{K_{0,p}}(G_0,\Omega)$ have been constructed by Madapusi Pera under some conditions on $L$.

\begin{dfn}
    We say that $L_{\Z_{(p)}}$ is \emph{maximal} if there is no larger $\Z_{(p)}$-lattice in $L_{\Q}$ on which the quadratic form is $\Z_{(p)}$-valued. Otherwise it is \emph{non-maximal}.

    We say that $L$ has \emph{cyclic discriminant} if $\mrm{disc}(L)$ is cyclic.
\end{dfn}

\begin{thm}[\cite{pera2016integral}, Theorem 7.4 (1), Proposition 7.9]\label{thm:orthogonal has extension}
    Assume $L_{\Z_{(p)}}$ is maximal (resp. is non-maximal with cyclic determinant). 
    \begin{enumerate}
        \item[(1)] The scheme $\Sh_{K_{0,p}}(G_0,\Omega)$ admits an integral canonical model (resp. a smooth integral canonical model) over $\Z_{(p)}$, denoted $\Sscr_{K_{0,p}}(G_0,\Omega)$.
        \item[(2)] Moreover, let $\Ocal_F$ be a discrete valuation ring that is a faithfully flat $\Z_{(p)}$-algebra of absolute ramification index $e \le p-1$. Assume that $\widehat{\Ocal_F^{\sh}}$, the completion of the strict henselization of $\Ocal_F$, is finite over $W(\overline{\F_p})$. 
        
        \noindent Then $\Sscr_{K_{0,p}}(G_0,\Omega)_{\Ocal_F}$ again has smooth-extension (resp. is again a smooth integral canonical model of $\Sh_{K_{0,p}}(G_0,\Omega)$ over $\Ocal_F$).
    \end{enumerate}    
\end{thm}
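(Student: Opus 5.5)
Statement (1) is taken directly from \cite[Theorem 7.4 (1), Proposition 7.9]{pera2016integral}, so the work is in (2), which we approach as in \Cref{thm:hodge has smooth extension} and \Cref{thm:abelian has smooth extension}. We go through Madapusi Pera's construction and verify at each stage that the relevant extension property persists after base change to $\Ocal_F$. In outline, $\Sscr_{K_{0,p}}(G_0,\Omega)$ is built from the GSpin Shimura datum $(G,\Omega)$ attached to $L$. If $L_{\Z_{(p)}}$ is maximal, one embeds $L$ into a self-dual lattice $\tilde L$ of larger rank, possibly after allowing $p$-divisible discriminant. The GSpin variety for $\tilde L$, or for $L$ in the self-dual case, is of Hodge type with hyperspecial level. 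By \Cref{thm:hodge has smooth extension} its model over $\Z_{(p)}$ stays a smooth integral canonical model after base change to $\Ocal_F$ when $e \le p-1$. The model for $L$ is then defined as a normalization of a closure inside this model, and the $\SO$-model is obtained as a quotient by a finite (or profinite) group, following the abelian-type recipe of \Cref{eq:abelian type integral model}.

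The plan has four steps. First, we check that $\Sscr_{K_p}(\GSpin(L),\Omega)_{\Ocal_F}$ has smooth-extension. This follows from \Cref{thm:hodge has smooth extension} together with \Cref{lem:normalized closed substack has smooth extension}, provided the normalization step is harmless. Second, we need the fact that a regular, formally smooth $\Xcal/\Ocal_F$ has generic morphisms into the normalization that extend. Such an $\Xcal$ is normal, so any morphism to the closure factors uniquely through its normalization. Third, in the non-maximal cyclic-discriminant case, Madapusi Pera obtains smoothness by a deformation-theoretic argument that works over $W(\overline{\F_p})$. Here we use the hypothesis that $\widehat{\Ocal_F^{\sh}}$ is finite over $W(\overline{\F_p})$: we base-change to $\widehat{\Ocal_F^{\sh}}$, where the local rings of the model are explicitly controlled by \cite[§7]{pera2016integral}. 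This gives smoothness and smooth-extension there, and we then descend to $\Ocal_F$ by \Cref{lem:descent of smooth extension}. That lemma applies because $\Ocal_F \to \widehat{\Ocal_F^{\sh}}$ is faithfully flat and formally smooth, and it preserves regularity of formally smooth $\Ocal_F$-schemes, being ind-étale followed by completion of an excellent DVR. Fourth, we pass from $\GSpin$ to $\SO$ by running the argument of Step 3 in the proof of \Cref{thm:abelian has smooth extension}: DVR-extension is inherited by closed subschemes (\Cref{rk:closed substack has extension}), and \Cref{fact:moonen argument} together with \Cref{lem:proétale lifting of smooth extension} transfer smooth-extension through the quotient. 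As in Step 5 there, this is first done over $\Ocal_F^{\sh}$ and then descended.

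In the maximal case we only claim smooth-extension, not smoothness. There we may instead argue more directly. By (1), $\Sscr_{K_{0,p}}(G_0,\Omega)$ is an integral canonical model, so it has healthy-extension over $\Z_{(p)}$. We then need healthy-extension to survive base change to $\Ocal_F$. Granting that, \Cref{prop:smooth implies healthy} yields smooth-extension when $e \le p-1$. To prove healthy-extension over $\Ocal_F$, we would reprove it along the lines of \Cref{fact:Ag has healthy extension}: reduce to codimension-one points of a locally healthy $\Xcal$ and use DVR-extension, which is stable under faithfully flat DVR extension.

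The main obstacle is the third step. We must confirm that Madapusi Pera's smoothness and extension arguments for the discriminant-kernel models work over a ramified base such as $\widehat{\Ocal_F^{\sh}}$ rather than only over $W(\overline{\F_p})$. This is presumably where the finiteness hypothesis enters: the deformation rings and the relevant crystalline or Breuil–Kisin computations only need to be carried out over a finite extension of $W(\overline{\F_p})$ of ramification $e \le p-1$. There, Vasiu–Zink's criterion (\Cref{lem:smooth implies healthy}) applies. We would first try to show that the relevant complete local rings of $\Sscr_{K_{0,p}}(G_0,\Omega)_{\widehat{\Ocal_F^{\sh}}}$ are formal power series rings, or quotients admitting the epimorphism required in \Cref{lem:smooth implies healthy}.
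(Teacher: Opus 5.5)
Your outer architecture agrees with the paper's. You build the $\GSpin(L)$-model from the self-dual Hodge-type datum $(\tilde G,\tilde\Omega)$, then descend smooth-extension to the $\SO$-model by the Moonen-type argument of Step 3 of \Cref{thm:abelian has smooth extension}. Your ``direct'' argument in the maximal case is also correct, and shorter than the paper's. Moreover, the step you only grant is immediate and needs no reproof. A regular, locally healthy, flat $\Ocal_F$-scheme $\Xcal$ is a regular, locally healthy $\Z_{(p)}$-scheme with $\Xcal_F=\Xcal_{\Q}$. So the composite $\Xcal_F \to \Sscr_{K_{0,p}}(G_0,\Omega)_F \to \Sscr_{K_{0,p}}(G_0,\Omega)_{\Q}$ extends over $\Z_{(p)}$ by (1), and pairing with $\Xcal \to \Spec\,\Ocal_F$ gives the extension over $\Ocal_F$. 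Then \Cref{prop:smooth implies healthy} yields smooth-extension. This is the unnamed corollary after \Cref{lem:create tangent vector}, and it does not use the hypothesis on $\widehat{\Ocal_F^{\sh}}$. The codimension-one/DVR-extension route you propose instead would not work as stated. The passage from an open $U$ to all of $\Xcal$ in \Cref{fact:Ag has healthy extension} uses that the target is a moduli space of abelian schemes.

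The genuine gap is in the smooth (non-maximal) case. There (1) only gives smooth-extension for formally smooth $\Z_{(p)}$-schemes, which says nothing over a ramified $\Ocal_F$. Your Step 1 would settle this if the $\GSpin(L)$-model were the relative normalization of $\Sscr_{\tilde K}(\tilde G,\tilde\Omega)$ in $\Sh_K(G,\Omega)$, but that is not how it is built. It is the Zariski closure of $\Sh_K(G,\Omega)$ in the \emph{open} subscheme $Z^{\mrm{pr}}_{K^p}$ of the special-endomorphism scheme $Z_{K^p}$. That scheme is only finite unramified over $\Sscr_{\tilde K}(\tilde G,\tilde\Omega)$, and nothing gives finiteness of the closure over the $\tilde L$-model. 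So \Cref{lem:normalized closed substack has smooth extension} plus normality of $\Xcal$ does not apply, and ``provided the normalization step is harmless'' hides the whole difficulty. The paper's argument runs in three steps:
\begin{enumerate}
\item[(a)] $Z_{\Ocal_F}=\varprojlim_{K^p} Z_{K^p,\Ocal_F}$ has smooth-extension, by \Cref{thm:hodge has smooth extension} and \Cref{lem:extension of special endomorphisms}.
\item[(b)] Write a test scheme $T$ as a limit of smooth $\Ocal_F$-schemes via Popescu. The extension $T\to Z_{\Ocal_F}$ then lands in $Z^{\mrm{pr}}$ by redoing \cite[Lemma 6.16 (4)]{pera2016integral} over $\Ocal_F$. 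This needs the crystalline--de Rham comparison with $\Ocal_F$-coefficients, available because $e\le p-1$ makes $\Ocal_F\to k_F$ a PD-thickening. It also needs \cite[Lemma 6.14]{pera2016integral}, which is where the hypothesis on $\widehat{\Ocal_F^{\sh}}$ enters.
\item[(c)] \Cref{lem:factoring morphisms through closed substacks} then puts the morphism in the closure.
\end{enumerate}
Your Step 3 does not replace this. Smoothness after base change is automatic, but smooth-extension cannot be read off from the shape of the model's complete local rings. Also, the Vasiu--Zink criterion in \Cref{lem:smooth implies healthy} concerns the local rings of the test scheme, not of the target.
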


\begin{proof}[Sketch of proof]

Statement (1) is \cite[Theorem 7.4 (1), Proposition 7.9]{pera2016integral}. Statement (2) follows from the proof of \emph{loc. cit.} and \Cref{thm:hodge has smooth extension}. More precisely, we introduce two additional Shimura data, staying as close as possible to the notations from \textit{loc. cit.}\footnote{There are a total of four Shimura data introduced in \cite{pera2016integral}. We only mention three of them here: the remaining datum $(\tilde{G}_0, \tilde{\Omega})$ is used for the construction of $\Sscr_{K_{0,p}}(G_0,\Omega)$, which we omit.}. We argue in three steps.

\medskip

\textbf{Step 1.} Let $\Gcal = \GSpin(L)$ and $G = \Gcal_{\Q}$, so that $(G,\Omega)$ is a Shimura datum with reflex field $\Q$. We claim that it suffices to show that \Cref{thm:orthogonal has extension} holds for $\Sh_{K_p}(G,\Omega)$ (here $K_p = \Gcal(\Zp)$ is hyperspecial). Indeed, assuming this, let $\Sscr_{K_p}(G,\Omega)$ be the corresponding model. Then, Madapusi Pera defines a candidate integral model $\Sscr_{K_{0,p}}(G_0,\Omega)$ of which $\Sscr_{K_p}(G,\Omega)$ is a pro-étale cover. In particular, we have a pro-finite pro-étale cover
\[\Sscr_{K_p}(G,\Omega)_{\Ocal_F} \longrightarrow \Sscr_{K_{0,p}}(G_0,\Omega)_{\Ocal_F}\]
where $\Sscr_{K_{p}}(G,\Omega)_{\Ocal_F}$ has smooth-extension by assumption. Descending smooth-extension to the scheme $\Sscr_{K_{0,p}}(G_0,\Omega)_{\Ocal_F}$ is done using the strategy of \cite[Proposition 3.21.4]{moonen1998models} similarly to Step 3 in the proof of \Cref{thm:abelian has smooth extension}.

\medskip

\textbf{Step 2.} From now-on, the goal is to show that \Cref{thm:orthogonal has extension} holds for $\Sh_{K_p}(G,\Omega)$. In this step, we sketch the construction of the integral model $\Sscr_{K_p}(G,\Omega)$. To this end, one embeds $(L,Q)$ into an appropriate self-dual $\Z_{(p)}$-lattice $(\tilde{L},\tilde{Q})$. We define $\tilde{\Omega}$, $\tilde{\Gcal} = \GSpin(\tilde{L})$, $\tilde{G} = \tilde{\Gcal}_{\Q}$ and $\tilde{K}_p = \tilde{\Gcal}(\Zp)$ as above; one shows that $(\tilde{G}, \tilde{\Omega})$ is a Shimura datum of Hodge-type and reflex field $\Q$. In particular:

\begin{itemize}
    \item For each $\tilde{K} = \tilde{K}^p \tilde{K}_p \subseteq \tilde{G}(\A_f)$, $\Sh_K[\tilde{G},\tilde{\Omega}]$ is a moduli space for certain polarized abelian schemes with level structure. We let $(\tilde{A}_{\tilde{K}}^{\mrm{KS}}, \tilde{\lambda}^{\mrm{KS}}, [\tilde{\eta}^{\mrm{KS}}]_{\tilde{K}})$ be the universal \emph{Kuga-Satake} object over $\Sh_K[\tilde{G},\tilde{\Omega}]$.
    \item There is a smooth integral canonical model $\Sscr_{\tilde{K}_p}(\tilde{G},\tilde{\Omega})$, such that the base change to $\Ocal_F$ is again a smooth integral canonical model. By construction $\Sscr_{\tilde{K}}[\tilde{G},\tilde{\Omega}]$ is also a moduli space, we write again $(\tilde{A}_{\tilde{K}}^{\mrm{KS}}, \tilde{\lambda}^{\mrm{KS}}, [\tilde{\eta}^{\mrm{KS}}]_{\tilde{K}})$ for the universal object.
\end{itemize}

Fix  $\tilde{K}^p \subseteq \tilde{G}(\A_f^p)$ small enough, and $K^p \subseteq G(\A_f^p)$ a subgroup of $\tilde{K}^p$. One then defines a finite unramified morphism $Z_{K^p} \to \Sscr_{\tilde{K}}(\tilde{G},\tilde{\Omega})$ via the functor of points of $Z_{K^p}$. To describe it, Madapusi Pera introduces the notion of \emph{special endomorphisms} corresponding to a subsheaf\footnote{The notation $\tilde{L}(T)$, introduced by Madapusi Pera, is not to be interpreted as "the $T$-points of $\tilde{L}$". Rather, the notation suggests that the defining property of $\tilde{L}(T)$ is closely related to the self dual lattice $\tilde{L}$.}
\[[T \to \Sscr_{\tilde{K}}(\tilde{G},\tilde{\Omega})] \mapsto \tilde{L}(T)\]
of the sheaf $\u{\mrm{End}}(\tilde{A}_{\tilde{K}}^{\mrm{KS}})$ on $\Sscr_{\tilde{K}}(\tilde{G},\tilde{\Omega})$-schemes. The following lemma is key to establishing smooth-extension for $Z_{K^p}$.

\begin{lem}\label{lem:extension of special endomorphisms}
    Let $T$ be a regular scheme faithfully flat over $\Z_{(p)}$, and consider a morphism $g:T \to \Sscr_{\tilde{K}}(\tilde{G},\tilde{\Omega})$. Then any section in $\tilde{L}(T_{\Q})$ uniquely extends to a section in $\tilde{L}(T)$.
\end{lem}
\begin{proof}
    Let $A_T \to T$ denote the pullback of the Kuga-Satake abelian scheme via $g$. Let $f \in \tilde{L}(T_{\Q}) \subseteq \mrm{End}(A_{T_{\Q}})$, then $f$ extends to an endomorphism of $A_T$ (see \cite[8.4, Corollary 6]{boschNeronModels1990a}). Our assumptions on $T$ ensure that any generic point of $T_{\F_p}$ is the specialization of a point in $T_{\Q}$, so by \cite[Lemma 5.13]{pera2016integral} the extension of $f$ is again special. 
\end{proof}

Let $\Lambda \coloneq L^{\perp} \hookrightarrow \tilde{L}$ (here $\tilde{L}$ is the $\Z_{(p)}$-lattice, not the sheaf of special endomorphisms). To any $\Sscr_{\tilde{K}}(\tilde{G},\tilde{\Omega})$-scheme $T$, let $Z_{K^p}(T)$ denote the set of all $(A_T, \lambda_T , [\eta_T]_{\tilde{K}}, \iota,[\eta_{\iota}]_{K^p})$, where:
\begin{itemize}
    \item The triple $(A_T, \lambda_T ,[\eta_T]_{\tilde{K}^p})$ is the object associated with $T \to \Sscr_{\tilde{K}}(\tilde{G},\tilde{\Omega})$,
    \item The map $\iota: \u{\Lambda}_T \to \tilde{L}(T)$ is an isometric embedding,
    \item The object $[\eta_{\iota}]_{K^p}$ is a section over $T$ of a certain quotient sheaf $I_{\iota}^p / K^p$ (see Definition 6.11 of \textit{loc. cit.} for details), and
    \item There are compatibility conditions between $\iota$, $[\eta_T]_{\tilde{K}^p}$ and $[\eta_{\iota}]_{K^p}$ which we omit for simplicity. In particular, $[\eta_{\iota}]_{K^p}$ determines $[\eta_T]_{\tilde{K}^p}$, so that changing $\tilde{K}^p$ while leaving $K^p$ fixed does not change the set $Z_{K^p}(T)$.
\end{itemize}
It is shown in 6.13 of \textit{loc. cit.} that $Z_{K^p}$ is represented by a scheme finite and unramified over $\Sscr_{\tilde{K}}(\tilde{G},\tilde{\Omega})$. Next, Madapusi Pera argues that there is a canonical morphism
\[\Sh_{K}(G,\Omega) \to Z_{K^p,\Q}\]
which is an isomorphism onto an open and closed subscheme of the target (see Lemma 7.1 of \textit{loc. cit.}). Before taking the Zariski closure, we first replace $Z_{K^p}$ with an open subscheme $Z_{K^p}^{\mrm{pr}}$ which contains the generic fiber $Z_{K^p,\Q}$ (see Lemma 6.16 of \textit{loc. cit.} for details). We define $\Sscr_{K}(G,\Omega)$ as the Zariski closure of the image of $\Sh_{K}(G,\Omega)$ in $Z_{K^p}^{\mrm{pr}}$. Finally, we set
\[\Sscr_{K_p}(G,\Omega) \coloneq \varprojlim_{K^p \subseteq \tilde{K}^p} \Sscr_{K^p K_p}(G,\Omega).\]

\textbf{Step 3.} It remains to show that $\Sscr_{K_p}(G,\Omega)_{\Ocal_F}$ has smooth extension. It follows from the functor of points description of $Z_{K^p}$, from \Cref{thm:hodge has smooth extension} and from \Cref{lem:extension of special endomorphisms} that the projective limit
\[Z_{\Ocal_F} \coloneq \varprojlim_{K^p} Z_{K^p,\Ocal_F}\]
has smooth-extension. Let $T$ be a regular scheme formally smooth over $\Ocal_F$, together with a generic morphism $T \rightsquigarrow \Sscr_{K_p}(G,\Omega)_{\Ocal_F}$. Then there is an extension
\[T \to Z_{\Ocal_F}\]
and our goal is to show that this extension factors through $Z_{\Ocal_F}^{\mrm{pr}} \coloneq \varprojlim_{K^p} Z_{K^p,\Ocal_F}^{\mrm{pr}}$.
We claim that $T$ can be expressed as a filtered limit
\[T = \varprojlim_i T_i\]
where each $T_i$ is smooth over $\Ocal_F$. Indeed, $T \to \Spec \, \Ocal_F$ is flat by \cite[Théorème 19.7.1]{EGAIV}, and each fiber is geometrically regular by \cite[\href{https://stacks.math.columbia.edu/tag/07EL}{Tag 07EL}]{stacks-project} and \cite[\href{https://stacks.math.columbia.edu/tag/07EC}{Tag 07EC}]{stacks-project}. The conclusion follows from Popescu's theorem, see \cite[\href{https://stacks.math.columbia.edu/tag/07GC}{Tag 07GC}]{stacks-project}. We are now reduced to the following statement, where we use the assumptions on $\widehat{\Ocal_F^{\sh}}$ in \Cref{thm:orthogonal has extension}.

\begin{lem}
    Let $T_i$ be a smooth $\Ocal_F$-scheme, consider a morphism 
    \[f:T_i \to Z_{K^p,\Ocal_F}\]
    such that the generic fiber $f_F$ factors through $\Sh_{K^pK_p}(G,\Omega)_{\Ocal_F}$. Then $f$ factors through $Z_{K^p}^{\mrm{pr}}$.
\end{lem}
\begin{proof}[Sketch of proof]
    The original statement in \cite[Lemma 6.16 (4)]{pera2016integral} deals with the case $\Ocal_F = \Z_{(p)}$. The only point in the argument which needs some care after base-change to general $\Ocal_F$ is the use of the comparison isomorphism between crystalline and de Rham realization functors with $\Ocal_F$-coefficients (in the proofs of Lemma 6.14 and Lemma 6.16 (3) of \textit{loc. cit.}). The isomorphism holds (see \cite[Corollary 6.7, Theorem 7.1]{BerthelotOgusNotes}) because the assumption $e \le p-1$ ensures that $\Ocal_F \to k_F$ is a divided power thickening.

    Additionally, the reference to \cite[Lemma 6.14]{pera2016integral} in the proof is where our assumption on $\widehat{\Ocal_F^{\sh}}$ originates from.
\end{proof}

Thus the extension $T \to Z_{\Ocal_F}$ factors through $Z_{\Ocal_F}^{\mrm{pr}}$, hence through $\Sscr_{K_p}(G,\Omega)_{\Ocal_F}$. The proof of \Cref{thm:orthogonal has extension} is now complete.

\end{proof}

\section{Applications}\label{sec:applications}

Assume $p\neq 2$. Let $F/\Q_p$ be a finite field extension of ramification index $e$. In this section we prove \Cref{thm:main thm} by showing that, in each case, the relevant moduli space $\Mscr$ has PGR-extension. For curves we recall a result of Stix stating directly that $\Mscr$ has PGR-extension. For abelian schemes, K3 surfaces and cubic fourfolds, the proof is as follows. We argue that period morphisms induce étale morphisms $\tilde{\Mscr} \to \Sscr_K$ from a finite torsor $\tilde{\Mscr}$ over to (smooth) integral canonical models of Shimura varieties of finite level $K$. Then recall that the models at the infinite level $\Sscr_{K_p} = \varprojlim_{K^p} \Sscr_K$ have smooth extension even after base-change to $\Ocal_F$, as we argued in \Cref{sec:extension for Shimura}, and so each $\Sscr_{K,\Ocal_F}$ has L and PGR-extensions from \Cref{prop:smooth extension implies L extension} and \Cref{prop:L-extension implies PGR extension}. It then follows from \Cref{thm:PGR extension pulls back for stacks} that $\tilde{\Mscr}_{\Ocal_F}$ has PGR-extension, hence so has $\Mscr_{\Ocal_F}$ by \Cref{prop:PGR descends along finite torsors}. Summarizing:

\[\begin{tikzpicture}[node distance=1cm and 1cm]

    \node (U) {$\tilde{\Mscr}_{\Ocal_F}$};
    \node[right=of U] (S) {$\Sscr_{K,\Ocal_F}$};
    \node[above=of S] (A) {$\Sscr_{K_p,\Ocal_F}$};
    \node[below=of U] (M) {$\Mscr_{\Ocal_F}$};

    \draw[->] (A) -- node[right] {projection} (S);
    \draw[->] (U) -- node[above] {étale} (S);
    \draw[->] (U) -- node[left] {finite torsor} (M);

    \node[draw, right=0.5cm of A] {smooth-extension};
    \node[draw, right=0.5cm of S] {L-extension};
    \node[draw, left=0.5cm of U] {PGR-extension};
    \node[draw, left=0.5cm of M] {PGR-extension};

\end{tikzpicture}\]

\subsection{Smooth curves}

Let $g \ge 2$, $n \ge 0$ be integers satisfying $2-2g<n$. Let $\Mscr_{g,n}$ denote the separated Deligne-Mumford stack of genus-$g$, $n$-pointed, geometrically connected, smooth, proper curves. Let $\ell_1 \neq \ell_2$ be prime numbers different from $p$, set $\Lscr = \{\ell_1,\ell_2\}$, and $\Z_{\Lscr} = \Z_{\l_1} \times \Z_{\l_2}$. In \cite[Remark 2.8 (a)]{stix2005monodromy} Stix defines a $\Z_{\Lscr}$-local system\footnote{We warn the reader that in \cite{stix2005monodromy} the notation $\L$ refers to the set denoted $\Lscr$ here.} $\L$ over $\Mscr_{g,n} \times_{\Z} \Z\left[ \frac{1}{\Lscr}\right]$. 

Let $\Sscr \coloneq \Mscr_{g,n} \times_{\Z} \Ocal_F$, the pullback of $\L$ to $\Sscr$ is again denoted $\L$.

\begin{prop}
    The pair $(\Sscr,\L)$ has L-extension, hence $\Sscr$ has PGR-extension.
\end{prop}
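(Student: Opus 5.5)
The plan is to deduce L-extension directly from Stix's monodromy result and then conclude PGR-extension formally. Once $(\Sscr,\L)$ is known to have L-extension, \Cref{prop:L-extension implies PGR extension} gives PGR-extension at once. That step needs only $\Sscr$ to be a separated Deligne-Mumford stack over $\Ocal_F$, which holds by base change of $\Mscr_{g,n}$, and the fact that $\Z_{\Lscr}$ is profinite, so that \Cref{cor:pointwise criterion for local system} applies. The real content is therefore the L-extension.

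For L-extension, fix a smooth scheme $\Xcal/\Ocal_F$ and a generic morphism $f:\Xcal \rightsquigarrow \Sscr$. Assume $f_\eta^{-1}\L$ extends to a $\Z_{\Lscr}$-local system on $\Xcal$. The morphism $f_\eta$ corresponds to a smooth proper $n$-pointed curve $C \to \Xcal_\eta$ of genus $g$. We must extend it to such a curve over $\Xcal$; uniqueness is automatic by \Cref{prop:morphisms isomorphic over dense subset}.

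First reduce to the generic points of the special fiber. Since $\Xcal$ is smooth over $\Ocal_F$, it is regular and normal. The local ring at a generic point $\xi$ of $\Xcal_s$ is a DVR $\Ocal_{\Xcal,\xi}$, whose residue field has characteristic $p \notin \Lscr$. Stix's theorem \cite{stix2005monodromy} is the Oda-type criterion: a hyperbolic pointed curve over the fraction field of a DVR of residue characteristic outside $\Lscr$ has good reduction if its pro-$\Lscr$ (here $\Z_{\Lscr}$-linearized) monodromy $\L$ is unramified. This is precisely why two primes are used, since one prime alone does not suffice for curves. Applying it at each such $\xi$, $f$ extends over an open $U \subseteq \Xcal$ containing $\Xcal_\eta$ and all generic points of $\Xcal_s$, so that $\Xcal \setminus U$ has codimension $\ge 2$.

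The main obstacle is passing from codimension one to all of $\Xcal$, because $\Mscr_{g,n}$ is not proper. I would try two routes, in this order.

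\begin{enumerate}
\item Use the Deligne--Mumford compactification $\overline{\Mscr}_{g,n}$, which is proper. After replacing $\Xcal$ by a finite étale cover trivializing $\L$ modulo a suitable level, the level-structure cover $\overline{\Mscr}_{g,n}[N]$ is a proper algebraic space. A morphism from a normal scheme, defined off codimension $\ge 2$, to a proper target with finite inertia extends if the target has no rational curves in the fibers; failing that, I would take the closure of the graph and argue that the resulting $\overline{\Mscr}$-valued map lands in the smooth locus. At a point $x$ of $\Xcal_s$, a boundary point would force a vanishing cycle. This would give nontrivial unipotent $\ell$-monodromy along a DVR through $x$ meeting $U$ generically, contradicting the extendability of $f_\eta^{-1}\L$ over $\Xcal$. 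Finally, descend from the finite étale cover exactly as in \Cref{lem:atlas has L-extension}.
\item Alternatively, invoke directly the purity-type statement in \cite{stix2005monodromy}, that stable curves with smooth generic fiber over a regular base extend the smooth locus, which I expect is how the paper simply cites Stix.
\end{enumerate}

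In either case, I expect the delicate point to be checking that Stix's hypotheses (a $\Z[1/\Lscr]$-base and his definition of $\L$) match the pullback to $\Ocal_F$ with $p \notin \Lscr$.
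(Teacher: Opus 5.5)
Your route (2) is the paper's proof. The paper simply observes that L-extension for $(\Sscr,\L)$ follows directly from \cite[Theorem 2.10 (B)]{stix2005monodromy}, which applies since $\Xcal$ is smooth, hence normal, over $\Ocal_F$, where $\ell_1,\ell_2$ are invertible; it then invokes \Cref{prop:L-extension implies PGR extension}, exactly as in your first paragraph. The codimension-one reduction and route (1) are unnecessary. Route (1) would also not stand on its own as sketched: the closure of the graph only gives a proper birational $\Gamma \to \Xcal$, not a morphism to $\overline{\Mscr}_{g,n}$, so extension across $x$ is precisely what remains unproved there.
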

\begin{proof}
    This is a corollary of \cite[Theorem 2.10 (B)]{stix2005monodromy} and \Cref{prop:L-extension implies PGR extension}.
\end{proof}

Consequently, we recover \cite[§6.2, Example (1)]{cadoret2025pointwise}.

\begin{cor}\label{thm: PGR GGR curves}
    Let $\Xscr/\Ocal_F$ be a smooth scheme with generic fiber $X/F$. Let $f:Y \to X$ be a smooth proper family of genus-$g$, $n$-pointed, geometrically connected, smooth, proper curves.
    If it has pointwise good reduction, then it has global good reduction.
\end{cor}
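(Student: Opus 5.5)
The plan is to translate the family $f:Y \to X$ into a generic morphism to the moduli stack and then apply the PGR-extension of $\Sscr = \Mscr_{g,n} \times_{\Z} \Ocal_F$ established in the preceding proposition. Concretely, the family $Y \to X$ together with its $n$ sections is an object of $\Mscr_{g,n}(X)$. It therefore defines a morphism $f_\eta : X \to \Sscr_\eta$, i.e.\ a generic morphism $\Xscr \rightsquigarrow \Sscr$. This morphism is representable, since $X$ is a scheme and $\Sscr$ has representable diagonal. If the generic fiber $X$ is empty or $\Xscr$ is not of the right form we argue componentwise, and we may assume $\Xscr$ is a smooth scheme over $\Ocal_F$ as required in \Cref{dfn:PGR-extension property}.

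Next we check that $f_\eta$ is a PGR-morphism, following \Cref{rk:PGR implies PGR morphism}. Let $x \in |\Xscr|_{\eta}^{\mrm{int}}$ with residue field $F(x)$. By pointwise good reduction, the fiber $Y_x/F(x)$ has good reduction, and by our convention (the remark following \Cref{dfn:pointwise good reduction}) the model extends the extra structure. Thus there is a smooth proper $n$-pointed genus-$g$ curve over $\Ocal_{F(x)}$ extending $(Y_x, \text{sections})$. Its geometric fibers are connected, since the fibers of a smooth proper model with geometrically connected generic fiber are geometrically connected. This model is an $\Ocal_{F(x)}$-point of $\Sscr$ lifting $f_\eta(x)$, so $f_\eta(x) \in |\Sscr|_{\eta}^{\mrm{int}}$.

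By the proposition, $\Sscr$ has PGR-extension, so $f_\eta$ extends to an $\Ocal_F$-morphism $\Xscr \to \Sscr$. Pulling back the universal curve along this morphism gives a smooth proper family $\Ycal \to \Xscr$ of $n$-pointed genus-$g$ curves. It is a scheme, or at worst an algebraic space, which is all \Cref{dfn:pointwise good reduction} requires. By construction its generic fiber is isomorphic to $Y \to X$ together with its sections. This is global good reduction.

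The only delicate point is the bookkeeping that matches pointwise good reduction of the varieties with integrality of points of the stack. Specifically, one must ensure the good-reduction model of $Y_x$ carries the marked sections and has geometrically connected fibers. I would address this by invoking the convention that models extend the extra structure. For the sections there is a fallback: they extend automatically by properness of the model over the DVR, and they remain disjoint after a finite extension, possibly via stable reduction. Connectedness of the special fiber follows from Zariski's connectedness theorem.
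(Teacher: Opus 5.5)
Your proposal is correct and follows the paper's route, which the paper leaves implicit through \Cref{rk:PGR implies PGR morphism} and \Cref{rk:PGR extensions means PGR implies GGR}. The pointed family is a generic morphism $\Xscr \rightsquigarrow \Mscr_{g,n}\times_{\Z}\Ocal_F$, pointwise good reduction makes it a PGR-morphism, PGR-extension (Stix's L-extension plus \Cref{prop:L-extension implies PGR extension}) extends it, and you then pull back the universal curve.

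Do drop your ``fallback'' for the sections, because it is false. For $g\ge 2$ the smooth model of $Y_x$ is unique, so extended sections that meet in its special fiber still meet after every finite extension; stable reduction only separates them on a rational tail, which gives a non-smooth fiber. Pointwise good reduction therefore has to be read for the pointed fibers, exactly as the paper's convention stipulates. Without that convention the corollary fails: take $C\times X\to X$ with the constant section $P$ and the diagonal section, where $X=C\setminus\{P\}$ and $\Xscr$ is the smooth locus of the blow-up of a smooth model of $C$ at the reduction of $P$, minus the strict transform of $P$.
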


\subsection{Abelian varieties}

We recall the setup from \Cref{subsec:moduli of polar ab}. Namely, let $g \ge 1$ be an integer, $\delta = (\delta_1,\dots,\delta_g)$ a tuple of positive integers satisfying $\delta_1 | \dots | \delta_g$, and fix a symplectic $\Z$-module $(V,\psi)$ of rank $2g$ whose symplectic form in an appropriate basis is prescribed by the matrix
\[J_{\delta} \coloneq \begin{pmatrix}
    0 & \Delta_{\delta} \\
    -\Delta_{\delta} & 0.
\end{pmatrix}\]
where $\Delta_{\delta} = \mrm{diag}(\delta_1\dots,\delta_g)$. Consider the $\Z$-group scheme $\Gcal_{\delta} \coloneq \GSp(V,\psi)$, set $G \coloneq \Gcal_{\Q}$ its generic fiber. Let $K_0 = K_p K_0^p = \Gcal_{\delta}(\widehat{\Z})$, and fix $K = K_p K^p \subseteq K_0$ a level structure; note that $K_p$ is fixed and $K^p$ varies. The stacks $\overline{\Ascr}_{g,\delta}$, $\overline{\Ascr}_{g,\delta,K}$ and $\overline{\Ascr}_{g,\delta, K_p}$ have been defined in (the paragraph following) \Cref{dfn:Ag} and \Cref{dfn:AgK}; in particular for $K = K_0$, $\overline{\Ascr}_{g,\delta} = \overline{\Ascr}_{g,\delta,K_0}$.

\begin{prop}\label{prop:Ag has PGR}
    Assume $e \le p-1$. Then $\overline{\Ascr}_{g,\delta,K,\Ocal_F}$ has L-extension, hence PGR-extension.
\end{prop}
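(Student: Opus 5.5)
The plan is to realise $\overline{\Ascr}_{g,\delta,K,\Ocal_F}$ as a finite level of a fit system whose infinite level has smooth-extension, and then apply \Cref{prop:smooth extension implies L extension} followed by \Cref{prop:L-extension implies PGR extension}. Concretely, I take $R = \widehat{\Z}^p$, $V_R = V\otimes R$ and the profinite group $K^p \subseteq K_0^p = \Gcal_{\delta}(\widehat{\Z}^p) \subseteq \GL(V_R)$. By \Cref{dfn:AgK}, the system $(\overline{\Ascr}_{g,\delta,K_pK'^p})_{K'^p\in\Ccal(K^p)}$ is the moduli stack of level $K'^p$ attached to the $Q$-moduli datum on $\overline{\Ascr}_{g,\delta}$ of \Cref{subsec:moduli of polar ab}. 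As recorded after \Cref{dfn:Q moduli stack}, this projective system is fit for $K^p$. Its limit is $\overline{\Ascr}_{g,\delta,K_p}$, because the limit over $\Ccal(K^p)$ is cofinal in the limit over $\Ccal(K_0^p)$. The base change to $\Ocal_F$ is again fit, since being a finite torsor and equivariance are preserved under base change.

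Since $e\le p-1$, \Cref{cor:Ag has smooth extension} shows that $\overline{\Ascr}_{g,\delta,K_p,\Ocal_F}$ has smooth-extension. Each $\overline{\Ascr}_{g,\delta,K,\Ocal_F}$ is a separated Deligne--Mumford stack: it is finite étale over $\overline{\Ascr}_{g,\delta,\Ocal_F}$, which is a closed substack of the separated DM stack $\Ascr_{g,d}$. Hence \Cref{prop:smooth extension implies L extension} applies with $K^p$ in place of $K_0$, and gives that $(\overline{\Ascr}_{g,\delta,K,\Ocal_F},\L^{\can})$ has L-extension. By \Cref{prop:can equals univ}, $\L^{\can}$ is canonically the universal prime-to-$p$ Tate module $\L_K$, so the local system in the statement may be taken to be either one. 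Finally, \Cref{prop:L-extension implies PGR extension} converts L-extension into PGR-extension.

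I expect the main point requiring care to be the fit-system setup. First, the level $K^p$ need not be normal in $K_0^p$, which is why I work with $\Ccal(K^p)$ rather than $\Ccal(K_0^p)$. Second, I need to verify that the limit of the tower really coincides with the stack $\overline{\Ascr}_{g,\delta,K_p,\Ocal_F}$ to which \Cref{cor:Ag has smooth extension} applies. The latter uses the profiniteness of $K^p$ to identify $\varprojlim H^0(T,\Lev(A)/\Fcal_{K'^p})$ with $H^0(T,\Lev(A))$, exactly as in the proof of \Cref{prop:can equals univ}. The substantive input, that smooth-extension survives base change to $\Ocal_F$, has already been established through \Cref{prop:smooth implies healthy}. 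That input is where the hypothesis $e\le p-1$ enters.
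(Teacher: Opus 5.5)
Your proposal is correct and follows the paper's proof: smooth-extension of the infinite-level stack $\overline{\Ascr}_{g,\delta,K_p,\Ocal_F}$, then \Cref{prop:smooth extension implies L extension} applied to the fit tower, then \Cref{prop:L-extension implies PGR extension}. Two of your choices make explicit what the paper's two-line proof leaves implicit: re-indexing the tower by $\Ccal(K^p)$, so that every level $K$ (possibly non-normal) is the base of a fit system, and citing \Cref{cor:Ag has smooth extension} rather than \Cref{thm:abelian has smooth extension} for the smooth-extension input.
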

\begin{proof}
    The stack $\overline{\Ascr}_{g,\delta, K_p, \Ocal_F}$ has smooth extension by \Cref{thm:abelian has smooth extension}. By definition it is the projective limit of the $K_0$-fit system $(\overline{\Ascr}_{g,\delta,K,\Ocal_F})_K$; it follows from \Cref{prop:smooth extension implies L extension} that each $\overline{\Ascr}_{g,\delta,K,\Ocal_F}$ has L-extension, hence PGR-extension by \Cref{prop:L-extension implies PGR extension}.
\end{proof}

\begin{rk}\label{rk:optimality of e}
    The bound on $e$ is optimal. Indeed, otherwise \Cref{cor:counterexample PGR not GGR} shows that for $\delta = (1,\dots,1)$ or $(1,\dots,1,p)$, there are some values of $g$ such that $\overline{\Ascr}_{g,\delta,K,\Ocal_F}$ does not have PGR-extension for any level structure $K = K^p K_p$. This implies in particular that for $e \ge p$ and $\delta = (1,\dots,1)$, the base-change $\Sscr_{K_p}(G,\Omega)_{\Ocal_F}$ of the Siegel modular variety (see \Cref{subsec:smooth ext for Siegel and Hodge}) does not have smooth-extension, hence is no longer a smooth integral canonical model. This answers the question of \cite[Remark 3.2.12.1]{vasiu1999integral} negatively.
\end{rk}

As a corollary, we recover \cite[§6.2, Example (3)]{cadoret2025pointwise}. Our proof up to this point has been a reformulation and generalization of theirs; no new idea was introduced. 

\begin{cor}\label{thm:PGR GGR ab schemes}
    Assume $e \le p-1$. Let $X/F$ be a smooth variety, and $\Xcal/\Ocal_F$ a smooth model.
    Let $A \to X$ be a polarized abelian scheme with level structure $K$. 
    If it has pointwise good reduction, then it has global good reduction.
\end{cor}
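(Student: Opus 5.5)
The corollary should follow from \Cref{prop:Ag has PGR} via the dictionary of \Cref{rk:PGR extensions means PGR implies GGR}. The steps are: encode the family as a PGR-morphism to a moduli stack, extend it, and read off the extended family.

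\textbf{Step 1: the moduli map.} Let $(A,\lambda)$ have relative dimension $g$ and symplectic type $\Delta$. Choose $\delta$ with $\Delta_{\delta}=\Delta$ so that the generic fibers of $A$ define objects of $\Ascr_{g,\delta}$. If the fppf type of $\ker\lambda$ is not constant, work component by component on $X$; by \Cref{fact:Agdelta and its closure} this is harmless when $p\nmid d$, and for $p\mid d$ we only need the map to land in $\overline{\Ascr}_{g,\delta}$. Together with the level structure, $(A,\lambda,[\eta]_{K^p})$ gives a morphism
\[f_{\eta}\colon X\to \overline{\Ascr}_{g,\delta,K,F},\]
which is a generic morphism $f\colon\Xcal\rightsquigarrow\overline{\Ascr}_{g,\delta,K,\Ocal_F}$.

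\textbf{Step 2: $f$ is a PGR-morphism.} Take $x\in|\Xcal|_{\eta}^{\mrm{int}}$. By hypothesis the fiber $A_x$ extends to an abelian scheme over $\Ocal_{F(x)}$. The polarization extends by \Cref{fact:polarization extends}, and after a finite extension $E'/F(x)$ trivializing the relevant finite étale level data, the level structure extends as well: the prime-to-$p$ Tate module is unramified by Serre--Tate, so finite étale sections extend over $\Ocal_{E'}$. The extended point lies in the closure $\overline{\Ascr}_{g,\delta}$, since its generic fiber lies in $\Ascr_{g,\delta}$. Hence $f_{\eta}(x)\in|\overline{\Ascr}_{g,\delta,K,\Ocal_F}|_{\eta}^{\mrm{int}}$.

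\textbf{Step 3: extend and conclude.} By \Cref{prop:Ag has PGR}, which uses $e\le p-1$, $f$ extends to an $\Ocal_F$-morphism $\Xcal\to\overline{\Ascr}_{g,\delta,K,\Ocal_F}\subseteq\Ascr_{g,d,K,\Ocal_F}$. Pulling back the universal object gives a polarized abelian scheme with $K$-level structure over $\Xcal$ whose generic fiber is $A\to X$. This is global good reduction.

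\textbf{Main obstacle.} The only delicate point is the bookkeeping in Steps 1--2: checking that the integral points in the sense of \Cref{dfn:integral locus} of the stack with level $K$ and closure condition match pointwise good reduction of the abelian variety alone. This requires allowing a finite extension $E'$ to extend the level structure, and using the closure $\overline{\Ascr}_{g,\delta}$ rather than $\Ascr_{g,\delta}$ to absorb the degeneration of $\ker\lambda$ when $p\mid d$.
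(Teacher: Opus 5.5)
Your proposal is correct and follows the paper's proof. You reduce to connected components of $X$, view $A\to X$ with its polarization and level structure as a generic morphism to $\overline{\Ascr}_{g,\delta,K,\Ocal_F}$, and extend it using \Cref{prop:Ag has PGR}; your Step 2 spells out the check, left implicit in the paper, that pointwise good reduction yields a PGR-morphism. One small precision: take $\delta$ to be the fppf type of $\ker\lambda$ itself (locally constant in characteristic $0$), since the symplectic type $\Delta$ only determines $\delta$ up to powers of $p$.
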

\begin{proof}
    We may assume that $X$ is irreducible, in which case $A \to X$ corresponds to a morphism $X \to \Ascr_{g,F}$ factoring through $\overline{\Ascr}_{g,\delta,K,F}$ for some $\delta$. The result follows from \Cref{prop:Ag has PGR}.
\end{proof}

\subsection{K3 surfaces}
Fix $d \ge 1$ an integer such that $p \nmid 2d$. The K3 lattice is the rank 22 quadratic lattice $\Lambda = E_8^{\oplus 2} \oplus U^{\oplus 3}$, where $E_8$ is the $E_8$ lattice of rank 8 and $U$ is the hyperbolic lattice of rank 2. Let $e,f \in U\subseteq \Lambda$ be isotropic elements satisfying $[e,f]=1$ in some copy of the hyperbolic lattice, where $[\cdot,\cdot]$ is the bilinear symmetric form on $U$. Set $L_d = \langle e-df \rangle^{\perp} \subseteq \Lambda$ a sublattice of discriminant $2d$ and rank 21 and write $W \coloneq L_{d,\Q}$. We let $L_{d}^{\vee} \subseteq W$ denote the dual lattice. To this data is associated a Shimura stack of orthogonal type, as in \Cref{subsec:smooth ext for orthogonal type} (see also \cite[§3.1]{madapusi2015tate}).

\begin{dfn}\label{dfn:orthogonal Shimura}
    Let $G = \SO(W)$ and $\Omega$ the space of oriented negative planes in $L_{d,\R}$. Let $K_0=K_0^p K_p \subseteq G(\A_f)$ denote the largest subgroup of $\SO(L_d)(\widehat{\Z})$ acting trivially on $L_{d}^{\vee} / L_{d}$. For any level $K = K^p K_p \subseteq K_0$, let $\Sh_K[G,\Omega] / \Z_{(p)}$ be the Shimura stack of level $K$ associated to the Shimura datum $(G,\Omega)$ with reflex field $\Q$.
\end{dfn}

\begin{prop}\label{fact:int model of orthogonal shimura}
    Assume $e \le p-1$. The Shimura variety $\Sh_{K_p}(G,\Omega)$ admits a integral model $\Sscr_{K_p}(G,\Omega)$ over $\Q$ such that
    \begin{enumerate}
        \item If $p^2 \nmid d$, then $\Sscr_{K_p}(G,\Omega)$ is an integral canonical model over $\Z_{(p)}$ as in \Cref{dfn:int can model}, and $\Sscr_{K_p}(G,\Omega)_{\Ocal_F}$ has smooth extension.
        \item If $p^2 \mid d$, then $\Sscr_{K_p}(G,\Omega)$ is a smooth integral canonical model over $\Z_{(p)}$ as in \Cref{dfn:smooth int model}, and so is its base-change $\Sscr_{K_p}(G,\Omega)_{\Ocal_F}$.
    \end{enumerate}
    In any case, $\Sscr_{K_p}(G,\Omega)_{\Ocal_F}$ has smooth extension
\end{prop}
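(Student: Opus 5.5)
We will deduce the statement from \Cref{thm:orthogonal has extension} applied to the quadratic lattice $L = L_d$ of signature $(19,2)$ (after the usual sign convention, so that $\Omega$ is the space of oriented negative definite planes). The discriminant kernel of \Cref{thm:orthogonal has extension} is then exactly the group $K_0$ of \Cref{dfn:orthogonal Shimura}, and $K_p = K_{0,p}$. The proof therefore reduces to checking the lattice-theoretic hypotheses of \Cref{thm:orthogonal has extension} in each case, together with the hypothesis on $\widehat{\Ocal_F^{\sh}}$.

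First, since $\Lambda$ is unimodular and $L_d = \langle e - df\rangle^{\perp}$ with $(e-df)^2 = -2d$, the discriminant group $L_d^\vee/L_d$ is isomorphic to $\Z/2d\Z$, which is cyclic. Next we determine when $L_{d,\Z_{(p)}}$ is maximal. Since $p$ is odd, $L_{d,\Zp}$ is an orthogonal sum of a unimodular lattice and the rank-one lattice $\langle 2d\rangle$ up to a $p$-adic unit. Write $d = p^{a}u$ with $p \nmid u$. The lattice $\langle p^a u\rangle$ admits a strictly larger integral overlattice if and only if $a \ge 2$, via $\langle p^{a-2}u\rangle$. Hence $L_{d,\Z_{(p)}}$ is maximal if and only if $p^2 \nmid d$. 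So in case (1), \Cref{thm:orthogonal has extension} gives an integral canonical model whose base change to $\Ocal_F$ has smooth-extension. In case (2), the lattice is non-maximal with cyclic discriminant, and the theorem gives a smooth integral canonical model over $\Z_{(p)}$ that stays one after base change. The final sentence is then the conjunction of the two cases, using \Cref{dfn:smooth int model} (ii) in case (2).

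It remains to verify the auxiliary hypothesis. Since $F/\Qp$ is finite, $\Ocal_F$ is a complete DVR, faithfully flat over $\Z_{(p)}$ with ramification index $e \le p-1$. Its strict henselization has completion $\Ocal_F \otimes_{W(k_F)} W(\overline{\F_p})$, which is finite free of rank $e$ over $W(\overline{\F_p})$. The same holds if one wants the model over $\Z_{(p)}$ rather than $\Q$ as literally written; the integral model is by construction over $\Z_{(p)}$.

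The main obstacle is the local computation of maximality. One must check carefully that the only source of non-maximality is the $p$-primary part of $\langle 2d\rangle$, which uses $p \ne 2$. One must also check that the isotropic-vector criterion for enlarging the lattice matches the dichotomy $p^2 \mid d$. I would handle this by diagonalizing $L_{d,\Zp}$ via Jordan decomposition and appealing to the classification of maximal lattices over $\Zp$ for odd $p$: a lattice is maximal if and only if its discriminant group at $p$ has no nontrivial isotropic subgroup. For a cyclic group $\Z/p^a$ carrying the form $x \mapsto x^2/(p^a u)$, a nontrivial isotropic subgroup exists precisely when $a \ge 2$. The dichotomy in the statement should be read accordingly: the maximal case corresponds to $p^2 \nmid d$ and the non-maximal case to $p^2 \mid d$.
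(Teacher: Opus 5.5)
Your proposal is correct and follows the paper's proof: both apply \Cref{thm:orthogonal has extension} to $L_d$, using that $L_d^\vee/L_d \simeq \Z/2d\Z$ is cyclic and that $L_{d,\Z_{(p)}}$ is maximal exactly when $p^2 \nmid d$. The only differences are that the paper cites \cite[\S 4.1]{madapusi2015tate} for these lattice facts while you verify them through the $p$-part of the discriminant form, and that you explicitly check the hypothesis that $\widehat{\Ocal_F^{\sh}} = \Ocal_F \otimes_{W(k_F)} W(\overline{\F_p})$ is finite over $W(\overline{\F_p})$, which the paper leaves implicit.
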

\begin{proof}
    This is \Cref{thm:orthogonal has extension}. From \cite[§4.1]{madapusi2015tate} we have that $L_d$ has cyclic determinant, and is maximal at $p$ if and only if $p^2 \nmid d$.
\end{proof}

We assume from now-on that $e \le p-1$ and that $\Sscr_{K_p}(G,\Omega)$ is as in \Cref{fact:int model of orthogonal shimura}. From \Cref{prop:int model has PGR} we find that the quotient stack $\Sscr_{K}[G,\Omega]_{\Ocal_F}$ has PGR-extension.

Next we introduce the moduli space of primitively polarized K3 surfaces with level structure, following \cite[§2.1, §2.10]{madapusi2015tate}.

\begin{dfn}\label{dfn:K3 moduli space}
    Let $\Mscr_{2d}^{\circ}$ be the moduli problem that assigns to every $\Z_{(p)}$-scheme $T$ the groupoid of tuples $(f:Y \to T, \xi)$ where
    \begin{itemize}
        \item $f:Y \to T$ is a K3 surface\footnote{That is, $Y$ is a proper and smooth algebraic space over $T$, with geometric fibers that are K3 surfaces in the usual sense.}, and
        \item $\xi \in \underline{\mrm{Pic}}(X/T)(T)$ is a primitive polarization of degree $2d$.
    \end{itemize}
    It is a separated Deligne-Mumford stack of finite type over $\Z_{(p)}$ (\cite{rizov2006moduli} Theorem 4.3.3). 
\end{dfn}

If $(f:Y \to T,\xi) \in \Mscr_{2d}^{\circ}(T)$, let $\mbf{H}_{\widehat{\Z}^p}^2(f) = \prod_{\l \neq p} \mbf{H}_{\l}^2(f)$ be the $\widehat{\Z}^p$-local system on $T$ defined as the product of the relative $\l$-adic cohomology sheaves $R^2f_* \Zl$. The polarization defines a relative Chern class $\mrm{ch}_{\widehat{\Z}^p}(\xi) \in \mbf{H}_{\widehat{\Z}^p}^2(f)(1)$, the sub-local-system orthogonal to this class is the relative primitive cohomology sheaf $\mbf{P}_{\widehat{\Z}^p}^2(f)(1)$. 
Let $V \coloneq L_d \otimes_{\Z} \widehat{\Z}^p$. With notations from \Cref{sec:can loc sys on schemes and stacks} there is a local system
\begin{align*}
    \L:\Mscr_{2d}^{\circ} \longrightarrow & \Loc_{V} \\
    (f:Y \to T,\xi) \longmapsto & \mbf{P}_{\widehat{\Z}^p}^2(f)(1).
\end{align*}
The cup-product induces a quadratic form on (primitive) cohomology. We define
\begin{align*}
    \Lev: \Mscr_{2d}^{\circ} \longrightarrow & \BB \SO(L_d)(\widehat{\Z}^p) \\
    (f:Y \to T,\xi) \longmapsto & [(U \to T) \mapsto \{\text{Isometries } \mbf{P}_{\widehat{\Z}^p}^2(f)(1)_U \xrightarrow{\sim} \Fcal_V\}].
\end{align*}

Let $R=\widehat{\Z}^p$ and $Q = \SO(L_d)(R)$, now $(\Mscr_{2d}^{\circ}, \L, \Lev, \iota)$ is a $Q$-moduli datum as in \Cref{dfn:Q moduli datum}, where $\iota$ is the obvious sheaf inclusion of $\Lev(f:Y \to T,\xi)$ inside $\uIsom(\mbf{P}_{\widehat{\Z}^p}^2(f)(1),\Fcal_V)$. For any open subgroup $K = K^pK_p \subseteq K_0$ we may consider the associated moduli stack of level $K$ denoted $\Mscr_{2d,K}^{\circ}$. As $K_0$ is a finite index open normal subgroup of $Q$, we find that the projection $\Mscr_{2d,K_0}^{\circ} \to \Mscr_{2d}^{\circ}$ is a $Q/K_0$-torsor.

Ideally, $\Sh_K[G,\Omega]$ would serve as a period domain for the cohomology of K3 surfaces, so as to induce a morphism $\Mscr_{2d,K, \C}^{\circ} \to \Sh_{K}[G,\Omega]_{\C}$ which would descend over $\Q$, and finally extend to $\Mscr_{2d,K}^{\circ} \to \Sscr_{K}[G,\Omega]$ by invoking some extension property. To make this argument work however, $\Mscr_{2d}^{\circ}$ needs to be replaced with a double cover.

\begin{dfn}
    Let $\tilde{\Mscr}_{2d}^{\circ} \to \Mscr_{2d}^{\circ}$ be the double finite étale cover parametrizing isometric trivializations $\det (\mbf{P}_{\Z_2}^2) \xrightarrow{\sim} \Fcal_{\det(L_d) \otimes_{\Z} \Z_2}$ (notation from \Cref{dfn:sheaf from top space}). Its objects are called \emph{oriented, primitively polarized K3-surfaces of degree $2d$}.
\end{dfn}

From \cite[Proposition 4.2, Corollary 4.4 and Proposition 4.7]{madapusi2015tate} there is a natural period map $\tilde{\Mscr}_{2d, \C}^{\circ} \to \Sh_{K_0}(G,\Omega)_{\C}$ which extends to a $\Z_{(p)}$-morphism $\tilde{\Mscr}_{2d}^{\circ} \to \Sscr_{K_0}(G,\Omega)$.

\begin{fact}[\cite{madapusi2015tate}, Proposition 4.8]\label{fact:étale morphism from moduli to shimura}
    The map $\tilde{\Mscr}_{2d}^{\circ} \to \Sscr_{K_0}[G,\Omega]$ is étale.
\end{fact}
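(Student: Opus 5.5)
The statement is a cited fact from \cite[Proposition 4.8]{madapusi2015tate}, so the plan is to reconstruct the argument. Étaleness of a morphism of Deligne-Mumford stacks locally of finite presentation reduces to two claims: flatness and unramifiedness, or, equivalently, formal étaleness checked on complete local rings at geometric points. Both $\tilde{\Mscr}_{2d}^{\circ}$ and $\Sscr_{K_0}[G,\Omega]$ are flat over $\Z_{(p)}$ of relative dimension $19$. Regularity of the source follows from smoothness of $\Mscr_{2d}^{\circ}$, which holds because $p \nmid 2d$ and primitive polarizations deform along with the surface. Regularity of the target follows from \Cref{fact:int model of orthogonal shimura}. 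It therefore suffices to show that at each closed geometric point $x$ of the source, with image $y$, the induced map $\widehat{\Ocal}_{y} \to \widehat{\Ocal}_{x}$ is an isomorphism.

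In characteristic zero I would argue over $\C$. There the period map is the classical one, and by the local Torelli theorem for polarized K3 surfaces its differential is an isomorphism on tangent spaces. Since source and target are smooth of the same dimension, the map is étale on the generic fibre.

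On the special fibre I would use the Kuga-Satake construction underlying $\Sscr_{K_0}$, namely the closed subscheme structure cut out by special endomorphisms in \Cref{subsec:smooth ext for orthogonal type}. Through this construction the period map records the primitive crystal $\mbf{P}^2_{\mathrm{cris}}$ together with its Hodge filtration. At a point $x$ with K3 surface $Y_0$ over $k$, deformations of $(Y_0,\xi)$ over $W(k)$-artinian thickenings are controlled by lifts of the Hodge line in $\mbf{P}^2_{\mathrm{cris}}(Y_0)$: this is Deligne's and Ogus's crystalline Torelli and deformation theory for K3 surfaces. On the other side, the complete local ring of $\Sscr_{K_0}$ at $y$ is identified, via Grothendieck-Messing theory for the Kuga-Satake abelian scheme and Kisin's description of deformation rings of Hodge-type Shimura varieties, with the ring governing isotropic lines in the quadratic crystal $L_{\mathrm{cris}}$ lifting the given one. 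The remaining step is to compare the two crystals. Kuga-Satake compatibility, via the crystalline realization of $\Lambda$ and special endomorphisms, gives an isometry $\mbf{P}^2_{\mathrm{cris}}(Y_0)(1) \simeq L_{\mathrm{cris},y}$ compatible with filtrations. With this isometry, the two deformation functors agree and the map on complete local rings is an isomorphism.

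The main obstacle is producing this comparison of crystals, compatibly with filtrations, at points of the special fibre. It requires knowing that the $\ell$-adic isometry encoded in $\Lev$ has a crystalline counterpart. For this I would first use the $p$-adic comparison on a lift to characteristic zero, so that the Kuga-Satake abelian variety's $H^1_{\mathrm{cris}}$ contains $\mbf{P}^2_{\mathrm{cris}}$ as the special endomorphisms. I would then spread this out using the extension property of special endomorphisms, \Cref{lem:extension of special endomorphisms}, over the regular base.
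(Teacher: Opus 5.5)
Your overall architecture is the one behind Madapusi Pera's Proposition 4.8, which the paper only cites. Write $\iota$ for the period map. You reduce to complete local rings or tangent spaces, use local Torelli on the generic fibre, and on the special fibre compare the deformation theory of $(Y_0,\xi)$ with that of $\Sscr_{K_0}[G,\Omega]$ through an isometry $\mbf{P}^2(1)\simeq\iota^*\mbf{L}$.

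However, the step you single out as the crux is not resolved by what you propose. \Cref{lem:extension of special endomorphisms} extends honest special endomorphisms, i.e. elements of $\tilde L(T_{\Q})\subseteq\mathrm{End}(A_{T_{\Q}})$. The comparison you need is an isometry between realizations, $\mbf{P}^2_{\mathrm{cris}}(1)\xrightarrow{\sim}\mbf{L}_{\mathrm{cris}}\subseteq\mathrm{End}(\mbf{H}^1_{\mathrm{cris}}(A^{\mathrm{KS}}))$; note this lives inside the endomorphisms of $H^1_{\mathrm{cris}}$, not inside $H^1_{\mathrm{cris}}$ itself. That isometry is not a section of $\tilde L$: only algebraic classes, i.e. line bundles, give special endomorphisms. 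So the lemma gives you no way to spread it out. Moreover, an isometry at the single point $y$ produced from one lift does not identify the two deformation functors. You need the isometry to carry the Hodge filtration of every infinitesimal deformation $Y_A$ to that of $\iota(Y_A)$. That is a filtered, horizontal isometry in the family around $x$, not just at $x$.

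The missing ingredient is an integral de Rham comparison over the whole base.
\begin{itemize}
\item Start from the filtered, horizontal isometry $\alpha_{\mathrm{dR}}\colon\mbf{P}^2_{\mathrm{dR}}(1)\xrightarrow{\sim}\iota^*\mbf{L}_{\mathrm{dR}}$ over $\tilde{\Mscr}^{\circ}_{2d,\Q}$, together with its $p$-adic \'etale counterpart $\alpha_p$. Both come from the Kuga--Satake correspondence being absolutely Hodge, which is also what descends the period map to $\Q$.
\item Extend $\alpha_{\mathrm{dR}}$ over the smooth $\Z_{(p)}$-stack $\tilde{\Mscr}^{\circ}_{2d}$. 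Possible poles lie along components of the special fibre. By smoothness, it therefore suffices to check integrality at $W(k)$-points ($k$ perfect) lifting points of the special fibre.
\item At those points, integral $p$-adic Hodge theory applied to the $\Z_p$-isometry $\alpha_p$ shows that $\alpha_{\mathrm{dR}}$ is integral. It is then an isometry of self-dual lattices, since $p\nmid 2d$. Compatibility with filtrations and connections persists because the relevant quotients are torsion-free.
\item Since source and target are smooth of relative dimension $19$, it suffices to check that $\iota$ is an isomorphism on tangent spaces at $k$-points of the special fibre. The Kodaira--Spencer maps identify these with $\Hom(\mathrm{Fil}^1\mbf{P}^2_{\mathrm{dR}}(1),\mathrm{gr}^0\mbf{P}^2_{\mathrm{dR}}(1))$ and $\Hom(\mathrm{Fil}^1\mbf{L}_{\mathrm{dR}},\mathrm{gr}^0\mbf{L}_{\mathrm{dR}})$, and $\alpha_{\mathrm{dR}}$ intertwines them.
\end{itemize}
Working to first order also removes an imprecision in your sketch. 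Describing deformations via crystals is only legitimate over nilpotent PD thickenings, not over arbitrary artinian $W(k)$-algebras.
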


Let $\tilde{\Mscr}_{2d,K}^{\circ}$ denote the pullback of $\Mscr_{2d,K}^{\circ} \to \Mscr_{2d}^{\circ}$ along $\tilde{\Mscr}_{2d}^{\circ} \to \Mscr_{2d}^{\circ}$. The morphism in \Cref{fact:étale morphism from moduli to shimura} lifts to an étale morphism at level $K = K^p K_p \subseteq K_0$:
\begin{equation}\label{eq:étale integral period map}
    \tilde{\Mscr}_{2d,K}^{\circ} \to \Sscr_{K}[G,\Omega].
\end{equation}

\begin{rk}
    It follows from \Cref{prop:can equals univ} that the pullback of $\L_{\can}$ on $\Sscr_{K}[G,\Omega]$ to $\tilde{\Mscr}_{2d,K}^{\circ}$ coincides with the universal relative primitive cohomology $\mbf{P}_{\widehat{\Z}^p}^2(1)$.
\end{rk}

\begin{cor}\label{cor:moduli space has PGR}
    Assume $e \le p-1$. The stacks $\tilde{\Mscr}_{2d,\Ocal_F}^{\circ}$, $\Mscr_{2d,\Ocal_F}^{\circ}$, $\tilde{\Mscr}_{2d,K,\Ocal_F}^{\circ}$ and $\Mscr_{2d,K,\Ocal_F}^{\circ}$ all have PGR-extension.
\end{cor}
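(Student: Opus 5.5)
We follow the chain of implications summarized at the start of \Cref{sec:applications}, starting from the Shimura side and moving to the moduli side.

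\textbf{Step 1: the Shimura stacks.} By \Cref{fact:int model of orthogonal shimura}, for $e \le p-1$ the base change $\Sscr_{K_p}(G,\Omega)_{\Ocal_F}$ has smooth-extension. The system $(\Sscr_{K}[G,\Omega]_{\Ocal_F})_{K^p \in \Ccal(K_0^p)}$ is a $K_0^p$-fit system with limit $\Sscr_{K_p}(G,\Omega)_{\Ocal_F}$. Hence \Cref{prop:smooth extension implies L extension} and \Cref{prop:L-extension implies PGR extension}, as packaged in \Cref{prop:int model has PGR}, show that each $\Sscr_K[G,\Omega]_{\Ocal_F}$ has L-extension and PGR-extension. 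Here $K = K^pK_p \subseteq K_0$ ranges over all levels, including $K = K_0$. These stacks are separated Deligne-Mumford, being quotients of separated schemes by the proper actions of finite groups $K_0^p/K^p$ at a sufficiently small neat level.

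\textbf{Step 2: pullback along the étale period map.} Base-change \Cref{fact:étale morphism from moduli to shimura} and its lift \Cref{eq:étale integral period map} to $\Ocal_F$. This gives étale morphisms
\[\tilde{\Mscr}_{2d,\Ocal_F}^{\circ} \to \Sscr_{K_0}[G,\Omega]_{\Ocal_F}, \qquad \tilde{\Mscr}_{2d,K,\Ocal_F}^{\circ} \to \Sscr_{K}[G,\Omega]_{\Ocal_F}.\]
The source stacks are algebraic (indeed separated Deligne-Mumford, by \cite{rizov2006moduli} and because finite étale covers preserve this). So \Cref{thm:PGR extension pulls back for stacks} gives PGR-extension for $\tilde{\Mscr}_{2d,\Ocal_F}^{\circ}$ and for every $\tilde{\Mscr}_{2d,K,\Ocal_F}^{\circ}$.

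\textbf{Step 3: descent along finite torsors.} The map $\tilde{\Mscr}_{2d}^{\circ} \to \Mscr_{2d}^{\circ}$ is a $\Z/2\Z$-torsor, since it parametrizes orientations. Its pullback $\tilde{\Mscr}_{2d,K}^{\circ} \to \Mscr_{2d,K}^{\circ}$ is therefore also a $\Z/2\Z$-torsor. Applying \Cref{prop:PGR descends along finite torsors} with $H = \Z/2\Z$ to both maps, after base change to $\Ocal_F$, gives PGR-extension for $\Mscr_{2d,\Ocal_F}^{\circ}$ and $\Mscr_{2d,K,\Ocal_F}^{\circ}$. As an alternative for $\Mscr_{2d,\Ocal_F}^{\circ}$, one can descend along the finite torsor $\Mscr_{2d,K,\Ocal_F}^{\circ} \to \Mscr_{2d,\Ocal_F}^{\circ}$ with group $Q/K$.

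\textbf{Main obstacle.} The difficulty is the bookkeeping of hypotheses rather than any new idea. The main point to check is that the torsor $\tilde{\Mscr}_{2d}^{\circ} \to \Mscr_{2d}^{\circ}$ is a finite $H$-torsor in the precise sense of \Cref{dfn:morphism of stacks is Q-torsor}. Here I would use that it is defined as the scheme of isometric trivializations of a rank-one $\Z_2$-local system with fibre $\{\pm1\}$-torsor structure. This is a genuine $\Z/2\Z$-torsor over $\Z_{(p)}$, which uses $p \neq 2$. A second point is that the étaleness statement of \Cref{fact:étale morphism from moduli to shimura} is compatible with level-$K$ structures, so that \Cref{eq:étale integral period map} really is étale for every $K$. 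I would verify this by noting that both sides at level $K$ are obtained from level $K_0$ by pulling back the same $K_0/K$-torsor, via \Cref{prop:can equals univ} and the remark following \Cref{eq:étale integral period map}. Étaleness is then preserved under this base change.
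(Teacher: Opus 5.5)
Your proposal is correct and is the paper's argument: PGR-extension of $\Sscr_K[G,\Omega]_{\Ocal_F}$ comes from \Cref{prop:int model has PGR}, then pullback along the base-changed étale period maps via \Cref{thm:PGR extension pulls back for stacks}, then descent along the $\Z/2\Z$-torsor $\tilde{\Mscr}_{2d}^{\circ} \to \Mscr_{2d}^{\circ}$ (and its pullback to level $K$) via \Cref{prop:PGR descends along finite torsors}. The only flaw is in your optional alternative: $K$ need not be normal in $Q$, so there is no ``$Q/K$''-torsor, and you should descend first along the $K_0/K$-torsor and then along the $Q/K_0$-torsor; your main route does not need this.
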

\begin{proof}
    For $\tilde{\Mscr}_{2d,K,\Ocal_F}^{\circ}$ and $\tilde{\Mscr}_{2d,\Ocal_F}^{\circ}$ this follows from $\Sscr_{K}[G,\Omega]_{\Ocal_F}$ having PGR-extension, from the base-change to $\Ocal_F$ of the étale map in \Cref{fact:étale morphism from moduli to shimura} (and \Cref{eq:étale integral period map}), and from \Cref{thm:PGR extension pulls back for stacks}. As $\tilde{\Mscr}_{2d}^{\circ} \to \Mscr_{2d}^{\circ}$ is a $\Z/2\Z$-torsor, PGR-extension for $\Mscr_{2d,\Ocal_F}^{\circ}$ and $\Mscr_{2d,K,\Ocal_F}^{\circ}$ is then deduced from \Cref{prop:PGR descends along finite torsors}.
\end{proof}

Thus, (oriented) primitively polarized K3-surfaces with $K$-level structure having pointwise good reduction also have global good reduction. In the particular case of $\Mscr_{2d,\Ocal_F}^{\circ}$, we find:

\begin{cor}\label{thm:PGR GGR for K3}
    Assume $e \le p-1$. Let $X/F$ be a smooth variety, and $\Xcal/\Ocal_F$ a smooth model. Consider $Y \to X$ a primitively polarized K3-surface of degree $2d$. If it has pointwise good reduction, then it has global good reduction.
\end{cor}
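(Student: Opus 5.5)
The plan is to reduce \Cref{thm:PGR GGR for K3} to \Cref{cor:moduli space has PGR}, following the pattern of \Cref{thm:PGR GGR ab schemes} and using \Cref{rk:PGR extensions means PGR implies GGR}. First, we may assume $X$ (hence $\Xcal$, since $\Xcal$ is smooth and each component of $\Xcal$ is the closure of a component of $X$) is connected, and treat each component separately. The primitively polarized K3 surface $(Y \to X, \xi)$ of degree $2d$ is a morphism $f_{\eta}: X \to \Mscr_{2d,F}^{\circ}$, and we regard it as a generic morphism $f: \Xcal \rightsquigarrow \Mscr_{2d,\Ocal_F}^{\circ}$. The morphism is representable because $\Xcal_\eta = X$ is a scheme.

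Second, we check that $f$ is a PGR-morphism in the sense of \Cref{dfn:PGR-morphism}. Take $x \in |\Xcal|_{\eta}^{\mrm{int}}$. By hypothesis the fiber $Y_x/F(x)$ has good reduction, with the extra structure extended as in the remark following \Cref{dfn:pointwise good reduction}. So, after possibly a finite extension $E'/F(x)$, there is a smooth proper algebraic space over $\Ocal_{E'}$ whose geometric fibers are K3 surfaces, together with a primitive degree-$2d$ polarization extending $\xi_x$. Such a family is exactly an $\Ocal_{E'}$-point of $\Mscr_{2d}^{\circ}$ lifting $f_\eta(x)$, so $f_{\eta}(x) \in |\Mscr_{2d,\Ocal_F}^{\circ}|_{\eta}^{\mrm{int}}$. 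The one point to watch is that the polarization extends and stays primitive of degree $2d$. If the good-reduction model does not carry the polarization, I would extend the line bundle using $\underline{\mrm{Pic}}$ of the model over a DVR: by specialization of Néron--Severi, $\xi$ lies in $\mrm{Pic}$ of the generic fiber, which equals $\mrm{Pic}$ of the total space by regularity, and it restricts to a class of the same degree. Ampleness on the special fiber can fail here, so I would prefer to read the hypothesis with structure included, as the paper stipulates.

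Third, since $e \le p-1$, \Cref{cor:moduli space has PGR} says $\Mscr_{2d,\Ocal_F}^{\circ}$ has PGR-extension. Hence $f$ extends to an $\Ocal_F$-morphism $\Xcal \to \Mscr_{2d,\Ocal_F}^{\circ}$. Pulling back the universal family gives a smooth proper algebraic space $\Ycal \to \Xcal$ with K3 fibers and a primitive polarization. Its generic fiber is $(Y,\xi)$ up to the $2$-isomorphism given by the extension. This is global good reduction. The main obstacle is essentially only the bookkeeping in step two, namely matching the paper's convention for pointwise good reduction with structure to $\Ocal_{E'}$-points of the moduli stack. All the substantive input is already contained in \Cref{cor:moduli space has PGR}.
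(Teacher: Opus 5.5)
Your proposal is correct and is essentially the paper's own argument: the paper obtains this corollary as the special case of \Cref{cor:moduli space has PGR} for $\Mscr_{2d,\Ocal_F}^{\circ}$, translated through \Cref{rk:PGR implies PGR morphism} and \Cref{rk:PGR extensions means PGR implies GGR}, just as you do. You read pointwise good reduction as including the primitive polarization, which is the convention the paper adopts, so your second step is the only bookkeeping needed.
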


\begin{rk}
    All statements of this paragraph still hold true if $\Mscr_{2d}^{\circ}$ is replaced with $\Mscr_{2d}$ the moduli stack defined as in \Cref{dfn:K3 moduli space}, except $\xi$ is now a primitive quasi-polarization (as in \cite[§2.1]{madapusi2015tate}).
\end{rk}

\begin{rk}
    In trying to generalize the argument from K3 surfaces to general hyperkähler varieties, a couple of steps are missing. One can attach a Shimura datum $(G^{\mrm{HK}},\Omega^{\mrm{HK}})$ to the quadratic lattice defined by the second cohomology group of a fixed hyperkähler variety as in \cite[§4.5]{bindt2021hyperkahler}, as well as introduce the moduli stack $\Mscr_{2d}$ (resp. $\tilde{\Mscr}_{2d}$) of degree-2$d$ polarized, (resp. oriented) hyperkähler varieties over $\Z_{(p)}$, as in \cite[Definition 3.3.1, Definition 4.3.3]{bindt2021hyperkahler}\footnote{The stack introduced in \cite{bindt2021hyperkahler} is the disjoint union over $d$ of the $\tilde{\Mscr}_{2d,\Q}$.}. By \Cref{thm:orthogonal has extension} the variety $\Sh_{K_p}(G^{\mrm{HK}},\Omega^{\mrm{HK}})/\Q$ has an integral canonical model $\Sscr_{K_p}(G^{\mrm{HK}},\Omega^{\mrm{HK}})/\Z_{(p)}$, where $K_p \subseteq G^{\mrm{HK}}(\A_p)$ suitably chosen. By \cite[Theorem 4.5.2]{bindt2021hyperkahler} there is an étale period map $\tilde{\Mscr}_{2d,K,\Q} \to \Sh_{K}(G^{\mrm{HK}},\Omega^{\mrm{HK}})$ over $\Q$. If $p \nmid d$ then $\Sscr_{K_p}(G^{\mrm{HK}},\Omega^{\mrm{HK}})$ has smooth-extension and $\tilde{\Mscr}_{2d,K_p}$ is regular, formally smooth over $\Z_{(p)}$ hence an extension 
    \[\tilde{\Mscr}_{2d,K} \to \Sscr_{K}(G^{\mrm{HK}},\Omega^{\mrm{HK}}).\]
    If $p \nmid d$ then $\tilde{\Mscr}_{2d,K_p}$ is no longer smooth; if it can be shown to be locally healthy then the extension above still exists as $\Sscr_{K_p}(G^{\mrm{HK}},\Omega^{\mrm{HK}})$ has healthy-extension. In any case, if this extension were shown to be étale, it would follow as in \Cref{cor:moduli space has PGR} that $\tilde{\Mscr}_{2d,K}$, $\Mscr_{2d,K}$ and $\Mscr_{2d}$ have PGR-extension, and so \Cref{thm:PGR GGR for K3} would extend to hyperkähler varieties.
\end{rk}

\subsection{Cubic fourfolds}

The argument for K3 surfaces applies \textit{mutatis mutandis} to cubic fourfolds; we give references to the main ingredients following \cite[§5.13]{madapusi2015tate}.

Let $M_0$ be the even rank 2 quadratic lattice with bilinear form represented by
$\begin{pmatrix}
    2 & 1 \\
    1 & 2
\end{pmatrix}$
and set
\[M = E_8^{\oplus 2} \oplus U^{\oplus 2} \oplus M_0\]
which is has signature $(20,2)$ and is maximal at all primes.

\begin{dfn}\label{dfn:orthogonal Shimura for cubic fourfolds}
    Let $G = \SO(V)$ and $\Omega$ the space of oriented negative planes in $M_{\R}$. Let $K_0=K_0^p K_p \subseteq G(\A_f)$ denote the largest subgroup of $\SO(M)(\widehat{\Z})$ acting trivially on $M^{\vee} / M$. For any level $K = K^p K_p \subseteq K_0$, let $\Sh_K[G,\Omega]$ be the Shimura stack of level $K$ associated to the Shimura datum $(G,\Omega)$, over its reflex field $\Q$.
\end{dfn}

From \Cref{thm:orthogonal has extension} we find:

\begin{fact}
    The scheme $\Sh_{K_p}(G,\Omega)$ admits an integral canonical model $\Sscr_{K_p}(G,\Omega)$ over $\Z_{(p)}$. If $e \le p-1$ then $\Sscr_{K_p}(G,\Omega)_{\Ocal_F}$ has smooth-extension.
\end{fact}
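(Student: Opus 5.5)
The statement to be proved is that $\Sh_{K_p}(G,\Omega)$ for the cubic fourfold lattice $M$ has an integral canonical model over $\Z_{(p)}$, and that its base change to $\Ocal_F$ has smooth-extension when $e \le p-1$. I would deduce both parts from \Cref{thm:orthogonal has extension} applied to the quadratic lattice $(M,Q)$. Note that \Cref{thm:orthogonal has extension} is stated for lattices of signature $(n,2)$. The first step is therefore to recast the Shimura datum of \Cref{dfn:orthogonal Shimura for cubic fourfolds} in that form. Here $M$ has signature $(20,2)$, so $n = 20 \ge 1$, and I read the group $\SO(V)$ as $\SO(M_\Q)$. The level $K_0$ is by definition the discriminant kernel $K_M$, so its $p$-part $K_p$ is exactly the $K_{0,p}$ of \Cref{subsec:smooth ext for orthogonal type}.

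The second step is to check the hypothesis of \Cref{thm:orthogonal has extension}. Since $M$ is maximal at all primes, $M_{\Z_{(p)}}$ is maximal. We are then in the first case of the theorem, which gives an integral canonical model $\Sscr_{K_p}(G,\Omega)$ over $\Z_{(p)}$ in the sense of \Cref{dfn:int can model}. To justify maximality, I would note that $\mrm{disc}(M) = \mrm{disc}(M_0) \simeq \Z/3\Z$, because $E_8$ and $U$ are unimodular and $\det M_0 = 3$. For $p \neq 3$, $M_{\Z_{(p)}}$ is self-dual, hence maximal. For $p = 3$, the discriminant has squarefree order, so no proper even overlattice exists. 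Conveniently, this computation also shows that the discriminant is cyclic.

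The third step concerns smooth-extension, assuming $e \le p-1$. Part (2) of \Cref{thm:orthogonal has extension} in the maximal case gives smooth-extension of $\Sscr_{K_p}(G,\Omega)_{\Ocal_F}$. That part also requires $\widehat{\Ocal_F^{\sh}}$ to be finite over $W(\overline{\F_p})$, so this must be verified. Since $F/\Qp$ is finite, $\Ocal_F^{\sh}$ has completion $\Ocal_F \otimes_{W(k_F)} W(\overline{\F_p})$, which is finite of rank $e$ over $W(\overline{\F_p})$. Alternatively, one can bypass part (2): $\Sscr_{K_p}(G,\Omega)$ has healthy-extension, and healthy-extension is inherited by the base change, so \Cref{prop:smooth implies healthy} yields smooth-extension directly.

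I expect the main obstacle to be the bookkeeping with the theorem's hypotheses, namely confirming maximality at $p = 3$ and matching the discriminant-kernel level. The actual input is entirely contained in \Cref{thm:orthogonal has extension}.
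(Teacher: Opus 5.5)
Your proposal is correct and follows the paper, which deduces the statement directly from \Cref{thm:orthogonal has extension} using that $M$ is maximal at all primes. Your verification of maximality at $p=3$ via $\mrm{disc}(M)\simeq\Z/3\Z$, your matching of the discriminant-kernel level, and your check that $\widehat{\Ocal_F^{\sh}}$ is finite over $W(\overline{\F_p})$ fill in details the paper leaves implicit. Your bypass through healthy-extension is also valid, since being regular and locally healthy is intrinsic to the test scheme and so healthy-extension survives base change to $\Ocal_F$; it is exactly the corollary stated right after \Cref{prop:smooth implies healthy}, and it avoids the hypothesis on $\widehat{\Ocal_F^{\sh}}$.
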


Consider $\Mscr$ the moduli stack of cubic fourfolds over $\Z_{(p)}$, and $\tilde{\Mscr}$ its double cover trivializing the determinant of primitive cohomology. For $K$ as in \Cref{dfn:orthogonal Shimura for cubic fourfolds}, we let $\Mscr_K$ (resp. $\tilde{\Mscr}_K$) be the moduli stack with $K$-level structure on degree-4 primitive cohomology which is a finite étale cover of $\Mscr$ (resp. $\tilde{\Mscr}$). In particular, $\Mscr_{K_0} \to \Mscr$ is a finite torsor.

From the modular description of $\Sh_{K_0}(G,\Omega)_{\C}$ (see \cite[Proposition 3.3]{madapusi2015tate}) we find a period map:
\[\tilde{\Mscr}_{\C} \to \Sh_{K_0}(G,\Omega)_{\C}\]
which descends over $\Q$ as in \cite[Corollary 4.4]{madapusi2015tate}. By smooth-extension we find a period map $\tilde{\Mscr} \to \Sscr_{K_0}[G,\Omega]$ which is étale by \cite[Theorem 5.14]{madapusi2015tate}. Thus for $K$ as in \Cref{dfn:orthogonal Shimura for cubic fourfolds} we find étale maps $\tilde{\Mscr}_K \to \Sscr_{K}(G,\Omega)$. It follows that $\tilde{\Mscr}_{K,\Ocal_F}$, hence also $\Mscr_{K,\Ocal_F}$ and $\Mscr_{\Ocal_F}$, have PGR-extension. This proves the pointwise to global good reduction for (oriented) cubic fourfolds with $K$-level structure, and in the particular case of $\Mscr$:

\begin{thm}\label{thm:PGR GGR cubic fourfolds}
    Assume $e \le p-1$. Let $X/F$ be a smooth variety, and $\Xcal/\Ocal_F$ a smooth model. Consider $Y \to X$ a smooth, proper family of cubic fourfolds. If it has pointwise good reduction, then it has global good reduction.
\end{thm}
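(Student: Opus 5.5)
The plan is to reduce \Cref{thm:PGR GGR cubic fourfolds} to the statement that $\Mscr_{\Ocal_F}$ has PGR-extension, via \Cref{rk:PGR extensions means PGR implies GGR}. We may assume $X$ is connected. The family $Y \to X$ is a morphism $X \to \Mscr_F$, hence a generic morphism $f:\Xcal \rightsquigarrow \Mscr_{\Ocal_F}$. Pointwise good reduction of $Y$ means: for $x \in |\Xcal|_\eta^{\mrm{int}}$, the fiber $Y_x$ has a smooth proper model over $\Ocal_{F(x)}$. We must upgrade this to an $\Ocal_{E'}$-point of $\Mscr$, i.e. the model must again be a cubic fourfold. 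Indeed, the ample line bundle $\Ocal(1)$ extends (Picard of the smooth model restricts surjectively onto the generic fiber for regular models). Being a smooth hypersurface of degree $3$ in $\P^5$ is then an open and closed condition in families of polarized varieties with matching Hilbert polynomial. It remains to check that the special fiber is again embedded by $\Ocal(1)$ as a cubic; I would invoke Kodaira-type vanishing and semicontinuity, possibly after a finite extension. Granting this, $f$ is a PGR-morphism. Global good reduction follows once $f$ extends to $\Xcal \to \Mscr_{\Ocal_F}$, because the universal family pulls back to a smooth proper $\Ycal \to \Xcal$ extending $Y$.

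To get PGR-extension of $\Mscr_{\Ocal_F}$, I follow the diagram at the start of \Cref{sec:applications}, as in \Cref{cor:moduli space has PGR}. First, by the Fact following \Cref{dfn:orthogonal Shimura for cubic fourfolds} (an application of \Cref{thm:orthogonal has extension}, using that $M$ is maximal at $p$), $\Sscr_{K_p}(G,\Omega)_{\Ocal_F}$ has smooth-extension when $e\le p-1$. I need that $\widehat{\Ocal_F^{\sh}}$ is finite over $W(\overline{\F_p})$, which holds as $F/\Qp$ is finite. Second, $(\Sscr_K[G,\Omega]_{\Ocal_F})_{K}$ is a fit system with limit $\Sscr_{K_p}(G,\Omega)_{\Ocal_F}$. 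So by \Cref{prop:int model has PGR} (that is, \Cref{prop:smooth extension implies L extension} and \Cref{prop:L-extension implies PGR extension}) each $\Sscr_K[G,\Omega]_{\Ocal_F}$ has PGR-extension. Third, base-change to $\Ocal_F$ the étale period maps $\tilde{\Mscr}_K\to\Sscr_K[G,\Omega]$ coming from \cite[Theorem 5.14]{madapusi2015tate}. Then \Cref{thm:PGR extension pulls back for stacks} gives PGR-extension for $\tilde{\Mscr}_{K,\Ocal_F}$; this needs $\tilde{\Mscr}$ to be a separated Deligne--Mumford stack of finite type, which holds by GIT for smooth cubics. Fourth, $\tilde{\Mscr}_K\to\Mscr_K$ is a $\Z/2$-torsor and $\Mscr_{K_0}\to\Mscr$ is a finite torsor. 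Hence two applications of \Cref{prop:PGR descends along finite torsors} give PGR-extension for $\Mscr_{\Ocal_F}$. If $K_0$ is not neat, I instead choose $K\subseteq K_0$ normal and neat and descend along the finite $K_0/K$-torsor.

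The main obstacle is the existence of the integral étale period map $\tilde{\Mscr}\to\Sscr_{K_0}[G,\Omega]$ over $\Z_{(p)}$. The paper obtains it by extending the $\Q$-period map using smooth-extension. This requires $\tilde{\Mscr}_{K}$ to be smooth over $\Z_{(p)}$, which holds since smooth cubic hypersurfaces have unobstructed deformations. Étaleness is then \cite[Theorem 5.14]{madapusi2015tate}, and I would take it as given, checking only that the orientation double cover and level structures match those in loc.\ cit.\ for $p>2$. A secondary subtlety is the step in the first paragraph that a smooth proper model of a cubic fourfold is again a cubic fourfold. If that proves delicate, I would instead define pointwise good reduction via the moduli stack, in accordance with the convention of the Remark following \Cref{dfn:pointwise good reduction}.
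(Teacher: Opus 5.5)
Your proposal is correct and follows the paper's route: smooth-extension of $\Sscr_{K_p}(G,\Omega)_{\Ocal_F}$ gives L- and PGR-extension of each $\Sscr_K[G,\Omega]_{\Ocal_F}$, which is pulled back along the étale period map and then descended along the $\Z/2\Z$-torsor and the level torsor to $\Mscr_{\Ocal_F}$. The one addition is your first paragraph's attempt to show that a smooth proper model of a cubic fourfold is again a cubic fourfold; the paper does not prove this but builds it into its conventions (\Cref{rk:PGR implies PGR morphism}), which is your fallback, and your sketch is not a proof as written (the ``closed condition'' claim is exactly what needs proving, and Kodaira-type vanishing can fail in characteristic $p$), so rely on the convention instead.
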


\nocite{}
\bibliographystyle{smfalpha}
\bibliography{ref.bib}

\end{document}